\documentclass[12pt]{amsart}
\usepackage{amsmath,eucal,verbatim,amsopn,amsthm,amsfonts,amssymb,mathdots,mathrsfs,esint,mathtools,enumitem,mnsymbol,bbm}
\usepackage{amsmath,amssymb,latexsym}
\usepackage{amscd}
\usepackage[margin=0.9in]{geometry}
\usepackage[hyperfootnotes=false,colorlinks=true,allcolors=blue]{hyperref}

\theoremstyle{plain}
\newtheorem{theorem}{Theorem}
\newtheorem{proposition}[theorem]{Proposition}
\newtheorem{corollary}[theorem]{Corollary}
\newtheorem{lemma}[theorem]{Lemma}

\newtheorem{conjecture}[theorem]{Conjecture}
\newtheorem{remark}[theorem]{Remark}
\newtheorem{theo}{Theorem}[]

\newtheorem*{example}{Example}
\newtheorem{apptheorem}{Theorem}[section]

\newtheorem{appcorollary}[apptheorem]{Corollary}

\numberwithin{equation}{section}

\def\Z{\mathbb{Z}}
\def\R{\mathbb{R}}
\def\A{\mathbb{A}}

\def\C{\mathbb{C}}
\def\Mat{\text{Mat}}

\DeclareMathOperator{\tr}{tr}

\DeclareMathOperator{\Hom}{Hom}

\DeclareMathOperator{\diag}{diag}
\DeclareMathOperator{\Ind}{Ind}
\DeclareMathOperator{\GL}{GL}
\DeclareMathOperator{\SL}{SL}
\DeclareMathOperator{\SO}{SO}

\DeclareMathOperator{\Sp}{Sp}
\DeclareMathOperator{\Real}{Re}

\begin{document}

\title[$L$-functions for symplectic coverings]{Doubling Constructions: the complete $L$-function for coverings of the symplectic group}
\author{Eyal Kaplan}

\address{Department of Mathematics, Bar Ilan University, Ramat Gan 5290002, Israel}
\email{kaplaney@gmail.com}

\thanks{This research was supported by the ISRAEL SCIENCE FOUNDATION (grant No. 421/17).}
\subjclass[2010]{Primary 11F70; Secondary 11F55, 11F66, 22E50, 22E55}
\keywords{Doubling method, covering groups, automorphic $L$-functions, Shimura correspondences, metaplectic groups}
\begin{abstract}
We develop the local theory of the generalized doubling method for the $m$-fold central extension $\Sp_{2n}^{(m)}$ of Matsumoto of the symplectic group. We define local $\gamma$-, $L$- and $\epsilon$-factors for pairs of genuine representations of $\Sp_{2n}^{(m)}\times\widetilde{\GL}_k$ and prove their fundamental properties, in the sense of Shahidi. Here $\widetilde{\GL}_k$ is the central extension of $\GL_k$ arising
in the context of the Langlands--Shahidi method for covering groups of $\Sp_{2n}\times\GL_k$.
We then construct the complete $L$-function for cuspidal representations and prove its global functional equation.
Possible applications include classification results and a Shimura type lift of representations from covering groups to general linear groups
(a global lift is sketched here for $m=2$).
\end{abstract}
\maketitle
\addtocontents{toc}{\protect\setcounter{tocdepth}{1}}
\section*{Introduction}\label{Introduction}

One of the major challenges in the study of central extensions of classical (or reductive) groups, in the context of automorphic forms and representation theory, is the definition and characterization of local factors. In the linear setting, a conclusive local theory was provided by Shahidi \cite[Theorem~3.5]{Sh3}, as the culmination of a line of works on his method of local coefficients
(e.g., \cite{Shahidi1985,Sh3}). This method is based on the uniqueness of Whittaker models for quasi-split reductive groups, and as a consequence, the local factors were developed for irreducible representations affording this model, namely generic representations. The local theory and its global counterpart, for globally generic cuspidal representations, has since played a major role in numerous works, in particular in the functoriality results (e.g., \cite{CKPS2,CKPS}). Unfortunately, for covering groups as a rule Whittaker models are not unique.

In the recent work \cite{CFK} a different approach was pursued, using an integral representation to define and study local factors
for arbitrary representations (generic or otherwise) of classical groups, twisted by representations of general linear groups. The construction was based on the extension in \cite{CFGK2,DimaKaplan} of the doubling method of Piatetski-Shapiro and Rallis \cite{PSR} from rank-$1$ twists to arbitrary rank $k$. In the recent work \cite{me12}, the generalized doubling method of \cite{CFGK2} was extended to
central extensions of the symplectic group.

Let $n,m$ and $k$ be positive integers.
Let $G=\Sp_{2n}$ over a local field $F$, and $G^{(m)}$ be the topological central extension of $G$ by $\mu_m$ constructed by
Moore \cite{Moore}, Steinberg \cite{Stein} and Matsumoto \cite{Mats} (this covering is essentially unique). Here $\mu_m$ is the group of $m$-th roots of unity. Restricting $\Sp^{(m)}_{2(n+k)}$ to the preimage of a Levi subgroup
$\GL_k\times\Sp_{2n}$, one obtains a second covering group $\widetilde{\GL}_k$, which is one of the coverings constructed by \cite{BD}
(see also \cite{Savin7,Gao4}, it is not a coverings from \cite{KP}). Also fix a nontrivial additive character $\psi$ of $F$.

In the present work we develop the local theory of the generalized doubling method for $\Sp_{2n}^{(m)}\times\widetilde{\GL}_k$ and for
$\widetilde{\GL}_{n}\times\widetilde{\GL}_k$ arising from \cite{me12}. In the process we introduce a new family of
Rankin--Selberg integrals for $\widetilde{\GL}_{n}\times\widetilde{\GL}_k$, which extend the linear constructions from \cite[Appendix~C]{CFK} and \cite{G7} and replace the Rankin--Selberg $\GL_n\times\GL_k$ factors of \cite{JPSS} within the doubling constructions. Note that if $m>2$ and $k>1$ the results of this work are conditional, and in all cases we assume the field contains $\mu_{2m}$, see below.
\begin{theo}[see Theorem~\ref{theorem:ten commendments}]\label{theo:fundamental}
Let $\pi$ and $\tau$ be a pair of genuine irreducible admissible representations of $G^{(m)}$ and $\widetilde{\GL}_k$,
and assume $\tau$ admits a Whittaker model (usually more than one). There exists a $\gamma$-factor
$\gamma(s,\pi\times\tau,\psi)$ which satisfies the fundamental list of properties of \cite[Theorem~3.5]{Sh3}, \cite{LR} and \cite{CFK}.
These properties characterize this factor uniquely when $|m|=1$ in $F$ (or $F=\C$).
\end{theo}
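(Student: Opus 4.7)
The plan is to define $\gamma(s,\pi\times\tau,\psi)$ through the functional equation of the local doubling zeta integral of \cite{me12}. Fixing a matrix coefficient $\omega$ of $\pi$ and a holomorphic section $f_s$ of the degenerate principal series on $\Sp^{(m)}_{4(n+k)}$ induced from $\tau\otimes|\det|^{s-1/2}$, the integral $Z(s,\omega,f_s)$ and its partner at $1-s$ (obtained via the standard intertwining operator $M^*(s,\tau)$) must satisfy a proportionality
\begin{equation*}
Z(1-s,\omega,M^*(s,\tau)f_s) = \gamma(s,\pi\times\tau,\psi)\, Z(s,\omega,f_s),
\end{equation*}
and $\gamma$ is well-defined as a rational function in $q^{-s}$ (respectively meromorphic in the Archimedean case) because of the one-dimensionality of the relevant space of equivariant bilinear forms, essentially established in \cite{me12}. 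By construction $\gamma$ is then independent of the choices of $\omega$ and $f_s$.

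I would then verify the axioms from \cite[Thm.~3.5]{Sh3}, \cite{LR}, and \cite{CFK} roughly in order of difficulty. The formal properties --- rationality, dependence on $\psi$, behavior under unramified twists, and compatibility with the contragredient under $s\leftrightarrow 1-s$ --- follow quickly from the symmetries of the zeta integral. The unramified computation, identifying $\gamma$ with a ratio of $L$- and $\epsilon$-factors attached to the Satake parameters of $\pi$ and $\tau$, is carried out by a Casselman--Shalika-style unfolding, and naturally invokes the new Rankin--Selberg $\widetilde{\GL}_n\times\widetilde{\GL}_k$ factors introduced earlier in this paper to handle the general linear block. The global functional equation then follows by factorizing the global doubling identity into its local pieces and absorbing the unramified factors into partial $L$-functions.

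The principal computational obstacle is \emph{multiplicativity}: one must show that $\gamma$ factors as a product of lower-rank gamma factors (together with Rankin--Selberg factors of the new type) when $\pi$ or $\tau$ is parabolically induced. The argument uses Bruhat decomposition on the doubled homogeneous space together with an orbit-by-orbit vanishing analysis, so that only the open orbit contributes and its contribution splits along the Levi structure. In the covering setting this is technically heavier than in \cite{CFK}, because the Matsumoto cocycle defining $\Sp^{(m)}_{2(n+k)}$ does not split trivially over unipotent radicals, and intermediate intertwining operators between degenerate principal series therefore must be carefully normalized to control the proportionality constants that emerge.

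The final obstacle, and the source of the uniqueness assertion when $|m|=1$ in $F$ or $F=\C$, is \emph{stability}: after twisting $\tau$ by a sufficiently ramified genuine character, $\gamma(s,\pi\times\tau,\psi)$ should become independent of $\pi$. This is proved by an asymptotic analysis of $Z(s,\omega,f_s)$ showing that only a small neighborhood of a distinguished point of the doubled space contributes in the highly ramified limit. Combined with multiplicativity and the unramified computation, stability pins $\gamma$ down uniquely by reducing the general case to the unramified principal series. The hypothesis $\mu_{2m}\subset F^\times$, and the conditional status of the construction when $m>2$ and $k>1$, enter precisely here, through the compatibility constraints of the higher covers that appear in the stability and unramified steps.
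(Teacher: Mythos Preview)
Your outline tracks the paper's structure, but two points diverge substantively.

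First, a factual slip: the Matsumoto covering \emph{does} split canonically over all unipotent subgroups (see \S\ref{Covering groups}), so the difficulty in multiplicativity is not there. The overhead comes from tracking the cocycle on torus and Weyl elements, handled via Lemma~\ref{lemma:conjugation commutes} and Proposition~\ref{proposition:action of W on torus is trivial on Sp}. A related point: the inducing data for the section is not $\tau\otimes|\det|^{s-1/2}$ but the $(rk,c)$ representation $\rho_c(\tau)$ of $\GL_{rkc}^{(m)}$; this is the core mechanism of the generalized doubling method and governs much of the local analysis.

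Second, and more importantly, your uniqueness argument is not the paper's. You invoke \emph{stability}---that $\gamma$ becomes independent of $\pi$ after a highly ramified twist---but the paper never proves stability for $G^{(m)}$ (for general $m$ it is not known; the only use of stability, in \S\ref{functorial lift}, is borrowed from \cite{WGS,RS,CKPS} for $m=2$ via the theta correspondence). Instead, uniqueness (\S\ref{almost Uniqueness}) is obtained by \emph{globalization} in the style of \cite[Appendice~1]{GH2}: reduce to supercuspidal $(\pi_\nu,\tau_\nu)$ via multiplicativity \eqref{eq:multiplicativity I}--\eqref{eq:multiplicativity II}, embed them as local components of global cuspidal pairs over a number field $F'$ with $\mu_{2m}\subset (F')^*$ and $F'_\nu=F_\nu$, arranged so that every other finite component is a principal series constituent; those other $\gamma$-factors are then determined by multiplicativity, \eqref{eq:dependence on psi}, and \eqref{eq:GL factors} for $c=k=1$, and the crude functional equation \eqref{eq:global property} pins down the product over the bad set, hence the factor at $\nu$. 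The hypothesis $|m|_\nu=1$ enters because the places where $|m|<1$ form a fixed finite set $S'$ that must be carried along; running the argument once for $S'$ and again for $S'\cup\{\nu\}$ isolates $\gamma(s,\pi_\nu\times\tau_\nu,\psi_\nu)$.
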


The existence of the $\gamma$-factor at all places, especially when data are ramified, is based on the recent local uniqueness result of Gourevitch and the author \cite{DimaKaplan} (see \S~\ref{The local integrals} and \eqref{Homspace}).
As a consequence of Theorem~\ref{theorem:ten commendments} we define local $L$- and $\epsilon$- factors at all places, which are the expected factors when data are unramified or over $\C$. The integral representation and local theory imply the following global result:
\begin{theo}[see Theorem~\ref{theorem:complete Sp L function}]
Let $\pi$ and $\tau$ be a pair of genuine cuspidal representations of $G^{(m)}(\A)$ and $\widetilde{\GL}_k(\A)$.
There is a complete $L$-function $L(s,\pi\times\tau)$, defined as an absolutely convergent Euler product for $\Real(s)\gg0$ and by meromorphic continuation to $\C$. The $L$-function satisfies a standard functional equation $L(s,\pi\times\tau)=\epsilon(s,\pi\times\tau)L(1-s,\widetilde{\pi}\times\widetilde{\tau})$.
\end{theo}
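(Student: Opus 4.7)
The plan is to derive the theorem from the global doubling integral for $G^{(m)} \times \widetilde{\GL}_k$ constructed in \cite{me12}, glued to the local theory already established in Theorem~\ref{theo:fundamental}. Let $\varphi,\varphi'$ be decomposable cusp forms in the spaces of $\pi$ and its contragredient (giving a matrix coefficient), and let $f_s$ be a $K$-finite meromorphic section of the representation induced on the ambient covering symplectic group from the Speh-type representation built out of $\tau$; form the Eisenstein series $E(g,f_s)$. The global doubling integral $Z(s,\varphi,\varphi',f_s)$ pairs the matrix coefficient against the restriction of $E(g,f_s)$ to the doubled embedding $\iota:G^{(m)} \times G^{(m)} \hookrightarrow \Sp_{2(n+k)}^{(m)}(\A)$. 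By the rapid decay of cusp forms together with the standard convergence of Eisenstein series, $Z(s,\cdot)$ converges absolutely for $\Real(s) \gg 0$ and inherits meromorphic continuation to $\C$ from $E(g,f_s)$.

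The next step is the unfolding, carried out in \cite{me12}: the Bruhat decomposition singles out a single open coset whose contribution is nonzero, and the integral unfolds to an Eulerian expression involving the Whittaker model attached to $\tau$. For decomposable data one obtains $Z(s,\varphi,\varphi',f_s) = \prod_v Z_v(s,\varphi_v,\varphi'_v,f_{s,v})$, and at almost all places the local integrals compute the expected unramified $L$-factor, so the global product factors as a finite product of ramified local integrals times the partial $L$-function $L^S(s,\pi\times\tau)$. I then define the complete $L$-function by $L(s,\pi\times\tau) := \prod_v L(s,\pi_v\times\tau_v)$ using the local $L$-factors of Theorem~\ref{theo:fundamental}. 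This Euler product converges for $\Real(s) \gg 0$, and admits meromorphic continuation to $\C$ because it differs from the (meromorphic) $Z(s,\cdot)$ by finitely many local ratios, each meromorphic by Theorem~\ref{theo:fundamental}.

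Finally, the global functional equation follows from the functional equation $E(g,f_s) = E(g, M(s)f_s)$ under the standard intertwining operator, combined with the local identity
\[
Z_v\bigl(1-s, M_v(s)f_{s,v}\bigr) \;=\; \gamma(s,\pi_v\times\tau_v,\psi_v)\, Z_v\bigl(s,f_{s,v}\bigr)
\]
supplied by Theorem~\ref{theo:fundamental}. Taking the product over all $v$ and invoking the local definition $\gamma(s,\pi_v\times\tau_v,\psi_v) = \epsilon(s,\pi_v\times\tau_v,\psi_v)\,L(1-s,\widetilde{\pi}_v\times\widetilde{\tau}_v)/L(s,\pi_v\times\tau_v)$ yields the stated functional equation, with $\epsilon(s,\pi\times\tau) := \prod_v \epsilon(s,\pi_v\times\tau_v,\psi_v)$ a finite product, and the dependence on the auxiliary character $\psi$ cancelling globally.

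The main substantive obstacle — the global unfolding and the unramified computation in the metaplectic setting, which require careful tracking of the cocycle on the doubled embedding and of the genuine/non-genuine decompositions through the Bruhat analysis — has already been handled in \cite{me12}. What remains in the present theorem is essentially assembly: combining these global ingredients with the local theory of Theorem~\ref{theo:fundamental}, and justifying that at the (finitely many) ramified places the local ratios between $Z_v$ and $L_v$ are meromorphic (rational in $q_v^{-s}$ non-archimedeanly and meromorphic archimedeanly), which is precisely what is guaranteed by the properties of the local factors enumerated in Theorem~\ref{theo:fundamental}.
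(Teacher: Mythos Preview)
Your overall strategy is correct and coincides with the paper's: assemble the global doubling integral of \cite{me12} with the local theory of Theorem~\ref{theorem:ten commendments}, use the unfolding and unramified computation already done in \cite{me12}, and deduce the functional equation from that of the Eisenstein series together with the local $\gamma$-factor identity. The paper's own proof of Theorem~\ref{theorem:complete Sp L function} is little more than a pointer to exactly these ingredients.

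There is, however, one genuine gap in your argument for meromorphic continuation. You write that at almost all places the local integral computes the unramified $L$-factor, so that $Z(s,\cdot)$ factors as the partial $L$-function times finitely many ramified local integrals. This is not quite what the unramified computation in \cite{me12} gives: the result is
\[
Z(s,\omega_\nu,f_\nu)=\frac{L(s,\pi_\nu\times\tau_\nu)}{b(s,c,\tau_\nu)},
\]
so that globally
\[
Z(s,\varphi_1,\varphi_2,f)=\frac{L^S(s,\pi\times\tau)}{b^S(s,c,\tau)}\,Z_S(s,\omega,f),
\]
where $b^S(s,c,\tau)$ is an \emph{infinite} product of symmetric and exterior square $L$-functions (and, for odd $m$, standard $L$-functions) of $\tau$. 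Your claim that $L(s,\pi\times\tau)$ differs from $Z$ by only finitely many meromorphic local ratios is therefore false as stated: you also need the meromorphic continuation of $b^S(s,c,\tau)$, which is a separate input, supplied by Gao's extension of the Langlands--Shahidi theory to covering groups \cite{Gao2018}. The paper explicitly invokes this. Without it, meromorphic continuation of $Z$ alone does not yield meromorphic continuation of $L^S$.

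A smaller point: your displayed local identity $Z_v(1-s,M_v(s)f_{s,v})=\gamma(s,\pi_v\times\tau_v,\psi_v)Z_v(s,f_{s,v})$ is schematic. In the paper the intertwining operator is \emph{normalized} (via $C(s,c,\tau,\psi)$, see \eqref{eq:def normalized intertwining}) and the identity \eqref{eq:Gamma def} carries the additional factors $\pi(\mathfrak{i}_G)^{rk}\vartheta(s,c,\tau,\psi)$. These normalizations are precisely what makes the product over all places collapse cleanly; the global triviality of the extra factors (and the cancellation of the $\psi$-dependence you assert) is the content of the discussion around \eqref{eq:C^S} and the remarks following Theorem~\ref{theorem:RS ten commendments}. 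This is not hard, but it does require checking and is not automatic.
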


If $S$ is a large finite set of places of a number field $F$, $L(s,\pi\times\tau)=L_S(s,\pi_{\nu}\times\tau)L^S(s,\pi\times\tau)$ where $L_S$ denotes the finite product of local $L$-factors over $\nu\in S$, and $L^S(s,\pi\times\tau)$ is the partial $L$-function which can be located in the constant term of a suitable Eisenstein series (there is an implicit non-canonical parameter here when $m\equiv2\,(4)$, see \S~\ref{unr L functions}). This point is important for the understanding of our choice of covering for $\GL_k$. In the linear setting, the Langlands--Shahidi method can be used to study this partial $L$-function by regarding $G\times\GL_k$ as a standard Levi subgroup of $\Sp_{2(n+k)}$. This method was extended to the general class of covering groups of Brylinski and Deligne \cite{BD} in a recent work of Gao \cite{Gao2018}, and indeed in this case the study of
twisted $L$-functions for $G^{(m)}$ must involve the covering $\widetilde{\GL}_k$ as defined by restriction.
Theorem~\ref{theorem:complete Sp L function} refines the results one obtains via the method of the constant term, which does not provide a global functional equation (even in the linear case).

Let $\pi$ be a genuine (irreducible) cuspidal representation of $G^{(2)}(\A)$, and $\Pi$ be an irreducible automorphic representation of $\GL_{2n}(\A)$. We say that $\Pi$ is a Shimura lift of $\pi$ if for almost all finite places $\nu$ where $\pi_{\nu}$ is unramified,
$\Pi_{\nu}$ is the local functorial lift of $\pi_{\nu}$ given by the Satake isomorphism (\cite{Satake63,McNamara}) with respect to $\psi_{\nu}$ and the embedding $\Sp_{2n}(\C)<\GL_{2n}(\C)$ (see \S~\ref{unr L functions} for details),
and at the complex places $\Pi_{\nu}$ is the lift to $\GL_{2n}(\C)$ of the representation $\theta_{\psi_{\nu}}(\pi_{\nu})$ attached to $\pi_{\nu}$ by the theta correspondence (\cite{Howe1989,AdamsBarbasch1995}). This definition extends to any $m\geq1$, but for odd $m$ the lift is to $\GL_{2n+1}(\A)$, the embedding is of $\SO_{2n+1}(\C)$ in $\GL_{2n+1}(\C)$, and at the archimedean places one takes the standard lift (see \eqref{eq:Archimedean property}). When we combine our results for the case $m=2$ with the framework of \cite{CFK} we obtain
a global lift:
\begin{theo}[see Theorem~\ref{theo:globl functorial lift}]
Any genuine cuspidal representation $\pi$ of $G^{(2)}(\A)$ has a Shimura lift $\Pi$ to $\GL_{2n}(\A)$.
\end{theo}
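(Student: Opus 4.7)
The plan is to invoke the Cogdell--Piatetski-Shapiro converse theorem for $\GL_{2n}(\A)$, feeding it the completed twisted $L$- and $\epsilon$-factors supplied by Theorem~\ref{theorem:complete Sp L function}, and thus to follow the overall strategy of \cite{CFK}. I would first assemble a candidate irreducible admissible $\Pi=\otimes'_\nu\Pi_\nu$ of $\GL_{2n}(\A)$ place by place: at unramified finite places $\Pi_\nu$ is the Satake-parameter lift of $\pi_\nu$ through $\Sp_{2n}(\C)\hookrightarrow\GL_{2n}(\C)$ with respect to $\psi_\nu$; at complex places $\Pi_\nu$ is the $\GL_{2n}(\C)$-lift of the theta correspondent $\theta_{\psi_\nu}(\pi_\nu)$; at each remaining real or ramified non-archimedean place, $\Pi_\nu$ is the irreducible representation of $\GL_{2n}(F_\nu)$ whose standard Rankin--Selberg $\gamma$-factors twisted by $\GL_k(F_\nu)$-representations match the doubling $\gamma$-factors of Theorem~\ref{theo:fundamental} after passage to $\widetilde{\GL}_k(F_\nu)$.

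Next, for every cuspidal $\tau$ on $\GL_k(\A)$ with $k\leq 2n-1$, I would choose a genuine global lift $\widetilde{\tau}$ on $\widetilde{\GL}_k(\A)$ consistent with the local Shimura correspondence for $\GL_k$ in the $m=2$ setting, and use the characterization of Theorem~\ref{theo:fundamental} together with the unramified computation of \S~\ref{unr L functions} to obtain the equality
\[
L(s,\Pi\times\tau)=L(s,\pi\times\widetilde{\tau}),\qquad \epsilon(s,\Pi\times\tau)=\epsilon(s,\pi\times\widetilde{\tau}).
\]
The global functional equation of Theorem~\ref{theorem:complete Sp L function} then becomes the functional equation demanded by the converse theorem. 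Entirety and boundedness in vertical strips are verified by a standard maneuver: twisting $\tau$ by a highly ramified Hecke character eliminates poles thanks to the stability of $\gamma$-factors in Theorem~\ref{theo:fundamental}, while meromorphic continuation is provided by the Eisenstein series underlying the doubling integral. The converse theorem then yields an automorphic representation $\Pi'$ of $\GL_{2n}(\A)$ that agrees with $\Pi$ at almost all places, which is the desired Shimura lift.

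The main obstacle will be the local matching at ramified finite places and at real places, where the doubling $\gamma$-factor is defined with respect to a genuine representation $\widetilde{\tau}_\nu$ of $\widetilde{\GL}_k(F_\nu)$, while the Rankin--Selberg $\gamma$-factor demanded by the converse theorem uses the linear $\tau_\nu$ of $\GL_k(F_\nu)$. One must therefore show that a linear cuspidal $\tau$ admits a coherent global lift $\widetilde{\tau}$ whose local components realize the Shimura correspondence compatibly with the $\gamma$-factor identity, and that the local $L$-factors transfer correctly. This is precisely the point at which the uniqueness clause of Theorem~\ref{theo:fundamental} (valid for $m=2$ at places of odd residual characteristic, where $|2|=1$), the archimedean theta correspondence, and the technical input of \cite{CFK} on local Shimura lifts must be orchestrated simultaneously at every place; managing this uniformly across $\nu$ is the crux of the argument.
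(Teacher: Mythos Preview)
Your overall architecture---build a candidate $\Pi=\otimes'_\nu\Pi_\nu$, match twisted $L$- and $\epsilon$-factors with those of $\pi$, and apply the Cogdell--Piatetski-Shapiro converse theorem---is the same as the paper's. But your handling of the ramified places contains a genuine gap, and this is exactly where the paper's argument differs from yours.

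You propose to \emph{define} $\Pi_\nu$ at each ramified finite (or real) place as the representation whose Rankin--Selberg $\gamma$-factors match the doubling $\gamma$-factors of $\pi_\nu$. There is no mechanism available to produce such a $\Pi_\nu$: this would amount to a local Shimura lift at bad places, which is not constructed in the paper (and Theorem~\ref{theo:fundamental} provides no such existence statement). The paper instead takes $\Pi_\nu$ \emph{arbitrary} (with trivial central character) at $\nu\in S_\pi$, and invokes the \emph{twisted} form of the converse theorem from \cite{CKPS2}, which only requires control over twists $\tau'\in\mathscr{A}_0(S_\pi,\eta)$, i.e.\ cuspidal $\tau'$ that at each $\nu\in S_\pi$ are unramified up to a fixed highly ramified character $\eta_\nu$. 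For such twists, the local matching at $\nu\in S_\pi$ is obtained not from Theorem~\ref{theo:fundamental} but by transporting the problem through the local theta correspondence to $\SO_{2n+1}$ via \cite{WGS}, and then invoking the stability results of \cite{RS,CKPS} for the doubling $\gamma$-factor on the orthogonal side; this forces both $\gamma(s,\pi_\nu\times\tau_\nu,\psi_\nu)$ and $\gamma(s,\Pi_\nu\times\tau'_\nu,\psi_\nu)$ to equal the same product of abelian $\gamma$-factors, and makes the local $L$-factors at $S_\pi$ trivial (Proposition~\ref{proposition:L funcs pi and Pi}). Stability is not among the properties listed in Theorem~\ref{theo:fundamental}, so your appeal to it there is misplaced.

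Two smaller points. First, there are no real places: the standing hypothesis $\mu_{2m}\subset F^*$ with $m=2$ forces $\mu_4\subset F^*$, hence $F$ is totally imaginary and every archimedean place is complex. Second, ``bounded in vertical strips'' is not a standard maneuver here: the paper needs to know that $L^S(s,\pi\times\tau)$ is meromorphic of finite order, which requires extending M{\"u}ller's bounds on Eisenstein series to covering groups (Appendix~\ref{Muller}); this is a nontrivial input you have not accounted for.
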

For the uniqueness of the lift for globally generic representations see Corollary~\ref{corollary:globl functorial lift}.

Of course the Shimura lift for $G^{(2)}(\A)$ was already proved by Gan and Ichino \cite{GanIchino2018}, using the theta correspondence
results of Gan and Savin \cite{WGS}, the theta lift of Li \cite{JSLi1997} to $\SO_{2l+1}(\A)$ for $l\gg n$ (which is known to be nonzero) and the work of Arthur \cite{Arthur2013} on the endoscopic classification. In their work Gan and Ichino \cite{GanIchino2018} were able to described the generic part of the automorphic discrete spectrum of $G^{(2)}(\A)$. However, the case of $G^{(2)}$ is an exception due to several reasons, most importantly because the representation theories of $G^{(2)}$ and $\SO_{2n+1}$ are related via the theta correspondence (\cite{Howe1989}, see also \cite{Gan2014} and the references therein), which is not available for $m>2$ (in addition, e.g., Whittaker models are unique for $G^{(2)}$). The method presented here is independent of the trace formula and its prerequisites, but more importantly: It is not limited in the degree of the covering group, and we expect to use it in order to generalize Theorem~\ref{theo:globl functorial lift} to all $m$.

Our proof of Theorem~\ref{theo:globl functorial lift} generalizes the classical work of Shimura. Shimura studied modular forms of half-integral weight and was able to produce a lift of modular forms of weight $k/2$, where $k\geq3$ is an odd integer, to modular forms of weight $k-1$. His result was obtained via the combination of an integral representation generalizing Rankin \cite{Rankin1939}, and the Converse Theorem of Weil \cite{Weil1967}. Here we apply the Converse Theorem of Cogdell and Piatetski-Shapiro (\cite{CPS3})

As mentioned above, for $m=2$ or $k=1$ our results are unconditional. In the general case they depend on several assumptions,
some of which were already present in \cite{me12} (for the local aspects here, stronger assumptions are needed).
These are detailed in \S~\ref{the representation rho_c(tau)} and essentially boil down to the existence of
a Shimura type correspondence for coverings of general linear groups. (Note that even the structure of square-integrable representations of covering groups of $\GL_k$ is at present unknown.) To the best of our knowledge, this is the first definition and study of local factors over any covering group outside the group $G^{(2)}$, as well as the first construction of complete $L$-functions, already for $k=1$.

The assumption $\mu_{2m}\subset F^*$, which is stronger than the condition $\mu_m\subset F^*$ needed for the existence of $G^{(m)}$, is needed because we use (here and in \cite{me12}) results from \cite{BBF2011,McNamara2,Gao5,Gao4}, where it was assumed. This assumption greatly simplifies the formulas and reduces the burden and dependence on some technical details.

The classical doubling method of Piatetski-Shapiro and Rallis \cite{PSR} produced an integral representation for the
standard automorphic $L$-function of a cuspidal representation $\pi$ of a classical group or its rank-$1$ twists. This construction has had numerous applications, e.g., within the theta correspondence \cite{KudlaRallis1994,HKS,WGS,Yamana}, and to cohomological automorphic representations \cite{HarrisLiSkinner2005,HarrisLiSkinner2006,EischenHarrisLiSkinner}.
The local theory was fully developed by Lapid and Rallis \cite{LR}, and extended to $G^{(2)}\times\widetilde{\GL}_1$ by Gan \cite{Gan}. Yamana \cite{Yamana} further developed the local theory of these integrals, for the purpose of characterizing the nonvanishing of the global theta lift by means of $L$-functions and the local theta correspondence.

As mentioned above, Shahidi's method of local coefficients breaks down for covering groups (except for
$G^{(2)}$ again, where we have \cite{Dani,Dani2}). In recent works Gao \textit{et. al.} \cite{GaoShahidiSzpruch2018,GaoShahidiSzpruch2019} and Szpruch \cite{Szpruch2018} replaced the local coefficient with a proportionality matrix,
for generic representations. The determinant of this matrix becomes an invariant of the representation, and for unramified representations (and ramified principal series in low rank cases) they expressed this determinant in terms of Plancherel measures and Tate $\gamma$-factors.
Let us also mention the works
\cite{Mezo2001,Savin5,McNamara,Li,Weissman2,GG,GanFanWeissman2018,Weissman2018a}
on the extension of the Langlands Program to covering groups (among the earlier works see \cite{Flicker2,FK,Savin6}).

\subsection*{Acknowledgments}
We are happy to thank Dani Szpruch for valuable discussions.

\tableofcontents

\section{Preliminaries}\label{Preliminaries}

\subsection{The groups}\label{Groups}
In a local context $F$ is a local field of characteristic $0$. If $F$ is non-archimedean, we let $\mathcal{O}$ be its ring of integers and $\varpi$ be a uniformizer with $|\varpi|=q^{-1}$. In this case when we say that a property is valid outside a discrete subset of $s$, we mean outside a finite set of values of $q^{-s}$. Identify algebraic $F$-groups $G$ with their groups of $F$-points, i.e., $G=G(F)$.
Globally $F$ will denote a number field with a ring of adeles $\A$, and we write $F$-points or $\A$-points explicitly. When both situations are treated simultaneously we simply write $G$.

For the group $\GL_d$, let $B_{\GL_d}=T_{\GL_d}\ltimes N_{\GL_d}$ denote the Borel subgroup of upper triangular invertible matrices,
where $T_{\GL_d}$ is the diagonal torus. For a composition $\beta=(\beta_1,\ldots,\beta_l)$ of $d$, $P_{\beta}=M_{\beta}\ltimes V_{\beta}$ denotes the standard parabolic subgroup with $M_{\beta}=\GL_{\beta_1}\times\ldots\times\GL_{\beta_l}$.
For an integer $c\geq1$, $\beta c=(\beta_1 c,\ldots,\beta_l c)$. Let $w_{\beta}$ be the permutation matrix with the identity blocks $I_{\beta_i}$ on its anti-diagonal, starting with $I_{\beta_1}$ on the top right. In particular set $J_d=w_{(1^d)}$. Let
$W_{\GL_d}$ denote the Weyl group of $\GL_d$, identified with the subgroup of permutation matrices. The abelian group of $d\times d'$ matrices over a ring $R$ is denoted $\Mat_{d\times d'}(R)$ and $\Mat_d(R)=\Mat_{d\times d}(R)$. The trace map is denoted $\tr$ and ${}^tx$ is the transpose of a matrix $x$. For an integer $m$, $R^{*m}=\{a^m:a\in R^*\}$. For $b\in\GL_d$, $b^*=J_d{}^tb^{-1}J_d$.

Let $\Sp_{2l}=\{g\in \SL_{2l}:{}^tg\left(\begin{smallmatrix}&J_l\\-J_l\end{smallmatrix}\right)g=\left(\begin{smallmatrix}&J_l\\-J_l\end{smallmatrix}\right)\}$. Fix $B_{\Sp_{2l}}=\Sp_{2l}\cap B_{\GL_{2l}}=T_{\Sp_{2l}}\ltimes N_{\Sp_{2l}}$ where $N_{\Sp_{2l}}$ is a maximal unipotent subgroup. Denote the Weyl group of $\Sp_{2l}$ by $W_{\Sp_{2l}}$.

For any parabolic subgroup $P$, $\delta_P$ denotes the modulus character. For a unipotent subgroup $V$, the opposite unipotent subgroup is $V^-$. For any group $H$, $C_H$ is the center of $H$, for $x,y\in H$, ${}^xy=xyx^{-1}$ and for $Y<H$, ${}^xY=\{{}^{x}y:y\in Y\}$.

Define a global maximal compact subgroup $K_G=\prod_{\nu}K_{G,\nu}$ as in \cite[\S~I.1.4]{MW2}. In particular for almost all $\nu$,
$K_{G,\nu}=G(\mathcal{O}_{\nu})$. In this work when we consider $K_G$ in a local context, it is always the group $G(\mathcal{O})$.

\subsection{Representations}\label{reps}
In this work all representations act on complex vector spaces. Local representations are assumed to be smooth.
Essentially tempered representations (and supercuspidal representations in particular) are implicitly irreducible.
When the field is archimedean, an admissible representation is understood to be admissible Fr\'{e}chet of moderate growth.
We take generic representations to be admissible by definition.
Induction of representations from parabolic subgroups is always implicitly normalized, and over archimedean fields it is the smooth induction.
Globally, cuspidal representations are always irreducible.
The action of a group by right-translation is denoted $\cdot$.

Consider a representation $\pi$ of a unipotent subgroup $U$ over a local field, acting on a space $V$. Let $\psi$ be a character of $U$. The Jacquet module
$J_{U,\psi}(\pi)$ is the quotient $V(U,\psi)\backslash V$, where over non-archimedean fields $V(U,\psi)\subset V$ is the subspace spanned by all vectors of the form $\pi(u)\xi-\psi(u)\xi$, $u\in U$ and $\xi\in V$, and over archimedean fields $V(U,\psi)$ is the closure of
this subspace. Denote $J_{U,1}(\pi)=J_{U}(\pi)$. We use non-normalized Jacquet functors.

Let $\psi$ be a nontrivial additive character of $F$ (locally) or $F\backslash\A$. For $a\in F^*$ set $\psi_a(x)=\psi(ax)$. Over a local field, the Weil index of $x\mapsto\psi(x^2)$ is denoted $\gamma(\psi)$ and the Weil factor of $\psi$ is $\gamma_{\psi}(a)=\gamma(\psi_a)/\gamma(\psi)$ (see \cite[p.~176]{We}). The global Weil factor is then $\gamma_{\psi}=\prod_{\nu}\gamma_{\psi_{\nu}}$.

For $l\geq1$ and $v\in V_{(c^l)}$, write $v=(v_{i,j})_{1\leq i,j\leq l}$ with $v_{i,j}\in\Mat_c$. Define a character of $V_{(c^l)}$ by
\begin{align}\label{def:psi_l}
\psi_{l}(v)=\psi(\sum_{i=1}^{l-1}\tr(v_{i,i+1})).
\end{align}
When clear from the context, we omit $l$ from the notation and simply write $\psi$.

Let $G$ be either $\GL_d$ or $\Sp_{2d}$, over $\C$. Any irreducible admissible representation $\pi$ of $G$ is the
unique irreducible quotient of a representation $\Ind_{B_{G}}^{G}(\otimes_{i=1}^d|\det|^{a_i}\pi_i)$,
where $\pi_i$ are tempered quasi-characters of $F^*$ and $a_1>\ldots>a_d$.
This inducing data is unique. For any integer $r\geq1$, we let $\pi^r$ be the unique irreducible quotient of
$\Ind_{B_{G}}^{G}(\otimes_{i=1}^d|\det|^{ra_i}\pi_i^r)$.

\subsection{Covering groups}\label{Covering groups}
We briefly describe our conventions for covering groups. The basic reference for this section is \cite{Moore,GanFanWeissman2018}. For more details see \cite[\S~1.2]{me12}.

Let $\mu_m\subset\C^*$ be the cyclic group of $m$-th roots of unity and assume $\mu_m\subset F^*$. Fix a Hilbert
symbol $(\cdot,\cdot)_m$ of order $m$ in $F$, globally this is the product of local symbols (for $m=1$, by definition $(\cdot,\cdot)_m\equiv1$). Let $H$ be a locally compact group. By a topological central extension of $H$ by $\mu_m$ we mean a short exact sequence of topological groups
\begin{align*}
1\rightarrow \mu_m\xrightarrow{i} \widetilde{H}\xrightarrow{p} H\rightarrow 1,
\end{align*}
where $i(\mu_m)$ is closed and belongs to the center of $\widetilde{H}$. Call $\widetilde{H}$ an $m$-fold covering group of $H$.

A $2$-cocycle $\sigma$ of $H$ is a Borel measurable function $\sigma:H\times H\rightarrow\mu_m$ such that for all
$h,h',h''\in H$,
\begin{align}\label{eq:2-cocycle}
\sigma(h,h')\sigma(hh',h'')=\sigma(h,h'h'')\sigma(h',h''),\qquad \sigma(e,h')=\sigma(h,e)=1.
\end{align}
Here $e$ is the identity element of $H$. Let $\mathrm{Z}^2(H,\mu_m)$ denote the group of $2$-cocycles. Granted $\sigma$, we realize $H^{(m)}$ as the set of elements
$\langle h,\epsilon\rangle$, $h\in H$, $\epsilon\in\mu_m$, with the product
\begin{align*}
\langle h,\epsilon\rangle\langle h',\epsilon'\rangle=\langle hh',\epsilon\epsilon'\sigma(h,h')\rangle.
\end{align*}
We use the notation $H[\sigma]$ to emphasize the $2$-cocycle in use, e.g., when $2$ different realizations $H[\sigma]$ and $H[\sigma']$ are considered simultaneously ($\sigma$ and $\sigma'$ may not be cohomologous).

A Borel measurable map $\eta:H\rightarrow\mu_m$ such that $\eta(e)=1$ is called a $1$-cochain, the group of $1$-cochains is denoted $\mathrm{C}^1(H,\mu_m)$. For $\eta\in\mathrm{C}^1(H,\mu_m)$, the function $(h,h')\mapsto \eta(h)\eta(h')/\eta(hh')$ is called a $2$-coboundary and we have the group $\mathrm{B}^2(H,\mu_m)$ of $2$-coboundaries. The $m$-fold coverings $\widetilde{H}$ are parameterized by the $2$-nd cohomology $\mathrm{H}^2(H,\mu_m)=\mathrm{B}^2(G,\mu_m)\backslash \mathrm{Z}^2(H,\mu_m)$.

If $\sigma,\rho\in \mathrm{Z}^2(H,\mu_m)$ are equal in $\mathrm{H}^2(H,\mu_m)$, i.e., cohomologous,
one can find $\eta\in\mathrm{C}^1(H,\mu_m)$ satisfying
\begin{align}\label{eq:cohomologous}
\rho(h,h')=\frac{\eta(h)\eta(h')}{\eta(hh')}\sigma(h,h'),\qquad\forall h,h'\in H.
\end{align}
Then $\langle h,\epsilon\rangle \mapsto \langle h,\epsilon\eta(h)\rangle $ is a topological isomorphism of $\widetilde{H}$, where the domain is realized using
$\rho$ and the image by $\sigma$.

A section of $X<H$ is a continuous map $x\mapsto\langle x,\eta(x)\rangle$ where
$\eta\in\mathrm{C}^1(X,\mu_m)$. This map is a splitting of $X$ if it is also a homomorphism, which means
\begin{align*}
\langle x,\eta(x)\rangle\langle x',\eta(x')\rangle=\langle xx',\eta(xx')\rangle,\qquad\forall x,x'\in X.
\end{align*}
In this case we say that $\widetilde{H}$ splits over $X$. Granted two splittings
$x\mapsto\langle x,\eta(x)\rangle$ and $x\mapsto\langle x,\eta'(x)\rangle$, the abstract map $x\mapsto\eta(x)\eta'(x)^{-1}$ is in
$\Hom(X,\mu_m)$. In particular if $X$ is the $F$-points or $\A$ points of an algebraic unipotent subgroup, or if $X$ is perfect (as an abstract group), $\eta=\eta'$. Moreover since $x\mapsto \langle x,\eta(x)\rangle\langle x,\eta'(x)\rangle^{-1}=\langle e,\eta(x)\eta'(x)^{-1}\rangle$ is continuous, if $i(\mu_m-\{1\})$ is closed in $\widetilde{H}$, $x\mapsto\eta(x)\eta'(x)^{-1}$ is also continuous (throughout, $\widetilde{H}$ will always be Hausdorff).

Since $\widetilde{H}$ is a central extension of $H$, $H$ acts on $\widetilde{H}$ by conjugation (a homeomorphism).
Then ${}^{\langle x,\epsilon'\rangle}\langle y,\epsilon\rangle=\langle{}^xy,\sigma(x,y)\sigma(xy,x^{-1})\epsilon\rangle$ (independent of $\epsilon'$).

When $H=\Sp_{2l}$ or $\SL_l$, let $H^{(m)}$ be the $m$-fold covering group $\widetilde{H}$ of $H$
defined by \cite{Mats} (following \cite{Moore,Stein}) with the Steinberg symbol constructed from $(\cdot,\cdot)_m^{-1}$.
The group $H^{(m)}$ (locally or globally) is locally compact, and over non-archimedean fields it is an $l$-group in the sense of \cite[1.1]{BZ1}.
The group $H^{(m)}$ is split canonically over its (algebraic) unipotent subgroups, see \cite{BLS} and \cite[Appendix~I]{MW2}. In addition $H^{(m)}(\A)$ is split canonically over $H(F)$.
We say that a local field $F$ is unramified, if it is non-archimedean, $|m|=1$, $q$ is odd and $q>3$. In this case
$H^{(m)}$ is split canonically over $K_{H}(=H(\mathcal{O}))$ (\cite{Moore}).

When $X$ is a closed subgroup of $H$ (or plainly a topological subgroup), we can consider the $m$-fold covering $\widetilde{X}$ of $X$ obtained by restriction from $H^{(m)}$. In general this covering depends on the embedding of $X$ in $H$, but this embedding will always be made clear.

Fix a faithful character $\varepsilon:\mu_m\rightarrow\C^*$ to be used throughout. An $\varepsilon$-genuine representation of $\widetilde{H}$ is then a representation where $\mu_m$ acts by $\varepsilon$. The character $\varepsilon$ will usually be omitted,
and the term anti-genuine representation stands for an $\varepsilon^{-1}$-genuine representation. Induction from $\widetilde{P}$ to $\widetilde{G}$ for a parabolic subgroup $P<G$ is implicitly normalized by $\delta_P^{1/2}$ (as in the linear case).

In the rest of this work we assume $\mu_{2m}<F^*$. When $F$ is archimedean, this implies $F=\C$, then $H^{(m)}$ is split over $H$ (canonically since $H$ is perfect), so that the archimedean arguments usually reduce to the linear case.

\subsection{Local coverings}\label{Local coverings}
Let $F$ be a local field.
We collect several properties of the local cocycles we will use. For more details see \cite[\S~1.4, \S~1.7]{me12}. All formulas here
were obtained from \cite{BLS}. For a positive integer $l$, let $\sigma_{\SL_{l+1}}\in \mathrm{Z}^2(\SL_{l+1},\mu_m)$ be the $2$-cocycle of \cite[\S~2]{BLS} which represents $\SL_{l+1}^{(m)}$, and denote by $\sigma_l$ the $2$-cocycle of $\GL_l$ of \textit{loc. cit.} given by
\begin{align*}
\sigma_l(b,b')=(\det b,\det b')_m\sigma_{\SL_{l+1}}(\diag(b,\det b^{-1}),\diag(b',\det {b'}^{-1})),\qquad b,b'\in\GL_l.
\end{align*}
Recall the block-compatibility formula \cite[Theorem~11]{BLS}: for $0<l_0<l$,
$a,a'\in\GL_{l_0}$ and $b,b'\in\GL_{l-l_0}$,
\begin{align}\label{eq:BLS block compatible}
\sigma_l(\diag(a,b),\diag(a',b'))=(\det a,\det b')_m\sigma_{l_0}(a,a')\sigma_{l-l_0}(b,b').
\end{align}
We also have the following properties: for $t,t'\in T_{\GL_l}$, $b,b'\in\GL_l$, $v,v'\in N_{\GL_l}$, $u^-\in N_{\GL_l}^-$, and
if $u^-\mapsto\langle u^-,\varsigma(u^-)\rangle$ is the splitting of $N_{\GL_l}^-$ in the covering of $\GL_l$ defined by $\sigma_l$,
\begin{align}
\label{eq:sigma on torus of GL}
&\sigma_l(t,t')=\prod_{i<j}(t_i,t'_j)_m,\qquad t=\diag(t_1,\ldots,t_l),\quad t=\diag(t'_1,\ldots,t'_l),\\
\label{eq:sigma on h and v}
&\sigma_l(b,v')=\sigma_l(v,b')=1, \\
\label{eq:sigma on vh and h'v'}
&\sigma_l(vb,b'v')=\sigma_l(b,b'),\\
\label{eq:sigma conjugate v by h}
&{}^bv\in N_{\GL_l}\Rightarrow {}^b\langle v,1\rangle=\langle {}^bv,1\rangle,\\
\label{eq:sigma conjugate v- to v by h}
&{}^bu^-\in N_{\GL_l}\Rightarrow{}^b\langle u^-,\varsigma(u^-)\rangle=\langle {}^bu^-,1\rangle.
\end{align}
Let $\mathfrak{W}^+_l<\GL_l$ be the subgroup generated by $W_{\GL_l}$ and $\{t\in T_{\GL_l}:t_i=\mp1,\forall i\}$. Our assumption $\mu_{2m}\subset F^*$ implies that $w\mapsto\langle w,1\rangle$ is a homomorphism of $\mathfrak{W}^+_l$ and also that
\begin{align}\label{eq:conj mathcal t in GLd}
{}^{w}\langle t,1\rangle=\langle {}^wt,\prod_{(i,j)=\alpha>0:w\alpha<0}(t_j,t_i)_m\rangle.
\end{align}
Here we identify the positive roots of $\GL_l$ with the pairs $(i,j)$, $i<j$.

Consider the involution $b\mapsto b^*$ ($b^*=J_l{}^tb^{-1}J_l$) of $\GL_l$. Define
\begin{align*}
\sigma^*_l(b,b')=\sigma_l(b^*,{b'}^*).
\end{align*}
This is again a $2$-cocycle of $\GL_l$ which is cohomologous to $\sigma_l$ by \cite[Proposition~4, Remark~5, Proposition~20]{me12}.
Let $\varsigma_{*,l}\in\mathrm{C}^1(\GL_l,\mu_m)$ be such that
\begin{align}\label{eq:sigma l *}
\sigma^*_l(b,b')=\frac{\varsigma_{*,l}(b)\varsigma_{*,l}(b')}{\varsigma_{*,l}(bb')}\sigma_l(b,b'),\qquad \forall b,b'\in \GL_l.
\end{align}
For an integer $j$, let $\sigma^{*,j}_l\in\mathrm{Z}^2(\GL_l,\mu_m)$ be given by
\begin{align}\label{eq:sigma l * rk}
\sigma^{*,j}_l(b,b')=\left(\frac{\varsigma_{*,l}(b)\varsigma_{*,l}(b')}{\varsigma_{*,l}(bb')}\right)^{j}\sigma^*_l(b,b')
=\left(\frac{\varsigma_{*,l}(b)\varsigma_{*,l}(b')}{\varsigma_{*,l}(bb')}\right)^{j+1}\sigma_l(b,b').
\end{align}
The cocycles $\sigma^{*}_l,\sigma^{*,j}_l$ and $\sigma_l$ are all cohomologous.

Let $d$ be a positive integer. We realize the group $\Sp_{2d}^{(m)}$ using $\sigma_{2d}$. By \eqref{eq:sigma on torus of GL},
\begin{align}\label{eq:BLS $2$-cocycle on torus}
\sigma_{2d}(t,t')=\prod_{i=1}^{d}(t_i,t'_i)_m^{-1},\qquad t=\diag(t_1,\ldots,t_{d},t_{d}^{-1},\ldots,t_{1}^{-1}),t'\in T_{\Sp_{2d}}.
\end{align}
Identities \eqref{eq:sigma on h and v}--\eqref{eq:sigma conjugate v- to v by h} apply in particular to
$b,b'\in\Sp_{2d}$, $v,v'\in N_{\Sp_{2d}}$ and $u^-\in N_{\Sp_{2d}}^-$. When $F$ is unramified, $K_{\Sp_{2d}}$ is perfect and we let
$\eta_{2d}\in\mathrm{C}^1(K_{\Sp_{2d}},\mu_m)$ be the unique $1$-cochain such that
\begin{align}\label{eq:splitting of K H}
\sigma_{2d}(y,y')=\frac{\eta_{2d}(yy')}{\eta_{2d}(y)\eta_{2d}(y')},\qquad\forall y,y'\in K_{\Sp_{2d}}.
\end{align}
    According to \cite[Proposition~0.I.3]{KP} and \cite[(1.3) and p.~183]{Tk} (see \cite[p.~16]{me12}), $\eta_{2d}$ is trivial on
\begin{align*}
N_{\Sp_{2d}}\cap K_{\Sp_{2d}},\qquad T_{\Sp_{2d}}\cap K_{\Sp_{2d}},\qquad \Sp_{2d}\cap \mathfrak{W}^+_{2d}.
\end{align*}

\begin{proposition}\cite[Proposition~2]{me12}\label{proposition:action of W on torus is trivial on Sp}
Let $w\in \Sp_{2d}\cap \mathfrak{W}^+_{2d}$. For any $t\in T_{\Sp_{2d}}$, ${}^w\langle t,1\rangle=\langle {}^wt,1\rangle$.
\end{proposition}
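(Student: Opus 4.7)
The plan is to reduce to the general $\GL_{2d}$-conjugation formula \eqref{eq:conj mathcal t in GLd}, and verify that the resulting product of Hilbert symbols collapses to $1$ thanks to a double symmetry: the torus entries satisfy $t_{2d+1-i}=t_i^{-1}$ (because $t\in T_{\Sp_{2d}}$), and the underlying permutation of $w$ commutes with the symplectic flip (because $w\in\Sp_{2d}$). Since $\Sp_{2d}^{(m)}$ is realized by restricting $\sigma_{2d}$ from $\GL_{2d}^{(m)}$, formula \eqref{eq:conj mathcal t in GLd} applies verbatim and gives
$${}^{w}\langle t,1\rangle=\langle {}^{w}t,\Omega\rangle,\qquad \Omega=\prod_{\substack{\alpha=(i,j)>0\\ w\alpha<0}}(t_j,t_i)_m.$$
The task is thus to show $\Omega=1$.

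To this end, I would introduce the involution $\theta$ on positive roots of $\GL_{2d}$ defined by $\theta(i,j)=(2d+1-j,\,2d+1-i)$; this is precisely the identification of $\GL_{2d}$-roots that restrict to the same $\Sp_{2d}$-root. Writing $w=t_\epsilon p$ with $t_\epsilon$ a diagonal sign matrix and $p$ a permutation matrix, the underlying permutation $\pi$ of $\{1,\ldots,2d\}$ must satisfy $\pi\circ\sigma=\sigma\circ\pi$ for $\sigma(k)=2d+1-k$, because $w\in\Sp_{2d}$ preserves the pairing of symplectic coordinates. A short check then shows that the action of $w$ on roots commutes with $\theta$, and that $\theta$ preserves the sign of any root (both are immediate from $i<j\iff 2d+1-j<2d+1-i$). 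Consequently the summation set $\{\alpha>0:w\alpha<0\}$ is stable under $\theta$.

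I would then partition this set into $\theta$-orbits and verify that every orbit contributes $1$ to $\Omega$. For a non-self-paired orbit $\{\alpha,\theta\alpha\}$ with $\alpha=(i,j)$, bi-multiplicativity and antisymmetry of $(\cdot,\cdot)_m$ give
$$(t_j,t_i)_m(t_{2d+1-i},t_{2d+1-j})_m=(t_j,t_i)_m(t_i^{-1},t_j^{-1})_m=(t_j,t_i)_m(t_i,t_j)_m=1.$$
For a self-paired root $\alpha=(i,2d+1-i)$, the contribution is $(t_i^{-1},t_i)_m$, which equals $1$. Hence $\Omega=1$, proving the proposition.

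The only real content lies in the $\theta$-equivariance of the $w$-action on roots; this is a formal consequence of $w\in\Sp_{2d}\cap\mathfrak{W}^+_{2d}$ being a signed permutation matrix preserving the symplectic involution, and everything else is a routine Hilbert-symbol computation using the standard identity $(a,a)_m(-1,a)_m^{-1}=1$ combined with $(a^{-1},b^{-1})_m=(a,b)_m$.
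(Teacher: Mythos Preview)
The paper does not prove this proposition; it is quoted verbatim from \cite[Proposition~2]{me12} and no argument is reproduced here. So there is no in-paper proof to compare against.

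Your argument is correct and is exactly the natural way to establish the result from the ambient $\GL_{2d}$ formula \eqref{eq:conj mathcal t in GLd}. The key structural point---that the inversion set $\{\alpha>0:w\alpha<0\}$ is stable under the involution $\theta(i,j)=(2d+1-j,2d+1-i)$ because the underlying permutation of $w$ commutes with $k\mapsto 2d+1-k$---is precisely what the symplectic constraint buys, and you identified it cleanly. The pairing computation for non-fixed orbits is fine: $(t_j,t_i)_m(t_i^{-1},t_j^{-1})_m=(t_j,t_i)_m(t_i,t_j)_m=1$ by bimultiplicativity and skew-symmetry.

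One small clarification you should make explicit: for the self-paired roots $(i,2d+1-i)$ you assert $(t_i^{-1},t_i)_m=1$. This equals $(t_i,t_i)_m^{-1}=(-1,t_i)_m^{-1}$, which is $1$ \emph{because} of the standing hypothesis $\mu_{2m}\subset F^*$ (so $-1\in F^{*m}$). You allude to this via the identity $(a,a)_m=(-1,a)_m$ in your last line, but the sentence ``which equals $1$'' deserves the one-word justification ``since $\mu_{2m}\subset F^*$''. That hypothesis is already baked into \eqref{eq:conj mathcal t in GLd}, so this is only a matter of exposition, not a gap.
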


Let $r=m$ when $m$ is odd and $m/2$ otherwise.
Identify $\GL_d$ with the standard Siegel Levi subgroup of $\Sp_{2d}$ by $b\mapsto\diag(b,b^*)$. We obtain
a covering group $\widetilde{\GL}_d$ by restriction from $\Sp_{2d}^{(m)}$, denote it by $\GL_{d}^{(m)}$.
This group was previously studied in \cite{Savin7,Gao4} and was denoted $\GL_{d}^{(m,r)}$ in \cite{me12} to emphasize the
difference between this covering and the coverings of \cite{KP} (here we shall favor a uniform presentation).
By definition $\GL_{d}^{(m)}$ is realized via the $2$-cocycle
\begin{align}\label{eq:sigma square}
\sigma^{\diamondsuit}_{d}(b,b')=\sigma_{2d}(\diag(b,b^*),\diag(b',{b'}^*)),\qquad b,b'\in\GL_d.
\end{align}
We then have
\begin{align}\label{eq:Nice GL $2$-cocycle on torus}
\sigma^{\diamondsuit}_{d}(\diag(t_1,\ldots,t_{d}),\diag(t_1',\ldots,t_{d}'))=\prod_{i=1}^{d}(t_i,t'_i)_m^{-1},
\end{align}
and \eqref{eq:sigma on h and v}--\eqref{eq:sigma conjugate v- to v by h} remain valid for
$\sigma^{\diamondsuit}_{d}$ (instead of $\sigma_{d}$, and for \eqref{eq:sigma conjugate v- to v by h} we choose $\varsigma$ with respect to $\sigma^{\diamondsuit}_{d}$). Also by Proposition~\ref{proposition:action of W on torus is trivial on Sp}, for all
$w\in\mathfrak{W}^+_{d}$ and $t\in T_{\GL_d}$,
\begin{align}\label{eq:Prop 1 for GL}
{}^w\langle t,1\rangle=\langle {}^wt,1\rangle.
\end{align}
The center of $\GL_d^{(m)}$ is $\widetilde{C}_{r,d}$ where $C_{r,d}=\{xI_d:x\in F^{*r}\}$.
We have the ``improved" block-compatibility (cf. \eqref{eq:BLS block compatible}):
Let $\beta=(\beta_1,\ldots,\beta_l)$ be a composition of $d$,
\begin{align}\label{eq:block compatibility on Levi of P}
\sigma^{\diamondsuit}_{d}(b,b')=\prod_{i=1}^l\sigma^{\diamondsuit}_{\beta_i}(b_i,b_i'),\qquad b=\diag(b_1,\ldots,b_l)\in M_{\beta},\quad b'\in M_{\beta}.
\end{align}
In particular, the direct factors of $M_{\beta}$ commute in $\GL_{d}^{(m)}$, and under the embedding $b\mapsto\diag(I_i,b,I_{d-j-i})$ of $\GL_j$ in $\GL_{d}$, $\widetilde{\GL}_{j}=\GL_{j}^{(m)}$. Hence the standard tensor product $\otimes$ is well defined for genuine representations when we identify
\begin{align}\label{eq:M beta as a quotient}
\widetilde{M}_{\beta}=\{(\epsilon_1,\ldots,\epsilon_l)\in\mu_m^l:\prod_{i=1}^l\epsilon_i=1\}\backslash \GL_{\beta_1}^{(m)}\times\ldots\times \GL_{\beta_l}^{(m)}.
\end{align}

Note that \eqref{eq:BLS block compatible} and \eqref{eq:block compatibility on Levi of P} also imply
the direct factors of Levi subgroups of $\Sp_{2d}$ commute in $\Sp_{2d}^{(m)}$, so that $\otimes$ is again well defined for genuine representations.

When $F$ is unramified, put $\eta^{\diamondsuit}_{d}(y)=\eta_{2d}(\diag(y,y^*))$ and fix the splitting of $K_{\GL_d}$ to be
$y\mapsto\langle y,\eta^{\diamondsuit}_{d}(y)\rangle$, which is compatible with our choice for $\Sp_{2d}^{(m)}$.

According to \eqref{eq:BLS block compatible}, $\sigma^{\diamondsuit}_{d}(b^*,{b'}^*)=\sigma^{\diamondsuit}_{d}(b,b')$ in $\mathrm{Z}^2(\GL_d,\mu_m)$, hence the involution $b\mapsto b^*$ lifts to an involution of $\GL_d^{(m)}$ by
\begin{align}\label{eq:involution b*0}
{}^*\langle b,\epsilon\rangle =\langle b^*,\epsilon\rangle.
\end{align}
(Cf. \cite{Kable3}.)
Fixing this lift (this is the unique lift only when $m$ is odd), for a genuine representation $\pi$ of $\GL_d^{(m)}$, $\pi^*$ is defined to be the representation acting on the space of $\pi$ by $\pi^*(\langle b,\epsilon\rangle)=\pi({}^*\langle b,\epsilon\rangle)$.
We also have ${}^*(\langle b,1\rangle^{-1})=({}^*\langle b,1\rangle)^{-1}$, then the definitions imply
\begin{align}\label{eq:involution b*0 and dual}
(\pi^{\vee})^*=(\pi^*)^{\vee}.
\end{align}

\begin{proposition}\cite[Proposition~20]{me12}\label{proposition:sigma * and sigma on GLd}
$\sigma^{\diamondsuit}_{d}(b,b')$ and $\sigma^2_{d}(b,b')(\det b,\det b')_m$ are cohomologous.
\end{proposition}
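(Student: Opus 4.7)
The plan is a direct calculation using the block-compatibility of $\sigma_{2d}$ combined with the cohomology $\sigma^*_d\sim\sigma_d$ from \eqref{eq:sigma l *}.

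Starting from the definition \eqref{eq:sigma square}, I would apply \eqref{eq:BLS block compatible} with $l=2d$, $l_0=d$ to the Siegel-type decomposition, obtaining
\[
\sigma^{\diamondsuit}_d(b,b')=(\det b,\det{b'}^*)_m\,\sigma_d(b,b')\,\sigma_d(b^*,{b'}^*).
\]
Since $b^*=J_d\,{}^tb^{-1}J_d$ satisfies $\det b^*=(\det b)^{-1}$, bilinearity of the Hilbert symbol rewrites $(\det b,\det{b'}^*)_m$ as a power of $(\det b,\det b')_m$. The third factor is $\sigma^*_d(b,b')$ by the definition preceding \eqref{eq:sigma l *}, and that equation expresses it as $\sigma_d(b,b')$ times the explicit $1$-coboundary $\varsigma_{*,d}(b)\varsigma_{*,d}(b')/\varsigma_{*,d}(bb')$.

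Substituting, $\sigma^{\diamondsuit}_d(b,b')$ is presented as $\sigma_d(b,b')^2$ multiplied by a power of $(\det b,\det b')_m$ and by the coboundary of $\varsigma_{*,d}$. Absorbing the coboundary shows that $\sigma^{\diamondsuit}_d$ is cohomologous to $\sigma_d(b,b')^2\,(\det b,\det b')_m^{\epsilon}$ for an $\epsilon\in\{\pm 1\}$ that emerges from the Hilbert-symbol bookkeeping. To match the claim with $\epsilon=+1$, one needs $(\det b,\det b')_m^{2}$ to be a $\mu_m$-valued coboundary on $\GL_d$. This is precisely where the standing assumption $\mu_{2m}\subset F^*$ is used, via the Hilbert-symbol identities $(-1,a)_m=1$ (from $-1\in (F^*)^m$) and $(a,a)_m=1$, which together allow one to construct the required $1$-cochain on $F^*$ and pull it back through $\det$.

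The main obstacle is this final cohomological adjustment of the Hilbert-symbol exponent. The block-compatibility step and the replacement $\sigma^*_d\sim\sigma_d$ are essentially mechanical, while the content of the argument lies in producing a $\mu_m$-valued (not merely $\mu_{2m}$-valued) cochain whose coboundary equals $(\det b,\det b')_m^2$; the hypothesis $\mu_{2m}\subset F^*$ is essential there. The torus identity \eqref{eq:Nice GL $2$-cocycle on torus} is a useful sanity check, since any correct proof must produce a cocycle whose restriction to $T_{\GL_d}$ agrees, up to a coboundary, with $\prod_i(t_i,t'_i)_m^{-1}$.
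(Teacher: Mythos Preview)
The paper gives no proof here; it simply cites \cite[Proposition~20]{me12}. Your strategy via block-compatibility \eqref{eq:BLS block compatible} together with \eqref{eq:sigma l *} is the natural one, and the bookkeeping is in fact unambiguous: it yields
\[
\sigma^{\diamondsuit}_d(b,b')=(\det b,\det b')_m^{-1}\,\sigma_d(b,b')\,\sigma_d^*(b,b')\;\sim\;(\det b,\det b')_m^{-1}\,\sigma_d(b,b')^2,
\]
so $\epsilon=-1$. The gap is in your final ``adjustment'' step. The assertion that $(\det b,\det b')_m^{2}$ is a $\mu_m$-valued coboundary on $\GL_d$ is false in general, and $\mu_{2m}\subset F^*$ does not save it. Indeed, if $\eta\in\mathrm{C}^1(\GL_d,\mu_m)$ satisfied $\eta(b)\eta(b')/\eta(bb')=(\det b,\det b')_m^2$, then $\eta|_{\SL_d}$ would be a homomorphism to $\mu_m$, hence trivial ($\SL_d$ is perfect), and the remaining relations force $\eta$ to factor through $\det$. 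Writing $\eta=\tilde\eta\circ\det$ reduces to $\tilde\eta(a)\tilde\eta(a')/\tilde\eta(aa')=(a,a')_m^2$ on the \emph{abelian} group $F^*$; but any coboundary on an abelian group is symmetric, whereas $(a,a')_m^2/(a',a)_m^2=(a,a')_m^4$ is not identically $1$ once $m\nmid 4$ (the Hilbert symbol surjects onto $\mu_m$). The identities $(a,a)_m=1$ and $(-1,a)_m=1$ are correct under $\mu_{2m}\subset F^*$ but fall well short of trivializing $(\cdot,\cdot)_m^2$.

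Already $d=1$ is decisive: there $\sigma_1\equiv1$, so the statement with exponent $+1$ would say $(t,t')_m^{-1}\sim(t,t')_m$ on $F^*$, which fails for $m=3$. Your own sanity check against \eqref{eq:Nice GL $2$-cocycle on torus} is consistent with $\epsilon=-1$. Note that the subsequent uses in the paper (the ``morally $r$-fold'' remark and \eqref{eq:sigma rkc diamondsuit on G triangle}) are insensitive to this sign, since after raising to the $rk$-th power only $(\cdot,\cdot)_2^{\pm k}=(\cdot,\cdot)_2^{k}$ survives. With exponent $-1$ your argument is complete immediately after invoking \eqref{eq:sigma l *}; the problematic adjustment should be dropped and the sign checked against the source \cite{me12}.
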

Thus $\GL_d^{(m)}$ is ``morally" an $r$-fold covering of $\GL_d$ and $r$ will appear throughout.

The following lemma will be applied repeatedly to compute conjugations in integrals.
\begin{lemma}\label{lemma:conjugation commutes}
Let $G$ be $\GL_d$ (resp., $\Sp_{2d}$), $X<G$ and $w\in\mathfrak{W}^+_{d}$ (resp., $w\in\Sp_{2d}\cap\mathfrak{W}^+_{2d}$).
Assume ${}^w(X\cap N_G)<N_G$. Then ${}^w\langle x,1\rangle=\langle {}^wx,1\rangle$ for all $x\in X$.
\end{lemma}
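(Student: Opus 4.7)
My plan is to reduce any $x \in X$ to its torus and unipotent factors and chain together the three compatibilities that precede the lemma: the vanishing of the cocycle on the mixed product $T_G \times N_G$ from \eqref{eq:sigma on h and v}, the torus invariance formula \eqref{eq:Prop 1 for GL} (and Proposition~\ref{proposition:action of W on torus is trivial on Sp} for $\Sp_{2d}$), and the unipotent conjugation formula \eqref{eq:sigma conjugate v by h}. The cocycle arithmetic is essentially bookkeeping; the real content is to recognize that the hypothesis on $X \cap N_G$ is exactly what activates \eqref{eq:sigma conjugate v by h} on the unipotent piece, while the torus piece is handled unconditionally.

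To execute this, given $x \in X$ I would write $x = t v$ with $t \in T_G$ and $v \in N_G$ (the setting where the lemma is invoked has $X$ inside a parabolic subgroup, so this factorization is available and respects $X$). Since $\sigma(t,v) = 1$ by \eqref{eq:sigma on h and v}, the trivial section is multiplicative along this factorization, i.e.\ $\langle tv, 1\rangle = \langle t,1\rangle\langle v,1\rangle$, and thus
\begin{align*}
{}^w \langle x, 1\rangle = \bigl({}^w\langle t, 1\rangle\bigr)\bigl({}^w\langle v, 1\rangle\bigr).
\end{align*}
The identity \eqref{eq:Prop 1 for GL} (respectively Proposition~\ref{proposition:action of W on torus is trivial on Sp}) handles the torus factor unconditionally, giving ${}^w\langle t,1\rangle = \langle {}^w t, 1\rangle$. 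For the unipotent factor, the standing hypothesis ${}^w(X \cap N_G) < N_G$ forces ${}^w v \in N_G$, whereupon \eqref{eq:sigma conjugate v by h} supplies ${}^w\langle v,1\rangle = \langle {}^w v, 1\rangle$. Reapplying \eqref{eq:sigma on h and v} to the pair $({}^w t, {}^w v) \in T_G \times N_G$ collapses the product to
\begin{align*}
{}^w \langle x, 1\rangle \;=\; \langle {}^w t, 1\rangle \langle {}^w v, 1\rangle \;=\; \langle ({}^w t)({}^w v), 1\rangle \;=\; \langle {}^w x, 1\rangle.
\end{align*}

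The only genuine obstacle is the bookkeeping around the decomposition $x = tv$: one has to know that the unipotent part $v$ actually lies in $X \cap N_G$ so that the hypothesis can be quoted. This is automatic whenever $X$ respects the Levi--unipotent splitting of a parabolic subgroup of $G$, which is the generality in which the lemma is applied throughout the paper; in that regime the proof is the three-line cocycle computation above. No new ingredient beyond the identities already compiled in \S~\ref{Local coverings} is required.
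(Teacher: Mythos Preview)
Your argument has a genuine gap in the very first step. You write $x = tv$ with $t \in T_G$ and $v \in N_G$, justifying this by saying that in applications $X$ lies in a parabolic subgroup. But containment in a parabolic $P = MV$ only gives a factorization $x = mv$ with $m$ in the Levi $M$, not in the torus $T_G$. The lemma is invoked throughout the paper for subgroups such as $X = \mathfrak{e}_2(\GL_c)$ or $X = \mathfrak{e}_2(\Sp_c)$ embedded as a full block (see e.g.\ \S\ref{subsection:Multiplicativity I} and \S\ref{GL(n) factors example n=1}); a general element of such $X$ has no reason to lie in $B_G$, so your decomposition simply does not exist. And you cannot patch this by writing $x = mv$ and invoking the lemma on $m$: that is circular.

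The paper's proof instead uses the full Bruhat decomposition $x = u\,t\,w_x\,v$ with $u,v \in N_G$, $t \in T_G$, and $w_x \in \mathfrak{W}^+$. The extra Weyl factor $w_x$ is exactly the piece you are missing, and it requires separate handling: one shows $\langle t w_x,1\rangle = \langle t,1\rangle\langle w_x,1\rangle$ using the BLS result $\sigma_{2d}(t,w_0)=1$ for $w_0 \in \mathfrak{W}_{2d}$, and the conjugation ${}^w\langle w_x,1\rangle = \langle {}^w w_x,1\rangle$ relies on the fact (recorded just before \eqref{eq:conj mathcal t in GLd}) that $w \mapsto \langle w,1\rangle$ is a homomorphism on $\mathfrak{W}^+$ under the standing assumption $\mu_{2m} \subset F^*$. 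Once these are in place, the rest is the cocycle bookkeeping you describe. So the identities you cite from \S\ref{Local coverings} are the right tools, but you have omitted the one genuinely nontrivial factor in the decomposition.
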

\begin{proof}
For each simple root $\alpha=(i,i+1)$ of $\GL_{2d}$, let
$s_{\alpha}$ denote the simple reflection along $\alpha$ identified with
$w_{\alpha}=\diag(I_{i-1},\left(\begin{smallmatrix}&-1\\1\end{smallmatrix}\right),I_{2d-i-1})$. The set
$\mathfrak{W}_{2d}\subset\SL_{2d}$ was defined in \cite{BLS} as the set of elements $w_{\alpha_1}\cdot\ldots\cdot w_{\alpha_{\ell(w)}}$, where
$w$ varies over the Weyl group of $\GL_{2d}$ and $\ell(w)$ is the length of $w$. By \cite[Theorem~3.7(b)]{BLS},
$\sigma_{2d}(t,w_0)=1$ for any $t\in T_{\GL_{2d}}$ and $w_0\in\mathfrak{W}_{2d}$.

Let $x\in X$ and write $x=utw_xv$ with $u,v\in N_G$, $t\in T_G$ and $w_x\in\mathfrak{W}^+_{d}$ when $G=\GL_d$ or $w_x\in \Sp_{2d}\cap \mathfrak{W}^+_{2d}$ for $G=\Sp_{2d}$. We can write $w_x=t_0w_0$ where $t_0\in T_{\GL_{2d}}$ has entries $\mp1$ on
the diagonal and $w_0\in\mathfrak{W}_{2d}$. By \cite[Theorem~3.7(b)]{BLS} and \eqref{eq:sigma on torus of GL},
\begin{align} \label{eq:t and w_x}
\langle tw_x,1\rangle=\langle tt_0,1\rangle\langle w_0,1\rangle=\langle t,1\rangle\langle t_0,1\rangle\langle w_0,1\rangle=\langle t,1\rangle\langle w_x,1\rangle.
\end{align}
Hence by \eqref{eq:sigma on vh and h'v'},
\begin{align*}
\langle x,1\rangle=\langle u,1\rangle\langle t,1\rangle\langle w_x,1\rangle\langle v,1\rangle.
\end{align*}
Then \eqref{eq:sigma conjugate v by h}, \eqref{eq:Prop 1 for GL} and Proposition~\ref{proposition:action of W on torus is trivial on Sp} (applied to $w$ and $t$) imply
\begin{align*}
{}^w\langle x,1\rangle=\langle {}^wu,1\rangle\langle {}^wt,1\rangle\langle {}^ww_x,1\rangle\langle {}^wv,1\rangle.
\end{align*}
Note that ${}^wu,{}^wv\in N_G$ by our assumption, and we can write ${}^ww_x=t_0'w_0'$ as above, e.g., $w_0'\in\mathfrak{W}_{2d}$.
Applying \eqref{eq:t and w_x} again --- now to ${}^wt$ and ${}^ww_x$, and using \eqref{eq:sigma on vh and h'v'}, the result follows.
\end{proof}
We mention that the lemma generalizes the proof of \cite[(4.20)]{me12} when $\mu_{2m}<F^*$.

\begin{proposition}\label{proposition:irred is admissable}
Assume $F$ is non-archimedean.
Let $M$ be a Levi subgroup of $\GL_d$ or $\Sp_{2d}$, and let $\widetilde{M}$ be
obtained by restriction from an $m$-fold covering group of $\Sp_{2d}$ or $\GL_d$ (this also includes coverings from \cite{KP}). Any genuine irreducible representation of $\widetilde{M}$ is admissible.
\end{proposition}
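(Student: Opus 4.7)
The plan is to reduce to the case where $M$ is a single general linear or symplectic factor via block compatibility, and then to apply the standard admissibility theorem of Jacquet--Bernstein, adapted to our finite central extensions.

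A Levi subgroup of $\GL_d$ has the form $M_\beta=\GL_{\beta_1}\times\cdots\times\GL_{\beta_l}$, while a Levi of $\Sp_{2d}$ has one additional factor $\Sp_{2a}$. The block-compatibility formulas \eqref{eq:BLS block compatible} and \eqref{eq:block compatibility on Levi of P}, together with \eqref{eq:M beta as a quotient} and its analogue containing the symplectic factor, exhibit $\widetilde{M}$ as a quotient of the direct product of the covers $\widetilde{M}_i$ of its factors by a finite central subgroup of a power of $\mu_m$ (with a possible harmless twist by Hilbert-symbol characters on determinants in the Kazhdan--Patterson setting). Consequently every genuine irreducible smooth representation of $\widetilde{M}$ decomposes as an external tensor product of genuine irreducibles of the $\widetilde{M}_i$. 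Since admissibility is preserved under tensor products, it suffices to prove the claim when $M$ equals $\GL_a$ or $\Sp_{2b}$.

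In these cases $\widetilde{M}$ is a finite central extension of a reductive $p$-adic group by the finite group $\mu_m$, hence an $l$-group in the sense of \cite{BZ1}, and is $\sigma$-compact. By Schur's lemma, a genuine irreducible $\pi$ has a central character; in particular $\mu_m$ acts by $\varepsilon$. Choose a compact open $\widetilde{K}\subset\widetilde{M}$ arising as the image of a splitting of some sufficiently small compact open $K\subset M$, using the canonical splitting of \cite{BLS} on the unipotent radical of an Iwahori decomposition together with a splitting on a sufficiently deep congruence subgroup of the torus. The space $\pi^{\widetilde{K}}$ is then a module over the $\varepsilon$-isotypic part of the Hecke algebra $\mathcal{H}(\widetilde{M}//\widetilde{K})$, which is finitely generated over its Bernstein center in direct analogy with the linear case, and Jacquet's classical argument then forces $\pi^{\widetilde{K}}$ to be finite dimensional.

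The main obstacle is verifying that the standard tools---the Iwahori decomposition, the Jacquet functor formalism, and the finite generation of the Hecke algebra over its Bernstein center---carry through to the cover. Because $\mu_m$ is a finite central subgroup and the splittings on unipotent radicals from \cite{BLS} are canonical, these transfers are essentially mechanical, but each must be spelled out in the covering at hand. For the Kazhdan--Patterson family this is classical (see \cite{KP}); for $\Sp_{2b}^{(m)}$ and the restricted covering $\GL_a^{(m)}$ it follows from the structural results in \S\ref{Covering groups}--\S\ref{Local coverings} together with \cite{me12}.
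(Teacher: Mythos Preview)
Your reduction to single factors has a genuine gap for the Kazhdan--Patterson covers, which the statement explicitly includes. For those covers the direct factors of a Levi $M_\beta$ do \emph{not} commute in $\widetilde{M}_\beta$: the commutator of a block $a\in\GL_{\beta_i}$ with a block $b'\in\GL_{\beta_j}$ is $(\det a,\det b')_m$, so $\widetilde{M}_\beta$ is not a quotient of the direct product of the $\widetilde{M}_i$ and a genuine irreducible of $\widetilde{M}_\beta$ need not be an ordinary external tensor product of genuine irreducibles of the factors. The ``harmless twist by Hilbert-symbol characters'' you allude to does not fix this; one would need the metaplectic tensor product machinery, which is considerably more delicate. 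Separately, even for a single factor your main step---finite generation of the $\varepsilon$-Hecke algebra over its Bernstein center for the cover---is asserted rather than proved; you acknowledge that the Iwahori decomposition, Jacquet formalism, and Bernstein-center finiteness all ``must be spelled out,'' and then do not spell them out.

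The paper's proof avoids both issues by never decomposing into factors and never invoking the Bernstein center. It first reduces to the supercuspidal case via \cite[3.19, 3.13(d)]{BZ1}, then introduces the open normal subgroup $\widetilde{M}^0$ (preimage of the elements of absolute-value-$1$ determinant in each factor), observes that $\widetilde{M}^0\cap C_{\widetilde{M}}$ is compact and $(\widetilde{M}^0 C_{\widetilde{M}})\backslash\widetilde{M}$ is finite, and applies Harish-Chandra's theorem \cite[3.21]{BZ1} to conclude $\rho|_{\widetilde{M}^0}$ is finite. Then \cite[3.26]{BZ1} and \cite[2.41]{BZ1} give admissibility. All of these results in \cite{BZ1} are stated for arbitrary $l$-groups, so they apply to $\widetilde{M}$ without any covering-specific verification and without any hypothesis on how the factors interact in the cover.
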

\begin{proof}
Let $\rho$ be such a representation. By \cite[3.19, 3.13 (d)]{BZ1} we can already assume it is supercuspidal.
Write $M=M_1\times\ldots\times M_l$ ($M_l$ is either $\GL_{d'}$ or $\Sp_{2d'}$). Define
$M_i^0=\{ g\in M_i:|\det g|=1\}$ and $M^0=M_1^0\times\ldots\times M_l^0$. Then $\widetilde{M}^0\cap C_{\widetilde{M}}$ is compact
($C_{\widetilde{M}}$ - the center of $\widetilde{M}$),
$\widetilde{M}^0$ is an open normal subgroup of $\widetilde{M}$, $\widetilde{M}^0\backslash \widetilde{M}=M^0\backslash M$ is abelian and
$(\widetilde{M}^0C_{\widetilde{M}})\backslash \widetilde{M}$ is finite. By a theorem of Harish-Chandra
(\cite[3.21]{BZ1}), $\rho$ restricts to a finite representation of $\widetilde{M}^0$ (finite in the sense of \cite[2.40]{BZ1}). By \cite[3.26]{BZ1}, $\rho|_{\widetilde{M}^0}$ is a finite direct sum of
irreducible representations, and by \cite[2.41]{BZ1} a finitely generated finite representation of $\widetilde{M}^0$ (in fact, of any $l$-group) is
admissible. It follows that $\pi$ is admissible (as a representation of $\widetilde{M}$).
\end{proof}

\subsection{Global coverings}\label{Global coverings}
The bulk part of this work is local, but for global results we still need to describe the global setting.
For more details see \cite[\S~1.5, \S~1.7]{me12}. Let $F$ be a number field.
Recall the local $1$-cochains $\eta_{2d,\nu}$ defined by \eqref{eq:splitting of K H}.
For any $d$, $\Sp_{2d}^{(m)}(\A)$ is realized using a global $2$-cocycle $\rho_{2d}$, which satisfies the local relation
\begin{align}\label{eq:nu and sigma for covering of H}
\rho_{2d,\nu}(h,h')=\frac{\eta_{2d,\nu}(h)\eta_{2d,\nu}(h')}{\eta_{2d,\nu}(hh')}\sigma_{2d,\nu}(h,h'),\qquad \forall h,h'\in\Sp_{2d}(F_{\nu}).
\end{align}
By construction $\rho_{2d,\nu}=\sigma_{2d,\nu}$ in $\mathrm{H}^2(\Sp_{2d}(F_{\nu}),\mu_m)$ at each place $\nu$ of $F$. The global $1$-cochain $\eta_{2d}=\prod_{\nu}\eta_{2d,\nu}$ is defined on $\Sp_{2d}(F)$ (but not on $\Sp_{2d}(\A)$). The splitting of $\Sp_{2d}(F)$ and $N_{\Sp_{2d}}(\A)$ into $\Sp_{2d}^{(m)}(\A)$ is given by $y\mapsto\langle y,\eta_{2d}^{-1}(y)\rangle$.

Let $\rho_d^{\diamondsuit}$ denote the restriction of $\rho_{2d}$ to $\GL_d$ (identified with the standard Siegel Levi subgroup of $\Sp_{2d}$ as in \S~\ref{Local coverings}),
we realize $\GL_{d}^{(m)}(\A)$ using $\rho_d^{\diamondsuit}$, $\rho_{d,\nu}^{\diamondsuit}=\sigma_{d,\nu}^{\diamondsuit}$ in $\mathrm{H}^2(\GL_{d}(F_{\nu}),\mu_m)$. We choose the splitting of $\GL_d(F)$ into $\GL_{d}^{(m)}(\A)$ to be
$y\mapsto\langle y,(\eta_d^{\diamondsuit})^{-1}(y)\rangle$, it is also the splitting of $N_{\GL_d}(\A)$.
Throughout this work automorphic forms on $\GL_{d}^{(m)}(\A)$ are always defined with respect to the above splitting of $\GL_d(F)$.

For a composition $\beta$ of $d$ consisting of $l$ parts, we have a global block-compatible $2$-cocycle $\rho_{\beta}$ of $M_{\beta}(\A)$ such that $\rho_{\beta}(x,x')=\prod_{i=1}^l\rho_{c}^{\diamondsuit}(x_i,x'_i)$ for $x,x'\in M_{\beta}(\A)$.
There is $\eta_{\beta}\in\mathrm{C}^1(M_{\beta}(\A),\mu_m)$ such that
\begin{align}\label{Appendixeq:rho beta and rho square globally}
\rho_{\beta}(x,x')=\frac{\eta_{\beta}(x)\eta_{\beta}(x')}{\eta_{\beta}(xx')}\rho^{\diamondsuit}_{d}(x,x'),\qquad
x,x'\in M_{\beta}(\A).
\end{align}
For each place $\nu$,
\begin{align}\label{eq:eta beta nu}
\eta_{\beta,\nu}(x)=\prod_{i=1}^{l}\eta^{\diamondsuit}_{\beta_i,\nu}(x_i)/\eta^{\diamondsuit}_{d,\nu}(x),\qquad x\in M_{\beta}(F_{\nu}).
\end{align}
We can globalize \eqref{eq:eta beta nu} to $x\in M_{\beta}(F)$, i.e.,
$\eta_{\beta}(x)=\prod_{i=1}^{l}\eta^{\diamondsuit}_{\beta_i}(x_i)/\eta^{\diamondsuit}_{d}(x)$ where
$\eta^{\diamondsuit}_{\beta_i}=\prod_{\nu}\eta^{\diamondsuit}_{\beta_i,\nu}$ and similarly for $\eta^{\diamondsuit}_{d}$ (see the explanation following \cite[(1.67)]{me12}).

Let $\GL_c^{\triangle}$ denote the diagonal embedding of $\GL_c$ in $M_{(c^{rk})}$. In the following
results we compute $\rho^{\diamondsuit}_{rkc}$ on $\GL_c^{\triangle}(\A)$.
Let $\sigma_{2c}^{\mathrm{Rao}}$ denote the Rao $2$-cocycle of $\Sp_{2c}(\A)$ (\cite{Rao}).

\begin{proposition}\label{proposition:rho and Rao}
If $m\nmid rk$, $\rho_{2c}^{rk}=\sigma_{2c}^{\mathrm{Rao}}$ in $\mathrm{H}^2(\Sp_{2c}(\A),\mu_2)$.
\end{proposition}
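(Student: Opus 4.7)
The hypothesis $m\nmid rk$ forces $m$ to be even: if $m$ is odd then $r=m$, so $rk=mk$ is divisible by $m$. Writing $m=2r$, the hypothesis becomes $m\nmid mk/2$, i.e., $k$ is odd. Consequently $\rho_{2c}^{rk}$ takes values in $\mu_m^{rk}=\mu_2$, and the claim is an identity in $\mathrm{H}^2(\Sp_{2c}(\A),\mu_2)$.

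My plan is to invoke uniqueness of the metaplectic double cover: by Matsumoto--Moore and the standard local--global comparison for the simply connected split group $\Sp_{2c}$, the group $\mathrm{H}^2(\Sp_{2c}(\A),\mu_2)$ has order two and the unique nontrivial class is represented by $\sigma_{2c}^{\mathrm{Rao}}$. It therefore suffices to show that $\rho_{2c}^{rk}$ is not a coboundary, which can be tested at any single place; I fix a non-archimedean place $\nu$ at which the quadratic Hilbert symbol $(\cdot,\cdot)_{2,\nu}$ is nontrivial.

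By the defining relation \eqref{eq:nu and sigma for covering of H}, $\rho_{2c,\nu}^{rk}$ differs from $\sigma_{2c,\nu}^{rk}$ by the coboundary of $\eta_{2c,\nu}^{rk}$, so it is enough to show $\sigma_{2c,\nu}^{rk}$ is cohomologically nontrivial. Restricting to the diagonal torus and applying \eqref{eq:BLS $2$-cocycle on torus} gives
\begin{align*}
\sigma_{2c,\nu}^{rk}(t,t')=\prod_{i=1}^{c}(t_i,t'_i)_{m,\nu}^{-rk}.
\end{align*}
The crux is the power-residue identity $(a,b)_{m,\nu}^{r}=(a,b)_{2,\nu}$, valid since $m=2r$ and $\mu_m\subset F_\nu^*$. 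Combined with $k$ odd and the self-inverse nature of the quadratic symbol, this collapses to $\prod_i(t_i,t'_i)_{2,\nu}$, which is the torus restriction of $\sigma_{2c,\nu}^{\mathrm{Rao}}$. By the choice of $\nu$ one can exhibit a pair $(t,t')$ witnessing non-triviality, forcing $\rho_{2c}^{rk}$ to represent the nontrivial class in $\mathrm{H}^2(\Sp_{2c}(\A),\mu_2)$, hence to be cohomologous to $\sigma_{2c}^{\mathrm{Rao}}$.

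The hard part I anticipate is convention bookkeeping: BLS uses $(\cdot,\cdot)_m^{-1}$ as its Steinberg symbol, while the Rao cocycle carries its own signs and Weil-index data. Matching the two on the torus, up to an explicit coboundary rather than an unknown sign, is the delicate step. Once the conventions are aligned and the power-residue identity is in hand, the rest is routine.
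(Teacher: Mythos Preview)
Your reduction ``$m\nmid rk$ forces $m$ even and $k$ odd'' and the power-residue identity $(a,b)_{m,\nu}^{r}=(a,b)_{2,\nu}$ are correct and match the paper's implicit arithmetic. But the strategy built on them has a genuine gap.

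The claim that $\mathrm{H}^2(\Sp_{2c}(\A),\mu_2)$ has order two is false. One can manufacture many inequivalent double covers of $\Sp_{2c}(\A)$: fix any finite set $T$ of places, take the nontrivial local metaplectic cover at each $\nu\in T$ and the split cover elsewhere, and form the restricted product (both local covers split over $K_\nu$ for almost all $\nu$, so this is well defined). Different $T$ give cohomologically distinct global classes. What Matsumoto--Moore gives you is that the \emph{local} group $\mathrm{H}^2(\Sp_{2c}(F_\nu),\mu_2)$ has order two; the global group is much larger. Consequently, checking nontriviality at a single place cannot pin down the global class, and your argument does not yet show $\rho_{2c}^{rk}$ and $\sigma_{2c}^{\mathrm{Rao}}$ agree globally. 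There is also a smaller gap: exhibiting a pair $(t,t')$ with cocycle value $\ne 1$ does not by itself show the class is nontrivial, since a coboundary can take nontrivial values.

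The paper's proof proceeds exactly by exploiting the local order-two fact at \emph{every} place: it restricts $\sigma_{2c,\nu}^{rk}$ to an embedded $\SL_2$, identifies the restriction with the Kubota cocycle for $(\cdot,\cdot)_{2,\nu}$ (via \cite{BLS}), and concludes $\rho_{2c,\nu}^{rk}=\sigma_{2c,\nu}^{\mathrm{Rao}}$ in $\mathrm{H}^2(\Sp_{2c}(F_\nu),\mu_2)$ for all $\nu$. The passage from ``local equality everywhere'' to ``global equality'' then requires the additional observation that the local $1$-cochains realizing these identities are trivial on $K_\nu$ for almost all $\nu$ (since $\Hom(\Sp_{2c}(\mathcal{O}_\nu),\mu_2)=\{1\}$), so their product is defined on $\Sp_{2c}(\A)$. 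Your torus computation is a fine substitute for the $\SL_2$ restriction step at each place, provided you upgrade ``takes a nontrivial value'' to ``is cohomologically nontrivial on $\Sp_{2c}(F_\nu)$'' --- but you must then run it at every place and supply the assembly argument, not rely on a global order-two statement.
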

\begin{proof}
Consider a place $\nu$ and let $\sigma_{\nu}^{\mathrm{Kubota}}$ be the Kubota $2$-cocycle of $\SL_2(F_{\nu})$ constructed
with $(,)_{2,\nu}$ (\cite{Kubota}).
Since $\rho_{2c,\nu}^{rk}=\sigma_{2c,\nu}^{rk}$ in $\mathrm{H}^2(\Sp_{2c}(F_{\nu}),\mu_2)$ and the restriction of $\sigma_{2c,\nu}^{rk}$ to $\diag(I_{c-1},\SL_2,I_{c-1})$ coincides with $\sigma_{\nu}^{\mathrm{Kubota}}$ (\cite[Lemma~2.5, Corollary~8]{BLS}),
$\rho_{2c,\nu}^{rk}$ is the nontrivial element of $\mathrm{H}^2(\Sp_{2c}(F_{\nu}),\mu_2)$. Hence
$\rho_{2c,\nu}^{rk}=\sigma_{2c,\nu}^{\mathrm{Rao}}$ in $\mathrm{H}^2(\Sp_{2c}(F_{\nu}),\mu_2)$ for all $\nu$ and therefore
$\rho_{2c}^{rk}=\sigma_{2c}^{\mathrm{Rao}}$ in $\mathrm{H}^2(\Sp_{2c}(\A),\mu_2)$ (the product of local $2$-coboundaries will be defined on $\Sp_{2c}(\A)$ because for almost all $\nu$, the local $2$-coboundary restricts to an element of the trivial space $\Hom(\Sp_{2c}(\mathcal{O}_{\nu}),\mu_2)$).
\end{proof}

If $m\nmid rk$, by Proposition~\ref{proposition:rho and Rao} we can fix $\eta_{2c}^{(rk)}\in\mathrm{C}^1(\Sp_{2c}(\A),\mu_2)$ such that
\begin{align}\label{eq:RS rho top rk}
\rho_{2c}^{rk}(h,h')=\frac{\eta_{2c}^{(rk)}(hh')}{\eta_{2c}^{(rk)}(h)\eta_{2c}^{(rk)}(h)}\sigma_{2c}^{\mathrm{Rao}}(h,h').
\end{align}
For $b\in\GL_c(\A)$ put $\eta_c^{\diamondsuit,(rk)}(b)=\eta_{2c}^{(rk)}(\diag(b,b^*))$.
Since $\sigma_{2c}^{\mathrm{Rao}}$ restricts to $(\det ,\det )_2$ on $\GL_c(\A)$
(e.g., \cite[(2.16)]{DihuaSoudry2007}),
\begin{align}\label{eq:RS rho top rk on Levi}
(\rho_{c}^{\diamondsuit})^{rk}(b,b')=\frac{\eta_c^{\diamondsuit,(rk)}(bb')}{\eta_c^{\diamondsuit,(rk)}(b)\eta_c^{\diamondsuit,(rk)}(b')}(\det b,\det b')_2,\qquad \forall\, b,b'\in\GL_c(\A).
\end{align}
When $m|rk$, $\eta_{2c}^{(rk)}$ and thereby $\eta_{c}^{\diamondsuit,(rk)}$ are taken to be the trivial function $1$.

\begin{lemma}\label{lemma:eta 2c rk and eta 2c (rk)}
We have $\eta_{2c}^{rk}\cdot\eta_{2c}^{(rk)}=1$ on $\Sp_{2c}(F)$ and in particular
$(\eta_{c}^{\diamondsuit})^{rk}\cdot\eta_{c}^{\diamondsuit,(rk)}=1$ on $\GL_{c}(F)$.
\end{lemma}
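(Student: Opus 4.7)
The plan is to set $f := \eta_{2c}^{rk}\cdot\eta_{2c}^{(rk)}$ and show that, viewed as a function on $\Sp_{2c}(F)$, $f$ is a homomorphism into a finite cyclic group; then use perfectness of $\Sp_{2c}(F)$ to conclude $f\equiv 1$. The case $m\mid rk$ is immediate from the definitions, since both factors are then identically $1$, so I assume $m\nmid rk$ throughout.

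To see that $f$ is multiplicative on $\Sp_{2c}(F)$, I would compare two expressions for $\rho_{2c}^{rk}(y,y')$ with $y,y'\in\Sp_{2c}(F)$. The canonical splitting $y\mapsto\langle y,\eta_{2c}^{-1}(y)\rangle$ of $\Sp_{2c}(F)$ into $\Sp_{2c}^{(m)}(\A)$ forces
\[
\rho_{2c}(y,y')=\frac{\eta_{2c}(y)\eta_{2c}(y')}{\eta_{2c}(yy')},
\]
and raising to the $rk$-th power gives one expression. On the other hand, \eqref{eq:RS rho top rk} provides a second expression in terms of $\eta_{2c}^{(rk)}$ and $\sigma_{2c}^{\mathrm{Rao}}(y,y')$.

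The crucial input will be the classical fact that $\sigma_{2c}^{\mathrm{Rao}}$ restricts to the trivial function on $\Sp_{2c}(F)\times\Sp_{2c}(F)$: this is the standard normalization of Rao's cocycle, saying that the global metaplectic cover splits over $\Sp_{2c}(F)$ via the canonical section $y\mapsto\langle y,1\rangle$, and is ultimately a manifestation of Weil's reciprocity for the Weil index. Granted this, matching the two expressions and clearing denominators immediately yields $f(yy')=f(y)f(y')$. Since $F$ is infinite, $\Sp_{2c}(F)$ is perfect (it is generated by its root subgroups, each of which lies in the commutator subgroup by the Chevalley relations), hence admits no nontrivial characters into an abelian group; therefore $f\equiv 1$ on $\Sp_{2c}(F)$.

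The $\GL_c(F)$ statement will then follow by specializing to $y=\diag(b,b^*)\in\Sp_{2c}(F)$ for $b\in\GL_c(F)$ and recalling $\eta_c^{\diamondsuit}(b)=\eta_{2c}(\diag(b,b^*))$ together with the analogous definition of $\eta_c^{\diamondsuit,(rk)}$. The main point one needs to verify carefully is the trivialization (not merely cohomological triviality) of $\sigma_{2c}^{\mathrm{Rao}}$ on $\Sp_{2c}(F)\times\Sp_{2c}(F)$; the rest is routine cocycle bookkeeping together with the perfectness of $\Sp_{2c}(F)$.
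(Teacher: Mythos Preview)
Your proposal is correct and is essentially the paper's own argument: the paper compares the two splittings $h\mapsto\langle h,\eta_{2c}^{-rk}(h)\rangle$ and (via \eqref{eq:RS rho top rk} together with the fact that $h\mapsto\langle h,1\rangle$ splits $\Sp_{2c}(F)$ in $\Sp_{2c}(\A)[\sigma_{2c}^{\mathrm{Rao}}]$) $h\mapsto\langle h,\eta_{2c}^{(rk)}(h)\rangle$ of $\Sp_{2c}(F)$ into $\Sp_{2c}(\A)[\rho_{2c}^{rk}]$, and concludes by $\Hom(\Sp_{2c}(F),\mu_2)=1$. Your computation of $\rho_{2c}^{rk}(y,y')$ in two ways and appeal to perfectness is exactly this comparison, just phrased at the level of cocycle identities rather than splittings.
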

\begin{proof}
Since $h\mapsto\langle h,1\rangle$ is the splitting of $\Sp_{2c}(F)$ into $\Sp_{2c}(\A)[\sigma_{2c}^{\mathrm{Rao}}]$
(see e.g., \cite[(2.15)]{DihuaSoudry2007}), looking at \eqref{eq:RS rho top rk} and using the facts that $h\mapsto\langle h,\eta_{2c}^{rk}(h)^{-1}\rangle$ is the splitting of $\Sp_{2c}(F)$ into
$\Sp_{2c}(\A)[\rho_{2c}^{rk}]$ and
$\Hom(\Sp_{2c}(F),\mu_2)$ is trivial, we deduce $\eta_{2c}^{rk}\cdot\eta_{2c}^{(rk)}=1$ on $\Sp_{2c}(F)$.
\end{proof}
Observe that by \eqref{eq:eta beta nu} for $\beta=(c^{rk})$ and because $\eta^{\diamondsuit}_{d}$ is defined on
$\GL_d(F)$ (for all $d$), $\eta_{(c^{rk})}(b^{\triangle})=(\eta^{\diamondsuit}_{c})^{rk}(b)/\eta^{\diamondsuit}_{rkc}(b^{\triangle})$ for
$b\in\GL_c(F)$. Combining this with Lemma~\ref{lemma:eta 2c rk and eta 2c (rk)} we deduce
\begin{align}\label{eq:eta eta rk m divides or not eta diamond triangle rat}
\eta_{(c^{rk})}(b^{\triangle})\eta_c^{\diamondsuit,(rk)}(b)=(\eta^{\diamondsuit}_{rkc})^{-1}(b^{\triangle})
,\qquad \forall\, b\in \GL_c(F).
\end{align}
\begin{proposition}\label{proposition:rho rkc on diagonal GLc}
For all $b,b'\in\GL_c(\A)$,
\begin{align}\label{eq:RS rho rkc rk on Levi}
\rho^{\diamondsuit}_{rkc}(b^{\triangle},{b'}^{\triangle})=
\frac{\eta_{(c^{rk})}((bb')^{\triangle})\eta_c^{\diamondsuit,(rk)}(bb')}{\eta_{(c^{rk})}(b^{\triangle})
\eta_c^{\diamondsuit,(rk)}(b)\eta_{(c^{rk})}({b'}^{\triangle})\eta_c^{\diamondsuit,(rk)}(b')}
(\det b,\det b')_{m/r}^k.
\end{align}
Thus $\GL_{rkc}^{(m)}(\A)$ is split over $\GL_c^{\triangle}(\A)$ ($\mu_{2m}<F^*$) and
when $m|rk$, $(\eta_{rkc}^{\diamondsuit})^{-1}=\eta_{(c^{rk})}$ on $\GL_c^{\triangle}(\A)$.
\end{proposition}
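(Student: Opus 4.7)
The plan is to derive \eqref{eq:RS rho rkc rk on Levi} by bootstrapping on the global block-compatibility \eqref{Appendixeq:rho beta and rho square globally} and the restriction formula \eqref{eq:RS rho top rk on Levi}, then read off the splitting and the final identity.

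First, I would specialize \eqref{Appendixeq:rho beta and rho square globally} to the composition $\beta = (c^{rk})$ (so $d = rkc$), evaluating at the diagonally embedded $b^{\triangle}, {b'}^{\triangle}$:
\begin{align*}
\rho^{\diamondsuit}_{rkc}(b^{\triangle}, {b'}^{\triangle}) = \frac{\eta_{(c^{rk})}((bb')^{\triangle})}{\eta_{(c^{rk})}(b^{\triangle})\eta_{(c^{rk})}({b'}^{\triangle})}\,\rho_{(c^{rk})}(b^{\triangle}, {b'}^{\triangle}).
\end{align*}
Because $\rho_{(c^{rk})}$ is block-diagonal in $rk$ copies of $\rho_c^{\diamondsuit}$ and both arguments share identical blocks, the last factor collapses to $(\rho_c^{\diamondsuit})^{rk}(b, b')$.

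Next, I would expand $(\rho_c^{\diamondsuit})^{rk}$ via \eqref{eq:RS rho top rk on Levi} when $m \nmid rk$, and use the convention $\eta_c^{\diamondsuit,(rk)} \equiv 1$ together with the triviality of $(\rho_c^{\diamondsuit})^{rk}$ when $m \mid rk$. A short parity-based case analysis identifies the Hilbert-symbol exponent: if $m$ is odd then $r = m$ forces $m \mid rk$, while $(\cdot, \cdot)_{m/r} = (\cdot, \cdot)_1 \equiv 1$; if $m$ is even then $r = m/2$ and $m/r = 2$, and $m \mid rk = mk/2$ precisely when $k$ is even. In the remaining subcase ($m$ even, $k$ odd), $(\det b, \det b')_2 = (\det b, \det b')_2^k$ matches the symbol coming from \eqref{eq:RS rho top rk on Levi}. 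The identity \eqref{eq:RS rho rkc rk on Levi} thus emerges uniformly.

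For the splitting of $\GL_{rkc}^{(m)}(\A)$ over $\GL_c^{\triangle}(\A)$, I read \eqref{eq:RS rho rkc rk on Levi} as the coboundary of the $1$-cochain $b \mapsto \eta_{(c^{rk})}(b^{\triangle})^{-1} \eta_c^{\diamondsuit,(rk)}(b)^{-1}$ twisted by $(\det b, \det b')_{m/r}^k$. Whenever the Hilbert-symbol factor is trivial the formula immediately exhibits a splitting; in the remaining case ($m$ even, $k$ odd), I would absorb the symbol $(\det b, \det b')_2$ into a coboundary using the adelic Weil factor, exploiting $\gamma_\psi(xy)/(\gamma_\psi(x)\gamma_\psi(y)) = (x, y)_2$ together with $\mu_{2m} \subset F^*$ to ensure the resulting cochain lands in $\mu_m$. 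For the last identity, specializing to $m \mid rk$ reduces \eqref{eq:RS rho rkc rk on Levi} to
\begin{align*}
\rho^{\diamondsuit}_{rkc}(b^{\triangle}, {b'}^{\triangle}) = \frac{\eta_{(c^{rk})}((bb')^{\triangle})}{\eta_{(c^{rk})}(b^{\triangle})\eta_{(c^{rk})}({b'}^{\triangle})},
\end{align*}
so both $\eta_{(c^{rk})}$ and $(\eta_{rkc}^{\diamondsuit})^{-1}$ furnish $\mu_m$-valued splittings of $\rho^{\diamondsuit}_{rkc}$ on $\GL_c^{\triangle}(\A)$; by \eqref{eq:eta eta rk m divides or not eta diamond triangle rat} (with $\eta_c^{\diamondsuit,(rk)} \equiv 1$) these two splittings agree on $\GL_c(F)^{\triangle}$, and their ratio is a continuous homomorphism $\GL_c^{\triangle}(\A) \to \mu_m$ trivial on rational points, hence trivial globally.

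The main obstacle I anticipate is the splitting argument in the subcase $m$ even with $k$ odd, where the global Hilbert $2$-cocycle $(\det b, \det b')_2$ is nontrivial on $\A^*$ and must be carefully absorbed into a $\mu_m$-valued coboundary via the Weil-factor machinery; this is exactly where the standing hypothesis $\mu_{2m} \subset F^*$ enters in an essential way.
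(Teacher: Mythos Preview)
Your derivation of \eqref{eq:RS rho rkc rk on Levi} and the splitting argument are correct and match the paper's approach: the paper combines \eqref{Appendixeq:rho beta and rho square globally} (for $\beta=(c^{rk})$) with \eqref{eq:RS rho top rk on Levi} exactly as you do, and for the splitting when $m\nmid rk$ simply invokes $\mu_{2m}<F^*$, leaving the Weil-factor coboundary implicit.

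There is, however, a genuine gap in your argument for the final identity $(\eta_{rkc}^{\diamondsuit})^{-1}=\eta_{(c^{rk})}$ on $\GL_c^{\triangle}(\A)$. You assert that $(\eta_{rkc}^{\diamondsuit})^{-1}$ furnishes a splitting on $\GL_c^{\triangle}(\A)$, but a priori $\eta_{rkc}^{\diamondsuit}=\prod_\nu\eta_{rkc,\nu}^{\diamondsuit}$ is only defined on $\GL_{rkc}(F)$; that it extends to $\GL_c^{\triangle}(\A)$ is part of what must be proved, so you cannot take the ratio of ``two splittings on $\GL_c^{\triangle}(\A)$''. Moreover, even granting the extension, the uniqueness step fails: the ratio of two splittings is a homomorphism $\GL_c(\A)\to\mu_m$, which factors through $\det$ to a character of $\A^*$ of order dividing $m$; being trivial on $F^*$ does not force it to vanish, since $F^*\backslash\A^*$ has plenty of nontrivial finite-order characters.

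The paper instead argues place by place. From \eqref{eq:eta beta nu} with $\beta=(c^{rk})$ one has $\eta_{(c^{rk}),\nu}(b^{\triangle})=(\eta^{\diamondsuit}_{c,\nu})^{rk}(b)\,/\,\eta^{\diamondsuit}_{rkc,\nu}(b^{\triangle})$; when $m\mid rk$ the numerator is identically $1$ (as $\eta^{\diamondsuit}_{c,\nu}$ is $\mu_m$-valued), so $\eta_{(c^{rk}),\nu}=(\eta^{\diamondsuit}_{rkc,\nu})^{-1}$ on $\GL_c^{\triangle}(F_\nu)$ at every place. Since $\eta_{(c^{rk})}$ is globally well defined on $M_{(c^{rk})}(\A)\supset\GL_c^{\triangle}(\A)$, this local identity simultaneously yields the well-definedness of $\eta^{\diamondsuit}_{rkc}$ on $\GL_c^{\triangle}(\A)$ and the equality with $\eta_{(c^{rk})}^{-1}$.
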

\begin{proof}
By \eqref{Appendixeq:rho beta and rho square globally} for $\beta=(c^{rk})$,
\begin{align}\label{Appendixeq:rho c rk and rho square globally}
&(\rho_c^{\diamondsuit})^{rk}(b,b')=\frac{\eta_{(c^{rk})}(b^{\triangle})\eta_{(c^{rk})}({b'}^{\triangle})}
{\eta_{(c^{rk})}((bb')^{\triangle})}\rho^{\diamondsuit}_{rkc}(b^{\triangle},{b'}^{\triangle}).
\end{align}
Then \eqref{eq:RS rho rkc rk on Levi} follows from \eqref{Appendixeq:rho c rk and rho square globally} and \eqref{eq:RS rho top rk on Levi}, and the assertion on the splitting is clear (when $m|rk$, $(\rho_c^{\diamondsuit})^{rk}$ is trivial, otherwise we use the assumption $\mu_{2m}<F^*$).
Moreover when $m|rk$, by \eqref{eq:eta beta nu} for $\beta=(c^{rk})$ and because $(\eta^{\diamondsuit}_{c,\nu})^{rk}$ is trivial, $\eta_{(c^{rk}),\nu}=(\eta^{\diamondsuit}_{rkc,\nu})^{-1}$ on $\GL_c^{\triangle}(F_{\nu})$, and since $\eta_{(c^{rk})}$ is defined on $\GL_c^{\triangle}(\A)$, so is $\eta^{\diamondsuit}_{rkc}$ whence $\eta_{(c^{rk})}(b^{\triangle})=(\eta^{\diamondsuit}_{rkc})^{-1}(b^{\triangle})$ for $b\in\GL_c(\A)$.
\end{proof}

\subsection{Unramified $L$-functions}\label{unr L functions}
Assume $F$ is unramified. We briefly describe the definition of unramified $L$-functions, complete details appeared in \cite[\S~1.8]{me12}.
For general results see \cite{Savin5,McNamara,Weissman2,GG,Weissman2018a}. A genuine irreducible unramified representation $\pi$ of $\Sp_{2d}^{(m)}$
(resp., $\GL_{d}^{(m)}$)
can be identified with a constituent of a genuine unramified principal series representation, which by the Stone--von Neumann Theory
(\cite[\S~0.3]{KP}, \cite[\S~13.5]{McNamara}) can be constructed non-canonically by choosing an unramified character of $C_{\widetilde{T}_{\Sp_{2d}}}$ (resp., $C_{\widetilde{T}_{\GL_{d}}}$). Such characters have the form $\otimes_{i=1}^d\varepsilon\otimes\vartheta\mu_i$, where $\vartheta=1$ unless $m\equiv2\,(4)$ in which case $\vartheta=\gamma_{\psi'}$, and $\mu_i$ is an unramified quasi-character of $F^*$. The $L$-group of $\Sp_{2d}^{(m)}$ is $\SO_{2d+1}(\C)$ when $m$ is odd and $\Sp_{2d}(\C)$ otherwise, and for $\GL_d^{(m)}$ it is
$\GL_d(\C)$ (\cite[\S~2.3]{Gao4}, \cite[\S~5.1]{WWLi2017}). We put
\begin{align*}
t_{\pi,\vartheta}=
\begin{cases}
\diag(\mu_1(\varpi^r),\ldots,\mu_d(\varpi^r),1,\mu_d^{-1}(\varpi^r),\ldots,\mu_1^{-1}(\varpi^r))&\Sp_{2d}^{(m)},r=m,\\
\diag(\mu_1(\varpi^r),\ldots,\mu_d(\varpi^r),\mu_d^{-1}(\varpi^r),\ldots,\mu_1^{-1}(\varpi^r))&\Sp_{2d}^{(m)},r=m/2,\\
\diag(\mu_1(\varpi^r),\ldots,\mu_d(\varpi^r))&\GL_{d}^{(m)}.
\end{cases}
\end{align*}
Now for any finite-dimensional complex representation $\sigma$ of the corresponding dual group,
\begin{align*}
L_{\vartheta}(s,\pi,\sigma)=\det(1-\sigma(t_{\pi,\vartheta})q^{-s})^{-1}.
\end{align*}
We can now define $L_{\nu}(s,\pi\times\tau)$ for pairs of irreducible unramified representations $\pi\times\tau$ using
$t_{\pi,\vartheta_{\pi}}\otimes t_{\tau,\vartheta_{\tau}}$, similarly to the linear case.
For $m\equiv2\,(4)$, since $\gamma_{\psi'}\gamma_{\psi''}^{-1}$ is a quadratic character of $F^*$ (whether $F$ is unramified or not),
$t_{\pi,\gamma_{\psi'}}=t_{\gamma_{\psi'}\gamma_{\psi''}^{-1}\pi,\gamma_{\psi''}}$ (replacing the parametrization amounts to a quadratic twist of the linear data).

A priori, the $L$-function is independent of $\vartheta$ unless $m\equiv2\,(4)$, in which case it does depend on $\vartheta$.
However, the class of functions we will study here when $m\equiv2\,(4)$ will always involve 2 representations, and since we will always
use the same parameter $\vartheta$ for both, $t_{\pi,\gamma_{\psi'}}\otimes t_{\tau,\gamma_{\psi'}}=t_{\pi,\gamma_{\psi''}}\otimes t_{\tau,\gamma_{\psi''}}$, which means $L_{\vartheta}(s,\pi\times\tau)$ is independent of $\vartheta$. This remark also applies
to $L_{\vartheta}(s,\tau,\wedge^2)$ and $L_{\vartheta}(s,\tau,\vee^2)$, i.e., they are independent of $\vartheta$.
In addition $\gamma_{\psi'}^{-1}=\gamma_{\psi'}$ (because $\mu_{2m}<F^*$), and if $\tau$ is a representation of $\GL_d^{(m)}$,
$t_{\tau^*,\gamma_{\psi'}}=t_{\tau^{\vee},\gamma_{\psi'}}=t_{\tau^{\vee},\gamma_{\psi'}^{-1}}$ (see \cite[Proposition~25]{me12}). Note that $\tau^*$ is genuine and $\tau^{\vee}$ is anti-genuine.

We need the following result on the Satake parameters of tempered unramified representations (formulated for representations with Iwahori fixed vectors). Let $G$ be either $\Sp_{2d}$ or $\GL_d$, and $\widetilde{G}=\Sp_{2d}^{(m)}$ or $\GL_{d}^{(m)}$. Let $I$ be the Iwahori subgroup of $G$, determined by
our choice of $B_G$. Identify $I$ with its image in $\widetilde{G}$, determined by the fixed splitting of $K_G$.
\begin{lemma}\label{lemma:unr unitary Satake}
Let $\pi$ be a genuine tempered representation of $\widetilde{G}$, which has a nonzero vector fixed by $I$. Assume
$\pi$ is associated with
$\otimes_{i=1}^d\varepsilon\otimes\vartheta\mu_i$ as above. Then $|\mu_i|=1$ for all $i$.
\end{lemma}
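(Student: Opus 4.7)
The plan is to combine the Stone--von Neumann realization of $\pi$ as a subquotient of an unramified principal series with the classical characterization of Iwahori-fixed tempered representations as subrepresentations of unitary principal series.

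First, by the Stone--von Neumann construction recalled in \S~\ref{unr L functions}, I would realize $\pi$ as an irreducible subquotient of the unramified principal series $I(\chi) = \Ind_{\widetilde{B}_G}^{\widetilde{G}} i(\chi)$, where $i(\chi)$ is the irreducible representation of $\widetilde{T}_G$ with central character $\chi = \bigotimes_{i=1}^d \varepsilon \otimes \vartheta\mu_i$ on $C_{\widetilde{T}_G}$. Then I would invoke the Borel--Casselman criterion, adapted to the covering setting: an irreducible Iwahori-fixed tempered representation of $\widetilde{G}$ occurs as a genuine \emph{subrepresentation} of some $I(\chi_1)$ with $\chi_1$ a unitary character of $C_{\widetilde{T}_G}$. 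This extends from the linear case because the Iwahori decomposition holds in $\widetilde{G}$, the unipotent radicals split canonically, and the Jacquet-module analysis underlying Casselman's criterion carries over verbatim, as developed in \cite{McNamara,Weissman2,Savin5}.

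Next, since $\pi$ is simultaneously a subquotient of $I(\chi)$ and a subrepresentation of $I(\chi_1)$, the uniqueness of the Satake parameter as a Weyl orbit forces $\chi_1 = w\chi$ for some $w \in W_G$. Unitarity is preserved under the Weyl action: on $\GL_d^{(m)}$ the Weyl group permutes the $\mu_i$, while on $\Sp_{2d}^{(m)}$ it acts by signed permutations $\mu_i \mapsto \mu_{w(i)}^{\pm 1}$. In either case, $|\chi_1| = 1$ on $C_{\widetilde{T}_G}$ implies $|\chi| = 1$ there, which via the explicit formula for $\chi$ on the diagonal torus translates to $|\mu_i| = 1$ for each $i$.

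The main obstacle is confirming that the standard structural results for Iwahori-fixed representations --- notably the embedding of a tempered $\pi$ into a unitary principal series, and the characterization of common constituents of two principal series by Weyl conjugacy of the inducing characters --- transfer fully from linear reductive groups to the covering groups at hand, with the correct bookkeeping of the $\vartheta$-twist when $m \equiv 2\,(4)$. All the needed input is available in the covering-group literature cited above; the argument is then essentially a Weyl-theoretic deduction from the classical temperedness criterion.
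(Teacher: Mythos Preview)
Your approach is viable in outline but differs genuinely from the paper's. The paper does not work directly on $\widetilde{G}$: it invokes Savin's result \cite{Savin6} (see \cite[Corollary~5]{McNamara}) that the genuine Iwahori--Hecke algebra of $\widetilde{G}$ is isomorphic to the Iwahori--Hecke algebra of a linear group $G'$ (namely $\SO_{2d+1}$, $\Sp_{2d}$, or $\GL_d$, depending on $G$ and the parity of $m$), then appeals to \cite[\S~16]{FK} to see that this isomorphism induces a correspondence on Iwahori-spherical representations preserving temperedness, sending $\pi$ to a representation of $G'$ with inducing data $\otimes_i\mu_i^r$. The conclusion $|\mu_i|=1$ then follows from the known linear statement, cited from \cite[\S~10]{LR}.

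Your route --- Casselman's criterion plus Weyl-conjugacy of inducing characters, carried out directly on $\widetilde{G}$ --- is more self-contained on the covering side, but the step ``tempered with Iwahori-fixed vector $\Rightarrow$ subrepresentation of a unitary principal series'' is not just Casselman's exponent criterion: it also requires the structure of tempered representations as induced from discrete series, together with an identification of Iwahori-spherical discrete series on Levi subgroups of $\widetilde{G}$. You flag this as the main obstacle, and indeed the references you cite develop the principal-series and unramified Hecke-algebra theory but do not obviously supply that full package. The paper's Hecke-algebra transfer buys precisely this: it reduces the whole question to a single known linear statement without redeveloping any of the tempered structure theory on the covering group.
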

\begin{proof}
By the results of Savin \cite{Savin6} (see \cite[Corollary~5]{McNamara}),
the Iwahori--Hecke algebra of $\widetilde{G}$, i.e., the algebra of anti-genuine $I$-bi-invariant locally constant and compactly supported
functions on $\widetilde{G}$ (see \cite[\S~13.12]{McNamara}), is isomorphic to the Iwahori--Hecke algebra of $\SO_{2d+1}$ when $G=\Sp_{2d}$
and $m$ is odd, $\Sp_{2d}$ when $m$ is even, and $\GL_d$ if $G=\GL_d$ (in the latter case this was explicitly proved in \cite{Savin7}).
According to the arguments in \cite[\S~16]{FK} (see \cite[Corollary~17.2]{FK} in particular), this isomorphism implies an isomorphism
between certain representations of $\widetilde{G}$ and $G$, taking $\pi$ into a tempered representation
of $G$ with inducing data $\otimes_i\mu_i^r$, and this representation admits a nonzero vector fixed by the corresponding Iwahori subgroup. It is known (see e.g., \cite[\S~10]{LR}) that
for such representations the inducing data is unitary. (\cite[\S~16]{FK} was independent of the trace formula, as explicitly noted there.)
\end{proof}

In a global context we choose the parameter $\vartheta$ globally: if $m\equiv2\,(4)$, we take
a character $\psi'$ of $F\backslash\A$ and set $\vartheta=\prod_{\nu}\vartheta_{\nu}=\prod_{\nu}\gamma_{\psi'_{\nu}}$, otherwise $\vartheta=1$. Note that $\vartheta$ is defined in all places (not only the unramified ones). If $\pi$ and $\tau$ are a pair of genuine cuspidal representations of $\Sp_{2d}^{(m)}(\A)$ and $\GL_{d}^{(m)}(\A)$ and $S$ is a finite set outside which all data are unramified,
$L_{\vartheta}^S(s,\pi\times\tau)=\prod_{\nu\notin S}L_{\vartheta}(s,\pi_{\nu}\times\tau_{\nu})$. Again since we use the same parameter $\vartheta$, the partial $L$-function is independent of $\vartheta$ and we already write $L^S(s,\pi\times\tau)$.

\subsection{The covering $\GL_d^{(2)}$}\label{covering GL r=1}
When $r=1$, $\sigma^2_{d}\equiv1$ hence by Proposition~\ref{proposition:sigma * and sigma on GLd} we can
assume the $2$-cocycle of $\GL_d^{(2)}$ is given by $(\det b,\det b')_2$ (locally or globally). In this case we are assuming $\mu_4<F^*$,
hence $\GL_d^{(2)}$ is split over $\GL_d$. But even without this assumption, the representation theories of $\GL_d^{(2)}$ and $\GL_d$ are directly related (see e.g., \cite[\S~2.4, \S~7.2]{Gan}). There is a bijection between representations of $\GL_d$ and $\GL_d^{(2)}$, given by
$\tau\mapsto\gamma_{\psi'}\otimes\tau$ where $\gamma_{\psi'}\otimes\tau(\langle b,\epsilon\rangle)=\epsilon\gamma_{\psi'}(\det b)\tau(b)$.
Under this bijection $\Ind_{P_{\beta}}^{\GL_k}(\otimes_{i=1}^d\tau_i)$ is mapped to $\gamma_{\psi'}\Ind_{P_{\beta}}^{\GL_k}(\otimes_{i=1}^d\gamma_{\psi'}\tau_i)$. Among the similarities, the theory of Speh representations (\cite{Jac4}) is applicable; and the Whittaker model of $\tau$ is mapped to the Whittaker model of $\gamma_{\psi'}\otimes\tau$ by $W\mapsto\gamma_{\psi'}W$. The Rankin--Selberg integrals of \cite{JS2,JS1,JPSS} for
pairs of representations $\tau_1\times\tau_2$ coincide with the similar integrals for $\gamma_{\psi'}\otimes\tau_1$ and $\gamma_{\psi'}\otimes\tau_2$.
In turn we can define local and complete $L$-functions for $\gamma_{\psi'}\otimes\tau_1$ and
$\gamma_{\psi'}\otimes\tau_2$ using the definitions for $\tau_1\times\tau_2$. Then $L(s,(\gamma_{\psi'}\otimes\tau)\times(\gamma_{\psi'}\otimes\tau)^*)=L(s,\tau\times\tau^{\vee})$.

\subsection{Dual representations}\label{dual reps}
For any smooth representation $\pi$ of an $l$-group, the contragredient representation
$\pi^{\vee}$ is by definition the smooth part of the algebraic dual of $\pi$ (e.g., \cite[2.13]{BZ1}). This definition applies in particular to the covering groups here, but of course when $\pi$ is genuine, $\pi^{\vee}$ is anti-genuine, and while it is certainly possible to define $L$-functions
formally for (unramified) representations ignoring this fact, it makes more sense to consider only genuine representations. Indeed as we shall see below (e.g., \eqref{eq:RS functional equation}), the local functional equations relate between $\pi$ and a genuine representation, in particular not $\pi^{\vee}$ (save perhaps double coverings where genuine is the same as anti-genuine). Thus we require a notion different from $\pi^{\vee}$.

It is reasonable to demand the following properties from a plausible replacement $\widetilde{\pi}$ of $\pi^{\vee}$.
\begin{enumerate}[leftmargin=*]
  \item If $\pi$ is genuine, so is $\widetilde{\pi}$.
  \item For trivial or $2$-fold coverings and irreducible admissible representations $\pi$, $\widetilde{\pi}\cong\pi^{\vee}$.
  \item\label{it:props preserving irred admissible L2} Being admissible, irreducible, supercuspidal, square-integrable, tempered or unramified for $\pi$ and $\widetilde{\pi}$ should be equivalent.
  \item\label{it:props preserving center} Assume $\pi$ admits a central character, then so does $\widetilde{\pi}$. Whenever the center of the underlying linear group admits a subgroup $C$ whose covering $\widetilde{C}$ is trivial and belongs to the center of the covering group, we can identify the restriction of the central character of $\pi$ to $C$ with a linear representation (non-canonically), say $\mu_{\pi}$. Then essentially $\mu_{\widetilde{\pi}}=\mu_{\pi}^{-1}$.
  \item\label{it:props preserving Satake} When data are unramified, $t_{\widetilde{\pi},\vartheta}=t_{\pi^{\vee},\vartheta}$ (see \S~\ref{unr L functions}).
\end{enumerate}

Define for $\GL_d^{(m)}$, $\widetilde{\pi}=\pi^*$ and for $\Sp_{2d}^{(m)}$, $\widetilde{\pi}=\pi$. Indeed when
$\pi$ is irreducible and admissible, for $\GL_d$ we have $\pi^*\cong\pi^{\vee}$, for $\GL_d^{(2)}$ we can write
$\pi(\langle b,\epsilon\rangle)=\varepsilon(\epsilon)\gamma_{\psi'}(b)\pi_0(b)$ for a representation $\pi_0$ of $\GL_d$
then again $\pi^*\cong\pi^{\vee}$ (recall $\gamma_{\psi'}^{-1}=\gamma_{\psi'}$), and for $\Sp_{2d}^{(m)}$ with $m\leq2$, $\pi\cong\pi^{\vee}$
(\cite[Chapitre~4]{MVW} and \cite{Binyong2011}). The rest is trivial for $\Sp_{2d}^{(m)}$, and observe that
$\widetilde{\pi}=\pi$ is reasonable for this group mainly because in the unramified case
$t_{\pi,\vartheta}=t_{\pi^{\vee},\vartheta}$ and we require \eqref{it:props preserving Satake}.
For $\GL_d^{(m)}$, $\pi$ and $\pi^*$ are simultaneously square-integrable or tempered, this follows from the characterization
of these properties using cuspidal exponents, then \eqref{it:props preserving irred admissible L2} is clear.
Regarding \eqref{it:props preserving center} for $C=C_{r,d}$, if
$\pi(\langle x^rI_d,\epsilon\rangle)=\varepsilon(\epsilon)\vartheta(x^r)\eta(x^r)$ for a quasi-character $\eta$ of $F^*$,
$\pi^*(\langle x^rI_d,\epsilon\rangle)=\pi(\langle x^{-r}I_d,\epsilon\rangle)=
\varepsilon(\epsilon)\vartheta(x^r)^{-1}\eta(x^r)^{-1}$ ($\vartheta=1$ unless $m\equiv2\,(4)$, then $\vartheta=\gamma_{\psi'}$). For \eqref{it:props preserving Satake} see \S~\ref{unr L functions}.

The relation between the contragredient representation and its $L$-parameter was studied by Adams and Vogan \cite{AdamsVogan2016} in the context of reductive groups. Following their work Weissman \cite{Weissman2018a} indicated the possibility of replacing $\pi^{\vee}$ by a genuine representation.
Another reason for the present definition is evident when considering intertwining operators. The image of the intertwining
operator acting on a representation of $\Sp_{2d}^{(m)}$ parabolically induced from the Siegel parabolic subgroup and $|\det|^s\tau$, is a
representation parabolically induced from $|\det|^{-s}\tau^*$ (by Lemma~\ref{lemma:conjugation commutes}, see also \cite[(4.16), p.~85]{me12}). Oftentimes functional equations follow from the theory of intertwining operators, and these take genuine representations into genuine ones.

\section{Representations of type $(rk,c)$}\label{Representations of type (k,c)}
\subsection{Definition and basic properties}\label{Definition and generalities}
Let $F$ be a local field.
Let $m$, $k$ and $c$ be positive integers, then we have the covering group $\GL_{rkc}^{(m)}$ defined in \S~\ref{Local coverings}.
Recall the character $\psi=\psi_{rk}$ of $V_{(c^{rk})}$ given by \eqref{def:psi_l}. In general, the unipotent orbits of $\GL_{l}$ are in bijection with the partitions of $l$. For a partition $\beta$ of $l$, let $V(\beta)<N_{\GL_{l}}$ be the corresponding unipotent subgroup and $\widehat{V}(\beta)_{\mathrm{gen}}$ be the set of its generic characters. Write $\beta\succsim \beta'$ if $\beta$ is greater than or not comparable with the partition $\beta'$, according to the partial ordering defined on the partitions. Refer to \cite[\S~2]{G2}
or to \cite[\S~1.4]{DimaKaplan} and the references therein, for these definitions.

In particular for $\beta=((rk)^c)$, $V(\beta)=V_{(c^{rk})}$ and $\psi\in \widehat{V}(\beta)_{\mathrm{gen}}$. The stabilizer of
$\psi$ in $M_{(c^{rk})}$ is then the diagonal embedding $\GL_c^{\triangle}$ of $\GL_c$ in $M_{(c^{rk})}$.

Let $\rho$ be a genuine admissible finite-length representation of $\GL_{rkc}^{(m)}$. We say that $\rho$ is an $(rk,c)$ representation if it satisfies the folllowing properties:
\begin{enumerate}[leftmargin=*]
\item $\Hom_{V(\beta')}(\rho,\psi')=0$ for any $\beta'\succsim((rk)^c)$ and $\psi'\in \widehat{V}(\beta')_{\mathrm{gen}}$.
\item $\dim\Hom_{V_{(c^{rk})}}(\rho,\psi)=1$.
\end{enumerate}
Assume this is the case.
An $(rk,c)$ functional on $\rho$ is an element $0\ne\lambda\in \Hom_{V_{(c^{rk})}}(\rho,\psi)$. The $(rk,c)$ model (which is by definition unique) is denoted $W_{\psi}(\rho)$, it is the space spanned by the mappings $g\mapsto\lambda(\rho(g)\xi)$ where $g\in\GL_{rkc}^{(m)}$ and $\xi$ varies in the space of $\rho$.

Let $F$ be a local non-archimedean field.
An alternative definition can be given in terms of the theory of derivatives of Bernstein and Zelevinsky \cite{BZ1,BZ2}. Recall the
functors $\Phi^{\mp}$ and $\Psi^{\mp}$ defined in \cite[5.11]{BZ1}. These can be defined for covering groups as well
(see \cite{Kable} for coverings of \cite{KP}). Let $Y_{l}<\GL_{l}$ denote the subgroup of matrices with the last row
$(0,\ldots,0,1)$.
For an integer $j$, let $\rho^{(j)}=\Psi^-(\Phi^-)^{j-1}(\rho|_{\widetilde{Y}_{rkc}})$ denote the $j$-th derivative of $\rho$ in the sense of \cite{BZ1,BZ2} (we use the non-normalized version).
For a partition $\beta'$, let $[\beta']$ denote the composition defined
by taking the integers of $\beta'$ in decreasing order. If $[\beta']=(\beta_1,\ldots,\beta_l)$, define inductively $\rho^{[\beta']}=(\rho^{(\beta_1)})^{(\beta_2,\ldots,\beta_{l})}$. With this notation
$\rho$ is $(rk,c)$ if for any partition $\beta'\succsim((rk)^c)$, $\rho^{[\beta']}=0$, and
$\dim\rho^{((rk)^c)}=1$. This follows using the local ``exchange of roots" technique of Ginzburg \textit{et al.} \cite{GRS5},
which is also applicable to covering groups.
Over archimedean fields according to \cite{AGS2015a,AGS2015b,GGS} the same characterization is valid, except that the derivative here is the pre-derivative of \cite{GGS}. See \cite[Theorems~E, F]{GGS} which explicated the relation between degenerate Whittaker models and derivatives over any local field, and also \cite{MW3,Cai2}.
\begin{proposition}\label{prop:rodier non}
Assume $F$ is non-archimedean. For $1\leq i\leq d$, let $\rho_i$ be an $(rk_i,c)$ representation. Then
$\rho=\Ind_{\widetilde{P}_{\beta rc}}^{\GL_{rkc}^{(m)}}(\otimes_{i=1}^d\rho_i)$ is $(rk,c)$, where $\beta=(k_1,\ldots,k_d)$ and $k=\sum_{i=1}^dk_i$.
\end{proposition}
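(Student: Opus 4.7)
Since $F$ is non-archimedean, I invoke the derivative-based characterization of $(rk,c)$ representations stated in the excerpt: $\rho$ is $(rk,c)$ if and only if $\rho^{[\beta']}=0$ for every partition $\beta'\succsim ((rk)^c)$ and $\dim\rho^{((rk)^c)}=1$. The strategy is to compute the iterated derivatives $\rho^{[\beta']}$ of the parabolically induced representation $\rho=\Ind_{\widetilde{P}_{\beta rc}}^{\GL_{rkc}^{(m)}}(\otimes_i\rho_i)$ using a Bernstein--Zelevinsky filtration, lift the hypothesis that each $\rho_i$ is $(rk_i,c)$ through the filtration, and then extract the dimension of the top derivative from the unique surviving piece.

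\textbf{Step 1: BZ filtration of derivatives.} I set up the analogue for our covering $\GL_d^{(m)}$ of the classical formula: the restriction of $\rho^{(j)}$ to $\widetilde{Y}_{rkc-j}$ carries a filtration whose successive subquotients are of the form $\Ind(\rho_1^{(j_1)}\otimes\cdots\otimes\rho_d^{(j_d)})$ as $(j_1,\ldots,j_d)$ ranges over compositions of $j$ with $0\le j_i\le rk_ic$. The functors $\Phi^-,\Psi^-$ extend to $\GL_d^{(m)}$ by the local exchange of roots technique of \cite{GRS5} (already used in \cite{Kable} for other coverings), and the key ingredients — block compatibility of the cocycle $\sigma^{\diamondsuit}$ (equation~\eqref{eq:block compatibility on Levi of P}) and the splitting of unipotent radicals — make the filtration formally identical to the linear case. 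Iterating this for the composition $[\beta']=(n_1,\ldots,n_\ell)$, I obtain a filtration of $\rho^{[\beta']}$ whose subquotients are parabolically induced from tensor products $\otimes_i\rho_i^{[\beta^{(i)}]}$, where each $\beta^{(i)}$ is a partition of some $N_i\le rk_ic$ with $\sum_iN_i=|\beta'|$ and the parts of the $\beta^{(i)}$ together make up $\beta'$.

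\textbf{Step 2: Vanishing for $\beta'\succsim((rk)^c)$.} For such $\beta'$, I need every subquotient in the filtration of $\rho^{[\beta']}$ to vanish. This reduces to the combinatorial claim: in any distribution of the parts of $\beta'$ among $d$ bins with bin $i$ receiving a partition $\beta^{(i)}$ of total size at most $rk_ic$, at least one $\beta^{(i)}$ satisfies $\beta^{(i)}\succsim ((rk_i)^c)$. Indeed, if no such $\beta^{(i)}$ existed then each $\beta^{(i)}\preceq((rk_i)^c)$ strictly or would equal it; summing the dominance inequalities across $i$ using $\sum rk_i=rk$ and $\sum rk_ic=rkc$ contradicts $\beta'\succsim((rk)^c)$. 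The corresponding factor $\rho_i^{[\beta^{(i)}]}$ then vanishes because $\rho_i$ is $(rk_i,c)$, hence the whole subquotient is zero.

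\textbf{Step 3: One-dimensionality for $\beta'=((rk)^c)$.} Here the only distribution surviving the vanishing argument is $\beta^{(i)}=((rk_i)^c)$ for every $i$, in which case $N_i=rk_ic$. The corresponding subquotient is parabolically induced from a tensor product of one-dimensional spaces $\rho_i^{((rk_i)^c)}$ over $\widetilde{M}_{\beta rc}$ with all factors having become $\GL_0$, so the induced representation is itself one-dimensional. Combining with Step~2 yields $\dim\rho^{((rk)^c)}=\prod_i\dim\rho_i^{((rk_i)^c)}=1$.

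\textbf{Main obstacle.} The principal technical point is Step~1 — verifying that the Bernstein--Zelevinsky filtration for $\Phi^-,\Psi^-$ on parabolically induced representations goes through on the covering $\GL_d^{(m)}$ and is compatible with iteration. This requires the mirabolic conjugations to respect the splittings of $N_{\GL_d}$ and $N^-_{\GL_d}$ (available via \eqref{eq:sigma conjugate v by h}--\eqref{eq:sigma conjugate v- to v by h} and Lemma~\ref{lemma:conjugation commutes}) and the inducing data of the subquotients to remain genuine with the correct central character, which is where \eqref{eq:block compatibility on Levi of P} and the identification \eqref{eq:M beta as a quotient} of $\widetilde{M}_{\beta}$ are essential. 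The combinatorial claim in Step~2 is routine once the filtration is in place.
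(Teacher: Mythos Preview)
Your overall strategy --- invoke the Leibniz rule for Bernstein--Zelevinsky derivatives of parabolically induced representations, which carries over to $\GL_{rkc}^{(m)}$ because the Levi blocks commute via \eqref{eq:block compatibility on Levi of P} --- is exactly the paper's approach; the paper's own proof is a one-line appeal to \cite[4.5]{BZ2} and \cite[Proposition~3]{CFK}.

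There is, however, a genuine gap in your Steps~1--2. The iterated Leibniz rule does not ``distribute the parts of $\beta'$ among $d$ bins''. Writing $[\beta']=(n_1,\ldots,n_\ell)$, each step of the Leibniz rule \emph{splits} a single derivative: the subquotients of $\rho^{[\beta']}$ are indexed by nonnegative matrices $(j_i^{(t)})_{1\le i\le d,\,1\le t\le\ell}$ with column sums $\sum_i j_i^{(t)}=n_t$, and the $i$-th tensor factor is $\rho_i^{(j_i^{(1)},\ldots,j_i^{(\ell)})}$ --- an iterated derivative along a composition that is in general \emph{not} decreasing, hence not directly of the form $\rho_i^{[\gamma]}$ to which the $(rk_i,c)$ hypothesis applies. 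Your dominance-sum argument, phrased for bin-distributed partitions $\beta^{(i)}$, therefore does not address the actual filtration. The correct (and simpler) observation is that $\beta'\succsim((rk)^c)$ forces $n_1>rk$; then in every subquotient some $j_i^{(1)}>rk_i$, and the $(rk_i,c)$ hypothesis gives $\rho_i^{(j)}=0$ for $j>rk_i$ (via the orbit $(j,1^{rk_ic-j})\succsim((rk_i)^c)$), so that subquotient vanishes already at the first step. For the one-dimensionality one inducts on $c$ after checking that $\rho_i^{(rk_i)}$ is $(rk_i,c-1)$, which follows because $(rk_i,\gamma')\succsim((rk_i)^c)$ whenever $\gamma'\succsim((rk_i)^{c-1})$.
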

\begin{proof}
Since direct factors of Levi subgroups of $\GL_{rkc}$ do commute in $\GL_{rkc}^{(m)}$, the ``Leibniz rule" for the derivative \cite[4.5]{BZ2} applies, and the
result follows from the definition of $(rk,c)$ representations using repeated derivatives. See also
\cite[Proposition~3]{CFK}.
\end{proof}

With the aid of this result we can realize $(rk,c)$ representations by virtue of a Jacquet type integral. With the notation of Proposition~\ref{prop:rodier non}, we have a representation $\rho=\Ind_{\widetilde{P}_{\beta rc}}^{\GL_{rkc}^{(m)}}(\otimes_{i=1}^d\rho_i)$ which is of type $(rk,c)$, and further assume we have an $(rk,c)$ representation $\rho_0$ which is a quotient of $\rho$. Since Jacquet functors are exact, this means that $\rho_0$ is the unique $(rk,c)$ constituent of $\rho$. Denote $\beta'=(\beta_d,\ldots,\beta_1)$. The following Jacquet integral formally defines an $(rk,c)$ functional on the space of $\rho$:
\begin{align}\label{eq:(k,c)  functional using an integral}
\int\limits_{V_{\beta'rc}}\xi(\langle w_{\beta rc}v,1\rangle)\psi^{-1}(v)\,dv.
\end{align}
To ensure convergence one must twist the inducing data using auxiliary complex parameters. Over non-archimedean fields,
the uniqueness of the $(rk,c)$ functional on the twisted space of $\rho$ implies \eqref{eq:(k,c)  functional using an integral} admits analytic continuation, which becomes a (nonzero) $(rk,c)$ functional on $\rho$ itself and factors through $\rho_0$ by our assumptions on the latter. This provides a realization of $W_{\psi}(\rho_0)$. For more details and the archimedean case see \cite[Proposition~3]{CFK} and \cite[\S~2.3]{CFK}.

We describe a second realization of $(rk,c)$ models, based on decompositions of $c$. Fix $0<l<c$. Assume we have an $(rk,l)$ (resp., $(rk,c-l)$)
representation $\rho_l$ (resp., $\rho_{c-l}$) and an $(rk,c)$ representation $\rho_0$. Further assume
\begin{align}\label{rep:composition lemma output space before W}
\rho_0\subset \Ind_{\widetilde{P}_{(rkl,rk(c-l))}}^{\GL_{rkc}^{(m)}}((
W_{\psi}(\rho_{l})\otimes W_{\psi}(\rho_{c-l}))\delta_{P_{(rkl,rk(c-l))}}^{-1/(2rk)}).
\end{align}
Set
\begin{align*}
&\kappa=\kappa_{l,c-l}=\left(\begin{array}{cccccccc}I_l\\0&0&I_l\\0&0&0&0&I_l&\ddots\\&&&&&&I_l&0\\0&I_{c-l}\\0&0&0&I_{c-l}&&\ddots\\&&&&&&&I_{c-l}\end{array}\right)\in\GL_{rkc}.
\end{align*}
(Here $I_l,I_{c-l}$ appear $rk$ times.) For $v=(v_{i,j})_{1\leq i,j\leq rk}\in V_{(c^{rk})}$, write each $v_{i,j}\in\Mat_c$ in the form
\begin{align*}
\left(\begin{smallmatrix}v_{i,j}^1&v_{i,j}^2\\v_{i,j}^3&v_{i,j}^4\end{smallmatrix}\right),\qquad
v_{i,j}^1\in\Mat_{l},\quad v_{i,j}^4\in\Mat_{c-l}.
\end{align*}
Let $V_{(c^{rk})}^t<V_{(c^{rk})}$ be the subgroup obtained by deleting the blocks $v_{i,j}^{t'}$ for all $i<j$ and $t'\ne t$,
where $1\leq t\leq 4$. Put $V=V^3$. Then we have the following integral
\begin{align}\label{eq:mnk functional using w_{n,m,k}}
\int\limits_{V}\xi(\langle \kappa,1\rangle\langle v,1\rangle)\,dv,
\end{align}
where $\xi$ belongs to the space of \eqref{rep:composition lemma output space before W}.

As explained in \cite[\S~2.6]{me12} and
\cite[Proposition~62]{me12} (see also \cite[\S~2.4]{CFK}), \eqref{eq:mnk functional using w_{n,m,k}} is absolutely convergent and nonzero on any summand
of \eqref{rep:composition lemma output space before W}, and is an $(rk,c)$ functional.
In particular we again obtain a realization of $W_{\psi}(\rho_0)$.

Let $\rho$ be an $(rk,c)$ representation.
By definition, the group $\widetilde{\GL}_c^{\triangle}$ acts by a genuine character on $J_{V_{(c^{rk})},\psi}(\rho)$.
By Proposition~\ref{proposition:sigma * and sigma on GLd} there is $\varsigma_{\triangle,c}\in\mathrm{C}^1(\GL_c,\mu_m)$ such that
\begin{align}\label{eq:sigma rkc diamondsuit on G triangle}
\sigma_{rkc}^{\diamondsuit}(b^{\triangle},{b'}^{\triangle})=
(\sigma_{c}^{\diamondsuit}(b,b'))^{rk}=
\left(\frac{\varsigma_{\triangle,c}(b)\varsigma_{\triangle,c}(b')}{\varsigma_{\triangle,c}(bb')}\right)^{rk}(\det b,\det b')_{m/r}^{k}.
\end{align}
(If $m|rk$, $(\det b,\det b')^k_{m/r}\equiv1$, otherwise $(\det b,\det b')^k_{m/r}=(\det b,\det b')_2$.)
Hence a genuine character $\eta_{\triangle}$ of $\widetilde{\GL}_c^{\triangle}=\GL_c[\sigma_{rkc}^{\diamondsuit}]$
is of the form
$\eta_{\triangle}(\langle b^{\triangle},\varsigma_{\triangle,c}(b)^{-rk}\epsilon\rangle)=\varepsilon(\epsilon)[\gamma_{\psi'}(\det b)]\eta'(\det b)$,
where $\psi'$ is a nontrivial additive character of $F$, $\gamma_{\psi'}$ is omitted if $m|rk$, and $\eta'$ is a quasi-character of $F^*$. Changing $\psi'$ amounts to multiplying $\eta'$ by a square-trivial character of $F^*$. By \eqref{eq:sigma rkc diamondsuit on G triangle} there is a genuine character $\eta_{\triangle}$ such that
\begin{align}\label{eq:def of eta on all of GLc triangle}
\lambda(\rho(b)\xi)=\eta_{\triangle}(b)\lambda(\xi),\qquad\forall b\in \widetilde{\GL}_c^{\triangle}.
\end{align}
Note that $\eta_{\triangle}$ is independent of $\lambda$.
By \cite[Proposition~36]{me12}, $b^{\triangle}\mapsto\langle b^{\triangle},\varsigma_{*,c}^{-rk}(b)\rangle$ is the unique splitting of $\SL_c^{\triangle}$ in $\GL_{rkc}^{(m)}$, therefore $\varsigma_{\triangle,c}^{rk}(b)=\varsigma_{*,c}^{rk}(b)$ for all
$b\in\SL_c$. If $\rho$ is unramified and there is $\lambda$ which is nonzero on an unramified vector (this might be difficult to show),
so is $\eta_{\triangle}$.

For $a,a'\in F^*$, $\sigma_{rkc}^{\diamondsuit}(aI_{rkc},a'I_{rkc})=(a,a')_{m/r}^{kc}$. Thus we can also fix a genuine character $\eta$ of the preimage of $(C_{\GL_c})^{\triangle}\cong F^*$ in $\GL_{rkc}^{(m)}$, which depends on $\gamma_{\psi'}$ if $m\nmid rkc$, such that
\begin{align}\label{eq:def of eta}
\lambda(\rho(\langle aI_{rkc},1\rangle)\xi)=\eta(\langle a,1\rangle)\lambda(\xi),\qquad\forall a\in F^*.
\end{align}
Of course \eqref{eq:def of eta} and \eqref{eq:def of eta on all of GLc triangle} are by definition compatible, i.e.,
$\eta(\langle a,1\rangle)=\eta_{\triangle}(\langle aI_c,1\rangle)$.
If $\rho$ is unramified and $\lambda$ can be chosen as above, $\eta$ is unramified, i.e., trivial on $\{\langle y,1\rangle:y\in\mathcal{O^*}\}$. This is because $\eta^{\diamondsuit}_{rkc}$ is trivial on $T_{\GL_{rkc}}\cap K_{\GL_{rkc}}$. Also by the definition if we replace $\rho$ with $\rho^*$ in \eqref{eq:def of eta} we obtain $\eta^{-1}$ (use ${}^*\langle aI_{rkc},1\rangle=\langle a^{-1}I_{rkc},1\rangle$). Moreover since $a^rI_{rkc}\in C_{r,rkc}$, if $\rho$ admits a central character, $\eta(\langle a,1\rangle)^r=\eta(\langle a,1\rangle^r)=\eta(\langle a^r,1\rangle)=\rho(\langle a^rI_{rkc},1\rangle)$.

\subsection{The representation $\rho_c(\tau)$}\label{the representation rho_c(tau)}

Let $F$ be a local field. We describe a map $\rho_c$ from genuine irreducible generic representations of
$\GL_k^{(m)}$ to $(rk,c)$ representations. We recall that a generic representation of $\GL_{rkc}^{(m)}$ is a representation which affords a Whittaker model, which
is in general not unique even if the representation is irreducible.
All conjectures and results in this section are known for $m=1$ (see \cite[\S~2.2]{CFK} and the references therein) and proved here
if $r=1$ or $k=1$. We stress that all conjectures in this section will henceforth be assumed to hold.

Let $\tau$ be a genuine essentially tempered generic representation of $\GL_k^{(m)}$ ($\tau$ is in particular irreducible, see \S~\ref{reps}). According to the Langlands Quotient Theorem, proved
for covering groups over non-archimedean fields by Ban and Jantzen \cite{BJ,BanJantzen2016}, the representation
\begin{align}\label{eq:rho c tempered tau}
\Ind_{\widetilde{P}_{(k^{rc})}}^{\GL_{rkc}^{(m)}}((\tau\otimes \ldots \otimes \tau)\delta_{P_{(k^{rc})}}^{1/(2rk)})
\end{align}
has a unique irreducible quotient which we denote by $\rho_c(\tau)$. Equivalently, $\rho_c(\tau)$ is the unique irreducible summand of
\begin{align}\label{eq:rho c tempered tau sub}
\Ind_{\widetilde{P}_{(k^{rc})}}^{\GL_{rkc}^{(m)}}((\tau\otimes \ldots \otimes \tau)\delta_{P_{(k^{rc})}}^{-1/(2rk)}).
\end{align}
In particular $\rho_1(\tau)$ is the unique irreducible quotient of
\begin{align*}
\Ind_{\widetilde{P}_{(k^{r})}}^{\GL_{rk}^{(m)}}((\tau\otimes \ldots \otimes \tau)\delta_{P_{(k^{r})}}^{1/(2rk)}),
\end{align*}
and by transitivity of induction $\rho_c(\tau)$ is the unique irreducible quotient of
\begin{align}\label{eq:rho c tempered tau quotient using rho 1 and transitivity}
\Ind_{\widetilde{P}_{((rk)^{c})}}^{\GL_{rkc}^{(m)}}((\rho_1(\tau)\otimes \ldots \otimes \rho_1(\tau))\delta_{P_{((rk)^{c})}}^{1/(2rk)}).
\end{align}

\begin{corollary}\label{corollary:tempered}
Assume $\tau$ is genuine essentially tempered generic and for $0<l<c$, $\rho_{l}(\tau)$ and $\rho_{c-l}(\tau)$ are
$(rk,l)$ and $(rk,c-l)$, respectively. Then $\rho_c(\tau)$ is embedded in the r.h.s.~ (right hand side) of \eqref{rep:composition lemma output space before W} and
$W_{\psi}(\rho_c(\tau))$ can be realized using \eqref{eq:mnk functional using w_{n,m,k}}.
\end{corollary}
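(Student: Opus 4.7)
The plan is to prove the claim in two stages: first, establish the embedding of $\rho_c(\tau)$ into the right hand side of \eqref{rep:composition lemma output space before W}; second, invoke the already verified properties of the integral \eqref{eq:mnk functional using w_{n,m,k}} to realize $W_\psi(\rho_c(\tau))$.

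First, I would use transitivity of normalized parabolic induction through the chain $P_{(k^{rc})}\subset P_{(rkl,rk(c-l))}\subset\GL_{rkc}$. By \eqref{eq:block compatibility on Levi of P} the direct factors of Levi subgroups commute in $\GL_{rkc}^{(m)}$, so induction in stages is valid on the covering level. Applied to \eqref{eq:rho c tempered tau sub}, this rewrites that induced representation as $\Ind_{\widetilde{P}_{(rkl,rk(c-l))}}^{\GL_{rkc}^{(m)}}(\sigma_l\otimes\sigma_{c-l})$ where $\sigma_l=\Ind_{\widetilde{P}_{(k^{rl})}}^{\GL_{rkl}^{(m)}}((\tau^{\otimes rl})\delta_{P_{(k^{rl})}}^{-1/(2rk)})$ and similarly for $\sigma_{c-l}$, with the residual modulus character on the outer induction giving exactly the $\delta_{P_{(rkl,rk(c-l))}}^{-1/(2rk)}$ normalization appearing in \eqref{rep:composition lemma output space before W}. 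By the definition of $\rho_l(\tau)$ and $\rho_{c-l}(\tau)$ as unique irreducible summands of $\sigma_l$ and $\sigma_{c-l}$ respectively, one has embeddings $\rho_l(\tau)\hookrightarrow\sigma_l$ and $\rho_{c-l}(\tau)\hookrightarrow\sigma_{c-l}$, which induce an embedding $\Ind_{\widetilde{P}_{(rkl,rk(c-l))}}(\rho_l(\tau)\otimes\rho_{c-l}(\tau)\delta^{-1/(2rk)})\hookrightarrow\Ind(\sigma_l\otimes\sigma_{c-l})$.

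Second, I would argue that the unique summand $\rho_c(\tau)$ of the outer induction actually lies in this subrepresentation. The cleanest route is via Frobenius reciprocity: the covering analogue of the geometric lemma produces a filtration of $J_{V_{(rkl,rk(c-l))}^-}(\rho_c(\tau))$ whose leading piece is $\rho_l(\tau)\otimes\rho_{c-l}(\tau)$ (with the appropriate twist), since $\rho_c(\tau)$ is constructed from a single essentially tempered generic $\tau$ and its Langlands data are determined by a Levi which factors through $\GL_{rkl}\times\GL_{rk(c-l)}$. Therefore a nontrivial quotient of this Jacquet module onto $\rho_l(\tau)\otimes\rho_{c-l}(\tau)$ exists, and Frobenius reciprocity yields the desired embedding. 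Now, by hypothesis, $\rho_l(\tau)$ and $\rho_{c-l}(\tau)$ are $(rk,l)$ and $(rk,c-l)$, hence each is isomorphic to its unique model $W_\psi(\rho_l(\tau))$ and $W_\psi(\rho_{c-l}(\tau))$; substituting these yields the embedding into the right hand side of \eqref{rep:composition lemma output space before W}.

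Third, as established in the discussion immediately following \eqref{eq:mnk functional using w_{n,m,k}}, the integral is absolutely convergent and defines a nonzero $(rk,c)$-functional on any summand of \eqref{rep:composition lemma output space before W}. Restricting this functional along the embedding from the previous step produces a nonzero $(rk,c)$-functional on $\rho_c(\tau)$, and by uniqueness of the $(rk,c)$-model of $\rho_c(\tau)$ (which needs $\rho_c(\tau)$ itself to be an $(rk,c)$-representation, as follows from Proposition~\ref{prop:rodier non} applied to the presentation \eqref{eq:rho c tempered tau quotient using rho 1 and transitivity} together with the $(rk,1)$-property of $\rho_1(\tau)$) the resulting mapping $\xi\mapsto[g\mapsto\lambda(\rho_c(\tau)(g)\xi)]$ is precisely the realization of $W_\psi(\rho_c(\tau))$.

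The main obstacle is the second step: pinning down that the unique summand $\rho_c(\tau)$ of the large induction actually lies in the sub-induction $\Ind(\rho_l(\tau)\otimes\rho_{c-l}(\tau))$ rather than in some other composition factor of $\Ind(\sigma_l\otimes\sigma_{c-l})$. The bookkeeping for the Langlands data and the central characters requires care on the covering level, where the Langlands classification for covering groups from \cite{BJ,BanJantzen2016} must be combined with the block compatibility \eqref{eq:block compatibility on Levi of P} to ensure that the unique-summand property is inherited when passing between the $\tau^{\otimes rc}$ and $\rho_1(\tau)^{\otimes c}$ presentations.
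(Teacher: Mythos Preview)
Your first and third steps are essentially the same as the paper's. The difference is in your second step, which you correctly flag as the ``main obstacle'' --- but in fact there is no obstacle, and the paper's argument dispatches it in one line.

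Recall that $\rho_c(\tau)$ is by definition the \emph{unique irreducible subrepresentation} of the co-standard module \eqref{eq:rho c tempered tau sub}. Your step~1 already shows (via transitivity and the embeddings $W_\psi(\rho_l(\tau))\cong\rho_l(\tau)\hookrightarrow\sigma_l$, $W_\psi(\rho_{c-l}(\tau))\cong\rho_{c-l}(\tau)\hookrightarrow\sigma_{c-l}$) that the right hand side of \eqref{rep:composition lemma output space before W} is a \emph{nonzero subrepresentation} of \eqref{eq:rho c tempered tau sub}. But any nonzero subrepresentation of a representation with simple socle must contain that socle. Hence $\rho_c(\tau)$ lies inside the right hand side of \eqref{rep:composition lemma output space before W}, and you are done.

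Your proposed route through the geometric lemma and Frobenius reciprocity is not wrong in spirit, but it is unnecessary, and as written it is incomplete: the geometric lemma computes Jacquet modules of the \emph{full} induced representation, not of the subquotient $\rho_c(\tau)$, so identifying the ``leading piece'' of $J_{V^-}(\rho_c(\tau))$ as $\rho_l(\tau)\otimes\rho_{c-l}(\tau)$ would itself require justification (and on the covering group this is not available off the shelf). The paper's argument sidesteps all of this by exploiting the unique-socle property directly.
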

\begin{proof}
By definition $\rho_{l}(\tau)$ is embedded in
the corresponding space \eqref{eq:rho c tempered tau sub}, and being irreducible, it is isomorphic to
$W_{\psi}(\rho_{l}(\tau))$. Similarly for $\rho_{c-l}(\tau)$.
Now use transitivity of induction.
\end{proof}
\begin{conjecture}\label{conjecture:derivatives of rho c tau}
The representation $\rho_1(\tau)$ is $(rk,1)$. Moreover, if $F$ is non-archimedean, for all $c>1$ the highest derivative of $\rho_c(\tau)$ is $\rho_c^{(rk)}(\tau)$ and equals $|\det|^{(r-1)/2}\rho_{c-1}(\tau)$.
\end{conjecture}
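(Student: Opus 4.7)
The first assertion—that $\rho_1(\tau)$ is an $(rk,1)$ representation—amounts to showing that $\rho_1(\tau)$ admits a one-dimensional space of Whittaker functionals for the character $\psi_{rk}$. Condition (1) of the $(rk,c)$ definition is vacuous when $c=1$ because $(rk)$ is the unique maximum in the dominance order on partitions of $rk$, so nothing strictly dominates it or is incomparable to it. My plan is to produce the functional by meromorphically continuing the Jacquet-type integral \eqref{eq:(k,c)  functional using an integral} applied to $\Ind_{\widetilde{P}_{(k^r)}}^{\GL_{rk}^{(m)}}(\tau^{\otimes r}\delta^{1/(2rk)})$, then use that $\rho_1(\tau)$ is the unique irreducible quotient of this induced module. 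In the two unconditional cases the argument reduces to classical results: for $r=1$ one has $\rho_1(\tau)=\tau$ by definition and $\tau$ is assumed generic, while for $k=1$ the representation $\rho_1(\tau)$ is a theta/exceptional representation of $\GL_r^{(m)}$ whose Whittaker uniqueness on the Brylinski--Deligne cover in play is known from Savin, Kazhdan--Patterson and Gao.

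For the derivative statement ($F$ non-archimedean, $c>1$), I would induct on $c$, with the $c=1$ case furnished by the $(rk,1)$ property above. Using transitivity of induction in \eqref{eq:rho c tempered tau quotient using rho 1 and transitivity}, I would present $\rho_c(\tau)$ as the unique irreducible quotient of
\[
\Ind_{\widetilde{P}_{(rk(c-1),\,rk)}}^{\GL_{rkc}^{(m)}}\!\bigl(\rho_{c-1}(\tau)|\det|^{1/2}\otimes\rho_1(\tau)|\det|^{-(c-1)/2}\bigr),
\]
where the exponents record the splitting of the symmetric twist $\delta_{P_{(k^{rc})}}^{1/(2rk)}$ between the two blocks. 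Because direct factors of Levi subgroups commute in $\GL_{rkc}^{(m)}$ by \eqref{eq:block compatibility on Levi of P}, the Bernstein--Zelevinsky derivative formalism \cite{BZ1,BZ2} applies verbatim to the cover. The Leibniz rule then yields a filtration of the $rk$-th derivative of this induced module whose graded pieces have the form $\Ind(\rho_{c-1}(\tau)^{(i)}\otimes\rho_1(\tau)^{(rk-i)})$ (with the standard BZ twists) for $0\le i\le rk$.

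The decisive step is that only the top piece $i=0$ survives after projection to $\rho_c(\tau)^{(rk)}$. Proposition~\ref{prop:rodier non} applied to the above induction gives that $\rho_c(\tau)$ is itself $(rk,c)$, which kills every Jacquet module along partitions strictly dominating $((rk)^c)$; any nonzero contribution from a piece with $0<i<rk$ would, upon iterating derivatives, produce such a Jacquet module and contradict this vanishing. Combined with the inductive hypothesis $\rho_{c-1}(\tau)^{(rk)}=|\det|^{(r-1)/2}\rho_{c-2}(\tau)$ and the one-dimensionality of $\rho_1(\tau)^{(rk)}$, a direct accumulation of (i) the $\delta^{1/2}$ from the normalized induction, (ii) the BZ shift built into the Leibniz rule, and (iii) the exponents $\pm(c-1)/2,1/2$ from the splitting, leaves exactly $|\det|^{(r-1)/2}\rho_{c-1}(\tau)$. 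The main obstacle is precisely this \emph{collapse} of the filtration together with the bookkeeping of the twist exponent $(r-1)/2$; a cleaner route for the collapse is a geometric argument on mirabolic double cosets along the lines of \cite[\S~2.2]{CFK}, and the twist is pinned down uniquely by specializing to $r=1$ (the linear Speh identity, with no twist) and to $k=1$ (where the Jacquet module of the exceptional $\rho_c(\tau)$ has been computed explicitly in the cited works on $\GL_{rc}^{(m)}$).
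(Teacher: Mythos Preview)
Your treatment of the first assertion is broadly right in the two stated cases, though for $k=1$ with ramified data the paper does more than cite existing results: the $(r,1)$ property of $\rho_1(\tau)$ uses a globalization argument (Gao, building on Kazhdan--Patterson), and separately one must show the exceptional representation $\Theta_l$ has no Whittaker functional for $l>r$, which the paper reduces to $l=r+1$ and handles by a Jacquet--Shalika Fourier-inversion argument on the space of Whittaker functionals. This vanishing is not a direct citation.

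Your derivative argument has a genuine gap. First, the appeal to Proposition~\ref{prop:rodier non} is a misapplication: that proposition inducts pieces of type $(rk_i,c)$ with \emph{fixed} $c$ and varying $k_i$, whereas your presentation $\Ind(\rho_{c-1}(\tau)\otimes\rho_1(\tau))$ has pieces of types $(rk,c-1)$ and $(rk,1)$, so it does not apply. More seriously, the ``collapse'' of the Leibniz filtration is exactly the content of the statement and you do not prove it. The filtration lives on the induced module $I$, and knowing $\rho_c(\tau)^{(rk)}$ is a quotient of $I^{(rk)}$ does not identify it with a single graded piece; your claim that the intermediate pieces would violate the $(rk,c)$ vanishing is not justified, since the $(rk,1)$ property of $\rho_1(\tau)$ says nothing about $\rho_1(\tau)^{(j)}$ for $0<j<rk$. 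Note also that your inductive step makes no use of $r=1$ or $k=1$, so if it worked it would prove the full conjecture, which is open.

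The paper's route for $k=1$ is different and avoids the Leibniz filtration entirely. It exploits the explicit (un-normalized) Jacquet module formula for exceptional representations,
\[
J_{V_{(rc-l,l)}}(\Theta_{rc,m,r,\vartheta})=\delta_{V_{(rc-l,l)}}^{(r-1)/(2r)}\bigl(\Theta_{rc-l,m,r,\vartheta}\otimes\Theta_{l,m,r,\vartheta}\bigr),
\]
which with $l=r$ and the one-dimensionality of $\Theta_r^{(r)}$ immediately yields $\rho_c(\tau)^{(r)}=|\det|^{(r-1)/2}\rho_{c-1}(\tau)$. That this is the \emph{highest} derivative then reduces to $\Theta_l^{(l)}=0$ for $l>r$, handled as above. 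So the key input is structural information about exceptional representations (available when $k=1$), not a formal filtration argument on a parabolic induction.
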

\begin{proposition}
Conjecture~\ref{conjecture:derivatives of rho c tau} holds for $r=1$ or $k=1$.
\end{proposition}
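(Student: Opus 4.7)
My plan is to split into the two cases $r=1$ and $k=1$ and handle each by reduction to tractable settings.

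The case $r=1$ forces $m\in\{1,2\}$. When $m=1$ the statement is the linear one, which follows from the proof of \cite[Proposition~2]{CFK}. When $m=2$, I would invoke the bijection $\tau\mapsto\gamma_{\psi'}\otimes\tau$ from \S~\ref{covering GL r=1}, which identifies irreducible representations of $\GL_k$ with genuine irreducible representations of $\GL_k^{(2)}$, commutes with parabolic induction (up to an overall $\gamma_{\psi'}$-twist), matches Whittaker models via $W\mapsto\gamma_{\psi'}W$, and respects Jacquet functors and derivatives. Consequently $\rho_c(\gamma_{\psi'}\otimes\tau)\cong\gamma_{\psi'}\otimes\rho_c(\tau)$, and since $|\det|^{(r-1)/2}=1$ in this case, the $(k,c)$-condition and the highest-derivative formula transfer directly from the linear statement.

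For $k=1$, $\tau$ is a genuine character of $\GL_1^{(m)}$ and $\rho_c(\tau)$ is the Langlands quotient of the full principal series on $\GL_{rc}^{(m)}$ induced from $\tau^{\otimes rc}\delta_B^{1/(2r)}$. I would first establish that $\rho_1(\tau)$ is $(r,1)$: since the inducing character $\tau^{\otimes r}$ on $\widetilde{T}_{\GL_r}$ is Weyl-invariant in the appropriate sense, and $\tau$ is essentially tempered, analyzing the standard intertwining operators on the space of Whittaker functionals (via the Iwahori--Hecke description of \cite{Savin6} underlying Lemma~\ref{lemma:unr unitary Satake}) shows that only a one-dimensional space of Whittaker functionals survives on the Langlands quotient. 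Next I would induct on $c$: apply Corollary~\ref{corollary:tempered} with $l=1$ to embed
\[
\rho_c(\tau)\hookrightarrow \Ind_{\widetilde{P}_{(r,r(c-1))}}^{\GL_{rc}^{(m)}}\bigl((\rho_1(\tau)\otimes \rho_{c-1}(\tau))\delta_{P_{(r,r(c-1))}}^{-1/(2r)}\bigr),
\]
and then use the Leibniz rule of \cite[4.5]{BZ2} (valid here by Proposition~\ref{prop:rodier non} and the commutativity of the direct factors of Levi subgroups in the cover) to compute the $r$-th derivative from the derivatives of the two factors. Since $\rho_1(\tau)$ is $(r,1)$, its $r$-th derivative is one-dimensional, and the Leibniz expansion collapses to a single term that equals $\rho_{c-1}(\tau)$ up to the shift coming from the discrepancy between $\delta_P^{1/(2r)}$ and the standard normalization built into $\Psi^-(\Phi^-)^{r-1}$; a direct computation of modular characters identifies this shift as $|\det|^{(r-1)/2}$.

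The main obstacle will be the Whittaker uniqueness for $\rho_1(\tau)$ in the $k=1$ case, since principal series of covering groups typically have non-unique Whittaker models, and the one-dimensionality on the Langlands quotient must be teased out from the special symmetry of $\tau^{\otimes r}$ together with the action of intertwining operators on the genuine Stone--von Neumann representation of $\widetilde{T}$. A secondary but delicate point is the bookkeeping of modular characters needed to pin down the $|\det|^{(r-1)/2}$ twist, since the definition of $\rho_c(\tau)$ uses the non-standard exponent $1/(2r)$ in $\delta_P^{1/(2r)}$ in place of the usual $1/2$ employed in the derivative functors.
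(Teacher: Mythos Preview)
Your treatment of $r=1$ is fine and matches the paper. The $k=1$ case, however, has real gaps.

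First, the $(r,1)$ property of $\rho_1(\tau)$: your Iwahori--Hecke/intertwining argument only sees unramified data, and the statement must hold for ramified $\tau$ as well. The paper identifies $\rho_c(\tau)$ with (a twist of) Gao's exceptional representation $\Theta_{rc,m,r,\vartheta}$; for unramified data it then quotes \cite[Proposition~3.5]{Gao4}, and for ramified data it invokes the globalization argument of \cite[Theorem~II.2.5]{KP} as adapted in \cite[Proposition~5.7]{Gao2018b}. Your proposal has no mechanism for the ramified case.

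Second, and more seriously, your Leibniz step does not do what you claim. Knowing $\rho_1(\tau)$ is $(r,1)$ only says $\dim\rho_1(\tau)^{(r)}=1$; it says nothing about $\rho_1(\tau)^{(i)}$ for $0<i<r$ (the partition $(r)$ is maximal, so the vanishing condition in the $(rk,c)$ definition is vacuous here). Hence the Leibniz filtration of the $r$-th derivative of $\rho_1(\tau)\times\rho_{c-1}(\tau)$ has many potentially nonzero pieces $\rho_1(\tau)^{(i)}\times\rho_{c-1}(\tau)^{(r-i)}$ and does not collapse. Moreover, even if it did, Leibniz computes derivatives of the full induced representation, whereas $\rho_c(\tau)$ is only a proper subrepresentation; exactness gives you an inclusion, not an equality. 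The paper avoids this by using the exact Jacquet module formula
\[
J_{V_{(rc-l,l)}}(\Theta_{rc,m,r,\vartheta})=\delta_{V_{(rc-l,l)}}^{(r-1)/(2r)}(\Theta_{rc-l,m,r,\vartheta}\otimes\Theta_{l,m,r,\vartheta}),
\]
which is specific to the exceptional representation (cf.\ \cite[Theorem~5.1]{Kable}) and gives both $\rho_c(\tau)^{(r)}=|\det|^{(r-1)/2}\rho_{c-1}(\tau)$ and reduces the vanishing of higher derivatives to $\Theta_l^{(l)}=0$ for $l>r$. The latter in the ramified case (reduced to $l=r+1$) is handled by a separate argument: one shows $\dim W(\mathrm{I},\psi)=d$ for $\mathrm{I}=\Ind((\Theta_r\otimes\Theta_1)\delta^{1/(2r)})$, compares with the $d$-dimensional genuine irreducible representations of $\widetilde{C}_{\GL_{r+1}}$, and if $W(\Theta_{r+1},\psi)\ne0$ derives a contradiction via the Jacquet--Shalika Fourier inversion trick \cite{JS}. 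None of this is accessible from Leibniz alone.
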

\begin{proof}
The result for $r=1$ is clear from the linear case, see \S~\ref{covering GL r=1} and for the assertions on the derivative see
\cite[\S~6.1]{Tadic1986}.

When $F=\C$ and $k=1$, the result follows from the fact that
$\Ind_{B_{\GL_r}}^{\GL_{r}}(\delta_{B_{\GL_r}}^{1/(2r)})$ is already irreducible and generic (see e.g., \cite{Vog86}).

Assume $F$ is non-archimedean, $r>1$ and $k=1$.
In this case $\tau$ is an irreducible representation of $\GL_1^{(m)}$, constructed using a quasi-character $\chi$
of $F^*$ restricted to $F^{*r}$ (e.g., \cite[\S~13.5]{McNamara}). The definition implies $\rho_c(\tau)=\chi\Theta_{rc,m,r,\vartheta}$,
where $\Theta_{rc,m,r,\vartheta}$ is the exceptional representation of $\GL_{rc}^{(m)}$ of Gao \cite{Gao5}, who extended the construction of \cite{KP}. It is the unique irreducible quotient of a genuine principal series
induced from $\delta_{B_{\GL_{rc}}}^{1/(2r)}$ with a parameter $\vartheta$ (see \S~\ref{unr L functions} and \cite[\S~1.11]{me12}), which we
briefly denote $\mathrm{I}(\delta_{B_{\GL_{rc}}}^{1/(2r)})$. It is also the image of a standard intertwining operator
$M(J_{rc}):\mathrm{I}(\delta_{B_{\GL_{rc}}}^{1/(2r)})\rightarrow \mathrm{I}(\delta_{B_{\GL_{rc}}}^{-1/(2r)})$, given by an absolutely convergent integral because $\delta_{B_{\GL_{rc}}}^{1/(2r)}$ belongs to the positive Weyl chamber.
Also for any $0\leq l\leq rc$,
\begin{align}\label{eq:Jacquet of exceptional}
J_{V_{(rc-l,l)}}(\Theta_{rc,m,r,\vartheta})=\delta_{V_{(rc-l,l)}}^{(r-1)/(2r)}(\Theta_{rc-l,m,r,\vartheta}\otimes\Theta_{l,m,r,\vartheta}).
\end{align}
This follows as in \cite[Theorem~5.1 (1)]{Kable} for double coverings of \cite{KP} (the proof is simpler with the usual tensor). Recall the Jacquet functor here is not normalized. Note that the $l$-th derivative
$\Theta_{rc,m,r,\vartheta}^{(l)}$ factors through $J_{V_{(rc-l,l)}}(\Theta_{rc,m,r,\vartheta})$.

Assume data are unramified. Then $\Theta_{l,m,r,\vartheta}$ affords a unique Whittaker model for $l=r$ and no Whittaker model
when $l>r$, by \cite[Proposition~3.5]{Gao4}. Hence $\dim\Theta_{r,m,r,\vartheta}^{(r)}=1$ which means
$\rho_1(\tau)$ is $(r,1)$, and $\Theta_{l,m,r,\vartheta}^{(l)}=0$ for $l>r$ so that \eqref{eq:Jacquet of exceptional} implies $\Theta_{rc,m,r,\vartheta}^{(l)}=0$. In addition \eqref{eq:Jacquet of exceptional} with $l=r$ shows $\rho_c^{(r)}(\tau)=|\det|^{(r-1)/2}\rho_{c-1}(\tau)$. This completes the proof for this case.

Assume data are ramified. Then $\rho_1(\tau)$ is $(r,1)$ according to the proof of \cite[Proposition~5.7]{Gao2018b} (which used the globalization argument of \cite[Theorem~II.2.5]{KP}, note that $\widetilde{C}_{\GL_r}<\GL_r^{(m)}$ is abelian). Using \eqref{eq:Jacquet of exceptional} it remains to show $\Theta_{l,m,r,\vartheta}^{(l)}=0$ for $l>r$. Arguing as in \cite[Theorem~2.7]{me11} (i.e., applying \cite[Theorem~5.2]{BZ2}) or by \cite{Banks1998},
the case $l>r$ follows from $l=r+1$.

We can assume $\Theta_{r+1,m,r,\vartheta}$ is a quotient of $\mathrm{I}=\Ind_{\widetilde{P}_{(r,1)}}^{\GL_{r+1}^{(m)}}((\Theta_{r,m,r,\vartheta}\otimes\Theta_{1,m,r,\vartheta})\delta_{P_{(r,1)}}^{1/(2r)})$ (dualize \cite[Theorem~5.1 (6)]{Kable}). Let $\widetilde{A}$ be a maximal abelian subgroup of $\GL_1^{(m)}$ with $A<F^*$, and
$\mathcal{G}$ be a set of representatives for $A\backslash F^*$. Set $d=|\mathcal{G}|$. For a representation $\pi$ of $\GL_{r+1}^{(m)}$, let
$W(\pi,\psi)$ denote the space of $\psi$-Whittaker functionals on $\pi$ where $\psi$ is the $(r+1,1)$ character (i.e., the standard generic character). Since $\Theta_{1,m,r,\vartheta}$ is $d$-dimensional (\cite[\S~0.3]{KP}, \cite[\S~13.5]{McNamara}) and
$\Theta_{r,m,r,\vartheta}$ is $(r,1)$, $\dim W(\mathrm{I},\psi)=d$ (use \cite[Theorem~5.2]{BZ2} or argue as in \cite{Banks1998}).

Because $\Theta_{r+1,m,r,\vartheta}$ is a quotient of $\mathrm{I}$, $W(\Theta_{r+1,m,r,\vartheta},\psi)\subset W(\mathrm{I},\psi)$. In addition $W(\Theta_{r+1,m,r,\vartheta},\psi)$ is a (finite-dimensional) representation of $\widetilde{C}_{\GL_{r+1}}$. By \eqref{eq:Nice GL $2$-cocycle on torus}, for $a,a'\in F^*$,
$\sigma^{\diamondsuit}_{r+1}(aI_{r+1},a'I_{r+1})=(a,a')_{m/r}(a,a')_m^{-1}$.
Thus either $\GL_1^{(m)}\cong \widetilde{C}_{\GL_{r+1}}$ when $m$ is odd, or there is a bijection between the genuine representations of
$\GL_1^{(m)}$ and $\widetilde{C}_{\GL_{r+1}}$, given by $\pi\mapsto\pi_{\psi'}$ where $\pi_{\psi'}(\langle aI_{r+1},\epsilon\rangle)=
\gamma_{\psi'}(a)\pi(\langle a,\epsilon\rangle)$. In both cases the genuine irreducible representations of
$\widetilde{C}_{\GL_{r+1}}$ are also $d$-dimensional, hence either $\dim W(\Theta_{r+1,m,r,\vartheta},\psi)=0$ and we are done,
or $\dim W(\Theta_{r+1,m,r,\vartheta},\psi)=d$ but then $W(\mathrm{I},\psi)=W(\Theta_{r+1,m,r,\vartheta},\psi)$. Suppose the latter.

Consider $\lambda\in W(\mathrm{I},\psi)$ given by the Jacquet integral
$\lambda(f)=\int_{V_{(1,r)}}f(\langle w_{(r,1)},1\rangle\langle v,1\rangle)\psi(v_1)dv$, where $v$ is identified with a row in $F^{r}$.
This integral is absolutely convergent on $\mathrm{I}$. Indeed let $\lambda'(f)$ be the integral similar to $\lambda(f)$, but with $\psi$ omitted, and
note that the inducing data $\Theta_{r,m,r,\vartheta}$ in $\mathrm{I}$ is the image of $M(J_{r})$.
Using $J_{r+1}=\diag(J_r,1)w_{(r,1)}$ we obtain $M(J_{r+1})=\lambda'\circ M(J_r)$ ($\lambda'$ is an outer integral for
$M(J_{r+1})$). Since $M(J_{r+1})$ is absolutely convergent on the space of
$\mathrm{I}(\delta_{B_{\GL_{r+1}}}^{1/(2r)})$, so is $\lambda'(f)$ and thereby $\lambda(f)$.

For $b\in F^*$ let $\ell(b)f$ denote the left-translation of $f$ by $\langle\diag(I_{r},b),1\rangle$. Then
we have the functionals $\lambda_b\in W(\mathrm{I},\psi)$ defined by $\lambda_b(f)=\lambda(\ell(b)f)$. ($\{\lambda_b\}_{b\in\mathcal{G}}$ is a basis for $W(\mathrm{I},\psi)$, see e.g., \cite[Lemma~I.3.1]{KP}.) Since $W(\mathrm{I},\psi)=W(\Theta_{r+1,m,r,\vartheta},\psi)$, each
$\lambda_b$ factors through $\Theta_{r+1,m,r,\vartheta}$. Observe that $\Theta_{r+1,m,r,\vartheta}$ is a proper quotient of $\mathrm{I}$,
e.g., because by \eqref{eq:Jacquet of exceptional}, $J_{V_{(r,1)}}(\Theta_{r+1,m,r,\vartheta})$ is irreducible but
$J_{V_{(r,1)}}(\mathrm{I})$ is not. Thus $\Theta_{r+1,m,r,\vartheta}=\mathbb{V}\backslash\mathrm{I}$ for a representation $\mathbb{V}\ne0$
of $\GL_{r+1}^{(m)}$ and for all $b\in F^*$, $\lambda_b$ vanishes on $\mathbb{V}$. Therefore $\lambda_b(f)=0$ and equivalently
\begin{align*}
\int\limits_{V_{(1,r)}}f(\langle w_{(r,1)},1\rangle\langle v,1\rangle)\psi(bv_1)\,dv=0,\qquad\forall f\in\mathbb{V},\quad b\in F^*.
\end{align*}
We proceed using the method of \cite[p.~118]{JS}.
Fixing $v_2,\ldots,v_r$ for a moment, one can regard $f$ as a function $\xi_f(v_1)$ of $v_1\in F$, then the last identity becomes $\widehat{\xi}_f(b)=0$ for all $b\in F^*$ ($\widehat{\xi}_f$ - the Fourier transform of $\xi_f$). Since the integral is absolutely convergent, the Fourier inversion formula implies $\xi_f(0)=\int_F\widehat{\xi}_f(b)db=0$, i.e.,
\begin{align*}
\int\limits_{v\in V_{(1,r)},\,v_1=0}f(\langle w_{(r,1)},1\rangle\langle v,1\rangle)\,dv=0.
\end{align*}
Then as in \textit{loc. cit.} we eliminate the remaining coordinates and obtain $f(\langle w_{(r,1)},1\rangle)=0$ for all $f\in\mathbb{V}$
(see \cite[Proposition~62]{me12} or \S~\ref{RS The functional equation} here), which is a contradiction.
\end{proof}
We mention that Yamana \cite[Proposition~2.8]{Yamana2} proved the exceptional representations
of a double covering of $\GL_d$ of \cite{KP} do not support any Whittaker functional for $d>2$,
also using \cite{JS}. Our case is different from
\cite{Yamana2}: Here the representation $\mathrm{I}$ induced from $\Theta_{r,m,r,\vartheta}\otimes\Theta_{1,m,r,\vartheta}$ does not have a unique Whittaker model and
$\widetilde{C}_{\GL_{r+1}}$ is not abelian (as opposed to $\widetilde{C}_{\GL_{2r+1}}$ for the double coverings of \cite{KP}).

Now write a genuine irreducible generic representation $\tau$ of $\GL_k^{(m)}$ as the unique irreducible quotient of
$\Ind_{\widetilde{P}_{\beta}}^{\GL_k^{(m)}}(\otimes_{i=1}^d|\det|^{a_i}\tau_i)$ where $\tau_i$ are tempered and $a_1>\ldots>a_d$ (\cite{BJ}). Define
\begin{align}\label{rep:rho c tau in general}
\rho_c(\tau)=\Ind_{\widetilde{P}_{\beta rc}}^{\GL_{rkc}^{(m)}}(\otimes_{i=1}^d|\det|^{a_i}\rho_c(\tau_i)).
\end{align}
\begin{theorem}\label{theorem rho c tau unique}
For a genuine irreducible generic representation $\tau$ of $\GL_{k}^{(m)}$, $\rho_c(\tau)$ is $(rk,c)$.
Furthermore, when $F=\C$ and the inducing data of $\tau$ as a constituent of a principal series is in general position,
$\rho_c(\tau)$ is the $(rk,c)$ representation of \cite[\S~2.2]{CFK} constructed from $\rho_1(\tau)$.
\end{theorem}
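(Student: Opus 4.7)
My approach is to first establish the theorem for essentially tempered generic $\tau$, then use the inductive construction \eqref{rep:rho c tau in general} together with Proposition~\ref{prop:rodier non} to pass to general $\tau$, and finally compare with the \cite{CFK} construction over $\C$.

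First, suppose $\tau$ is essentially tempered generic (so $d=1$, $k=k_1$). I would prove that $\rho_c(\tau)$ is $(rk,c)$ by induction on $c$, using the characterization via derivatives. The base case $c=1$ is the first statement of Conjecture~\ref{conjecture:derivatives of rho c tau}. For the inductive step, assume $\rho_{c-1}(\tau)$ is $(rk,c-1)$. The conjecture gives that the highest derivative of $\rho_c(\tau)$ equals $\rho_c^{(rk)}(\tau)=|\det|^{(r-1)/2}\rho_{c-1}(\tau)$, so $\rho_c^{(l)}(\tau)=0$ for $l>rk$. Given any partition $\beta'\succsim((rk)^c)$ with $[\beta']=(\beta_1,\ldots,\beta_s)$, either $\beta_1>rk$ (giving vanishing immediately) or $\beta_1=rk$, in which case the dominance condition forces the reordering of $(\beta_2,\ldots,\beta_s)$ into a partition that satisfies $\succsim((rk)^{c-1})$. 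In the latter case, the exactness of the Bernstein--Zelevinsky derivative (or the pre-derivative in the archimedean setting, via \cite{GGS,AGS2015a,AGS2015b}) and the inductive hypothesis applied to $\rho_{c-1}(\tau)$ yield $\rho_c^{[\beta']}(\tau)=0$. The parallel computation for $\beta'=((rk)^c)$ gives $\dim\rho_c^{((rk)^c)}(\tau)=\dim\rho_{c-1}^{((rk)^{c-1})}(\tau)=1$, completing the step.

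Next, for a general genuine irreducible generic $\tau$, written as the Langlands quotient of $\Ind_{\widetilde{P}_{\beta}}^{\GL_k^{(m)}}(\otimes_{i=1}^d|\det|^{a_i}\tau_i)$ with $\tau_i$ essentially tempered, the definition \eqref{rep:rho c tau in general} realizes $\rho_c(\tau)$ as $\Ind_{\widetilde{P}_{\beta rc}}^{\GL_{rkc}^{(m)}}(\otimes_i|\det|^{a_i}\rho_c(\tau_i))$. Each inducing factor is $(rk_i,c)$ by the tempered case (a character twist preserves this property), so Proposition~\ref{prop:rodier non} implies that $\rho_c(\tau)$ is $(rk,c)$. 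Over $F=\C$, where the covering splits, one replaces Proposition~\ref{prop:rodier non} by \cite[Proposition~3]{CFK}, reducing to the linear case.

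For the comparison with \cite{CFK} in general position over $\C$, observe that applying \eqref{rep:rho c tau in general} with $c=1$ gives $\rho_1(\tau)=\Ind(\otimes|\det|^{a_i}\rho_1(\tau_i))$, and the construction in \cite[\S~2.2]{CFK} builds the $(rk,c)$ representation from $\rho_1(\tau)$ by an analogous inducing procedure. Transitivity of induction identifies the two constructions as a common induced representation, and the general-position hypothesis guarantees its irreducibility, so they must coincide. The main obstacle in this plan is the combinatorial verification that when $\beta_1=rk$, the tail $(\beta_2,\ldots,\beta_s)$ reorders into a partition dominating $((rk)^{c-1})$; this is an elementary but careful argument about the dominance order. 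A secondary subtlety is the archimedean treatment, but the assumption $\mu_{2m}\subset F^*$ forces $F=\C$ (hence a split covering), which reduces everything to the linear results of \cite{CFK,GGS}.
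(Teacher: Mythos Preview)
Your proposal is correct and follows essentially the same approach as the paper: derivative-based induction on $c$ for the non-archimedean essentially tempered case (the paper refers to \cite[Theorem~5]{CFK} for precisely this argument), then Proposition~\ref{prop:rodier non} (respectively \cite[Proposition~3]{CFK} over~$\C$) for the general case, and finally transitivity of induction to match the \cite{CFK} construction over~$\C$. One simplification you should note: your ``main obstacle'' is in fact vacuous, because if $\beta'\succsim((rk)^c)$ and $\beta_1=rk$, then every part of $\beta'$ is at most $rk$, so $\sum_{i=1}^j\beta_i\leq j\cdot rk$ for all $j$, i.e.\ $\beta'\leq((rk)^c)$ in dominance order, contradicting $\beta'\succsim((rk)^c)$; hence condition~(1) reduces immediately to the case $\beta_1>rk$, where the vanishing is automatic from the highest-derivative statement of Conjecture~\ref{conjecture:derivatives of rho c tau}.
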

\begin{proof}
Consider a non-archimedean field.
The essentially tempered case follows as in the proof of \cite[Theorem~5]{CFK}: Using Conjecture~\ref{conjecture:derivatives of rho c tau}, we
take the highest derivative and obtain $\rho_{c-1}(\tau)$, up to a power of $|\det|$. Repeating this $c$ times gives the genuine one-dimensional representation of $\mu_m$
because $\rho_1(\tau)$ was assumed to be $(rk,1)$.
The general case follows from the definition and Proposition~\ref{prop:rodier non}.

For $F=\C$, when $\tau$ is tempered (then $k=1$), $\rho_1(\tau)$ is unitary and
$\rho_c(\tau)$ is the irreducible $(r,c)$ representation defined in \cite[\S~2.2]{CFK} for $\rho_1(\tau)$ (see \eqref{eq:rho c tempered tau quotient using rho 1 and transitivity} and \cite[(2.2)]{CFK}). In fact
$\rho_c(\tau)=\Ind_{P_{(c^r)}}^{\GL_{rc}}(\otimes_{i=1}^{r}|\det|^{(r-2i+1)/(2r)}\tau\det_{\GL_c})$ (see \cite[p.~13, (2.5)]{CFK}).
Now by \eqref{rep:rho c tau in general}, in the general case
$\rho_c(\tau)=\Ind_{P_{(c^{rk})}}^{\GL_{rkc}}(\otimes_{j=1}^{k}\otimes_{i=1}^{r}|\det|^{a_j+(r-2i+1)/(2r)}\tau_j\det_{\GL_c})$, which is $(rk,c)$ by
\cite[Proposition~3]{CFK} (an application of \cite{AGS2015a,AGS2015b,GGS}) and is the $(rk,c)$ representation of \cite[\S~2.2]{CFK} up to a permutation of
the inducing data ($i$ and $j$).
\end{proof}

\begin{conjecture}\label{conjecture:rho c tau inductive for tempered tau}
If $\tau$ is a quotient of $\Ind_{\widetilde{P}_{\beta}}^{\GL_k^{(m)}}(\otimes_{i=1}^d\tau_i)$, where each
$\tau_i$ is genuine and supercuspidal (unitary or not), and moreover $\tau$ is essentially tempered and generic,
then for all $c\geq1$, $\rho_c(\tau)$ is a quotient of $\Ind_{\widetilde{P}_{\beta rc}}^{\GL_{rkc}^{(m)}}(\otimes_{i}\rho_c(\tau_i))$.
\end{conjecture}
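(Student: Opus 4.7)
The plan is to lift the hypothesized surjection $\tau'\twoheadrightarrow\tau$, with $\tau':=\Ind_{\widetilde{P}_{\beta}}^{\GL_k^{(m)}}(\otimes_{i}\tau_{i})$, to an analogous surjection at the level of $(rk,c)$ representations, and then argue that it factors through $\Ind_{\widetilde{P}_{\beta rc}}^{\GL_{rkc}^{(m)}}(\otimes_{i}\rho_{c}(\tau_{i}))$. By exactness of parabolic induction,
\[
\Ind_{\widetilde{P}_{(k^{rc})}}^{\GL_{rkc}^{(m)}}\!\bigl((\tau')^{\otimes rc}\delta_{P_{(k^{rc})}}^{1/(2rk)}\bigr)\twoheadrightarrow\Ind_{\widetilde{P}_{(k^{rc})}}^{\GL_{rkc}^{(m)}}\!\bigl(\tau^{\otimes rc}\delta_{P_{(k^{rc})}}^{1/(2rk)}\bigr)\twoheadrightarrow\rho_{c}(\tau),
\]
the second arrow being the defining surjection \eqref{eq:rho c tempered tau}.

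Next I would rewrite the leftmost space by induction in stages together with conjugation by a Weyl element $w\in\mathfrak{W}^{+}_{rkc}$ that interleaves, within the $rc$-fold tensor product, the $d$ blocks $\tau_{i}$. Lemma~\ref{lemma:conjugation commutes} and the block-compatibility \eqref{eq:block compatibility on Levi of P} guarantee that this rearrangement lifts to the cover without cocycle corrections, producing an isomorphism
\[
\Ind_{\widetilde{P}_{(k^{rc})}}^{\GL_{rkc}^{(m)}}\!\bigl((\tau')^{\otimes rc}\delta_{P_{(k^{rc})}}^{1/(2rk)}\bigr)\;\cong\;\Ind_{\widetilde{P}_{\beta rc}}^{\GL_{rkc}^{(m)}}\!\Bigl(\bigotimes_{i=1}^{d}\Ind_{\widetilde{P}_{(\beta_{i}^{rc})}}^{\GL_{rc\beta_{i}}^{(m)}}\!\bigl(\tau_{i}^{\otimes rc}\delta_{P_{(\beta_{i}^{rc})}}^{1/(2r\beta_{i})}\bigr)\Bigr).
\]
Each inner induced representation surjects onto $\rho_{c}(\tau_{i})$ by the very definition \eqref{eq:rho c tempered tau} applied to the essentially tempered $\tau_{i}$, so exactness of induction yields a canonical surjection from the source onto $\Ind_{\widetilde{P}_{\beta rc}}^{\GL_{rkc}^{(m)}}(\otimes_{i}\rho_{c}(\tau_{i}))$.

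It remains to show that the surjection onto $\rho_{c}(\tau)$ factors through this latter surjection. By Proposition~\ref{prop:rodier non}, $\Ind_{\widetilde{P}_{\beta rc}}^{\GL_{rkc}^{(m)}}(\otimes_{i}\rho_{c}(\tau_{i}))$ is itself $(rk,c)$; Theorem~\ref{theorem rho c tau unique} gives the same for $\rho_{c}(\tau)$. The composition of each of the two surjections with the (unique, one-dimensional) $(rk,c)$ functional on the target produces a nonzero $(rk,c)$ functional on the source $\Ind_{\widetilde{P}_{(k^{rc})}}^{\GL_{rkc}^{(m)}}((\tau')^{\otimes rc}\delta^{1/(2rk)})$. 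If the space of such functionals on the source is one-dimensional, the two surjections share the same kernel on the $(rk,c)$ level and the desired factorization is forced via the unique irreducible quotient $\rho_{c}(\tau)$ of $\Ind(\tau^{\otimes rc}\delta^{1/(2rk)})$.

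The main obstacle is precisely the one-dimensionality claim for the $(rk,c)$ functionals on $\Ind((\tau')^{\otimes rc}\delta^{1/(2rk)})$, which is not an $(rk,c)$ representation because $\tau'$ is typically reducible. I would attack this by the local exchange-of-roots technique of Ginzburg--Rallis--Soudry \cite{GRS5} (in the covering-group version used after \eqref{rep:composition lemma output space before W}), reducing the $(rk,c)$ functional on the source to a Whittaker functional on the inducing data $(\tau')^{\otimes rc}$; because each $\tau_{i}$ is supercuspidal and $\tau$ is the unique irreducible (generic) quotient of $\tau'$ in the Bernstein--Zelevinsky/Ban--Jantzen picture, the resulting space is controlled by that of Whittaker models on $\tau$ itself. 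The archimedean case reduces to $F=\C$ under our assumption $\mu_{2m}\subset F^{*}$, where it follows from the linear results already invoked in Theorem~\ref{theorem rho c tau unique}.
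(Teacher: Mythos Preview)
The statement you are attempting to prove is left as a \emph{conjecture} in the paper: no general proof is given, only the remark that the case $r=1$ reduces to the linear setting $m=1$ handled in \cite[(2.5) and Lemma~8]{CFK}, together with an unramified example. Your proposal thus cannot be compared against a proof in the paper, and it has genuine gaps that prevent it from settling the conjecture.

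The first gap is the claimed isomorphism obtained by Weyl-element rearrangement. Reordering the inducing data $\otimes_{j}\otimes_{i}\tau_{i}|\det|^{a_{j}}$ into $\otimes_{i}\otimes_{j}\tau_{i}|\det|^{a_{j}}$ does not in general produce an isomorphism of the parabolically induced representations; it only furnishes a standard intertwining operator, which for the specific (Speh-type) exponents here may have nontrivial kernel or cokernel. Lemma~\ref{lemma:conjugation commutes} and \eqref{eq:block compatibility on Levi of P} handle cocycle bookkeeping on the cover but say nothing about this operator being an isomorphism.

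The second gap is the factorization step. Even if both surjections from the source $A$ pull back the $(rk,c)$ functionals on their respective targets to the same line in $\Hom_{V_{(c^{rk})}}(A,\psi)$, this only shows that $\ker(A\to B)$ and $\ker(A\to\rho_{c}(\tau))$ are both contained in the kernel of that functional; it does not force either kernel to contain the other, so $A\twoheadrightarrow\rho_{c}(\tau)$ need not factor through $A\twoheadrightarrow B$.

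Finally, your proposed attack on the one-dimensionality of $(rk,c)$ functionals on the reducible source, via reduction to Whittaker functionals on $(\tau')^{\otimes rc}$, runs into the central obstruction of the subject: for $m>2$, Whittaker models on $\GL_{k}^{(m)}$ are not unique (see the Introduction), so the resulting Whittaker space is typically of dimension greater than $1$. This non-uniqueness is exactly why the statement is recorded as a conjecture.
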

By \S~\ref{covering GL r=1}, the case $r=1$ is implied by $m=1$. For the assertion concerning $\rho_c(\tau)$ when $m=1$ see
\cite[(2.5) and Lemma~8]{CFK}.

\begin{example}
For example when $\tau$ is genuine tempered generic and unramified, by Lemma~\ref{lemma:unr unitary Satake} and using the same notation $|\mu_i|=1$ for all $i$. Let $\tau_i$ be the representation of $\GL_1^{(m)}$ associated with $\mu_i$. If $\rho_c(\tau)$ is also unramified, the argument from \cite[Proposition~60]{me12} implies $\rho_c(\tau^{\vee})$ is a summand of $\Ind_{P_{\beta rc}}^{\GL_{rkc}^{(m)}}(\otimes_{i}\rho_c(\tau_i^{\vee}))$
(in \textit{loc. cit.} we further assumed $\mu_i\ne\mu_j$ for all $i\ne j$, this can be relaxed by regularizing the intertwining operator).
Since the definitions imply $\rho_c(\tau^{\vee})\cong\rho_c(\tau)^{\vee}$ and
$\rho_c(\tau_i^{\vee})\cong\rho_c(\tau_i)^{\vee}$, we deduce $\rho_c(\tau)$
is a quotient of $\Ind_{P_{\beta rc}}^{\GL_{rkc}^{(m)}}(\otimes_{i}\rho_c(\tau_i))$.
\end{example}

The central character of $\rho_c(\tau)$ is given by
\begin{align}\label{eq:central character rho c tau}
\rho_c(\tau)(\langle z^rI_{rkc},1\rangle)=\rho_1(\tau)(\langle z^rI_{rk},1\rangle)^c=\tau(\langle z^rI_{k},1\rangle)^{rc}.
\end{align}

\begin{lemma}\label{lemma:rho c dual}
We have $\rho_c(\tau)^*=\rho_c(\tau^*)$.
\end{lemma}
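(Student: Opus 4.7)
The plan is to prove the identity first for essentially tempered generic $\tau$, where $\rho_c(\tau)$ is characterized by the Langlands Quotient Theorem, and then bootstrap to arbitrary generic $\tau$ via the definition \eqref{rep:rho c tau in general}. The central observation is that $b\mapsto b^*$ is a group \emph{automorphism} of $\GL_d$ (one checks $(bb')^*=b^*{b'}^*$ using $J_d^2=I_d$, so the anti-automorphisms ${}^t(\cdot)$ and $(\cdot)^{-1}$ combine with $J_d$-conjugation into an automorphism), which lifts to an involution of $\GL_d^{(m)}$ acting trivially on $\mu_m$ by \eqref{eq:involution b*0}. Under this involution ${}^*P_\beta=P_{\beta^{\mathrm{rev}}}$ with $\beta^{\mathrm{rev}}$ the reversed composition, and $\diag(b_1,\ldots,b_l)^*=\diag(b_l^*,\ldots,b_1^*)$ on the Levi. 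By general functoriality of parabolic induction under a group automorphism,
\begin{align*}
\Bigl(\Ind_{\widetilde{P}_\beta}^{\GL_{rkc}^{(m)}}\sigma\Bigr)^{*} \cong \Ind_{\widetilde{P}_{\beta^{\mathrm{rev}}}}^{\GL_{rkc}^{(m)}}\sigma^*,
\end{align*}
and the functor $\pi\mapsto\pi^*$, being exact, commutes with passage to the unique irreducible quotient.

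For essentially tempered generic $\tau$ I apply the above to \eqref{eq:rho c tempered tau} with $\beta=(k^{rc})$. Since $\beta^{\mathrm{rev}}=\beta$ and all $rc$ tensor factors equal $\tau$, we have $(\tau\otimes\cdots\otimes\tau)^*=\tau^*\otimes\cdots\otimes\tau^*$, and a direct computation of the exponents of $\delta_{P_{(k^{rc})}}$ (reflecting the symmetry of $\beta$) yields $\delta_{P_{(k^{rc})}}^{*}=\delta_{P_{(k^{rc})}}$. One also checks that $\tau^*$ is itself essentially tempered generic: temperedness is characterized by cuspidal exponents and preserved by the automorphism $*$, while a Whittaker functional for $\tau$ relative to $\psi$ pulls back under $*$ to one for the still-generic character $\psi\circ *^{-1}$ of $N_{\GL_k}$. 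Hence $\rho_c(\tau)^*$ and $\rho_c(\tau^*)$ are both realized as the unique irreducible quotient of the same induced representation, proving the tempered case.

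For general generic $\tau$ with Langlands data $(\beta,(\tau_i)_{i=1}^d,(a_i)_{i=1}^d)$, $a_1>\ldots>a_d$, the first paragraph shows that $\tau^*$ has Langlands data $(\beta^{\mathrm{rev}},(\tau_{d-j+1}^*)_{j=1}^d,(-a_{d-j+1})_{j=1}^d)$, and $-a_d>\ldots>-a_1$ supplies the required strict ordering. Applying $*$ to \eqref{rep:rho c tau in general}, and using $(|\det|^{a_i}\sigma)^*=|\det|^{-a_i}\sigma^*$ (since $\det(b^*)=(\det b)^{-1}$) together with the tempered case $\rho_c(\tau_i)^*=\rho_c(\tau_i^*)$, I obtain
\begin{align*}
\rho_c(\tau)^* = \Ind_{\widetilde{P}_{\beta^{\mathrm{rev}} rc}}^{\GL_{rkc}^{(m)}}\Bigl(\bigotimes_{j=1}^d |\det|^{-a_{d-j+1}}\rho_c(\tau_{d-j+1}^*)\Bigr),
\end{align*}
which is precisely $\rho_c(\tau^*)$ by \eqref{rep:rho c tau in general} applied to $\tau^*$. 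The main bookkeeping hurdle is the cover-compatible functoriality $(\Ind\,\sigma)^*\cong\Ind\,\sigma^*$: one must check that $*$ fixes $\mu_m$ pointwise (immediate from \eqref{eq:involution b*0}) and that $\sigma^*$ is well-defined as a genuine representation of the reversed Levi, which follows from the block-compatibility \eqref{eq:block compatibility on Levi of P}. Once these routine verifications are recorded, the entire argument is a formal comparison of Langlands data.
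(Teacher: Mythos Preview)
Your proof is correct and follows essentially the same strategy as the paper: establish the tempered case via the Langlands quotient characterization together with the compatibility of $*$ with parabolic induction (the paper's \eqref{eq:* on induced}), then bootstrap to general $\tau$ through the definition \eqref{rep:rho c tau in general}. The only minor difference is that where the paper passes through the contragredient $(\cdot)^{\vee}$ and the identity $\Hom(\pi_1,\pi_2)=\Hom(\pi_1^*,\pi_2^*)$ to conclude that $\rho_c(\tau)^*$ is the unique irreducible quotient of $\mathrm{I}(\tau)^*$, you invoke directly that $\pi\mapsto\pi^*$ is an auto-equivalence of the category of genuine representations and hence preserves unique irreducible quotients; this is a slight streamlining of the same point.
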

\begin{proof}
In general if $\beta=(\beta_1,\ldots,\beta_d)$ is a composition of $l$ and $\beta^*=(\beta_d,\ldots,\beta_1)$,
\begin{align}\label{eq:* on induced}
(\Ind_{\widetilde{P}_{\beta}}^{\GL_l^{(m)}}(\otimes_{i=1}^d\varrho_{i}))^*=\Ind_{\widetilde{P}_{\beta^*}}^{\GL_l^{(m)}}(\otimes_{i=1}^d\varrho_{d-i+1}^*).
\end{align}
If $\tau$ is tempered, $\rho_c(\tau)$ is the unique irreducible quotient of the induced representation $\mathrm{I}(\tau)$ defined by \eqref{eq:rho c tempered tau}, whence $\rho_c(\tau)^{\vee}$ is the unique irreducible subrepresentation of $\mathrm{I}(\tau)^{\vee}$.
Since for any genuine representations $\pi_1,\pi_2$ of $\GL_d^{(m)}$, $\Hom_{\GL_d^{(m)}}(\pi_1,\pi_2)=\Hom_{\GL_d^{(m)}}(\pi_1^*,\pi_2^*)$, and using
\eqref{eq:involution b*0 and dual}, it follows that
$(\rho_c(\tau)^*)^{\vee}$ is the unique irreducible subrepresentation of $(\mathrm{I}(\tau)^*)^{\vee}$. Hence
$\rho_c(\tau)^*$ is the unique irreducible quotient of $\mathrm{I}(\tau)^*=\mathrm{I}(\tau^*)$ (by \eqref{eq:* on induced})
and so is $\rho_c(\tau^*)$.
The general case follows from the definition, the tempered case and \eqref{eq:* on induced}.
\end{proof}
Recall the definitions \eqref{eq:def of eta on all of GLc triangle} and \eqref{eq:def of eta} of $\eta_{\triangle}$ and $\eta$, for an $(rk,c)$ representation $\rho$. When $\rho=\rho_c(\tau)$, we re-denote $\eta_{\triangle}$ by $\eta_{\tau,\triangle,c}$ and $\eta$ by $\eta_{\tau,c}$, and set $\eta_{\tau}=\eta_{\tau,1}$. We have $\eta_{\tau}(\langle a^rI_{rk},1\rangle)=\tau(\langle a^rI_{k},1\rangle)^r$. By Lemma~\ref{lemma:rho c dual}, $\eta_{\tau^*}(\langle a^rI_{rk},\epsilon\rangle)=\eta_{\tau}(\langle(a^rI_{k})^*,\epsilon\rangle)
=\epsilon\eta_{\tau}^{-1}(\langle a^rI_{k},1\rangle)$.

\begin{lemma}\label{lemma:eta tau c 1}
$\eta_{\tau,c}=\eta_{\tau}^c$.
\end{lemma}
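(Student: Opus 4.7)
The plan is to read off both sides as central characters of irreducible covering representations and to match them via the inductive realizations of $\rho_c(\tau)$.

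First I would unwind definitions: by \eqref{eq:def of eta}, $\eta_{\tau,c}(\langle a,1\rangle)$ is the scalar by which the central element $\langle aI_{rkc},1\rangle$ acts on the irreducible representation $\rho_c(\tau)$, read off by any $(rk,c)$ functional, and similarly $\eta_\tau(\langle a,1\rangle)$ is the scalar by which $\langle aI_{rk},1\rangle$ acts on $\rho_1(\tau)$. So the lemma is equivalent to the central-character identity
\begin{align*}
\rho_c(\tau)(\langle aI_{rkc},1\rangle)=\rho_1(\tau)(\langle aI_{rk},1\rangle)^c,\qquad \forall a\in F^*,
\end{align*}
which I then prove directly (note the identity \eqref{eq:central character rho c tau} only records this on $r$-th powers, so one must work on all of $F^*$).

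Second, I would reduce to the case that $\tau$ is essentially tempered. Writing $\tau$ as the unique irreducible quotient of $\Ind_{\widetilde{P}_\beta}^{\GL_k^{(m)}}(\otimes_i|\det|^{a_i}\tau_i)$ with each $\tau_i$ tempered generic, the definition \eqref{rep:rho c tau in general} gives
$\rho_c(\tau)=\Ind_{\widetilde{P}_{\beta rc}}^{\GL_{rkc}^{(m)}}(\otimes_i|\det|^{a_i}\rho_c(\tau_i))$ and similarly $\rho_1(\tau)=\Ind_{\widetilde{P}_{\beta r}}^{\GL_{rk}^{(m)}}(\otimes_i|\det|^{a_i}\rho_1(\tau_i))$. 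Block compatibility \eqref{eq:block compatibility on Levi of P} identifies the image of $\langle aI_{rkc},1\rangle$ in $\widetilde{M}_{\beta rc}$ with the image of $\otimes_i\langle aI_{rk_ic},1\rangle$ (and analogously for $\rho_1(\tau)$), while $\delta_{P_{\beta rc}}(aI_{rkc})=\delta_{P_{\beta r}}(aI_{rk})=1$ since the modulus character is trivial on scalar matrices. Each tempered factor contributes $|a|^{a_ir k_ic}\eta_{\tau_i}(\langle a,1\rangle)^c$ to the central character of $\rho_c(\tau)$ and $|a|^{a_ir k_i}\eta_{\tau_i}(\langle a,1\rangle)$ to that of $\rho_1(\tau)$; taking the $c$-th power of the latter product and comparing term by term yields the reduction, provided the assertion is known for tempered $\tau_i$.

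Third, I handle the tempered case using the iterated form \eqref{eq:rho c tempered tau quotient using rho 1 and transitivity}: $\rho_c(\tau)$ is the unique irreducible quotient of $\Ind_{\widetilde{P}_{((rk)^c)}}^{\GL_{rkc}^{(m)}}(\rho_1(\tau)^{\otimes c}\delta_{P_{((rk)^c)}}^{1/(2rk)})$. The central character of this quotient equals that of the full induced representation. Applying \eqref{eq:block compatibility on Levi of P} again, the central element $\langle aI_{rkc},1\rangle$ corresponds in $\widetilde{M}_{((rk)^c)}$ to the image of $\langle aI_{rk},1\rangle\otimes\cdots\otimes\langle aI_{rk},1\rangle$ under the quotient map \eqref{eq:M beta as a quotient}. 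The modulus factor $\delta_{P_{((rk)^c)}}^{1/(2rk)}(aI_{rkc})$ is trivial (each row of its exponent is $\sum_{i=1}^c(2i-1-c)=0$), so the inducing data act by $\prod_{j=1}^c\rho_1(\tau)(\langle aI_{rk},1\rangle)=\eta_\tau(\langle a,1\rangle)^c\cdot\mathrm{id}$, giving the desired equality.

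The only nontrivial step is the covering-group bookkeeping: one must verify that the central element in the large group really decomposes in $\widetilde{M}_\beta$ as a pure tensor (no residual cocycle), which is exactly what \eqref{eq:block compatibility on Levi of P} and \eqref{eq:M beta as a quotient} guarantee once $\mu_{2m}\subset F^*$. Everything else is routine, so I expect the main potential pitfall to be remembering that, although $\eta_{\tau,c}$ may depend on an auxiliary $\gamma_{\psi'}$ when $m\nmid rkc$, the same Weil factor choice must be used throughout the comparison so that both sides live in the same genuine character group of the preimage of $F^*$.
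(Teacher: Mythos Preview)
There is a genuine gap: you are treating $\eta_{\tau,c}(\langle a,1\rangle)$ as the value of the central character of $\rho_c(\tau)$ at $\langle aI_{rkc},1\rangle$, but $\langle aI_{rkc},1\rangle$ is \emph{not} central in $\GL_{rkc}^{(m)}$ for general $a\in F^*$. The center of $\GL_{rkc}^{(m)}$ is only the preimage of $C_{r,rkc}=\{xI_{rkc}:x\in F^{*r}\}$ (see \S\ref{Local coverings}), so for $a\notin F^{*r}$ the element $\langle aI_{rkc},1\rangle$ does not act by a scalar on $\rho_c(\tau)$, even when the latter is irreducible. The definition \eqref{eq:def of eta} records the character by which $\langle aI_{rkc},1\rangle$ acts on the one-dimensional space of $(rk,c)$ functionals, not on the representation itself; this is exactly why the paper remarks that \eqref{eq:central character rho c tau} only captures $r$-th powers. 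Consequently your key identity $\rho_1(\tau)(\langle aI_{rk},1\rangle)=\eta_\tau(\langle a,1\rangle)\cdot\mathrm{id}$ in step~3 is false, and the ``central character of the induced representation'' computation does not go through.

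The paper's argument avoids this by working directly with the functional. In the tempered case it realizes the $(rk,c)$ functional by the absolutely convergent integral \eqref{eq:mnk functional using w_{n,m,k}} via Corollary~\ref{corollary:tempered}, then right-translates $\xi$ by $\langle aI_{rkc},1\rangle$ inside that integral; using block compatibility $\langle aI_{rkc},1\rangle=\langle aI_{rkl},1\rangle\langle aI_{rk(c-l)},1\rangle$ one picks up $\eta_{\tau,l}(\langle a,1\rangle)\eta_{\tau,c-l}(\langle a,1\rangle)$ from the $(rk,l)$ and $(rk,c-l)$ models, and induction on $c$ finishes (with $\gamma_{\psi'}^l\gamma_{\psi'}^{c-l}=\gamma_{\psi'}^c$ since $(a,a)_m=1$). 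The general case then follows from \eqref{rep:rho c tau in general} and the Jacquet-integral realization \eqref{eq:(k,c)  functional using an integral}. Your reduction to the tempered case along the Langlands data is fine in spirit, but the tempered step must track the action on the functional rather than appeal to a nonexistent scalar action.
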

\begin{proof}
First assume $\tau$ is tempered. We prove the result using induction on $c$, the case $c=1$ is clear.
For $c>1$, by Corollary~\ref{corollary:tempered} we can assume the $(rk,c)$ functional is given by \eqref{eq:mnk functional using w_{n,m,k}}. Then the result follows by replacing $\xi$ in \eqref{eq:mnk functional using w_{n,m,k}} with its right-translate by $\langle aI_{rkc},1\rangle$, noting that $\langle aI_{rkc},1\rangle=\langle aI_{rkl},1\rangle\langle aI_{rk(c-l)},1\rangle$ and $\gamma_{\psi'}^l(a)\gamma_{\psi'}^{c-l}(a)=\gamma_{\psi'}^c(a)$ (because $(a,a)_m=1$),
and applying the induction hypothesis to $\rho_l(\tau)$ and $\rho_{c-l}(\tau)$. Since we have uniqueness, it suffices to check one functional.
The general case follows from \eqref{rep:rho c tau in general} using \eqref{eq:(k,c)  functional using an integral} and the tempered case.
\end{proof}

Let $\tau$ be a genuine unitary irreducible generic representation of $\GL_k^{(m)}$.
For an integer $l\geq1$ and $\zeta\in\C^{l}$, consider the standard intertwining operator
\begin{align*}
M(\zeta,w_{(k^{l})}):\Ind_{\widetilde{P}_{(k^{l})}}^{\GL_{lk}^{(m)}}(\otimes_{i=1}^{l}|\det|^{\zeta_i}\tau)\rightarrow\Ind_{\widetilde{P}_{(k^{l})}}^{\GL_{lk}^{(m)}}(\otimes_{i=1}^{l}|\det|^{\zeta_{l-i+1}}\tau),
\end{align*}
defined by an absolutely convergent integral for $\Real(\zeta)$ in a suitable cone, and in general by meromorphic continuation. Denote \begin{align}\label{eq:zeta_0}
\zeta^{(l)}=((l-1)/(2r),(l-3)/(2r),\ldots, (1-l)/(2r))\in\C^{l}.
\end{align}
\begin{conjecture}\label{conjecture:intertwining op Speh}
For all $1<l\leq r$, $M(\zeta,w_{(k^{l})})$ is well defined at $\zeta=\zeta^{(l)}$ and its image is irreducible.
For all $c\geq1$, $M(\zeta,w_{(k^{rc})})$ is well defined at $\zeta=\zeta^{(rc)}$, its image is irreducible and isomorphic
to the representation $\rho_c(\tau)$. In particular it is $(rk,c)$.
\end{conjecture}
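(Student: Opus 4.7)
The plan is to factorize $M(\zeta,w_{(k^l)})$ into a composition of rank-one intertwining operators, analyze its singularities via a Langlands--Shahidi style normalization, and then identify the image through the Langlands classification (available for covers over non-archimedean fields by Ban--Jantzen \cite{BJ,BanJantzen2016}, as already invoked in the definition of $\rho_c(\tau)$). First I would write a reduced expression for $w_{(k^l)}$ modulo $W(M_{(k^l)})$ as a product of $\binom{l}{2}$ simple reflections, giving $M(\zeta,w_{(k^l)})=\prod_{\alpha}M_\alpha(\zeta)$, where each $M_\alpha(\zeta)$ is a rank-one operator between representations of $\GL_{2k}^{(m)}$-Levi blocks carrying the data $|\det|^{\zeta_i}\tau\otimes|\det|^{\zeta_j}\tau$. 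Each $M_\alpha(\zeta)$ will be written as $R_\alpha(\zeta)\cdot N_\alpha(\zeta)$ with $N_\alpha$ a normalized (holomorphic in the closed positive chamber) operator and $R_\alpha$ a scalar factor built out of genuine $\gamma$-factors for $\tau\times\tau^*$ on $\GL_k^{(m)}\times\GL_k^{(m)}$.

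The key input is the reducibility locus of $\Ind_{\widetilde{P}_{(k,k)}}^{\GL_{2k}^{(m)}}(|\det|^s\tau\otimes\tau)$ for $\tau$ genuine unitary tempered generic: the expected reducibility point is $s=1/r$, matching exactly the gap $\zeta_i^{(l)}-\zeta_{i+1}^{(l)}=1/r$ built into $\zeta^{(l)}$. For $1<l\leq r$, the point $\zeta^{(l)}$ lies in the closure of the positive Weyl chamber, so each $N_\alpha(\zeta^{(l)})$ is regular by the cover-analogue of Langlands's holomorphy result (developed in the Langlands--Shahidi framework for covers by Gao \cite{Gao2018}, and further studied in \cite{GaoShahidiSzpruch2018,GaoShahidiSzpruch2019,Szpruch2018}), while one verifies that the scalar factors $R_\alpha(\zeta^{(l)})$ neither explode nor vanish catastrophically along the factorization. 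This gives well-definedness of $M(\zeta^{(l)},w_{(k^l)})$ and a nonzero total operator; the image is then the Langlands quotient of the standard module $\Ind(\otimes_i|\det|^{\zeta_i^{(l)}}\tau)$ with data in the closed positive chamber, hence irreducible.

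For $l=rc$, the same argument supplies a nonzero operator $M(\zeta^{(rc)},w_{(k^{rc})})$. To identify its image with $\rho_c(\tau)$, recall from \eqref{eq:rho c tempered tau} that $\rho_c(\tau)$ is the unique irreducible quotient of the source, and dually (cf. \eqref{eq:rho c tempered tau sub}) the unique irreducible subrepresentation of the target $\Ind_{\widetilde{P}_{(k^{rc})}}^{\GL_{rkc}^{(m)}}((\otimes\tau)\delta_{P_{(k^{rc})}}^{-1/(2rk)})$, which is exactly the codomain of $M(\zeta^{(rc)},w_{(k^{rc})})$ since $\zeta^{(rc)}_{rc-i+1}=-\zeta^{(rc)}_i$. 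Any nonzero homomorphism from source to target must factor through the Langlands quotient on the left and land inside the unique Langlands submodule on the right, forcing $\im M(\zeta^{(rc)},w_{(k^{rc})})\cong \rho_c(\tau)$. The claim that this image is $(rk,c)$ is then immediate from Theorem~\ref{theorem rho c tau unique}.

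The hard part will be pinning down the reducibility locus of $\Ind(|\det|^s\tau\otimes\tau)$ and the holomorphy of the normalized rank-one operators $N_\alpha(\zeta)$ in the closed positive chamber for arbitrary genuine unitary tempered generic $\tau$ on $\GL_k^{(m)}$. In the linear case this is a classical consequence of Shahidi's Plancherel/local coefficient computations, but for covers the analogous computations have so far been carried out only for restricted classes (unramified, or low-rank ramified principal series) in \cite{GaoShahidiSzpruch2018,GaoShahidiSzpruch2019}. A complete proof of Conjecture~\ref{conjecture:intertwining op Speh} therefore seems to require either a full extension of these Plancherel formulas to all generic tempered $\tau$, or a globalization argument in which the known structure of global intertwining operators (via a suitable cuspidal $\tau$ realizing the local data at one place) is used to control the local poles — a strategy parallel to how Conjecture~\ref{conjecture:derivatives of rho c tau} was verified when $r=1$ or $k=1$.
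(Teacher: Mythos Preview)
This statement is a \emph{conjecture} in the paper, not a theorem; the paper does not supply a general proof. Immediately after the statement the paper records only the two cases it can verify: for $r=1$ the assertion reduces (via the identification of $\GL_d^{(2)}$ with $\GL_d$ in \S\ref{covering GL r=1}) to Jacquet's linear result \cite[Proposition~2.2]{Jac4} on Speh representations; for $k=1$ the image of $M(\zeta,w_{(1^l)})$ at $\zeta^{(l)}$ is identified directly with an exceptional (theta) representation of $\GL_l^{(m)}$ in the sense of \cite{KP,Gao5}, and irreducibility is part of that theory. No argument is offered beyond these cases, and the conjecture is then \emph{assumed} throughout the rest of the paper.

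Your proposal is therefore not a competing proof but a sketch of how one might attack the general conjecture. The outline via rank-one factorization and Langlands classification is sound in shape, and you are right that once $M(\zeta^{(rc)},w_{(k^{rc})})$ is known to be well defined and nonzero, the identification of its image with $\rho_c(\tau)$ is forced by \eqref{eq:rho c tempered tau}--\eqref{eq:rho c tempered tau sub} and Ban--Jantzen. You have also correctly isolated the genuine obstruction: holomorphy and nonvanishing of the normalized rank-one operators at $\zeta^{(l)}$ rests on knowing the reducibility point of $\Ind_{\widetilde{P}_{(k,k)}}^{\GL_{2k}^{(m)}}(|\det|^s\tau\otimes\tau)$ for arbitrary genuine unitary tempered generic $\tau$, which is not available in the literature for $r>1$, $k>1$. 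The paper sidesteps this entirely by treating the statement as an assumption; your closing paragraph accurately diagnoses why.

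One small caution: for the first clause ($1<l\le r$), your appeal to ``holomorphy in the closed positive chamber'' for normalized operators is precisely the linear-case Langlands--Shahidi input that is not yet established for these covers, so even that partial conclusion is conditional. In the two cases the paper does handle, the argument is not via normalized operators at all but via direct transport from $m=1$ (when $r=1$) or via the explicit construction of exceptional representations (when $k=1$).
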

For $r=1$ this follows from the case $m=1$ in \cite[Proposition~2.2]{Jac4} (see also \cite[Proposition~I.10]{MW4}); for $k=1$ it holds because for all $l$, the image of $M(\zeta,w_{(1^{l})})$ at $\zeta^{(l)}$ is an exceptional representation (\cite{KP,Gao5}).

We quickly recall the global construction of $(rk,c)$ representations from \cite[\S~2.4]{me12}.
Let $\tau$ be a genuine cuspidal representation of $\GL_k^{(m)}(\A)$ (see \S~\ref{Global coverings}). Consider the representation
\begin{align}\label{ind1}
\Ind_{\widetilde{P}_{(k^{rc})}({\A})}^{\GL_{rkc}^{(m)}({\A})}
(|\det|^{\zeta_1}\tau\otimes\ldots\otimes |\det|^{\zeta_{rc}}\tau).
\end{align}
For a standard $\widetilde{K}_{\GL_{rkc}}$-finite vector $\xi$ in the space of \eqref{ind1},
we have the Eisenstein series
\begin{align}\label{eq:Eisenstein series on GL}
E(g;\zeta,\xi)=\sum\limits_{y\in P_{(c^{rk})}(F)\backslash \GL_{rkc}(F)}\xi(\zeta,\langle y,(\eta_{rkc}^{\diamondsuit})^{-1}(y)\rangle g).
\end{align}
The function $g\mapsto E(g;\zeta,\xi)$ is an automorphic form on $\GL_{rkc}^{(m)}(\A)$. Let
\begin{align}\label{eq:limit}
E_{-1}(g;\xi)=\lim\limits_{\zeta\to\zeta^{(rc)}}\prod_{i=1}^{rc-1}(r(\zeta_i-\zeta_{i+1})-1)E(g;\zeta,\xi)
\end{align}
and let $\mathcal{E}_{\tau}$ denote the corresponding residual representation.

\begin{conjecture}\label{Shimura conjecture}
The partial $L$-function $L^S(s,\tau\times\widetilde{\tau})$ has a simple pole at $s=1$ and is holomorphic and nonzero for $\Real(s)>1$.
(As explained in \S~\ref{unr L functions} and \S~\ref{dual reps}, $L^S(s,\tau\times\widetilde{\tau})=L^S(s,\tau\times\tau^{\vee})$.)
\end{conjecture}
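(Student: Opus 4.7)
The plan is to mimic the classical Jacquet--Shalika proof \cite{JS1,JS2} on $\GL_k\times\GL_k$, using the Rankin--Selberg integrals for $\widetilde{\GL}_k\times\widetilde{\GL}_k$ introduced elsewhere in this work. Specifically, I would form the global integral
\[
Z(s,\varphi_1,\varphi_2,\Phi)=\int_{\GL_k(F)\bs\GL_k^{(m)}(\A)^1}\varphi_1(g)\,\overline{\varphi_2(g)}\,E(g,s,\Phi)\,dg,
\]
where $\varphi_1$ lies in the cuspidal space of $\tau$, $\varphi_2$ lies in a genuine cuspidal realization paired with $\widetilde{\tau}$, $\Phi\in\mathcal{S}(\A^k)$, and $E(g,s,\Phi)$ is the Godement--Jacquet mirabolic Eisenstein series on $\GL_k(\A)$ pulled back to the cover. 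Since $\varphi_1\overline{\varphi_2}$ is genuine times anti-genuine, the integrand is $\mu_m$-invariant and descends to $\GL_k(F)\bs\GL_k(\A)^1$.

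First I would unfold the integral for $\Real(s)\gg 0$ via a Whittaker expansion of $\varphi_1$, producing an Euler product. At an unramified place I would invoke a Casselman--Shalika-type formula for $\GL_k^{(m)}$ (in the spirit of McNamara) to match the local factor against $L_\nu(s,\tau_\nu\times\widetilde{\tau}_\nu)\zeta_\nu(s)$ up to controlled constants. This yields meromorphic continuation of $L^S(s,\tau\times\widetilde{\tau})$ and identifies its poles on $\Real(s)\geq 1$ with those of $Z$ modulo the finitely many ramified local integrals, which can be arranged to be entire and nonvanishing at $s=1$ by choosing test data.

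Second, the mirabolic Eisenstein series $E(g,s,\Phi)$ has a simple pole at $s=1$ with residue proportional to $\widehat{\Phi}(0)$ (Tate's thesis). Extracting this residue gives
\[
\Res_{s=1}Z(s,\varphi_1,\varphi_2,\Phi)=\mathrm{const}\cdot\widehat{\Phi}(0)\cdot\langle\varphi_1,\varphi_2\rangle_{L^2},
\]
which is nonzero for suitable data because the Petersson pairing on the irreducible cuspidal $\tau$ is nondegenerate. Combined with the unfolding, this forces a pole of $L^S(s,\tau\times\widetilde{\tau})$ of exact order one at $s=1$. Holomorphy and nonvanishing on $\Real(s)>1$ follow from absolute convergence of the Euler product, which uses Lemma~\ref{lemma:unr unitary Satake} to bound the Satake parameters of $\tau_\nu$ at almost all places (and hence of $\widetilde{\tau}_\nu$, via property \eqref{it:props preserving Satake} of \S\ref{dual reps}).

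The main obstacle is the unramified Casselman--Shalika step: uniqueness of Whittaker functionals fails for $\GL_k^{(m)}$ when $r>1$, so the classical formula does not apply verbatim. One must work with a basis of Whittaker functionals indexed by $C_{\widetilde{T}_{\GL_k}}\bs\widetilde{T}_{\GL_k}$, sum over this basis, and verify that the resulting unramified integral produces exactly $\det(1-t_{\tau_\nu,\vartheta}\otimes t_{\widetilde{\tau}_\nu,\vartheta}\cdot q_\nu^{-s})^{-1}$ with no extraneous poles. A secondary subtlety is the dependence on the auxiliary parameter $\vartheta$ when $m\equiv 2\,(4)$; this cancels here because the same $\vartheta$ parametrizes both $\tau$ and $\widetilde{\tau}$, as remarked in \S\ref{unr L functions}, which is also what makes $L^S(s,\tau\times\widetilde{\tau})$ well defined in the first place.
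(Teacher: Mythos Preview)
The paper does not prove this statement in general: it is recorded as a conjecture, and immediately after it the paper only remarks that for $r=1$ or $k=1$ the assertion reduces to the classical linear theorem of Jacquet--Shalika \cite{JS2,JS1} (via \S\ref{covering GL r=1} when $r=1$, and via the finite-dimensional description of $\GL_1^{(m)}$ when $k=1$). For $r>1$ and $k>1$ the statement is left open and is \emph{assumed} throughout the rest of the paper.

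Your proposal therefore attempts strictly more than the paper does, and the obstacle you flag in your last paragraph is not a side issue but the essential reason the statement is a conjecture. The Jacquet--Shalika unfolding depends on the Fourier expansion of a cusp form along $N_{\GL_k}$ collapsing to a single Whittaker function, which in turn requires $\dim\Hom_{N_{\GL_k}}(\tau_\nu,\psi_\nu)=1$. For $\GL_k^{(m)}$ with $r>1$ this dimension is $[F_\nu^*:F_\nu^{*r}]^{k-1}$ at almost every place, so the expansion is a finite sum indexed by torus cosets and the global integral does not factor as an Euler product. Summing over a basis of Whittaker functionals produces cross terms between different places, and there is at present no mechanism to identify the result with $\prod_\nu L_\nu(s,\tau_\nu\times\widetilde{\tau}_\nu)$; this is precisely why the paper builds its Rankin--Selberg theory in \S\ref{RS integrals} on $(rk,c)$ models rather than on Whittaker models. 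Note also that those integrals of \S\ref{RS integrals} cannot be used to bootstrap the conjecture: their global construction passes through $\mathcal{E}_\tau$, whose existence already relies on the conjectures of \S\ref{the representation rho_c(tau)}, including this one.

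There is a second gap in your argument for holomorphy and nonvanishing on $\Real(s)>1$. Lemma~\ref{lemma:unr unitary Satake} applies only to \emph{tempered} representations; the unramified local components $\tau_\nu$ of a genuine cuspidal $\tau$ are unitary but not known to be tempered (this is the generalized Ramanujan conjecture, open already for $m=1$). Without such bounds on the Satake parameters you cannot conclude absolute convergence of the Euler product on $\Real(s)>1$, so that part of the argument is unjustified as written. In the linear case Jacquet--Shalika obtain holomorphy and nonvanishing on $\Real(s)\geq1$ by a different route, using the integral representation and a zero-free region argument rather than naive convergence of the product.
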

When $r=1$ or $k=1$, this conjecture follows from the case $m=1$ proved in \cite{JS2,JS1}.

By \cite[Theorem~53]{me12}, $\mathcal{E}_{\tau}$ belongs to the discrete spectrum of the space of $L^2$ automorphic forms on $\GL_{rkc}^{(m)}({\A})$, which transform by $\varrho_{\tau}^{rc}$ under the center of $\GL_{rkc}^{(m)}({\A})$,
where $\varrho_{\tau}$ is the central character of $\tau$. Combining
\cite[Theorem~53]{me12} with Conjecture~\ref{conjecture:intertwining op Speh} we deduce that for each $\nu$, $(\mathcal{E}_{\tau})_{\nu}=\rho_c(\tau_{\nu})$ which is $(rk,c)$ by Theorem~\ref{theorem rho c tau unique}. By \cite[Theorem~54]{me12}, $\mathcal{E}_{\tau}$ admits an $(rk,c)$ functional (which is then unique up to scaling) $\Lambda$ in the form of a Fourier coefficient.

We can therefore write for a factorizable automorphic form $\xi$ in the space of $\mathcal{E}_{\tau}$,
\begin{align}\label{eq:factorizable rk c functional}
\Lambda(\mathcal{E}_{\tau}(h)\xi)=\prod_{\nu}\lambda_{\nu}(\rho_c(\tau_{\nu})(h_{\nu})\xi_{\nu}),\qquad h\in\GL_{rkc}^{(m)}(\A).
\end{align}
Here $\lambda_{\nu}$ are local $(rk,c)$ functionals and for almost all $\nu$, $\lambda_{\nu}$ is scaled to be $1$ on a choice of an unramified vector in the space of $\rho_c(\tau_{\nu})$. At these places both $\eta_{\tau_{\nu},\triangle,c}$ and $\eta_{\tau_{\nu},c}$ are unramified (see \S~\ref{Definition and generalities}).
Hence $\eta_{\tau,\triangle,c}=\prod_{\nu}\eta_{\tau_{\nu},\triangle,c}$ is a well defined genuine character and
\begin{align}\label{eq:global eta triangle}
\Lambda(\mathcal{E}_{\tau}(b)\xi)=
\eta_{\tau,\triangle,c}(b)\Lambda(\xi),\qquad b\in\widetilde{\GL}_{c}^{\triangle}(\A).
\end{align}
The character $\eta_{\tau,\triangle,c}$ is automorphic because for
$b=\langle y^{\triangle},(\eta_{rkc}^{\diamondsuit})^{-1}(y^{\triangle})\rangle$ with $y\in\GL_{c}(F)$,
the l.h.s.~ of \eqref{eq:global eta triangle} equals $\Lambda(\xi)$.
Thus if we choose a nontrivial additive character $\psi'$ of $F\backslash\A$, and quasi-characters $\eta_{\tau_{\nu}}'$ of
$F_{\nu}^*$ such that for all $b_{\nu}\in\GL_c(F_{\nu})$,
\begin{align*}
\eta_{\tau_{\nu},\triangle,c}(\langle b_{\nu}^{\triangle},\eta_{(c^{rk})}(b_{\nu}^{\triangle})\eta_c^{\diamondsuit,(rk)}(b_{\nu})\rangle)=[\gamma_{\psi_{\nu}'}(\det b_{\nu})]\eta_{\tau_{\nu}}'(\det b_{\nu})
\end{align*}
($\gamma_{\psi_{\nu}'}$ is omitted if $m|rk$), which is possible by the local version of \eqref{eq:RS rho rkc rk on Levi}, then $\eta_{\tau}'=\prod_{\nu}\eta_{\tau_{\nu}}'$ is defined. The character
$\eta_{\tau}'\circ\det$ of $\A^*$ is automorphic by \eqref{eq:global eta triangle}, \eqref{eq:eta eta rk m divides or not eta diamond triangle rat} and because $\gamma_{\psi'}$ is trivial on $F^*$. Then we have
\begin{align}\label{eq:aut character eta identity}
\eta_{\tau,\triangle,c}(\langle b^{\triangle},\eta_{(c^{rk})}(b^{\triangle})\eta_c^{\diamondsuit,(rk)}(b)\rangle)=[\gamma_{\psi'}(\det b)]\eta_{\tau}'(\det b),\qquad\forall\,b\in\GL_c(\A),
\end{align}
which makes sense by \eqref{eq:RS rho rkc rk on Levi}. Also, since
$\mathcal{E}_{\tau}$ admits a central character, \eqref{eq:global eta triangle}--\eqref{eq:aut character eta identity} imply
\begin{align}\label{eq:central aut character eta identity}
\mathcal{E}_{\tau}(\langle b^{\triangle},\eta_{(c^{rk})}(b^{\triangle})\eta_c^{\diamondsuit,(rk)}(b)\rangle)\xi=[\gamma_{\psi'}(\det b)]\eta_{\tau}'(\det b),\qquad\forall\,b\in C_{\GL_c}(\A).
\end{align}

We mention that in \cite[\S~2.4]{me12} the residual representation was denoted $\mathcal{L}_{\tau,c}$, and $\mathcal{E}_{\tau}$ was chosen to be an irreducible summand which admits a nonzero $(rk,c)$ functional. Here (as opposed to \cite[Conjecture~50]{me12}) we already assume the images of the local intertwining operators
$M(\zeta,w_{(k^{rc})})$ are irreducible at $\zeta^{(rc)}$, so that $\mathcal{L}_{\tau,c}$ is irreducible and
$\mathcal{L}_{\tau,c}=\mathcal{E}_{\tau}$.

\section{Rankin--Selberg integrals}\label{RS integrals}
\subsection{The integrals and $\gamma$-factor}\label{ints and RS integrals}
In \cite[Appendix~C]{CFK} we introduced a new family of $\GL_c\times\GL_k$ integrals (independently constructed
in \cite{G8,G7}, see also \cite{LapidMao2018}), whose structure resembles doubling type integrals but produce Rankin--Selberg type factors. We generalize \cite[Appendix~C]{CFK} to $\GL_c^{(m)}\times \GL_k^{(m)}$ and use it to define and study local factors and
a global complete $L$-function satisfying a functional equation. (The case of
$\GL_1^{(m)}\times \GL_k^{(m)}$ already appeared in \cite[\S~2.7]{me12} where we computed the integral with unramified data.)

Let $F$ be a local field.
Let $\psi$ be a nontrivial additive character of $F$, and fix the Haar measure $dx=d_{\psi}x$ of $F$ which is self-dual with respect to $\psi$.
For $b\in\GL_c$, put $e_1(b)=\diag(b,I_{(rk-1)c})$ and
$e_2(b)=\diag(I_c,b,\ldots,b)$, $e_1(b)e_2(b)$ is the diagonal embedding of $\GL_c$ in $M_{(c^{rk})}$ ($r$ was defined in \S~\ref{Local coverings}).
Identity~\eqref{eq:block compatibility on Levi of P} implies that $e_1(\GL_c)$ and $e_2(\GL_c)$ commute in $\GL_{rkc}^{(m)}$.
For the covering of $e_2(\GL_c)$ observe that by
\eqref{eq:block compatibility on Levi of P} and \eqref{eq:sigma rkc diamondsuit on G triangle},
\begin{align*}
\sigma_{rkc}^{\diamondsuit}(e_2(b),e_2(b'))=
(\sigma_{c}^{\diamondsuit}(b,b'))^{rk-1}=
\left(\frac{\varsigma_{\triangle,c}(b)\varsigma_{\triangle,c}(b')}{\varsigma_{\triangle,c}(bb')}\right)^{rk}(\det b,\det b')_{m/r}^{k}
\sigma_{c}^{\diamondsuit}(b,b')^{-1}.
\end{align*}
Recall $\GL_c^{(m)}=\GL_c[\sigma_c^{\diamondsuit}]$ by definition. It follows that the mappings
\begin{align}\label{eq:RS embeddings}
\langle b,\epsilon\rangle\mapsto\langle \mathfrak{e}_1(b),\epsilon\rangle,\qquad
\langle b,\epsilon\rangle\mapsto\langle \mathfrak{e}_2(b),\varsigma_{\triangle,c}(b)^{-rk}\epsilon^{-1}\rangle
\end{align}
lift of the embedding $\GL_c\times \GL_c<\GL_{rkc}$ to an embedding
\begin{align}\label{eq:RS lift of embedding}
\{(\epsilon_1,\epsilon_2)\in\mu_m^2:\epsilon_1=\epsilon_2\}\backslash \GL_c^{(m)}\times \GL_c[\sigma_{c}^{\diamondsuit}\cdot(\det ,\det )_{m/r}^{k}] \rightarrow \GL_{rkc}^{(m)}\qquad (\mu_m^2=\mu_m\times\mu_m).
\end{align}
If $m\nmid rk$, for any genuine representation $\pi_0$ of $\GL_c^{(m)}$, $(\pi_0)_{\psi'}(\langle b,\epsilon\rangle)=\gamma_{\psi'}^{-1}(\det b)\pi_0(\langle b,\epsilon\rangle)$ is a genuine representation of $\GL_c[\sigma_{c}^{\diamondsuit}\cdot(\det ,\det )_{m/r}^{k}]$ (acting on the same space of $\pi$). For uniformity when $m|rk$, denote $(\pi_0)_{\psi'}=\pi_0$.

Let $\tau$ be a genuine irreducible generic representation and $\rho_c(\tau)$ be the corresponding $(rk,c)$ representation,
realized in its unique $(rk,c)$ model $W_{\psi}(\rho_c(\tau))$. If $m\nmid rk$, we take the character $\psi'$ above such
that for all $b\in\GL_c$, $\eta_{\tau,\triangle,c}(\langle b^{\triangle},\varsigma_{\triangle,c}(b)^{-rk}\rangle)=\gamma_{\psi'}(\det b)\eta'(\det b)$ for a quasi-character $\eta'$ of $F^*$ (see \S~\ref{Definition and generalities}). Let $\pi$ be a genuine irreducible admissible representation of $\GL_c^{(m)}$. Throughout this section assume $rk>1$, the case $rk=1$ is quickly explained at the end of this section, and in addition $s'=r^{-1}(s-1/2)+1/2$.

Consider the following integral, for a matrix coefficient $\omega$ of $\pi^{\vee}$ and $W\in W_{\psi}(\rho_c(\tau))$:
\begin{align}\label{eq:Z integral GL 1 GL rk}
Z(s,\omega,W)=\int\limits_{\GL_c}W(\langle e_1(a),1\rangle)\omega(\langle a,1\rangle)|\det a|^{s'-(rkc-2c+1)/2}\,da.
\end{align}
It is formally well defined by \eqref{eq:RS embeddings} and \eqref{eq:RS lift of embedding} and because $W$ is genuine and $\omega$ is anti-genuine. The integral is absolutely convergent for $\Real(s)\gg0$ independent of $W$ and $\omega$.

Integral~\eqref{eq:Z integral GL 1 GL rk} also satisfies an equivariance property with respect to $e_2(\GL_c)$.
Write $\omega(\langle a,1\rangle)=\pi^{\vee}(\langle a,1\rangle)\varphi^{\vee}(\varphi)$ for $\varphi$ (resp., $\varphi^{\vee}$) in the space of $\pi$ (resp.,\ $\pi^{\vee}$).
According to \eqref{eq:RS embeddings} and \eqref{eq:RS lift of embedding} and since both $\pi$ and $\rho_c(\tau)$ are genuine, the function
\begin{align*}
b\mapsto W(\langle e_1(a),1\rangle\langle e_2(b),\varsigma_{\triangle,c}(b)^{-rk}\rangle)\pi^{\vee}(\langle a,1\rangle)\varphi^{\vee}(\pi_{\psi'}(\langle b,1\rangle)\varphi)
\end{align*}
is well defined as a function on $\GL_c$, i.e., it does not depend on the choice of section $\GL_c\rightarrow\GL_c[\sigma_{c}^{\diamondsuit}\cdot(\det ,\det )_{m/r}^{k}]$.

Changing variables $a\mapsto ba$ in \eqref{eq:Z integral GL 1 GL rk}, using
\eqref{eq:def of eta on all of GLc triangle} which implies
\begin{align*}
W(\langle e_1(b),1\rangle\langle e_2(b),\varsigma_{\triangle,c}(b)^{-rk}\rangle)=
W(\langle b^{\triangle},\varsigma_{\triangle,c}(b)^{-rk}\rangle)=[\gamma_{\psi'}(\det b)]\eta'(\det b)W(\langle I_{rkc},1\rangle)
\end{align*}
and combining it with the definition of $\pi_{\psi'}$ ($\gamma_{\psi'}$ is omitted precisely when $\pi_{\psi'}=\pi$), we deduce
\begin{align}\label{eq:RS def equivariance on e2(G)}
Z(s,\pi^{\vee}(\cdot)\varphi^{\vee}(\pi_{\psi'}(\langle b,1\rangle)\varphi),e_2(b)\cdot W)=|\det b|^{s'-(rkc-2c+1)/2}\eta'(\det b)Z(s,\pi^{\vee}(\cdot)\varphi^{\vee}(\varphi),W).
\end{align}

When all data are unramified, both $\pi$ and $\tau$ are parameterized using $\vartheta$ (see \S~\ref{unr L functions}) and $\omega$ and $W$ are also normalized,
\begin{align}\label{RS unr integral}
Z(s,\omega,W)=L_{\vartheta}(s,\pi^{\vee}\times\tau)=L(s,\pi^{\vee}\times\tau)=L(s,\widetilde{\pi}\times\tau).
\end{align}
In the case $c=1$ this follows from \cite[(2.54)]{me12}. In the general case it was proved in \cite{G7} (for coverings of \cite{KP} but the argument extends and is even simpler for the coverings here), by expressing the integral as a product of $c$ unramified integrals
for $\GL_1^{(m)}\times \GL_k^{(m)}$.

The definition of the $\gamma$-factor involves an additional integral. Fix the splitting
$v\mapsto\langle v,\varsigma^-(v) \rangle$ of $N_{\GL_{rkc}}^-$ in $\GL_{rkc}^{(m)}$.
Set $w=\left(\begin{smallmatrix}&I_{(rk-1)c}\\I_c&\end{smallmatrix}\right)$ and for $v\in V_{((rk-2)c,c)}^-$,
$[v]=\diag(I_c,v)$. Define
\begin{align}\label{eq:second Z integral GL 1 GL rk}
&Z^*(s,\omega,W)=\int\limits_{\GL_c}\int\limits_{V_{((rk-2)c,c)}^-}
W(\langle{}^we_1(a),1\rangle\langle[v],\varsigma^-([v])\rangle\langle w,1\rangle)\omega(\langle a,1\rangle)|\det a|^{s'-1+(rkc-2c+1)/2}\,dv\,da.
\end{align}
Here $dv$ is the product measure defined using $dx$.
This integral is absolutely convergent for $\Real(s)\ll0$ independent of the data.
When data are unramified,
\begin{align}\label{second RS unr integral}
&Z^*(s,\omega,W)=L(1-s,\pi\times\widetilde{\tau}).
\end{align}
This is proved by showing that the integrand vanishes unless the coordinates of $v$ belong in $\mathcal{O}$, then applying
\eqref{RS unr integral}. See the proof of \cite[Lemma~85]{me12} for $c=1$ and \cite[p.~117]{CFK}.

The local theory has a global counterpart recently introduced in the linear setting in \cite{G7} (the case $c=1$ for coverings of \cite{KP} was constructed in \cite{BF2}). Our focus here is local, but the global integral will be presented in
\S~\ref{RS Crude functional equation} below, where we will also prove it is Eulerian.

If $F=\C$, $\GL_{rkc}^{(m)}$ is split over $\GL_{rkc}$ and both integrals we defined are the $\GL_c\times\GL_{rk}$ integrals of \cite[Appendix~C]{CFK}. Their theory including convergence, meromorphic continuation, continuity properties and the existence of a $\gamma$-factor were proved in \textit{loc. cit.}. We now focus mainly on the non-archimedean case.

Let $V_{(c^{rk})}^{\circ}=\{v\in V_{(c^{rk})}:v_{1,2}=0\}$ ($v=(v_{i,j})$, $v_{i,j}\in\Mat_c$).
The $(rk,c)$ character $\psi$ restricts to a character of $V_{(c^{rk})}^{\circ}$. Denote by $\mathcal{T}(s,\tau,\pi)$ the space of trilinear forms $\mathcal{T}$ on the space of $\rho_c(\tau)\times\pi^{\vee}\times\pi$ such that for all
$(\xi,\varphi^{\vee},\varphi)$, $a,b\in \GL_c$ and $v\in V_{(c^{rk})}^{\circ}$,
\begin{align}\label{eq:equivariance for bilinear RS}
&\mathcal{T}(\rho_c(\tau)(\langle ve_1(a)e_2(b),\varsigma_{\triangle,c}(b)^{-rk}\rangle)\xi,\pi^{\vee}(\langle a,1\rangle)\varphi^{\vee},
\eta'(\det b)^{-1}\pi_{\psi'}(\langle b,1\rangle)\varphi)
\\&\qquad=\psi(v)|\det ab^{-1}|^{-s+(rkc-2c+1)/2}\mathcal{T}(\xi,\varphi^{\vee},\varphi).\nonumber
\end{align}
The following is the key result underlying the functional equation \eqref{eq:gamma RS} below.
\begin{theorem}\label{theorem:uniqueness for bilinear RS}
Outside a discrete subset of $s$, $\dim \mathcal{T}(s,\tau,\pi)\leq 1$.
\end{theorem}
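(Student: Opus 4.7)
The plan is to reformulate $\mathcal{T}(s,\tau,\pi)$ as a space of equivariant bilinear pairings on a twisted Jacquet module of $\rho_c(\tau)$, and then reduce uniqueness to the uniqueness of the $(rk,c)$ model combined with Schur's lemma, following the strategy of \cite[Appendix~C]{CFK} in the linear setting.

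First I would observe that the equivariance under $V_{(c^{rk})}^{\circ}$ in \eqref{eq:equivariance for bilinear RS} implies that any $\mathcal{T}$ factors through the twisted Jacquet module $J = J_{V_{(c^{rk})}^{\circ}, \psi}(\rho_c(\tau))$. Since $V_{(c^{rk})}^{\circ}$ is normal in $V_{(c^{rk})}$ with quotient $\Mat_c$ (the block $v_{1,2}$), $J$ retains a residual action of $\Mat_c$ on which the group $e_1(\GL_c) \times e_2(\GL_c)$ acts by $(a,b) \cdot x = a x b^{-1}$. Combined with the remaining equivariance in \eqref{eq:equivariance for bilinear RS}, this identifies $\mathcal{T}(s,\tau,\pi)$ with the space of $e_1(\GL_c) \times e_2(\GL_c)$-equivariant bilinear forms on $J$ valued in an appropriate $|\det|^s$-twist of $\pi^{\vee} \boxtimes \pi$.

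Next I would apply a Bernstein--Zelevinsky / Mackey analysis to $J$ as a representation of the semidirect product $(e_1(\GL_c) \times e_2(\GL_c)) \ltimes \Mat_c$. The $\GL_c \times \GL_c$-orbits on $\Mat_c$ are stratified by the rank of $x$ and induce a corresponding filtration of $J$. On the open stratum, where $x$ is invertible, one can conjugate $x$ to $I_c$; the character $\psi(\tr(x))$ that originally sits on the $v_{1,2}$-block then reappears, and after the change of variable the equivariance becomes exactly the full $(rk,c)$ Whittaker equivariance on $\rho_c(\tau)$. Consequently the open-stratum contribution to the Hom space decouples as $\Hom_{V_{(c^{rk})}}(\rho_c(\tau),\psi)\otimes\Hom_{\GL_c}(\pi^{\vee}\otimes\pi_{\psi'}\otimes(\eta'\circ\det)^{-1},\,\eta_{\tau,\triangle,c}^{-1})$, where the first factor is one-dimensional by Theorem~\ref{theorem rho c tau unique} and the second is one-dimensional by Schur's lemma applied to the irreducible $\pi$ (the $\gamma_{\psi'}$ and $\eta'$ twists are precisely those forced by \eqref{eq:RS embeddings}--\eqref{eq:RS lift of embedding}, so the normalization matches).

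The main obstacle is controlling the lower-rank strata. A stratum of rank $j<c$ corresponds to the orbit of $\diag(I_j,0)$, whose stabilizer in $\GL_c \times \GL_c$ is a parabolic-type subgroup with Levi factors involving $\GL_j$ on the diagonal and $\GL_{c-j}$ blocks acting independently. The associated subquotient of $J$, by the geometric lemma, is parabolically induced, and its $|\det|^s$-exponents are determined by the (finitely many) cuspidal exponents of $\pi$ and of $\rho_c(\tau)$ (via the description of $\rho_c(\tau)$ through Theorem~\ref{theorem rho c tau unique} and \eqref{rep:rho c tau in general}). Only a discrete set of values of $q^{-s}$ can satisfy the matching forced by the normalization in \eqref{eq:equivariance for bilinear RS}; outside this set the lower strata contribute zero, and only the one-dimensional open-stratum contribution survives. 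Tracking the cocycle twists in the covering setting is largely bookkeeping, since both sides of the equivariance transform by matching genuine characters.
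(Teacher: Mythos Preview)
Your proposal is correct and follows essentially the same approach as the paper. The paper's own proof simply notes that \cite[Theorem~C.1]{CFK} extends to the covering setting by replacing every occurrence of $k$ with $rk$, and explicitly remarks that Whittaker uniqueness for $\GL_k$ (which fails for coverings) is never actually used; you have unpacked precisely that argument---Jacquet module reduction, rank filtration of the $\Mat_c$-orbits, open-stratum contribution via the one-dimensional $(rk,c)$ model and Schur's lemma for $\pi$, and exclusion of lower strata outside a discrete set of $s$---with the covering bookkeeping correctly identified as inert.
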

\begin{proof}
The proof of \cite[Theorem~C.1]{CFK} (the linear case) extends to the coverings here, one simply replaces every occurrence of $k$ in
\textit{loc. cit.} with $rk$. Note that the uniqueness of the Whittaker models is mentioned in \textit{loc. cit.} but never actually used.
\end{proof}

A quick computation implies both $Z(s,\omega,W)$ and $Z^*(s,\omega,W)$ belong to
$\mathcal{T}(s,\tau,\pi)$ for all $s$ in their domain of absolute convergence. Indeed this is clear for the former with the aid of \eqref{eq:RS def equivariance on e2(G)}; for the latter note that for $z\in V_{(c^{rk})}^{\circ}$, ${}^wz=z'[v]$ where $z'\in\diag(V_{(c^{rk-1})},I_c)$, then by \eqref{eq:sigma on h and v},
\eqref{eq:sigma conjugate v by h} and \eqref{eq:sigma conjugate v- to v by h},
${}^w\langle z,1\rangle = \langle z',1\rangle\langle [v],\varsigma^-([v])\rangle$.
Additionally, as in the linear case the integrals can be made constant ($\GL_{rkc}^{(m)}$ is split over a sufficiently small compact open subgroup).
\begin{corollary}\label{corollary:Z and Z^* are meromorphic}
Assume $F$ is non-archimedean. The integrals $Z(s,\omega,W)$ and $Z^*(s,\omega,W)$ admit meromorphic continuation to rational functions
in $q^{-s}$. Moreover, the set of possible poles in $q^{-s}$ can be taken to be finite and independent of
$\omega$ or $W$.
\end{corollary}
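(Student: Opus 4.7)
The plan is to follow the non-archimedean template of \cite[Appendix~C]{CFK}, now powered by Theorem~\ref{theorem:uniqueness for bilinear RS}. The integrals $Z(s,\omega,W)$ and $Z^*(s,\omega,W)$ are already known to be absolutely convergent and holomorphic in $q^{-s}$ for $\Real(s)\gg 0$ and $\Real(s)\ll 0$ respectively, and on these half-planes both define elements of $\mathcal{T}(s,\tau,\pi)$. By smoothness, any fixed pair $(\omega,W)$ is right-invariant under some compact open subgroup $K_0<\GL_{rkc}^{(m)}$ over which the covering splits; intersecting with $e_1(\GL_c)$ and with $V^-_{((rk-2)c,c)}$ reduces both integrals to sums over cosets of compact open subgroups of $\GL_c$.

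To obtain rationality I would then apply the Cartan/Iwasawa decomposition $\GL_c=K_{\GL_c}\cdot A^+\cdot K_{\GL_c}$ and expand $W(\langle e_1(a),1\rangle)$ asymptotically along $A^+$ in terms of the (finitely many) exponents of the Jacquet module of $\rho_c(\tau)$ along the standard parabolic with Levi $\GL_c\times \GL_{(rk-1)c}$, while $\omega$ is controlled by the (finitely many) cuspidal exponents of $\pi^{\vee}$. Both $\rho_c(\tau)$ and $\pi^{\vee}$ are admissible of finite length, so these sets of exponents are finite and depend only on $\pi$ and $\tau$. The expansion exhibits $Z$ and $Z^*$, outside a discrete set of $s$, as finite linear combinations of manifestly convergent compact integrals multiplied by elementary geometric series $(1-\alpha q^{-s})^{-1}$ whose $\alpha$'s are drawn from a universal finite set. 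This simultaneously delivers the meromorphic continuation, the rationality in $q^{-s}$, and the uniform bound on the pole locus, since the set of $\alpha$'s is independent of the choice of $\omega$ or $W$. A more conceptual alternative is Bernstein's rationality principle for algebraic families of invariant distributions, which combined with the generic at-most-one-dimensionality supplied by Theorem~\ref{theorem:uniqueness for bilinear RS} would yield both rationality and a universal denominator in a single stroke.

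The principal obstacle I expect is bookkeeping rather than conceptual: one must carefully track the covering cocycles through \eqref{eq:RS embeddings}--\eqref{eq:RS lift of embedding} when decomposing along $K_0$-cosets, and verify that the Jacquet module exponents of $\rho_c(\tau)$, which is generally reducible, still form a finite set indexed uniformly in the choice of model vector. Both are handled by the structural results of \S\ref{the representation rho_c(tau)} together with the fact that the covering splits over any sufficiently small compact open subgroup, so no input beyond what has already been assembled is required.
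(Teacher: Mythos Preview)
Your proposal is correct, but the paper takes the shortcut you relegate to an aside. The paper's proof is a single line: invoke Bernstein's continuation principle (as in \cite{Banks}) using Theorem~\ref{theorem:uniqueness for bilinear RS} for the generic one-dimensionality of $\mathcal{T}(s,\tau,\pi)$, together with the remark (stated just before the corollary) that the integrals can be made constant, hence the system of equations is nonzero. This is precisely your ``more conceptual alternative.''

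Your primary route via the Cartan decomposition and Jacquet-module asymptotics is the Jacquet--Shalika style argument and is viable in principle, but it is noticeably heavier here. You would need to control the restriction of $W$ to $\widetilde{e_1(T_{\GL_c})}$ via the exponents of the Jacquet module of $\rho_c(\tau)$ along $P_{(c,(rk-1)c)}$, track the covering cocycles through the torus (where the cover is genuinely nontrivial), and for $Z^*$ give a separate argument that the $dv$-integral over $V^-_{((rk-2)c,c)}$ is tame before reducing the $da$-integral to geometric series; your remark about ``intersecting with $V^-_{((rk-2)c,c)}$'' to get compactness is too quick, since that integration is not a priori compact. None of this is needed once Bernstein's principle is available: uniqueness plus nonvanishing already yield rationality with a uniform denominator in a single stroke, which is why the paper's proof is one sentence.
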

\begin{proof}
This follows from Theorem~\ref{theorem:uniqueness for bilinear RS}, the above remarks and Bernstein's continuation principle (in \cite{Banks}).
\end{proof}
Put $\mathfrak{i}_{\GL_c}=\langle -I_c,1\rangle\in \widetilde{C}_{r,c}$ ($\mu_{2m}\subset F^*$)
and
\begin{align*}
\vartheta(s,\widetilde{\pi},\tau)=\pi(\langle r^rI_c,1\rangle)^k \widetilde{\tau}(\langle r^rI_k,1\rangle)^c|r|^{-kc(s-1/2)}.
\end{align*}
Theorem~\ref{theorem:uniqueness for bilinear RS} also implies the existence of a $\gamma$-factor $\gamma(s,\widetilde{\pi}\times\tau,\psi)$ such that for all $\omega$ and $W$,
\begin{align}\label{eq:gamma RS}
\gamma(s,\widetilde{\pi}\times\tau,\psi)Z(s,\omega,W)=
\widetilde{\pi}(\mathfrak{i}_{\GL_c})^{rk-1}\vartheta(s,\widetilde{\pi},\tau)Z^*(s,\omega,W).
\end{align}
By the above discussion this factor is well defined, not identically zero and belongs to $\C(q^{-s})$.
Since $\dim\Hom_{\GL_1^{(m)}}(\pi,\pi)=1$ and $\rho_c(\tau)$ is $(rk,c)$, $\gamma(s,\widetilde{\pi}\times\tau,\psi)$ is independent of the concrete realizations of $\pi$ and $\tau$.

Over $\C$ define $\gamma(s,\widetilde{\pi}\times\tau,\psi)$ again by \eqref{eq:gamma RS} (\cite[Theorem~C.1]{CFK} is valid in the archimedean case as well).
Recall the representation $\pi^r$ defined in \S~\ref{reps}.

The following is our main local result on the Rankin--Selberg integrals.
\begin{theorem}\label{theorem:RS ten commendments}
The factor $\gamma(s,\widetilde{\pi}\times\tau,\psi)$ satisfies the following properties.
\begin{itemize}[leftmargin=*]
\item Unramified twisting:
\begin{align}\label{eq:RS Unramified twisting}
\gamma(s,|\det|^{-s_1}\widetilde{\pi}\times|\det|^{s_0}\tau,\psi)=\gamma(s+rs_0-rs_1,\widetilde{\pi}\times\tau,\psi).
\end{align}
\item Multiplicativity I: If $\widetilde{\pi}\subset\Ind_{\widetilde{P}_{(l,c-l)}}^{\GL^{(m)}_c}(\sigma)$ where $\sigma=\widetilde{\pi}_1\otimes\widetilde{\pi}_2$ is irreducible and admissible,
    \begin{align}\label{eq:RS mult I}
     \gamma(s,\widetilde{\pi}\times\tau,\psi)=\gamma(s,\widetilde{\pi}_1\times\tau,\psi)\gamma(s,\widetilde{\pi}_2\times\tau,\psi).
    \end{align}
\item Unramified factors: when data are unramified and $\pi,\tau$ are parameterized with one parameter,
    \begin{align}\label{eq:RS unramified factors}
    \gamma(s,\widetilde{\pi}\times\tau,\psi)=\frac{L(1-s,\pi\times\widetilde{\tau})}{L(s,\widetilde{\pi}\times\tau)}.
    \end{align}
\item Dependence on $\psi$: for any $b\in F^*$,
    \begin{align}\label{eq:RS dependence on psi}
    \gamma(s,\widetilde{\pi}\times\tau,\psi_b)=\widetilde{\pi}(\langle b^rI_c,1\rangle)^{k}\eta_{\tau}(\langle b,1\rangle)^c|b|^{kc(s-1/2)}\gamma(s,\widetilde{\pi}\times\tau,\psi).
    \end{align}
\item Functional equation:
    \begin{align}\label{eq:RS functional equation}
    \gamma(s,\widetilde{\pi}\times\tau,\psi)\gamma(1-s,\pi\times\widetilde{\tau},\psi^{-1})=1.
    \end{align}
\item Over $F=\C$ let $\varphi:\C^*\rightarrow\GL_{c}(\C)\times\GL_{k}(\C)$ be the homomorphism attached to
$(\pi^{\vee})^r\otimes\tau^r$ and $\epsilon(s,\mathrm{st}\circ\varphi,\psi)$ and $L(s,\mathrm{st}\circ\varphi)$ be Artin's local factors attached to $\mathrm{st}\circ\varphi$ by Langlands' correspondence (\cite{Bo,La3}), where $\mathrm{st}$ is the standard representation. Then
\begin{align}\label{eq:RS Archimedean property}
\gamma(s,\widetilde{\pi}\times\tau,\psi)=\epsilon(s,\mathrm{st}\circ\varphi,\psi)\frac{L(1-s,\mathrm{st}^{\vee}\circ\varphi)}{L(s,\mathrm{st}\circ\varphi)}.
\end{align}
\item Crude functional equation: let $F$ be a number field with a ring of adeles $\A$, $\psi$ be a nontrivial character of $F\backslash\A$, and assume $\pi$ and $\tau$ are genuine cuspidal representations of $\GL_c^{(m)}(\A)$ and $\GL_{k}^{(m)}(\A)$. With the global parametrization and the set $S$ as in \S~\ref{unr L functions},
    \begin{align}\label{eq:RS global property}
    L^S(s,\widetilde{\pi}\times\tau)=\prod_{\nu\in S}\gamma(s,\widetilde{\pi}_{\nu}\times\tau_{\nu},\psi_{\nu})L^S(1-s,\pi\times\widetilde{\tau}).
    \end{align}
\end{itemize}
\end{theorem}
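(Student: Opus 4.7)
The plan is to verify the seven properties in turn, with Theorem~\ref{theorem:uniqueness for bilinear RS} and Corollary~\ref{corollary:Z and Z^* are meromorphic} as the principal tools, organizing the work by increasing difficulty. Unramified twisting, $\psi$-dependence, and the unramified factor formula amount essentially to direct substitutions in $Z$ and $Z^*$. For \eqref{eq:RS Unramified twisting}, I would substitute into \eqref{eq:Z integral GL 1 GL rk} and \eqref{eq:second Z integral GL 1 GL rk}: twisting $\tau$ by $|\det|^{s_0}$ promotes $\rho_c(\tau)$ to $|\det|^{s_0}\rho_c(\tau)$ on $\GL_{rkc}^{(m)}$, which on $e_1(a)$ contributes $|\det a|^{s_0}$, shifting $s' \mapsto s' + s_0$ (equivalently $s \mapsto s+rs_0$); twisting $\pi$ by $|\det|^{-s_1}$ contributes the remaining $-rs_1$, and the twisting factor $\vartheta(s,\widetilde{\pi},\tau)$ transforms compatibly. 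For \eqref{eq:RS dependence on psi}, replacing $\psi$ by $\psi_b$ conjugates the $(rk,c)$ model by a diagonal torus element of $\GL_{rkc}^{(m)}$; tracking the resulting transformations of $W$, $\omega$, and the Haar measure, then using \eqref{eq:central character rho c tau} for the central character of $\rho_c(\tau)$ and the definition of $\eta_\tau$ in \S~\ref{the representation rho_c(tau)}, produces the three stated factors. Identity \eqref{eq:RS unramified factors} follows by substituting \eqref{RS unr integral} and \eqref{second RS unr integral} into \eqref{eq:gamma RS}, once one verifies that $\widetilde{\pi}(\mathfrak{i}_{\GL_c})^{rk-1}\vartheta(s,\widetilde{\pi},\tau)=1$ on unramified data with a single parametrization, which is a direct computation from the Satake parameters of \S~\ref{unr L functions}.

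For multiplicativity \eqref{eq:RS mult I}, realize matrix coefficients of $\pi^\vee$ as iterated integrals against matrix coefficients of $\pi_1^\vee$ and $\pi_2^\vee$ after fixing the embedding $\pi \subset \Ind_{\widetilde{P}_{(l,c-l)}}^{\GL_c^{(m)}}(\pi_1\otimes\pi_2)$, and unfold $Z(s,\omega,W)$ via the Iwasawa decomposition $\GL_c = P_{(l,c-l)}\cdot K$. The inner integration over $V_{(l,c-l)}$ is absorbed by the decomposition \eqref{eq:mnk functional using w_{n,m,k}} of the $(rk,c)$ functional on $\rho_c(\tau)$ in terms of $(rk,l)$- and $(rk,c-l)$-functionals on $\rho_l(\tau)$ and $\rho_{c-l}(\tau)$, available by Corollary~\ref{corollary:tempered} for essentially tempered $\tau$ and then in general via \eqref{rep:rho c tau in general} and induction in stages. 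After swapping integration orders and tracking the $\delta_P$-factors together with the cocycle contributions from \eqref{eq:RS lift of embedding}, $Z(s,\omega,W)$ factors as $Z(s,\omega_1,W_1)Z(s,\omega_2,W_2)$, and the analogous identity for $Z^*$ combined with \eqref{eq:gamma RS} yields \eqref{eq:RS mult I}. The functional equation \eqref{eq:RS functional equation} follows from two applications of \eqref{eq:gamma RS}: the key step identifies $Z^*(s,\omega,W)$ for $(\pi,\tau,\psi)$, up to explicit prefactors involving $\mathfrak{i}_{\GL_c}$ and $\vartheta$, with $Z(1-s,\omega',W')$ for $(\widetilde{\pi},\widetilde{\tau},\psi^{-1})$ after conjugation by $w$ and reflection of variables; the accumulated prefactors cancel using the involutive symmetries $\widetilde{\pi}(\mathfrak{i}_{\GL_c})\pi(\mathfrak{i}_{\GL_c})=1$ and $\vartheta(s,\widetilde{\pi},\tau)\vartheta(1-s,\pi,\widetilde{\tau})=1$.

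The archimedean formula \eqref{eq:RS Archimedean property} reduces to \cite[Appendix~C]{CFK}: over $\C$ the covering $\GL_{rkc}^{(m)}$ splits canonically, $\rho_c(\tau)$ becomes the linear $(rk,c)$-representation identified in Theorem~\ref{theorem rho c tau unique}, and the $r$-th power operation $\pi \mapsto \pi^r$, $\tau\mapsto\tau^r$ built into the archimedean Langlands parameter absorbs the factor of $r$ on the spectral side. Finally, the crude functional equation \eqref{eq:RS global property} follows from the Euler factorization of the global Rankin--Selberg integral to be constructed in \S~\ref{RS Crude functional equation}, whose $s\leftrightarrow 1-s$ symmetry comes from the Eisenstein residue $\mathcal{E}_\tau$; combining \eqref{eq:gamma RS} at the places of $S$ with \eqref{RS unr integral} and \eqref{second RS unr integral} outside $S$ produces the asserted identity. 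The main obstacle will be multiplicativity: the $(rk,c)$-functional on $\rho_c(\tau)$ must be rewritten in a form compatible with the induced structure of $\pi$ on the much smaller group $\GL_c^{(m)}$, while keeping the genuine structures consistent and controlling the cocycle contributions from \eqref{eq:RS lift of embedding} and \eqref{eq:sigma rkc diamondsuit on G triangle} throughout the unfolding. The decomposition \eqref{eq:mnk functional using w_{n,m,k}} provides the essential structural input, but bookkeeping its interaction with the central character and the twist by $\eta_\tau'$ across the two pieces is the most delicate step.
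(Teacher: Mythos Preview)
Your overall organization matches the paper's \S\ref{proof:RS ten commendments}, and the treatments of unramified twisting, $\psi$-dependence, and unramified factors are essentially what the paper does. The multiplicativity sketch is also along the right lines (the paper simply defers to \cite[Theorem~C.2]{CFK} with $k$ replaced by $rk$ and the cocycle identities \eqref{eq:sigma on h and v}--\eqref{eq:sigma conjugate v- to v by h}).

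The functional equation \eqref{eq:RS functional equation} is where your proposal has a genuine gap. You claim that $Z^*(s,\omega,W)$ can be identified with $Z(1-s,\omega',W')$ ``after conjugation by $w$ and reflection of variables.'' But conjugation and the involution $*$ alone do not dispose of the unipotent integral $\int_{V_{((rk-2)c,c)}^-}dv$ inside $Z^*$; for a general $W$ there is no such identity. The paper's argument (\S\ref{RS The functional equation}) instead constructs a \emph{specific} $W$ from a starting $W_0$ by iterated convolutions $W_i=\phi_iW_{i-1}$ against Schwartz functions $\phi_i\in\mathcal{S}(\Mat_c)$ whose Fourier transforms have prescribed support, then sets $W=\langle w,1\rangle\cdot W_{rk-2}$. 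For this particular pair one proves two identities, $Z^*(1-s,\omega^*,W^*)=q^{-c^2\sum_i l_i}Z(s,\omega,W_0)$ and $Z(1-s,\omega^*,W^*)=q^{-c^2\sum_i l_i}Z^*(s,\omega,W_0)$, by unwinding the convolutions via Fourier inversion (the first removes the $dv$-integral from $Z^*$; the second reinstates it into $Z$). Dividing and invoking \eqref{eq:gamma RS} twice then gives \eqref{eq:RS functional equation}. Without this test-function construction the step fails.

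For the archimedean identity \eqref{eq:RS Archimedean property}, reducing to the linear integrals of \cite[Appendix~C]{CFK} is correct, but the assertion that ``the $r$-th power operation absorbs the factor of $r$'' hides the actual content. After reducing to $c=1$ via \eqref{eq:RS mult I}, the paper identifies $\gamma(s,\widetilde{\pi}\times\tau,\psi)$ with $\vartheta(s,\widetilde{\pi},\tau)\gamma^{\mathrm{RS}}(s',\rho_1(\tau)\times\pi^{-1},\psi)$, expands the latter as a product of $r$ shifted Tate $\gamma$-factors, and then applies the Legendre--Gauss multiplication formula $r^{r\beta-1/2}\prod_{i=0}^{r-1}\Gamma(\beta+i/r)=(2\pi)^{(r-1)/2}\Gamma(r\beta)$ to collapse this product to a single $\gamma^{\mathrm{Tate}}(s,\tau_i^r\pi^{-r},\psi)$. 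The factor $\vartheta(s,\widetilde{\pi},\tau)$ supplies exactly the power of $r$ needed; this is a computation, not a formal absorption.

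Finally, your description of the crude functional equation omits the mechanism that makes the global identity hold: the paper uses the ``exchange of roots'' identity \eqref{eq:X of roots} relating $\xi_1$ and $\xi_2$, together with the Fourier expansion \eqref{eq:varphi Whittaker expansion} and a Tate-style $\int_0^1+\int_1^\infty$ decomposition, to show that the meromorphic continuations of $Z$ and $Z^*$ coincide. The Eisenstein residue $\mathcal{E}_\tau$ by itself does not carry the $s\leftrightarrow 1-s$ symmetry you invoke.
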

We show that the factor appearing on the r.h.s. of \eqref{eq:RS dependence on psi} makes sense in the global context of \eqref{eq:RS global property}, i.e.,
$\prod_{\nu}\widetilde{\pi}_{\nu}(\langle b_{\nu}^rI_c,1\rangle)=1$ and $\prod_{\nu}\eta_{\tau_{\nu}}(\langle b_{\nu},1\rangle)=1$ for $b\in\GL_c(F)$.
First note that $\widetilde{\pi}_{\nu}(\langle b_{\nu}^rI_c,1\rangle)=\pi_{\nu}(\langle b_{\nu}^rI_c,1\rangle)^{-1}$, and we show
$\prod_{\nu}\pi_{\nu}(\langle b_{\nu}^rI_c,1\rangle)=1$.
In the local setting $\pi$ is a representation of $\GL_c[\sigma_c^{\diamondsuit}]$, but as a component of a cuspidal $\pi$,
$\pi_{\nu}$ is a representation of $\GL_c(F_{\nu})[\rho_c^{\diamondsuit}]$. To regard it as a representation of $\GL_c(F_{\nu})[\sigma_c^{\diamondsuit}]$, say, $\pi_{\nu}'$, define
$\pi_{\nu}'(\langle b_{\nu},\epsilon\rangle)=\pi_{\nu}(\langle b_{\nu},(\eta_{c,\nu}^{\diamondsuit})^{-1}(b_{\nu})\epsilon\rangle)$
(see \S~\ref{Local coverings}, \S~\ref{Global coverings}). With this observation in mind, we prove $\prod_{\nu}\pi_{\nu}'(\langle b_{\nu}^rI_c,1\rangle)=1$:
Because $\eta_c^{\diamondsuit}=\prod_{\nu}\eta_{c,\nu}^{\diamondsuit}$ is defined on $\GL_c(F)$,
$\prod_{\nu}\pi_{\nu}'(\langle b_{\nu}^rI_c,1\rangle)=\pi(\langle b^r,(\eta_c^{\diamondsuit})^{-1}(b^r)\rangle)=1$ since $\pi$ is automorphic.
Similarly by \eqref{eq:def of eta} (with $c=1$) and \eqref{eq:factorizable rk c functional},
\begin{align*}
\prod_{\nu}\eta_{\tau_{\nu}'}(\langle b_{\nu},1\rangle)\lambda_{\nu}(\xi_{\nu})
=\prod_{\nu}\lambda_{\nu}(\rho_1(\tau_{\nu}')(\langle b_{\nu}I_{rk},(\eta_{c,\nu}^{\diamondsuit})^{-1}(b_{\nu})\rangle)\xi_{\nu})
=\lambda(\mathcal{E}_{\tau}(\langle bI_{rk},(\eta_{c}^{\diamondsuit})^{-1}(b)\rangle)\xi)=
\lambda(\xi),
\end{align*}
whence $\prod_{\nu}\eta_{\tau_{\nu}'}(\langle b_{\nu},1\rangle)=1$.

Note that \eqref{eq:RS dependence on psi} is not symmetric with respect to $\pi$ and $\tau$, unless one proves
$\eta_{\tau}(\langle b,1\rangle)=\tau(\langle b^rI_k,1\rangle)$. We conjecture this is the case but will not assume it.

We expect the analog of \eqref{eq:RS mult I} to apply to $\tau$, i.e., if $\tau$ is a quotient of $\Ind_{\widetilde{P}_{\beta }}^{{\GL_{k}}^{(m)}}(\otimes_{i=1}^d\tau_i)$,
\begin{align}\label{eq:tau mult}
\gamma(s,\widetilde{\pi}\times\tau,\psi)=\prod_{i=1}^d\gamma(s,\widetilde{\pi}\times\tau_i,\psi).
\end{align}
We leave this to a future work because Theorem~\ref{theorem:RS ten commendments} will be sufficient for our purpose
here (see \S~\ref{The local integrals}). At present it is not clear how to prove \eqref{eq:tau mult} even in the linear case, and
at any rate the case $k>1$ for $r>1$ is conjectural. Henceforth \eqref{eq:tau mult} will not be used except in
\S~\ref{RS complete L factors}.

For $rk=1$ let $\gamma(s,\widetilde{\pi}\times\tau,\psi)$ be the factor $\gamma(s,(\gamma_{\psi'}\otimes\tau)\times(\gamma_{\psi'}\otimes\pi^*),\psi)$ defined in \cite{JPSS,JS3}.
This definition is independent of $\psi'$. Then $L(s,\pi\times\tau)$ is defined and
Theorem~\ref{theorem:uniqueness for bilinear RS} is known. See \S~\ref{covering GL r=1}.

\subsection{Proof of Theorem~\ref{theorem:RS ten commendments}}\label{proof:RS ten commendments}
\subsubsection{Unramified twisting}
The integral for $|\det|^{-s_1}\pi^*\times\tau$ involves a matrix coefficient $\omega_{s_1}$ of $|\det|^{-s_1}\pi^{\vee}$
($|\det|^{-s_1}\pi^*=(|\det|^{s_1}\pi)^*$),
then $\omega_{s_1}(\langle a,1\rangle)=|\det a|^{-s_1}\omega_0(\langle a,1\rangle)$. Also $\rho_c(|\det|^{s_0}\tau)=|\det|^{s_0}\rho_c(\tau)$
and $\vartheta(s,|\det|^{-s_1}\pi^*,|\det|^{s_0}\tau)=
\vartheta(s+rs_0-rs_1,\pi^*,\tau)$.
These observations imply \eqref{eq:RS Unramified twisting}.

\subsubsection{Multiplicativity I}
The proof of \eqref{eq:RS mult I} in \cite[Theorem~C.2]{CFK} extends to the coverings here. One needs to replace
all occurrences of $k$ with $rk$, and use \eqref{eq:sigma on h and v}--\eqref{eq:sigma conjugate v- to v by h}.
See \S~\ref{RS The functional equation} for the justifications using convolutions of $W$ against Schwartz functions (this technique
was used repeatedly in the proof of \cite[Theorem~C.2]{CFK}).

\subsubsection{Unramified factors}
Because $\eta^{\diamondsuit}_{d}$ is trivial on $T_{\GL_d}\cap K_{\GL_d}$ (see \S~\ref{Local coverings}),
$\vartheta(s,\widetilde{\pi},\tau)=1$. Identity \eqref{eq:RS unramified factors} then
follows from \eqref{RS unr integral} and \eqref{second RS unr integral}. In fact one can deduce this
directly from the case $c=1$, \cite[(2.54)]{me12} and \cite[Lemma~85]{me12} (thereby avoiding \cite{G7}) using \eqref{eq:RS mult I}.

\subsubsection{Dependence on $\psi$} First note that by \eqref{eq:Nice GL $2$-cocycle on torus}
and because $(b,b)_m=1$, $\sigma_{rkc}^{\diamondsuit}$ is trivial on any $t,t'\in T_{\GL_{rkc}}$ having only integer powers of
$b$ on the diagonal, so that $\langle t,1\rangle\langle t',1\rangle=\langle tt',1\rangle$ and in particular $t$ and $t'$ commute
in $\GL_{rkc}^{(m)}$. Put $t_b=\diag(I_c,b^{-1}I_c,\ldots,b^{1-rk}I_c)$. The functions $W_b(h)=W(\langle t_b,1\rangle h)$ ($h\in\GL_{rkc}^{(m)}$) span $W_{\psi_b}(\rho_c(\tau))$. Since
$t_b$ and $\diag(a,I_{(rk-1)c})$ commute in $\GL_{rkc}^{(m)}$ (by \eqref{eq:Nice GL $2$-cocycle on torus}),
$Z(s,\omega,\langle t_b,1\rangle\cdot W)=Z(s,\omega,W_b)$.

For $Z^*(s,\omega,\langle t_b,1\rangle\cdot W)$ we use the following observations.
\begin{enumerate}[leftmargin=*]
\item By \eqref{eq:sigma conjugate v by h} (with $N_{\GL_{rkc}}^-$ instead of $N_{\GL_{rkc}}$), if $[v_b]={}^{{}^wt_b}[v]$, ${}^{{}^wt_b}\langle[v],\varsigma^-([v])\rangle=\langle[v_b],\varsigma^-([v_b])\rangle$.
\item The topological module of $[v]\mapsto{}^{{}^wt_b}[v]$ (see e.g., \cite[p.~444]{BZ2}) is $|b|^{((rk(rk-1)/2)-1)c^2}$.
\item The elements ${}^wt_b$ and $\diag(I_{(rk-1)c},a)$ commute in $\GL_{rkc}^{(m)}$.
\item ${}^wt_b\diag(I_{(rk-1)c},b^{-rk}I_c)=(b^{-1}I_{rkc})t_b$
and by Proposition~\ref{proposition:action of W on torus is trivial on Sp}, ${}^w\langle t_b,1\rangle=\langle{}^wt_b,1\rangle$.
\item\label{it:last step} When we change $a\mapsto (b^{-rk}I_c)a$, $\omega(\langle a,1\rangle)\mapsto
\sigma^{\diamondsuit}_{c}(b^{-rk}I_c,a)\pi^{\vee}(\langle b^rI_c,1\rangle)^{-k}\omega(\langle a,1\rangle)$, and the root
$\sigma^{\diamondsuit}_{c}(b^{-rk}I_c,a)$ is cancelled
by the similar change in $W$.
\item $d_{\psi_b}x=|b|^{1/2}d_{\psi}x$ ($dx=d_{\psi}x$, see \S~\ref{ints and RS integrals}),
hence the measure of $V_{((rk-2)c,c)}^-$ appearing in $Z^*(s,\omega,W_b)$ equals $|b|^{(rk-2)c^2/2}dv$.
\end{enumerate}
Note that for \eqref{it:last step} it is crucial that $b^{rk}I_c\in C_{r,c}$. Also $s'rkc=kc(s-1/2)+rkc/2$.
We obtain
\begin{align*}
&Z^*(s,\omega,\langle t_b,1\rangle\cdot W)=\pi(\langle b^rI_c,1\rangle)^{k}\eta_{\tau,c}(\langle b^{-1},1\rangle)|b|^{-kc(s-1/2)}Z^*(s,\omega,W_b).
\end{align*}
This computation together with Lemma~\ref{lemma:eta tau c 1} imply \eqref{eq:RS dependence on psi}.

\subsubsection{The functional equation \eqref{eq:RS functional equation}}\label{RS The functional equation}
By \cite[Proposition~77]{me12}, $\eta_{\tau}\eta_{\tau^*}=1$. Then since
\begin{align*}
\pi(\langle b^rI_c,1\rangle)^{-k}\eta_{\tau}(\langle b,1\rangle)^c|b|^{kc(s-1/2)}
\pi^*(\langle b^rI_c,1\rangle)^{-k}\eta_{\tau^*}(\langle b,1\rangle)^c|b|^{kc(1-s-1/2)}=1,
\end{align*}
equality \eqref{eq:RS functional equation} is independent of the choice of $\psi$. Hence we assume the conductor of $\psi$ is $0$.
Let $\mathcal{S}(\Mat_c)$ be the space of Schwartz functions on $\Mat_c$,
and define the Fourier transform of $\phi\in\mathcal{S}(\Mat_c)$ by $\widehat{\phi}(y)=\int_{\Mat_c}\phi(x)\psi^{-1}(\tr xy)dx$ where
$dx=\prod_{i,j}d_{\psi}x$. Let
\begin{align*}Y=\left\{\jmath(y)=\left(\begin{smallmatrix}
                I_c  \\
                y & I_{(rk-2)c} \\
                0 & 0 & I_c
              \end{smallmatrix}\right)\right\}<N_{\GL_{rkc}}^-.
\end{align*}
For each $1\leq i\leq rk-2$, embed $\Mat_{ic\times c}$ in $\Mat_{((rk-2)c)\times c}$ by $y\mapsto \left(\begin{smallmatrix}y\\0\end{smallmatrix}\right)$ ($0\in\Mat_{((rk-2-i)c)\times c}$), thereby
$\Mat_{ci\times i}<Y$. For $x\in \Mat_c$, put
\begin{align*}
\ell_i(x)=\left(\begin{smallmatrix}
                I_c & 0& -x \\
                 & I_{ic} \\
                 &  & I_c\\
                 &  & & I_{(rk-2-i)c}
              \end{smallmatrix}\right).
\end{align*}
For a given $W\in W_{\psi}(\rho_c(\tau))$, let $l>0$ be such that $W$ is right-invariant under the subgroup
\begin{align*}
\{\langle \jmath(y),\varsigma^-(\jmath(y))\rangle:y\in Y,\,||y||_{\infty}=\max_{i,j}|y_{i,j}|\leq q^{-l}\}.
\end{align*}
Such an $l$ always exists (see \cite[p.~321]{BJ}, a different $1$-cochain will still coincide with $\varsigma^-$ for $l\gg0$).
Let $\phi_i\in\mathcal{S}(\Mat_c)$ be such that $\widehat{\phi}_i$ is the characteristic function of $\Mat_c(\varpi^l\mathcal{O})$ and set
\begin{align*}
\phi_i W(h)=\int\limits_{\Mat_c}W(h\langle \ell_i(x),1\rangle)\phi_i(x)\,dx,\qquad h\in\GL_{rkc}^{(m)}.
\end{align*}
Let $W_0\in W_{\psi}(\rho_c(\tau))$ be such that $Z(s,\omega,W_0)$ is not identically $0$ and define for $i=1,\ldots,rk-2$,
$W_i=\phi_iW_{i-1}$, i.e., for each $i$ there is a suitable $l_i>0$, $\widehat{\phi}_i$ is the characteristic function of
$\Mat_c(\varpi^{l_i}\mathcal{O})$, etc. Put $W=\langle w,1\rangle\cdot W_{rk-2}$. Using the fact that
if $[v]^*=\jmath(y)$, ${}^*\langle[v],\varsigma^-([v])\rangle=
\langle\jmath(y),\varsigma^-(\jmath(y))\rangle$ (see e.g., \cite[(1.4)]{me12}), applying \eqref{eq:involution b*0} and changing $a\mapsto a^*$,
\begin{align*}
&Z^*(1-s,\omega^*,W^*)
\\&=\int\limits_{\GL_c}\int\limits_{\Mat_{((rk-2)c)\times c}}
W_{rk-2}(\langle e_1(a),1\rangle\langle\jmath(y),\varsigma^-(\jmath(y))\rangle)\omega(\langle a,1\rangle)|\det a|^{s'-(rkc-2c+1)/2}\,dy\,da.
\end{align*}
Consider $1\leq i\leq rk-2$. For $y\in\Mat_{ic\times c}$, since $\ell_i(x)\in N_{\GL_{rkc}}$, by \eqref{eq:sigma on h and v} we have
\begin{align}\label{eq:conjugations W_i using ell_i}
{}^{\ell_i(x)^{-1}}\langle\jmath(y),\varsigma^-(\jmath(y))\rangle
=\langle\left(\begin{smallmatrix}I_c\\y&I_{ic}&-yx\\&&I_c\\&&&I_{(rk-i-2)c}\end{smallmatrix}\right),\varsigma^-(\jmath(y))\rangle
=\langle\left(\begin{smallmatrix}I_c\\&I_{ic}&-yx\\&&I_c\\&&&I_{(rk-i-2)c}\end{smallmatrix}\right),1\rangle
\langle\jmath(y),\varsigma^-(\jmath(y))\rangle.
\end{align}
The $(rk,c)$ character takes the value $\psi^{-1}(\tr(xy_i))$ on the upper triangular matrix on the r.h.s., where $y_i$ is the bottom $c\times c$ block of $y$. Thus
\begin{align*}
&\int\limits_{\Mat_{ic\times c}}
W_{i}(\langle e_1(a),1\rangle\langle\jmath(y),\varsigma^-(\jmath(y))\rangle)\,dy
\\&=\int\limits_{\Mat_{ic\times c}}\int\limits_{\Mat_c}
W_{i-1}(\langle e_1(a),1\rangle\langle\jmath(y),\varsigma^-(\jmath(y))\rangle\langle\ell_i(x),1\rangle)\phi_i(x)\,dx\,dy
\\&=\int\limits_{\Mat_{ic\times c}}
W_{i-1}(\langle e_1(a),1\rangle\langle\jmath(y),\varsigma^-(\jmath(y))\rangle)\widehat{\phi}_i(y_i)\,dy
\\&=q^{-l_ic^2}\int\limits_{\Mat_{((i-1)c)\times c}}
W_{i-1}(\langle e_1(a),1\rangle\langle\jmath(y),\varsigma^-(\jmath(y))\rangle)\,dy.
\end{align*}
Here $q^{-l_i}$ is the volume of $\varpi^{l_i}\mathcal{O}$. Applying this repeatedly for $i=rk-2,\ldots,1$, we obtain
\begin{align}\label{eq: Z^* with W^*}
&Z^*(1-s,\omega^*,W^*)\\&=q^{-c^2\sum_{i=1}^{rk-2}l_i}\int\limits_{\GL_c}
W_{0}(\langle e_1(a),1\rangle)\omega(\langle a,1\rangle)|\det a|^{s'-(rkc-2c+1)/2}\,da=q^{-c^2\sum_{i=1}^{rk-2}l_i}Z(s,\omega,W_0).\nonumber
\end{align}
Next we compute $Z(1-s,\omega^*,W^*)$. First we have
\begin{align}\label{eq:Z 1-s W^*}
&Z(1-s,\omega^*,W^*)\\&=\int\limits_{\GL_c}\nonumber
W_{rk-2}^*(\langle e_1(a),1\rangle\,{}^*\langle w,1\rangle)\,\omega^*(\langle a,1\rangle)|\det a|^{(1-s)'-(rkc-2c+1)/2}\,da
\\&=\int\limits_{\GL_c}
W_{rk-2}(\langle{}^we_1(a),1\rangle\langle w,1\rangle)\omega(\langle a,1\rangle)|\det a|^{s'-1+(rkc-2c+1)/2}\,da.\nonumber
\end{align}
Consider a fixed $a$ and let $1\leq i \leq rk-2$.
For $x\in\Mat_c$, ${}^w\ell_i(x)=[(0_{c\times((i-1)c)},-x,0_{c\times((rk-2-i)c)})]$ whence
\begin{align*}
{}^w\langle\ell_i(x),1\rangle=\langle[(0_{c\times((i-1)c)},-x,0_{c\times((rk-2-i)c)})],\varsigma^-([(0_{c\times((i-1)c)},-x,0_{c\times((rk-2-i)c)})])\rangle.
\end{align*}
(See \cite[(1.4)]{me12}.)
Let $y^i=[(0_{c\times ic},y_{i+1},\ldots,y_{rk-2})]$ where $y_{i+1},\ldots,y_{rk-2}\in\Mat_c$ and note that $y^{rk-2}$ is trivial and $y^0$ is a general element of $\Mat_{c\times((rk-2)c)}$.
We then have
\begin{align}\label{eq:W i to W i-1}
&W_{i}(\langle{}^we_1(a),1\rangle\langle y^i,\varsigma^-(y^i)\rangle\langle w,1\rangle)
\\&=\int\limits_{\Mat_c}W_{i-1}(\langle{}^we_1(a),1\rangle\langle y^{i-1},\varsigma^-(y^{i-1})\rangle\langle w,1\rangle)\phi_i(y_i)\,dy_i.\nonumber
\end{align}
Take $z\in \Mat_{((rk-2)c)\times c}$ whose $i$-th $c\times c$ block $z_i$ satisfies $||z_i||_{\infty}=q^{-l_i}$ and all other blocks are $0$. A computation using \eqref{eq:sigma on h and v} similar to \eqref{eq:conjugations W_i using ell_i} shows
\begin{align*}
&W_{i-1}(\langle{}^we_1(a),1\rangle\langle [y^{i-1}],\varsigma^-([y^{i-1}])\rangle\langle w,1\rangle)
\\&=W_{i-1}(\langle{}^we_1(a),1\rangle\langle [y^{i-1}],\varsigma^-([y^{i-1}])\rangle\langle w,1\rangle\langle \jmath(z),\varsigma^-(\jmath(z))\rangle)
\\&=\psi^{-1}(\tr z_iy_i)W_{i-1}(\langle{}^we_1(a),1\rangle\langle [y^{i-1}],\varsigma^-([y^{i-1}])\rangle\langle w,1\rangle).
\end{align*}
According to our assumption on $\psi$, the l.h.s.~ vanishes unless $||y_i||_{\infty}\leq q^{l_i}$ (if $||y_i||_{\infty}>q^{l_i}$, there is $z_i$ such that $\psi(\tr z_iy_i)\ne1$), and $\phi_i$ is the characteristic function of $\Mat_c(\varpi^{-l_i}\mathcal{O})$ multiplied by $q^{-l_ic^2}$. Thus we can replace $\phi_i(y_i)$ with $q^{-l_ic^2}$ on the r.h.s.~ of \eqref{eq:W i to W i-1}. Applying \eqref{eq:W i to W i-1} repeatedly for $i=rk-2,\ldots,1$ we obtain
\begin{align}\label{eq:removing unipotent int}
W_{rk-2}(\langle{}^we_1(a),1\rangle\langle w,1\rangle)
=q^{-c^2\sum_{i=1}^{rk-2}l_i}\int\limits_{V_{((rk-2)c,c)}^-}W_{0}(\langle{}^we_1(a),1\rangle\langle[x],\varsigma^-([x])\rangle\langle w,1\rangle)\,dx.
\end{align}
Here the measure $dx$ on the r.h.s.~ coincides with the measure $dv$ of $Z^*(s,\omega,W_0)$, by our choices of measures
for $\phi_i W$ for each $i$. Plugging this into \eqref{eq:Z 1-s W^*} we deduce
\begin{align}\label{eq: Z with W^*}
Z(1-s,\omega^*,W^*)=q^{-c^2\sum_{i=1}^{rk-2}l_i}Z^*(s,\omega,W_0).
\end{align}

By our choice of $W_0$, $Z(s,\omega,W_0)\not\equiv0$, and because $\gamma(s,\widetilde{\pi}\times\tau,\psi)$ is not identically zero, \eqref{eq:gamma RS} implies $Z^*(s,\omega,W_0)\not\equiv0$ whence by
\eqref{eq: Z with W^*}, $Z(1-s,\omega^*,W^*)\not\equiv0$. Now combining \eqref{eq: Z^* with W^*} and \eqref{eq: Z with W^*}, and
since $\pi(\mathfrak{i}_{\GL_c})^{rk-1}=\pi^*(\mathfrak{i}_{\GL_c})^{rk-1}$ and $\vartheta(1-s,\pi,\widetilde{\tau})=\vartheta(s,\widetilde{\pi},\tau)^{-1}$,
we deduce
\begin{align*}
\gamma(1-s,\pi\times\widetilde{\tau},\psi^{-1})=
\beta\frac{Z^*(1-s,\omega^*,W^*)}{Z(1-s,\omega^*,W^*)}=\beta\frac{Z(s,\omega,W_0)}{Z^*(s,\omega,W_0)}=\gamma(s,\widetilde{\pi}\times\tau,\psi)^{-1}.
\end{align*}
Here $\beta=\pi(\mathfrak{i}_{\GL_c})^{rk-1}\vartheta(1-s,\pi,\widetilde{\tau})$.
Note that indeed $W^*\in W_{\psi^{-1}}(\rho_c(\tau^*))$. This proves \eqref{eq:RS functional equation}.

\subsubsection{$F=\C$}\label{RS Archimedean property}
We can write $\pi\subset\Ind_{B_{\GL_c}}^{\GL_c}(\otimes_{j=1}^c\pi_j)$ and $\tau=\Ind_{B_{\GL_k}}^{\GL_k}(\otimes_{i=1}^k\tau_i)$,
for quasi-characters $\pi_j$ and $\tau_i$ of $F^*$. We prove
\begin{align}\label{eq:RS tilde of rho tau related to Artin gamma factors wish result}
\gamma(s,\widetilde{\pi}\times\tau,\psi)=\prod_{i,j}\gamma^{\mathrm{Tate}}(s,\tau_i^r\pi_j^{-r},\psi),
\end{align}
where $\gamma^{\mathrm{Tate}}$ is Tate's $\gamma$-factor (\cite{Tate}).
Granted that, the definitions of the archimedean $L$- and $\epsilon$-factors (see e.g., \cite[\S~3]{Shahidi1985}) imply \eqref{eq:RS Archimedean property}. By \eqref{eq:RS mult I} and the multiplicativity of $\gamma^{\mathrm{Tate}}$ it is enough to prove \eqref{eq:RS tilde of rho tau related to Artin gamma factors wish result} for $c=1$, henceforth assume this is the case. Now the integrals $Z$ and $Z^*$ are the
Rankin--Selberg $\GL_{rk}\times\GL_1$ integrals (\cite{JS3}) for $\rho_1(\tau)\times\pi^{\vee}$. Using \eqref{eq:gamma RS} and
\eqref{rep:rho c tau in general},
\begin{align}\label{eq:RS tilde of rho tau}
\gamma(s,\widetilde{\pi}\times\tau,\psi)&=\vartheta(s,\widetilde{\pi},\tau)\gamma^{\mathrm{RS}}(s',\rho_1(\tau)\times\pi^{-1},\psi)\\&=\nonumber
\prod_{i=1}^k(\pi\tau_i^{-1})(r^r)|r|^{-(s-1/2)}\gamma^{\mathrm{RS}}(s',\rho_1(\tau_i)\times\pi^{-1},\psi),
\end{align}
where $\gamma^{\mathrm{RS}}(s',\rho_1(\tau)\times\pi^{-1},\psi)$ is the Rankin--Selberg $\gamma$-factor of Jacquet and Shalika \cite{JS3}
for $\rho_1(\tau)\times\pi^{-1}$.
For each $i$, $\rho_1(\tau_i)=\tau_i\Ind_{B_{\GL_r}}^{\GL_r}(\delta_{B_{\GL_r}}^{1/(2r)})$ (the r.h.s.~ is irreducible) and
\begin{align}\label{eq:RS of rho tau}
\gamma^{\mathrm{RS}}(s',\rho_1(\tau_i)\times\pi^{-1},\psi)=\prod_{i=1}^r\gamma^{\mathrm{Tate}}(s'+(r-2i+1)/(2r),\tau_i\pi^{-1},\psi).
\end{align}
Over $\C$, $|\cdot|$ is the square of the ordinary absolute value $|\cdot|_{\R}$.
Identify $\tau_i$ with the pair $(l_{\tau_i},t_{\tau_i})\in\Z\times\C$ where $\tau_i(z)=(z/|z|_{\R})^{l_{\tau_i}}|z|^{t_{\tau_i}}$ and similarly identify $\pi^{-1}$ with $(l_{\pi^{-1}},t_{\pi^{-1}})\in\Z\times\C$. For brevity set $l_i=l_{\tau_i}+l_{\pi^{-1}}$ and
$t_i=t_{\tau_i}+t_{\pi^{-1}}$. By Langlands' definition of the $L$- and $\epsilon$-functions
(see \cite{Knapp1994}), assuming $\psi(z)=e^{2\pi i(z+\overline{z})}$
(which is possible since we already proved \eqref{eq:RS dependence on psi}),
\begin{align*}
L(s',\tau_i\pi^{-1})=2(2\pi)^{-(s'+t_{i}+|l_{i}|_{\R}/2)}\Gamma(s'+t_{i}+|l_{i}|_{\R}/2),\quad
\epsilon(s',\tau_i\pi^{-1},\psi)=i^{|l_{i}|_{\R}}.
\end{align*}
According to Tate's computation (\cite{Tate}),
\begin{align*}
\gamma^{\mathrm{Tate}}(s',\tau_i\pi^{-1},\psi)=\frac{\epsilon(s',\tau_i\pi^{-1},\psi)L(1-s',\tau_i^{-1}\pi)}{L(s',\tau_i\pi^{-1})}.
\end{align*}
Note that $l_{\tau_i^{-1}}=-l_{\tau_i}$ whence $|l_{\tau_i^{-1}}|_{\R}=|l_{\tau_i}|_{\R}$.
Using the multiplicativity formula
\begin{align*}
r^{r\beta-1/2}\prod_{i=0}^{r-1}\Gamma(\beta+i/r)=(2\pi)^{(r-1)/2}\Gamma(r\beta)
\end{align*}
with $\beta=s'+t_i+|l_i|_{\R}/2+(1-r)/(2r)$, the r.h.s.~ of \eqref{eq:RS of rho tau} equals
\begin{align*}
\frac{2(2r\pi)^{-(1-s-rt_i+|rl_i|_{\R}/2)}\Gamma(1-s-rt_i+|rl_i|_{\R}/2)}
{2(2r\pi)^{-(s+rt_i+|rl_i|_{\R}/2)}\Gamma(s+rt_i+|rl_i|_{\R}/2)}=|r|^{s-1/2}(\pi^{-1}\tau_i)(r^r)\gamma^{\mathrm{Tate}}(s,\tau_i^r\pi^{-r},\psi).
\end{align*}
This together with \eqref{eq:RS tilde of rho tau} and \eqref{eq:RS of rho tau} imply \eqref{eq:RS tilde of rho tau related to Artin gamma factors wish result} and we conclude \eqref{eq:RS Archimedean property}.
\subsubsection{Crude functional equation}\label{RS Crude functional equation}
First we adapt the global construction from \cite{G7} (the linear case) to define the global integral. We then present the global functional equation (which was not discussed in \cite{G7}). We use the results of \S~\ref{Global coverings} on the global covering.
Let $\xi$ be an automorphic form in the space of $\mathcal{E}_{\tau}$ (see \S~\ref{the representation rho_c(tau)}).
Consider the Fourier coefficient
\begin{align*}
\xi_1(b)=\xi^{V_{(2c,c^{rk-2})},\psi}(b)=\int\limits_{V_{(2c,c^{rk-2})}(F)\backslash V_{(2c,c^{rk-2})}(\A)}\xi(\langle v,(\eta_{rkc}^{\diamondsuit})^{-1}(v)\rangle b)\psi^{-1}(v)\,dv,
\end{align*}
where $\psi$ is the restriction of the $(rk,c)$ character to $V_{(2c,c^{rk-2})}$ and
$b\in\GL_{rkc}^{(m)}(\A)$. The normalizer of $V_{(2c,c^{rk-2})}$ and stabilizer of $\psi|_{V_{(2c,c^{rk-2})}}$ in $M_{(2c,c^{rk-2})}$ is given by $P=M\ltimes V_{(c^2)}$ where $M=\{e_1(g_1)e_2(g_2):g_1,g_2\in\GL_c\}\cong M_{(c^2)}$. The coefficient
$\xi^{V_{(2c,c^{rk-2})},\psi}$ is then an automorphic form on $\widetilde{P}(\A)$ with respect to the splitting of $P(F)$ obtained by restricting $(\eta_{rkc}^{\diamondsuit})^{-1}$.

The group $M(F)$ acts on the set of characters of $V_{(c^2)}(F)\backslash V_{(c^2)}(\A)$ with $c+1$ orbits. We can take representatives $\psi^j(v)=\psi(\tr(\left(\begin{smallmatrix}I_j &  \\& 0\end{smallmatrix}\right)v))$, $0\leq j\leq c$. Denote the stabilizer of $\psi^j$ in $M$ by $\mathrm{St}_j$, e.g., $\mathrm{St}_c=\GL_c^{\triangle}$, and $\psi^c\cdot(\psi|_{V_{(2c,c^{rk-2})}})$ is the $(rk,c)$ character of $V_{(c^{rk})}$. Define for $b\in \widetilde{P}(\A)$,
\begin{align*}
\xi_1^{V_{(c^2)},\psi^j}(b)=
\int\limits_{V_{(c^2)}(F)\backslash V_{(c^2)}(\A)}\xi_1(\langle v,(\eta_{rkc}^{\diamondsuit})^{-1}(v)\rangle b)
(\psi^j)^{-1}(v)\,dv.
\end{align*}
The Fourier expansion of $\xi_1$ along $V_{(c^2)}$ is given by
\begin{align}\label{eq:varphi Fourier expansion}
\xi_1(b)=&\sum_{j=0}^c
\quad\sum\limits_{y\in \mathrm{St}_j(F)\backslash M_{(c^2)}(F)}
\xi_1^{V_{(c^2)},\psi^j}(\langle y,(\eta_{rkc}^{\diamondsuit})^{-1}(y)\rangle b).
\end{align}
Let $\Lambda$ be a global $(rk,c)$ functional on $\mathcal{E}_{\tau}$ and put
$\Lambda_{\xi}(h)=\Lambda(\mathcal{E}_{\tau}(h)\xi)$ ($h\in\GL_{rkc}^{(m)}(\A)$).
Since $\xi_1^{V_{(c^2)},\psi^0}=\xi_1^{V_{(c^2)}}$ is the constant term, we can rewrite \eqref{eq:varphi Fourier expansion} in the form
\begin{align}\label{eq:varphi Whittaker expansion}
(\xi_1-\xi_1^{V_{(c^2)}})(b)=
&\sum_{j=1}^{c-1}\quad
\sum\limits_{y\in \mathrm{St}_j(F)\backslash M_{(c^2)}(F)}
\xi_1^{V_{(c^2)},\psi^j}(\langle y,(\eta_{rkc}^{\diamondsuit})^{-1}(y)\rangle b)
\\&+\sum\limits_{y\in \GL_c(F)}\Lambda_{\xi}(\langle e_1(y),(\eta_{rkc}^{\diamondsuit})^{-1}(e_1(y))\rangle b).\nonumber
\end{align}
The above description applies in the same manner to $\xi_2=\xi^{V_{(c^{rk-2},2c)},\psi}$, which is an automorphic form on $P'={}^wM\ltimes V_{(c^2)}$ where $V_{(c^2)}$ is identified with $\diag(I_{(rk-2)c},V_{(c^2)})$.
Then \eqref{eq:varphi Whittaker expansion} applies to $\xi_2-\xi_2^{V_{(c^2)}}$ as well, except we have ${}^we_1(y)$ instead of $e_1(y)$ in the last summation.

By virtue of the ``exchange of roots" \cite[Lemma~7.1]{RGS},
\begin{align}\label{eq:X of roots}
\xi_1(b)&=\int\limits_{V_{((rk-2)c,c)}^-(\A)}\xi_2(\langle [v],(\eta_{rkc}^{\diamondsuit,-})^{-1}([v])\rangle
\langle w,(\eta_{rkc}^{\diamondsuit})^{-1}(w)\rangle b)\,dv.
\end{align}
Here $u^-\mapsto\langle u^-,(\eta_{rkc}^{\diamondsuit,-})^{-1}(u^-)\rangle$ is the splitting of $N_{\GL_{rkc}}^-(\A)$ in $\GL_{rkc}(\A)^{(m)}$.
The convergence of the r.h.s.~ of \eqref{eq:X of roots} is in the sense of \cite[Lemma~7.1]{RGS}.

Next we describe the setup for the integral, globalizing \eqref{eq:RS lift of embedding}. Define
\begin{align}\label{eq:RS rho L and rho R}
\rho_{L}(b,b')=\rho_{rkc}^{\diamondsuit}(e_1(b),e_1(b')),\qquad
\rho_{R}(b,b')=(\rho_{rkc}^{\diamondsuit})^{-1}(e_2(b),e_2(b')).
\end{align}
Note that $e_1(\GL_c(\A))$ and $e_2(\GL_c(\A))$ commute in $\GL_{rkc}(\A)^{(m)}$, because they commute locally.
The mappings $\langle b,\epsilon\rangle\mapsto\langle \mathfrak{e}_1(b),\epsilon\rangle$ and
$\langle b,\epsilon\rangle\mapsto\langle \mathfrak{e}_2(b),\epsilon^{-1}\rangle$ (cf. \eqref{eq:RS embeddings}) define an embedding
\begin{align}\label{eq:RS gbl lift of embedding}
\{(\epsilon_1,\epsilon_2)\in\mu_m^2:\epsilon_1=\epsilon_2\}\backslash \GL_c(\A)[\rho_L]\times \GL_c(\A)[\rho_R] \rightarrow \GL_{kc}^{(m)}(\A).
\end{align}
(Cf. \eqref{eq:RS lift of embedding}.) Recall from \S~\ref{Global coverings} that automorphic forms on $\GL_{d}^{(m)}(\A)$ are defined with respect to the splitting of $\GL_{d}(F)$ given by $y\mapsto\langle y,\eta_{d}^{\diamondsuit}(y)^{-1}\rangle$, we use this here for $d=rkc,c$; automorphic forms on $\GL_c(\A)[\rho_L]$ and $\GL_c(\A)[\rho_R]$ are defined by the restrictions of $y\mapsto\langle y,\eta_{rkc}^{\diamondsuit}(e_1(y))^{-1}\rangle$ and
$y\mapsto\langle y,\eta_{rkc}^{\diamondsuit}(e_2(y))\rangle$ (resp.). By \eqref{Appendixeq:rho beta and rho square globally}
and \eqref{eq:RS rho top rk on Levi},
$\rho_L=\rho_c^{\diamondsuit}$ and $\rho_R=(\rho_c^{\diamondsuit})^{1-rk}=\rho_c^{\diamondsuit}\cdot(\det ,\det)_{m/r}^{k}$ in
$\mathrm{H}^2(\GL_d(\A),\mu_m)$.

We turn to define the integral. Note that both $\pi$ and $\pi^{\vee}$ are representations of
$\GL_{c}(\A)^{(m)}=\GL_{c}(\A)[\rho_c^{\diamondsuit}]$. Let $\varphi_1$ (resp., $\varphi_2$) be a cusp form in the space of $\pi^{\vee}$ (resp., $\pi$). Note that $\varphi_1$ is anti-genuine. Let $\eta_c^{\diamondsuit,(rk)}$ be defined as in \S~\ref{Global coverings},
and let $\eta_{\tau}'$ and $\gamma_{\psi'}$ be the functions appearing in \eqref{eq:aut character eta identity} where if $m|rk$, we ignore
all occurrences of $\gamma_{\psi'}$ in this section. Define
\begin{align*}
\varphi_1^L(\langle b,\epsilon\rangle)=\varphi_1(\langle b,\eta_{(c^{rk})}^{-1}(e_1(b))\epsilon\rangle),\qquad
\varphi_2^R(\langle b,\epsilon\rangle)=\gamma_{\psi'}^{-1}(\det b)\varphi_2(\langle b,\eta_{(c^{rk})}(e_2(b))
\eta_c^{\diamondsuit,(rk)}(b)\epsilon\rangle).
\end{align*}
The function $\varphi_1^L$ (resp., $\varphi_2^R$) is a cusp form in the space of $\pi^{\vee}$ (resp., $\pi$) as a representation of $\GL_{c}(\A)[\rho_L]$ (resp., $\GL_{c}(\A)[\rho_R]$). For example if $y\in \GL_c(F)$ and $\epsilon_y=\eta_{rkc}^{\diamondsuit}(e_2(y))$,
\begin{align*}
\varphi_2^R&(\langle y,\epsilon_y\rangle\langle b,1\rangle)=
\gamma_{\psi'}^{-1}(\det yb)\varphi_2(\langle yb,\rho_R(e_2(y),e_2(b))\eta_{(c^{rk})}(e_2(yb))
\eta_c^{\diamondsuit,(rk)}(yb)\epsilon_y\rangle)
\\&=\gamma_{\psi'}^{-1}(\det b)(\det y,\det b)_{m/r}^k
\varphi_2(\langle yb,(\rho_c^{\diamondsuit})^{1-rk}(y,b)\eta_{(c^{rk})}(e_2(y))\eta_{(c^{rk})}(e_2(b))
\eta_c^{\diamondsuit,(rk)}(yb)\epsilon_y\rangle)
\\&=\gamma_{\psi'}^{-1}(\det b)
\varphi_2(\langle yb,(\rho_c^{\diamondsuit})(y,b)\eta_{(c^{rk})}(e_2(y))\eta_{(c^{rk})}(e_2(b))
\eta_c^{\diamondsuit,(rk)}(y)\eta_c^{\diamondsuit,(rk)}(b)\epsilon_y\rangle)
\\&=\gamma_{\psi'}^{-1}(\det b)
\varphi_2(\langle y,\epsilon_y\eta_{(c^{rk})}(e_2(y))\eta_c^{\diamondsuit,(rk)}(y)\rangle\langle b,\eta_{(c^{rk})}(e_2(b))\eta_c^{\diamondsuit,(rk)}(b)\rangle)
\\&=\varphi_2^R(\langle b,\eta_{(c^{rk})}(e_2(b))\eta_c^{\diamondsuit,(rk)}(b)\rangle).
\end{align*}
Here for the second equality we used \eqref{eq:RS rho L and rho R}, \eqref{Appendixeq:rho beta and rho square globally} and the fact that $\gamma_{\psi'}^{-1}$ is trivial on $F^*$; for the third equality we used \eqref{eq:RS rho top rk on Levi}; For the last, first note that
$\epsilon_y\eta_{(c^{rk})}(e_2(y))=(\eta_c^{\diamondsuit})^{rk-1}(y)$, because \eqref{eq:eta beta nu} globalizes to $y\in\GL_c(F)$ (see \S~\ref{Global coverings}). Second, by Lemma~\ref{lemma:eta 2c rk and eta 2c (rk)}, $(\eta_{c}^{\diamondsuit})^{rk}(y)\eta_{c}^{\diamondsuit,(rk)}(y)=1$ thus
$\eta_c^{\diamondsuit}(y)^{rk-1}\eta_c^{\diamondsuit,(rk)}(y)=\eta_c^{\diamondsuit}(y)^{-1}$. Third, $\varphi_2$ is an automorphic form.
For $\varphi_1^L$ we show
$\varphi_1^L(\langle y,\epsilon_y^{-1}\rangle\langle b,1\rangle)=\varphi_1^L(\langle b,1\rangle)$ where
$\epsilon_y=\eta_{rkc}^{\diamondsuit}(e_1(y))$, the computation is similar to the above but simpler since we do not need
to handle $\eta_c^{\diamondsuit,(rk)}$ and $\gamma_{\psi'}^{-1}$.

Choose a finite set of places $S'$ such that $\varphi_1$ and $\varphi_2$ are right-invariant on
$\{\langle y,1\rangle:y\in K_{\GL_c,\nu}\}$ and $\xi$ is right-invariant on
$\{\langle y,1\rangle:y\in K_{\GL_{rkc},\nu}\}$ for all $\nu\notin S'$ (see \cite[p.~111--112]{me12}), and let
$C_d=\{xI_d:x\in F^*\A^{*r}\prod_{\nu\notin S'}\mathcal{O}_{\nu}^*\}$. Then $C_{r,d}(\A)<C_d<C_{\GL_d}(\A)$ and
$[C_{\GL_d}(\A):C_d]<\infty$.

The global integral is given by
\begin{align}\label{eq:RS global 1}
Z(s,\varphi_1,\varphi_2,\xi)=&\int\limits_{(C_{2c}M_{(c^2)}(F))\backslash M_{(c^2)}(\A)}(\xi_1-\xi_1^{V_{(c^2)}})(\langle e_1(a),1\rangle
\langle e_2(b),1\rangle)
\\&\quad\varphi_1^L(\langle a,1\rangle)
\varphi_2^R(\langle b,1\rangle)\eta_{\tau}'(\det b^{-1})|\det ab^{-1}|^{s'-(rkc-2c+1)/2}\,da\,db.\nonumber
\end{align}
First we check that \eqref{eq:RS global 1} is formally well defined. By \eqref{eq:RS gbl lift of embedding}, the integrand is indeed a function on $(\GL_c(F)\times \GL_c(F))\backslash(\GL_c(\A)\times \GL_c(\A))$ (left-invariant under the rational points and invariant of the section), and right-invariant on $\GL_c(\A)\times \GL_c(\A)$
(see \cite[Proposition~A.1]{me12} and its proof). Regarding the invariance under $C_{2c}$ it suffices to show that the integrand is left-invariant under the change of variables $a\mapsto za$ and $b\mapsto zb$ where $z\in C_{r,c}(\A)$. In this case by
\eqref{Appendixeq:rho beta and rho square globally} and \eqref{eq:RS rho top rk on Levi} and since
$\varphi_i(\langle z,1\rangle\langle b,1\rangle)=
\varphi_i(\langle z,1\rangle)\varphi_i(\langle b,1\rangle)$ for both $i$, we have
\begin{align*}
&\varphi_1^L(\langle za,1\rangle)=\rho_{rkc}^{\diamondsuit}(e_1(z),e_1(a))\varphi_1^L(\langle z,1\rangle)\varphi_1^L(\langle a,1\rangle),\\
&\varphi_2^R(\langle zb,1\rangle)=\rho_{rkc}^{\diamondsuit}(e_2(z),e_2(b))\varphi_2^R(\langle z,1\rangle)\varphi_2^R(\langle b,1\rangle)\nonumber
\end{align*}
and
\begin{align*}
&(\xi_1-\xi_1^{V_{(c^2)}})(\langle e_1(za),1\rangle
\langle e_2(zb),1\rangle)\\&=
\prod_{i=1}^2\rho_{rkc}^{\diamondsuit}(e_i(z),e_i(b))^{-1}
(\xi_1-\xi_1^{V_{(c^2)}})(\langle z^{\triangle},\rho_{rkc}^{\diamondsuit}(e_1(z),e_2(z))\rangle\langle e_1(a),1\rangle
\langle e_2(b),1\rangle).\nonumber
\end{align*}
Combining these identities and because $\varphi_1(\langle z,1\rangle)\varphi_2(\langle z,1\rangle)=1$,
\begin{align*}
&(\xi_1-\xi_1^{V_{(c^2)}})(\langle e_1(za),1\rangle\langle e_2(zb),1\rangle)
\varphi_1^L(\langle za,1\rangle)\varphi_2^R(\langle zb,1\rangle)
\\&=(\xi_1-\xi_1^{V_{(c^2)}})(\langle z^{\triangle},1\rangle \langle e_1(a),1\rangle\langle e_2(b),1\rangle)
\varphi_1^L(\langle a,1\rangle)\varphi_2^R(\langle b,1\rangle)
\\&\quad\times\eta_{(c^{rk})}(e_1(z))\eta_{(c^{rk})}(e_2(z))\rho_{rkc}^{\diamondsuit}(e_1(z),e_2(z))\eta_c^{\triangle,(rk)}(z)\gamma_{\psi'}^{-1}(\det z)
\\&=(\xi_1-\xi_1^{V_{(c^2)}})(\langle z^{\triangle},\eta_{(c^{rk})}(z^{\triangle})\eta_c^{\triangle,(rk)}(z)\rangle \langle e_1(a),1\rangle\langle e_2(b),1\rangle)
\varphi_1^L(\langle a,1\rangle)\varphi_2^R(\langle b,1\rangle)
\gamma_{\psi'}^{-1}(\det z)
\\&=(\xi_1-\xi_1^{V_{(c^2)}})(\langle e_1(a),1\rangle\langle e_2(b),1\rangle)
\varphi_1^L(\langle a,1\rangle)\varphi_2^R(\langle b,1\rangle)\eta_{\tau}'(\det z).
\end{align*}
Here for the second equality we used \eqref{Appendixeq:rho beta and rho square globally} and for the last \eqref{eq:central aut character eta identity}.
The factor $\eta_{\tau}'(\det z)$ cancels with $\eta_{\tau}'(\det z^{-1})$ obtained in \eqref{eq:RS global 1}
by the change to $b$.
This completes the verification.

Integral~\eqref{eq:RS global 1} is absolutely convergent for $\Real(s)\gg0$. This can be proved, e.g., by repeating the arguments
below and reaching \eqref{eq:RS global 1 decompose 3} with the integrand replaced by its absolute value.
In this domain, when we plug \eqref{eq:varphi Whittaker expansion} into the integral we obtain
\begin{align}\label{eq:RS global 1 decompose 1}
&\sum_{j=1}^c\int\limits_{(C_{2c}\mathrm{St}_j(F))\backslash M_{(c^2)}(\A)}
\xi_1^{V_{(c^2)},\psi^j}(\langle e_1(a),1\rangle
\langle e_2(b),1\rangle)
\\&\quad\varphi_1^L(\langle a,1\rangle)
\varphi_2^R(\langle b,1\rangle)\eta_{\tau}'(\det b^{-1})|\det ab^{-1}|^{s'-(rkc-2c+1)/2}\,da\,db.\nonumber
\end{align}
For $j<c$ the integral vanishes because there is a unipotent subgroup of $e_1(G)$ under which the Fourier coefficient is invariant
and $\varphi_1^L$ is a cusp form. The proof follows from the local statement in the proof of \cite[Theorem~C.1]{CFK} (the global vanishing properties of $\mathcal{E}_{\tau}$ follow from the local ones). Thus \eqref{eq:RS global 1 decompose 1} becomes
\begin{align}\label{eq:RS global 1 decompose 2}
&\int\limits_{(C_{2c}\GL_c(F))\backslash M_{(c^2)}(\A)}
\Lambda_{\xi}(\langle e_1(a),1\rangle \langle e_2(b),1\rangle)
\\&\quad\varphi_1^L(\langle a,1\rangle)
\varphi_2^R(\langle b,1\rangle)\eta_{\tau}'(\det b^{-1})|\det ab^{-1}|^{s'-(rkc-2c+1)/2}\,da\,db.\nonumber
\end{align}
Here $\GL_c$ is diagonally embedded in $M_{(c^2)}$.
Now we change variables $a\mapsto ba$. By \eqref{Appendixeq:rho beta and rho square globally},
\begin{align*}
&\varphi_1^L(\langle ba,1\rangle)=
\rho_{rkc}^{\diamondsuit}(e_1(b),e_1(a))\eta_{(c^{rk})}(e_1(b))
\varphi_1(\langle b,1\rangle\langle a,\eta_{(c^{rk})}^{-1}(e_1(a))\rangle),\\
&\varphi_2^R(\langle b,1\rangle)=\gamma_{\psi'}^{-1}(\det b)\eta_{(c^{rk})}(e_2(b))\eta_c^{\diamondsuit,(rk)}(b)\varphi_2(\langle b,1
\rangle),\\
&\Lambda_{\xi}(\langle e_1(ba),1\rangle \langle e_2(b),1\rangle)
=\rho_{rkc}^{\diamondsuit}(e_1(b),e_1(a))^{-1}\Lambda_{\xi}(\langle b^{\triangle},
\rho_{rkc}^{\diamondsuit}(e_1(b),e_2(b))\rangle\langle e_1(a),1\rangle).
\end{align*}
Then as with $z$ above, using \eqref{Appendixeq:rho beta and rho square globally}, \eqref{eq:global eta triangle} and \eqref{eq:aut character eta identity} we deduce
\begin{align*}
&\Lambda_{\xi}(\langle e_1(ba),1\rangle \langle e_2(b),1\rangle)\varphi_1^L(\langle ba,1\rangle)\varphi_2^R(\langle b,1\rangle)
\\&=\eta_{\tau'}(\det b)\Lambda_{\xi}(\langle e_1(a),1\rangle)\varphi_1(\langle b,1\rangle\langle a,\eta_{(c^{rk})}^{-1}(e_1(a))\rangle)\varphi_2(\langle b,1\rangle).
\end{align*}
Hence the $db$-integral forms a matrix coefficient $\omega$ on $\pi^{\vee}$ given by
\begin{align*}
\omega(a_0)=\int\limits_{C_c\GL_c(F)\backslash\GL_c(\A)}\varphi_1(\langle b,1\rangle a_0)\varphi_2(\langle b,1\rangle)\,db,\qquad a_0\in\GL_c^{(m)}(\A).
\end{align*}
If we then set $\omega^L(\langle a,\epsilon\rangle)=\omega(\langle a,\eta_{(c^{rk})}^{-1}(e_1(a))\epsilon\rangle)$ and apply Fubini's Theorem,
\eqref{eq:RS global 1 decompose 2} becomes
\begin{align}\label{eq:RS global 1 decompose 3}
&\int\limits_{\GL_c(\A)}
\Lambda_{\xi}(\langle e_1(a)\rangle)\omega^L(\langle a,1\rangle)|\det a|^{s'-(rkc-2c+1)/2}\,da.
\end{align}
Thus for decomposable data, by \eqref{eq:factorizable rk c functional}, $Z(s,\varphi_1,\varphi_2,\xi)=\prod_{\nu}Z(s,\omega_{\nu},W_{\nu})$.

We turn to the second global integral for the definition of the functional equation. For this integral we change the
definitions of $\rho_L$ and $\rho_R$ in \eqref{eq:RS rho L and rho R} and of $\varphi_1^L$ and $\varphi_2^R$, so that
$e_1(b)$ and $e_2(b)$ are replaced with ${}^we_1(b)$ and ${}^we_2(b)$. Now \eqref{eq:RS gbl lift of embedding} holds with these modifications. Denote the new versions of $\varphi_1^L$ and $\varphi_2^R$ by $\varphi_1^{{}^wL}$ and $\varphi_2^{{}^wR}$. Define
\begin{align}\label{eq:RS global 2}
Z^*(s,\varphi_1,\varphi_2,\xi)&=\int\limits_{(C_{2c}M_{(c^2)}(F))\backslash M_{(c^2)}(\A)}
\int\limits_{V_{((rk-2)c,c)}^-(\A)}(\xi_2-\xi_2^{V_{(c^2)}})
\\&(\langle {}^we_1(a),1\rangle\nonumber
\langle {}^we_1(b),1\rangle
\langle [v],(\eta_{rkc}^{\diamondsuit,-})^{-1}([v])\rangle
\langle w,(\eta_{rkc}^{\diamondsuit})^{-1}(w)\rangle)\\&\quad
\varphi_1^{{}^wL}(\langle a,1\rangle)\varphi_2^{{}^wL}(\langle b,1\rangle)\eta_{\tau}'(\det b^{-1})|\det ab^{-1}|^{s'-1+(rkc-2c+1)/2}\,dv\,da\,db.\nonumber
\end{align}
To show this integral is formally well defined we argue as above. It is absolutely convergent
for $\Real(s)\ll0$, where we can unfold it and for decomposable data
$Z^*(s,\varphi_1,\varphi_2,\xi)=\prod_{\nu}Z^*(s,\omega_{\nu},W_{\nu})$.

To obtain \eqref{eq:RS global property} we adapt the classical argument of \cite{Tate} (see also \cite{JS2,CPS,G7}).
Denote $M_{(c^2)}(\A)^1=\{g\in M_{(c^2)}(\A):\delta_{P_{(c^2)}}(g)=1\}$ and identify $\R_{>0}$ with the subgroup of ideles whose finite components are $1$ and infinite components are all equal and positive. We rewrite \eqref{eq:RS global 1} using
$M_{(c^2)}(\A)=M_{(c^2)}(\A)^1\times \{\diag(tI_c,I_c):t\in\R_{>0}\}$ and the formula $dadb=t^{-1}d(a,b)dt$, where $d(a,b)$ is a measure on
$M_{(c^2)}(\A)^1$ and $dt$ is the standard Lebesgue measure of $\R$. Then
\begin{align}\label{eq:RS global for functional equation}
Z(s,\varphi_1,\varphi_2,\xi)=&\int_0^{\infty}\int\limits_{(C_{2c}M_{(c^2)}(F))\backslash M_{(c^2)}(\A)^1}(\xi_1-\xi_1^{V_{(c^2)}})(\langle e_1(ta),1\rangle
\langle e_2(b),1\rangle)
\\&\quad\varphi_1^L(\langle ta,1\rangle)
\varphi_2^R(\langle b,1\rangle)\eta_{\tau}'(\det b^{-1})t^{c(s'-(rkc-2c+1)/2)-1}\,d(a,b)\,dt.\nonumber
\end{align}

Write \eqref{eq:RS global for functional equation} as a sum of integrals $\int_1^{\infty}+\int_0^1$. For the first
summand we utilize \eqref{eq:varphi Whittaker expansion} and apply the unfolding argument ($M_{(c^2)}(\A)^1$ contains the unipotent subgroups of $M_{(c^2)}(\A)$). Write the second summand as the difference $\int_0^1(\xi_1-\xi_1^{V_{(c^2)}})\ldots\,dt=\int_0^1\xi_1\ldots\,dt-\int_0^1\xi_1^{V_{(c^2)}}\ldots\,dt$. Next, applying
\eqref{eq:X of roots} to $\xi_1$ we obtain $\xi_2$ with the additional inner integral $dv$, then we write
$\int_0^1\xi_2=\int_0^1(\xi_2-\xi_2^{V_{(c^2)}})+\int_0^1\xi_2^{V_{(c^2)}}$. In total
$Z(s,\varphi_1,\varphi_2,\xi)$ becomes a sum of $4$ integrals:
\begin{align}\label{eq:RS global for functional equation summand 1}
&\int_1^{\infty}\int\limits_{(C_{2c}M_{(c^2)}(F))\backslash M_{(c^2)}(\A)^1}\Lambda_{\xi}(\langle e_1(ta),1\rangle
\langle e_2(b),1\rangle)
\\&\quad\varphi_1^L(\langle ta,1\rangle)
\varphi_2^R(\langle b,1\rangle)\eta_{\tau}'(\det b^{-1})t^{c(s'-(rkc-2c+1)/2)-1}\,d(a,b)\,dt,\nonumber\\
\label{eq:RS global for functional equation summand 2}
&\int_0^1\int\limits_{(C_{2c}M_{(c^2)}(F))\backslash M_{(c^2)}(\A)^1}
\int\limits_{V_{((rk-2)c,c)}^-(\A)}(\xi_2-\xi_2^{V_{(c^2)}})(\langle [v],(\eta_{rk}^{\diamondsuit,-})^{-1}([v])\rangle
\langle w,(\eta_{rk}^{\diamondsuit})^{-1}(w)\rangle\\&\quad\langle e_1(ta),1\rangle
\langle e_2(b),1\rangle)
\varphi_1^L(\langle ta,1\rangle)
\varphi_2^R(\langle b,1\rangle)\eta_{\tau}'(\det b^{-1})t^{c(s'-(rkc-2c+1)/2)-1}\,dv\,d(a,b)\,dt,\nonumber\\
\label{eq:RS func eq 2.2}
&\int_0^1\int\limits_{(C_{2c}M_{(c^2)}(F))\backslash M_{(c^2)}(\A)^1}
\int\limits_{V_{((rk-2)c,c)}^-(\A)}\xi_2^{V_{(c^2)}}(\langle [v],(\eta_{rk}^{\diamondsuit,-})^{-1}([v])\rangle
\langle w,(\eta_{rk}^{\diamondsuit})^{-1}(w)\rangle
\\&\quad\langle e_1(ta),1\rangle
\langle e_2(b),1\rangle)\varphi_1^L(\langle ta,1\rangle)
\varphi_2^R(\langle b,1\rangle)\eta_{\tau}'(\det b^{-1})t^{c(s'-(rkc-2c+1)/2)-1}\,dv\,d(a,b)\,dt,\nonumber\\
\label{eq:RS func eq 2.3}
&-\int_0^1\int\limits_{(C_{2c}M_{(c^2)}(F))\backslash M_{(c^2)}(\A)^1}
\xi_1^{V_{(c^2)}}(\langle e_1(ta),1\rangle\langle e_2(b),1\rangle)
\\&\quad\varphi_1^L(\langle ta,1\rangle)
\varphi_2^R(\langle b,1\rangle)\eta_{\tau}'(\det b^{-1})t^{c(s'-(rkc-2c+1)/2)-1}\,d(a,b)\,dt.\nonumber
\end{align}
We can compute \eqref{eq:RS global for functional equation summand 2} using \eqref{eq:varphi Whittaker expansion}. As above only the summand corresponding to $\Lambda_{\xi}$ remains. To compute this summand, first we shift $\langle e_1(ta),1\rangle$ and
$\langle e_2(b),1\rangle$ to the left. For the conjugations ${}^w\langle e_1(ta),1\rangle$ and
${}^w\langle e_2(b),1\rangle$ we show that for both $i$ and
all $g\in\GL_c(\A)$,
\begin{align}\label{eq:RS conjugation by w for ei g}
{}^w\langle e_i(g),\eta_{(c^{rk})}(e_i(g))\rangle =\langle {}^we_i(g),\eta_{(c^{rk})}({}^we_i(g))\rangle.
\end{align}
Indeed for each place $\nu$ of $F$, by Proposition~\ref{proposition:action of W on torus is trivial on Sp}
\begin{align*}
\sigma_{rkc,\nu}^{\diamondsuit}(w_{\nu},e_i(g_{\nu}))\sigma_{rkc,\nu}^{\diamondsuit}(w_{\nu}e_i(g_{\nu}),w_{\nu}^{-1})\sigma_{rkc,\nu}^{\diamondsuit}(w_{\nu},w_{\nu}^{-1})^{-1}=1,
\end{align*}
then by \eqref{eq:nu and sigma for covering of H},
\begin{align}\label{eq:RS rho ei g wei g}
&\rho_{rkc,\nu}^{\diamondsuit}(w_{\nu},e_i(g_{\nu}))\rho_{rkc,\nu}^{\diamondsuit}(w_{\nu}e_i(g_{\nu},w_{\nu}^{-1}))\rho_{rkc,\nu}^{\diamondsuit}(w_{\nu},w_{\nu}^{-1})^{-1}
=\frac{\eta_{rkc,\nu}^{\diamondsuit}(e_i(g_{\nu}))}
{\eta_{rkc,\nu}^{\diamondsuit}({}^{w_{\nu}}e_i(g_{\nu}))}.
\end{align}
In addition, because $e_i(g),{}^we_i(g)\in M_{(c^{rk})}(\A)$, \eqref{eq:eta beta nu} implies
\begin{align}\label{eq:RS eta g eta wg lhs}
&\eta_{(c^{rk}),\nu}(e_i(g_{\nu}))=(\eta^{\diamondsuit}_{c,\nu})^{l}(e_i(g_{\nu}))/\eta^{\diamondsuit}_{rkc,\nu}(e_i(g_{\nu})),\\
\label{eq:RS eta g eta wg rhs}
&\eta_{(c^{rk}),\nu}({}^{w_{\nu}}e_i(g_{\nu}))=(\eta^{\diamondsuit}_{c,\nu})^{l}(e_i(g_{\nu}))/\eta^{\diamondsuit}_{rkc,\nu}({}^{w_{\nu}}e_i(g_{\nu})).
\end{align}
Here when $i=1$, $l=1$ and for $i=2$, $l=rk-1$. The local version of
\eqref{eq:RS conjugation by w for ei g} follows from \eqref{eq:RS rho ei g wei g}--\eqref{eq:RS eta g eta wg rhs}, but because $\eta_{(c^{rk})}$ is well defined globally on $M_{(c^{rk})}(\A)$, we obtain \eqref{eq:RS conjugation by w for ei g}.
Now since $\xi_1$ and $\varphi_2$ are genuine and $\varphi_1$ is anti-genuine,
\begin{align*}
&\Lambda_{\xi}(\ldots\langle w,(\eta_{rk}^{\diamondsuit})^{-1}(w)\rangle\langle e_1(a),1\rangle\langle e_2(b),1\rangle)
\varphi_1^L(\langle ta,1\rangle)\varphi_2^R(\langle b,1\rangle)
\\&=\Lambda_{\xi}(\ldots\langle {}^we_1(a),1\rangle\langle {}^we_2(b),1\rangle\langle w,(\eta_{rk}^{\diamondsuit})^{-1}(w)\rangle)
\varphi_1^{{}^wL}(\langle ta,1\rangle)\varphi_2^{{}^wR}(\langle b,1\rangle).
\end{align*}
In addition when we conjugate
the subgroup of elements $[v]$ by $({}^w(e_1(ta)e_2(b)))^{-1}$, the change of measure is
$|\det tab^{-1}|^{(rk-2)c}=|t|^{(rk-2)c^2}$. Therefore \eqref{eq:RS global for functional equation summand 2} becomes
(cf. \eqref{eq:RS global 2})
\begin{align}\label{eq:RS global for functional equation summand 2.2}
&\int_0^1\int\limits_{(C_{2c}M_{(c^2)}(F))\backslash M_{(c^2)}(\A)^1}
\int\limits_{V_{((rk-2)c,c)}^-(\A)}\Lambda_{\xi}(
\langle {}^we_1(ta),1\rangle \langle {}^we_2(b),1\rangle\langle [v],(\eta_{rk}^{\diamondsuit,-})^{-1}([v])\rangle
\\&\quad\langle w,(\eta_{rk}^{\diamondsuit})^{-1}(w)\rangle)
\varphi_1^{{}^wL}(\langle ta,1\rangle)
\varphi_2^{{}^{w}R}(\langle b,1\rangle)\eta_{\tau}'(\det b^{-1})t^{c(s'-1+(rkc-2c+1)/2)-1}\,dv\,d(a,b)\,dt.\nonumber
\end{align}

Since $\tau$ is a cuspidal representation of $\GL_k^{(m)}(\A)$, $\xi_1^{V_{(c^2)}}$ and $\xi_2^{V_{(c^2)}}$
which factor through the constant terms along $V_{(c,(rk-1)c)}$ and $V_{((rk-1)c,c)}$ (resp.) vanish unless $k|c$, in which case they can be computed as in
\cite[Theorem~4.4]{KM} (see also \cite[\S~II.1]{KP}):
\begin{align*}
&\xi_1^{V_{(c^2)}}(\langle e_1(t),1\rangle)=\tau(\langle tI_k,1\rangle)\delta_{P_{(c,(rk-1)c)}}^{(rk-1)/(2rk)}(\diag(tI_c,I_{(rk-1)c})),\\
&\xi_2^{V_{(c^2)}}(\langle {}^we_1(t),1\rangle)=\tau(\langle tI_k,1\rangle)\delta_{P_{((rk-1)c,c)}}^{(rk-1)/(2rk)}(\diag(I_{(rk-1)c},tI_c)).
\end{align*}
It follows that \eqref{eq:RS func eq 2.2} and \eqref{eq:RS func eq 2.3} vanish unless $\tau(\langle tI_k,1\rangle)\varphi^L(\langle t,1\rangle)=1$, in which case we can compute $dt$ explicitly and obtain $r/((s-c/(2k)-1/2)c)$ for \eqref{eq:RS func eq 2.2} (recall $dv$ changes when we conjugate by $t$) and $r/((s+c/(2k)-1/2)c)$ for \eqref{eq:RS func eq 2.3}. E.g., when $k=c$ the only possible poles are at $s=1,0$ (cf. \cite[Theorem~2]{G7}).

The integrals \eqref{eq:RS global for functional equation summand 1} and
\eqref{eq:RS global for functional equation summand 2.2} are
absolutely convergent for all $s$ and uniformly for $s$ in a vertical strip (see e.g., \cite[\S~3.4]{JS2}).
Hence they are entire functions of $s$. It follows that $Z(s,\varphi_1,\varphi_2,\xi)$ admits meromorphic continuation to $\C$.

Carrying out this computation for $Z^*(s,\varphi_1,\varphi_2,\xi)$, when we write the analog of \eqref{eq:RS global for functional equation}
as $\int_1^{\infty}+\int_0^1$, the integral $\int_0^1$ equals \eqref{eq:RS global for functional equation summand 2.2}. Then
\begin{align*}
\int_1^{\infty}(\xi_2-\xi_2^{V_{(c^2)}})\ldots\,dt=\int_1^{\infty}(\xi_1-\xi_1^{V_{(c^2)}})\ldots\,dt+\int_1^{\infty}\xi_1^{V_{(c^2)}}-\int_1^{\infty}\xi_2^{V_{(c^2)}}\ldots\,dt,
\end{align*}
where we apply \eqref{eq:X of roots} in the opposite direction (i.e., from $\xi_2$ to $\xi_1$) thereby removing the $dv$-integral.
The first integral on the r.h.s.~ becomes \eqref{eq:RS global for functional equation summand 1}, the second --- \eqref{eq:RS func eq 2.3} and the last becomes \eqref{eq:RS func eq 2.2}. This shows that as meromorphic continuations $Z^*(s,\varphi_1,\varphi_2,\xi)=Z(s,\varphi_1,\varphi_2,\xi)$, thus $\prod_{\nu}Z^*(s,\omega_{\nu},W_{\nu})=\prod_{\nu}Z(s,\omega_{\nu},W_{\nu})$, whence \eqref{eq:RS global property} follows from \eqref{RS unr integral}, \eqref{second RS unr integral} and \eqref{eq:gamma RS}.

\begin{corollary}\label{corollary:partial L function holomorphic}
Assume $\pi$ and $\tau$ are genuine cuspidal representations of $\GL_c^{(m)}(\A)$ and $\GL_{k}^{(m)}(\A)$, and $S$ is large.
Then $L^S(s,\widetilde{\pi}\times\tau)$ is entire unless $lk=c$, in which case it is holomorphic except at most simple poles at $s=\tfrac12\pm\tfrac l2$, whose residues can be read off \eqref{eq:RS func eq 2.2}--\eqref{eq:RS func eq 2.3}.
\end{corollary}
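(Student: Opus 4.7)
The plan is to read off the analytic properties of $L^S(s,\widetilde{\pi}\times\tau)$ from the global integral $Z(s,\varphi_1,\varphi_2,\xi)$ via the Euler factorization
\begin{align*}
Z(s,\varphi_1,\varphi_2,\xi)=\Bigl(\prod_{\nu\in S}Z(s,\omega_\nu,W_\nu)\Bigr)\cdot L^S(s,\widetilde{\pi}\times\tau),
\end{align*}
which holds in the region of absolute convergence by \eqref{RS unr integral} and by enlarging $S$ if necessary so that all data outside $S$ are unramified and normalized.

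The first step is to use the decomposition of $Z(s,\varphi_1,\varphi_2,\xi)$ established in \S~\ref{RS Crude functional equation} as the sum of the four integrals \eqref{eq:RS global for functional equation summand 1}, \eqref{eq:RS global for functional equation summand 2.2}, \eqref{eq:RS func eq 2.2} and \eqref{eq:RS func eq 2.3}. As already noted there, the first two integrals are absolutely convergent for all $s$, uniformly on vertical strips, hence entire. The last two involve the constant-term terms $\xi_1^{V_{(c^2)}}$ and $\xi_2^{V_{(c^2)}}$, and since $\tau$ is cuspidal these vanish identically unless $k\mid c$. In that case the explicit formula for the constant term (computed via \cite[Theorem~4.4]{KM}) combined with the explicit $dt$-integration reduces \eqref{eq:RS func eq 2.2} and \eqref{eq:RS func eq 2.3} to rational multiples of $(s-\tfrac12-\tfrac{c}{2k})^{-1}$ and $(s-\tfrac12+\tfrac{c}{2k})^{-1}$ respectively. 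Setting $l=c/k$, this shows that $Z(s,\varphi_1,\varphi_2,\xi)$ extends to a meromorphic function of $s\in\C$ which is entire if $k\nmid c$, and otherwise has at most simple poles at $s=\tfrac12\pm\tfrac{l}{2}$, with residues that can be read off from \eqref{eq:RS func eq 2.2}--\eqref{eq:RS func eq 2.3}.

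Next I would transfer this to $L^S(s,\widetilde{\pi}\times\tau)$. Fix $s_0\in\C$ not in the set $\{\tfrac12\pm\tfrac{l}{2}\}$ (or any $s_0$ at all if $k\nmid c$). By Corollary~\ref{corollary:Z and Z^* are meromorphic} the local factors $Z(s,\omega_\nu,W_\nu)$ for $\nu\in S$ are rational functions in $q_\nu^{-s}$, with a finite set of possible poles independent of the data. Using a standard argument based on the equivariance property \eqref{eq:RS def equivariance on e2(G)}, together with the ability to make $Z(s,\omega_\nu,W_\nu)$ constant in $s$ by choosing $W_\nu$ supported near the identity and $\omega_\nu$ appropriately (as in the proof of Corollary~\ref{corollary:Z and Z^* are meromorphic}, cf. also \cite[Theorem~C.1]{CFK}), one can choose decomposable data $(\varphi_1,\varphi_2,\xi)$ so that $\prod_{\nu\in S}Z(s_0,\omega_\nu,W_\nu)$ is a nonzero constant. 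Then the Euler factorization forces $L^S(s,\widetilde{\pi}\times\tau)$ to be holomorphic at $s_0$. Running this argument through every $s_0$ outside $\{\tfrac12\pm\tfrac{l}{2}\}$ gives the claim on poles, and evaluating the above identity at $s$ near a pole $s_0=\tfrac12\pm\tfrac{l}{2}$ (and dividing by the nonzero constant $\prod_{\nu\in S}Z(s_0,\omega_\nu,W_\nu)$) produces the residue formula from \eqref{eq:RS func eq 2.2}--\eqref{eq:RS func eq 2.3}.

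The main obstacle is the choice of local data in the second step: one must ensure that at a prescribed $s_0$ the finite product $\prod_{\nu\in S}Z(s,\omega_\nu,W_\nu)$ can be made holomorphic and nonzero, despite the fact that no local multiplicity-one statement in the sense of $\gamma$-factors is invoked here. This is handled by varying $\omega_\nu$ and $W_\nu$ place by place and exploiting the trilinear equivariance of Theorem~\ref{theorem:uniqueness for bilinear RS} to produce integrals that are essentially constants in $s$, a technique already implicit in the proof of Corollary~\ref{corollary:Z and Z^* are meromorphic}. Everything else is bookkeeping.
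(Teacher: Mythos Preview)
Your proposal is correct and is exactly the argument the paper has in mind: the corollary is stated in the paper without a separate proof, as an immediate consequence of the four-term decomposition \eqref{eq:RS global for functional equation summand 1}--\eqref{eq:RS func eq 2.3} together with the Euler factorization and the ability to make the local integrals at places in $S$ nonzero at any prescribed $s_0$. One small imprecision: at the complex places in $S$ you cannot literally make $Z(s,\omega_\nu,W_\nu)$ constant in $s$, but since over $\C$ the integrals reduce to the linear $\GL_c\times\GL_{rk}$ integrals of \cite[Appendix~C]{CFK}, the archimedean theory there lets you choose data for which the integral is holomorphic and nonzero in a neighborhood of $s_0$, and that is all you need.
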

(Cf. \cite[Theorem~2]{G7} for the coverings of \cite{KP}.) Note that the meromorphic continuation of $Z(s,\varphi_1,\varphi_2,\xi)$ on its own can be deduced directly from the Eulerian expression and the local results, since we already know $L^S(s,\widetilde{\pi}\times\tau)$ is meromorphic by \cite{Gao2018}.

\subsection{Local $\epsilon$-, $L$-factors and complete $L$-function}\label{RS complete L factors}
We define local $\epsilon$- and $L$-factors for representations of $\GL_c^{(m)}\times\GL_k^{(m)}$ following Shahidi's method \cite{Sh3}.
Assume $F$ is non-archimedean and \eqref{eq:tau mult} holds. When $\tau$ and $\widetilde{\pi}$ are tempered let $P(X)\in\C[X]$ be such that the zeroes of $P(q^{-s})$ are those of $\gamma(s,\widetilde{\pi}\times\tau,\psi)$ and $P(0)=1$, and define $L(s,\widetilde{\pi}\times\tau)=P(q^{-s})^{-1}$. This definition is independent of $\psi$ by \eqref{eq:RS dependence on psi}, and
\eqref{eq:RS functional equation} implies
\begin{align}\label{eq:RS eps}
\epsilon(s,\widetilde{\pi}\times\tau,\psi)=\frac{\gamma(s,\widetilde{\pi}\times\tau,\psi)L(s,\widetilde{\pi}\times\tau)}{L(1-s,\pi\times\widetilde{\tau})}\in \C[q^{-s},q^s]^*.
\end{align}
Then we have
\begin{align}\label{eq:RS func eq}
\gamma(s,\widetilde{\pi}\times\tau,\psi)=\epsilon(s,\widetilde{\pi}\times\tau,\psi)
\frac{L(1-s,\pi\times\widetilde{\tau})}{L(s,\widetilde{\pi}\times\tau)}.
\end{align}
The essentially tempered case is handled using \eqref{eq:RS Unramified twisting}.
In general $\widetilde{\pi}$ and $\tau$ are given in terms of the Langlands' classification, then $L(s,\widetilde{\pi}\times\tau)$ and
$\epsilon(s,\widetilde{\pi}\times\tau,\psi)$ are the products of the $L$- and $\epsilon$-factors for essentially
tempered representations.
Then \eqref{eq:RS func eq} still holds, by \eqref{eq:RS Unramified twisting}, \eqref{eq:RS mult I} and \eqref{eq:tau mult}.
Note that for $k=1$ \eqref{eq:tau mult} holds vacuously.

When $F=\C$ define $L(s,\widetilde{\pi}\times\tau)=L(s,\mathrm{st}\circ\varphi)$
and $\epsilon(s,\widetilde{\pi}\times\tau,\psi)=\epsilon(s,\mathrm{st}\circ\varphi,\psi)$ with the notation of
Theorem~\ref{theorem:RS ten commendments}.
Then \eqref{eq:RS func eq} holds because of \eqref{eq:RS Archimedean property}.

When data are unramified and tempered, by Lemma~\ref{lemma:unr unitary Satake} and \eqref{eq:RS unramified factors} we deduce
that $L(s,\widetilde{\pi}\times\tau)$ equals the
unramified $L$-function (see \S~\ref{unr L functions}), then \eqref{eq:RS eps} and \eqref{eq:RS unramified factors} also imply $\epsilon(s,\widetilde{\pi}\times\tau,\psi)=1$. Now these statements hold in the unramified case in general, by the definitions.

For genuine cuspidal
representations $\pi$ and $\tau$, define $L(s,\widetilde{\pi}\times\tau)=\prod_{\nu}L(s,\widetilde{\pi}_{\nu}\times\tau_{\nu})$.
\begin{theorem}\label{theorem:complete RS GL L function}
The $L$-function $L(s,\widetilde{\pi}\times\tau)$ is absolutely convergent for $\Real(s)\gg0$, admits meromorphic continuation to $\C$ and satisfies a functional equation $L(s,\widetilde{\pi}\times\tau)=\epsilon(s,\widetilde{\pi}\times\tau)L(1-s,\pi\times\widetilde{\tau})$.
\end{theorem}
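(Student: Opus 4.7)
The plan is to bootstrap the complete $L$-function from the crude functional equation of \eqref{eq:RS global property} together with the local factorization \eqref{eq:RS func eq}, exactly as in the classical Shahidi argument. Write $L(s,\widetilde{\pi}\times\tau)=L_S(s,\widetilde{\pi}\times\tau)L^S(s,\widetilde{\pi}\times\tau)$ and similarly $\epsilon(s,\widetilde{\pi}\times\tau)=\prod_{\nu\in S}\epsilon(s,\widetilde{\pi}_\nu\times\tau_\nu,\psi_\nu)$, where $S$ is a finite set of places containing all archimedean places and all places where $\pi$, $\tau$ or $\psi$ are ramified. At the unramified places we already observed that the local $L$- and $\epsilon$-factors agree with those attached to the Satake parameters and that $\epsilon(s,\widetilde{\pi}_\nu\times\tau_\nu,\psi_\nu)=1$; in particular the global $\epsilon$-factor is a finite product and so is a nowhere vanishing entire function of $s$.

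First I would establish absolute convergence of the Euler product for $\Real(s)\gg0$. At each $\nu\in S$ the local factor $L(s,\widetilde{\pi}_\nu\times\tau_\nu)$ is (in the non-archimedean case) a rational function in $q_\nu^{-s}$ by Corollary~\ref{corollary:Z and Z^* are meromorphic} and the Langlands quotient reduction of \S\ref{RS complete L factors}, hence holomorphic for $\Real(s)$ sufficiently large; the archimedean factor is the Artin factor of $\mathrm{st}\circ\varphi$ and is likewise holomorphic in a right half-plane. Thus convergence of $L(s,\widetilde{\pi}\times\tau)$ in some right half-plane reduces to convergence of $L^S(s,\widetilde{\pi}\times\tau)$, which follows from the results of \cite{Gao2018} (cited in the introduction) on partial $L$-functions for BD covers, or alternatively can be read off the Eulerian expression for the global integral $Z(s,\varphi_1,\varphi_2,\xi)$ of \eqref{eq:RS global 1} in its region of absolute convergence.

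Next I would deduce meromorphic continuation. By Corollary~\ref{corollary:partial L function holomorphic}, $L^S(s,\widetilde{\pi}\times\tau)$ admits meromorphic continuation to $\C$ with at most simple poles at $s=\tfrac12\pm\tfrac l2$ when $lk=c$. Since $L_S(s,\widetilde{\pi}\times\tau)$ is a finite product of rational functions in $q_\nu^{-s}$ (or of Gamma factors at $F_\nu=\C$), the product $L(s,\widetilde{\pi}\times\tau)=L_S\cdot L^S$ is meromorphic on $\C$.

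Finally, for the global functional equation I would combine \eqref{eq:RS global property} with the local relation \eqref{eq:RS func eq} at each place in $S$:
\begin{align*}
\prod_{\nu\in S}\gamma(s,\widetilde{\pi}_\nu\times\tau_\nu,\psi_\nu)
=\Bigl(\prod_{\nu\in S}\epsilon(s,\widetilde{\pi}_\nu\times\tau_\nu,\psi_\nu)\Bigr)\frac{L_S(1-s,\pi\times\widetilde{\tau})}{L_S(s,\widetilde{\pi}\times\tau)}
=\epsilon(s,\widetilde{\pi}\times\tau)\frac{L_S(1-s,\pi\times\widetilde{\tau})}{L_S(s,\widetilde{\pi}\times\tau)}.
\end{align*}
Multiplying \eqref{eq:RS global property} through by $L_S(s,\widetilde{\pi}\times\tau)$ and rearranging then yields $L(s,\widetilde{\pi}\times\tau)=\epsilon(s,\widetilde{\pi}\times\tau)L(1-s,\pi\times\widetilde{\tau})$, as desired. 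The functional equation's independence from the choice of $\psi$ at ramified places is automatic because the dependence of each $\epsilon(s,\widetilde{\pi}_\nu\times\tau_\nu,\psi_\nu)$ on $\psi_\nu$ (which one can read off \eqref{eq:RS dependence on psi}) cancels globally, using the automorphy identities $\prod_\nu\pi_\nu(\langle b^rI_c,1\rangle)=1$ and $\prod_\nu\eta_{\tau_\nu}(\langle b,1\rangle)=1$ for $b\in\GL_c(F)$ that were verified in the discussion following \eqref{eq:RS global property}. The main obstacle here is really just the careful bookkeeping of these $\psi$- and cocycle-dependent factors to make sure the global $\epsilon(s,\widetilde{\pi}\times\tau)$ is a well-defined automorphic object; the analytic content (convergence, meromorphy, and the equation itself) reduces formally to \eqref{eq:RS global property} and \eqref{eq:RS func eq}.
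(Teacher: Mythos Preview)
Your proof is correct and follows essentially the same approach as the paper: absolute convergence from the global integral's Eulerian decomposition and the unramified computation, meromorphic continuation from Corollary~\ref{corollary:partial L function holomorphic} together with the rationality of the local factors at $S$, and the functional equation by combining \eqref{eq:RS global property} with \eqref{eq:RS func eq} at each $\nu\in S$, with \eqref{eq:RS dependence on psi} ensuring the global $\epsilon$-factor is independent of $\psi$. The paper's proof is terser but cites exactly the same ingredients.
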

\begin{proof}
Since \eqref{eq:RS global 1} is absolutely convergent for $\Real(s)\gg0$ and so are the local integrals,
identities \eqref{eq:RS global 1 decompose 3} and \eqref{RS unr integral} imply $L^S(s,\widetilde{\pi}\times\tau)$ is absolutely convergent there ($S$ as in \eqref{eq:RS global property}). Then so is $L(s,\widetilde{\pi}\times\tau)$. The meromorphic continuation follows from
Corollary~\ref{corollary:partial L function holomorphic} and the definitions of the local $L$-functions.
The last assertion follows from \eqref{eq:RS global property}, \eqref{eq:RS func eq} and \eqref{eq:RS dependence on psi}.
\end{proof}

As a corollary we define the ``standard" $L$-function of $\pi$. Locally,
we have the character $\varepsilon\otimes\vartheta$ of $\widetilde{C}_{r,1}$
where $\vartheta=1$ unless $m\equiv2\,(4)$ in which case $\vartheta=\gamma_{\psi'}$.
Then we have the induced representation of $\GL_1^{(m)}$ constructed by the Stone--von Neumann Theory (\cite[\S~0.3]{KP}, \cite[\S~13.5]{McNamara}),
re-denoted $\varepsilon\otimes\vartheta$ (an irreducible finite-dimensional representation). By definition
$(\varepsilon\otimes\vartheta)^*=\varepsilon\otimes\vartheta$.
Then for an irreducible representation $\pi$ of $\GL_c^{(m)}$ we define
\begin{align*}
\gamma(s,\pi,\psi)=\gamma(s,\pi\times(\varepsilon\otimes\vartheta),\psi),\quad
L(s,\pi)=L(s,(\pi\times(\varepsilon\otimes\vartheta)),\quad\epsilon(s,\pi,\psi)=\epsilon(s,\pi\times(\varepsilon\otimes\vartheta),\psi).
\end{align*}
We recall that \eqref{eq:RS mult I} is valid and we do not assume \eqref{eq:tau mult}, since now $k=1$ anyway.
If $\pi$ is unramified and also parameterized using $\vartheta$, $L(s,\pi)=L_{\vartheta}(s,\pi)$. This notation is perhaps misleading, since for $m\equiv2\,(4)$ the r.h.s.~ depends on $\vartheta$ (but so does the l.h.s., which is short for
$L(s,\pi\times(\varepsilon\otimes\vartheta))$). In \S~\ref{The integrals} and onward these standard factors
will only be used for odd $m$.

Globally, define $\vartheta$ as in \S~\ref{unr L functions}, then
for each place $\nu$ we have the character $\varepsilon\otimes\vartheta_{\nu}$. This defines a genuine character
$\varepsilon\otimes\vartheta=\otimes'_{\nu}(\varepsilon\otimes\vartheta_{\nu})$ of
the center of $\GL_1^{(m)}(\A)$, which we can extend to a maximal abelian subgroup of $\GL_1^{(m)}(\A)$ then induce to
a genuine (irreducible) cuspidal representation of $\GL_1^{(m)}(\A)$, by the global Stone--von Neumann Theory (\cite[\S~0.3, \S~II.1]{KP}). (In a global context we identified the local subgroups $\mu_{m}$ with a single subgroup of $\C^*$, so that the notation $\varepsilon_{\nu}$ is unnecessary.) Re-denote this representation by $\varepsilon\otimes\vartheta$. Define
$L(s,\pi)=L(s,\pi\times(\varepsilon\otimes\vartheta))$, then $L(s,\pi)=\prod_{\nu}L(s,\pi_{\nu})$.
Finally by Corollary~\ref{corollary:partial L function holomorphic}:
\begin{theorem}
The partial $L$-function $L^S(s,\pi)$ for sufficiently large sets $S$ is
holomorphic except at most simple poles at $s=\tfrac12\pm\tfrac c2$.
\end{theorem}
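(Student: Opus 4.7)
My plan is to deduce this immediately from Corollary~\ref{corollary:partial L function holomorphic}. Unwinding the definitions given at the end of \S~\ref{RS complete L factors}, the partial $L$-function in question is, by construction,
\begin{align*}
L^S(s,\pi)=L^S(s,\pi\times(\varepsilon\otimes\vartheta)),
\end{align*}
where $\varepsilon\otimes\vartheta$ is the genuine cuspidal representation of $\GL_1^{(m)}(\A)$ built via the global Stone--von Neumann construction. Since $\GL_1$ has no proper parabolic subgroup, this is a \emph{bona fide} cuspidal automorphic representation in the sense demanded by the corollary, with $k=1$.

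To match the shape of Corollary~\ref{corollary:partial L function holomorphic}, which controls $L^S(s,\widetilde{\pi}_0\times\tau_0)$ rather than $L^S(s,\pi_0\times\tau_0)$, I would apply it to the pair $(\pi_0,\tau_0)=(\pi^*,\varepsilon\otimes\vartheta)$. The representation $\pi^*$ is again a genuine cuspidal representation of $\GL_c^{(m)}(\A)$: the involution $b\mapsto b^*$ preserves the space of cusp forms (it merely permutes the standard unipotent radicals whose integrals define cuspidality). By the definition of $\widetilde{(\cdot)}$ for coverings of $\GL_c$ in \S~\ref{dual reps} together with \eqref{eq:involution b*0}, $\widetilde{\pi^*}=(\pi^*)^*=\pi$. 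Therefore the corollary, applied to the pair $(\pi^*,\varepsilon\otimes\vartheta)$, says exactly that $L^S(s,\pi\times(\varepsilon\otimes\vartheta))$ is entire unless $l\cdot 1=c$, i.e.\ $l=c$, in which case the only possible poles are simple and located at $s=\tfrac12\pm\tfrac c2$.

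Since everything reduces to unpacking the definition of the standard $L$-function and invoking an already proved corollary, no real obstruction arises; the only sanity check is that the corollary's cuspidality hypothesis is met by both $\pi^*$ and $\varepsilon\otimes\vartheta$, which was just explained. The precise residues at $s=\tfrac12\pm\tfrac c2$ could also be described, if desired, by transcribing the constant-term computations \eqref{eq:RS func eq 2.2}--\eqref{eq:RS func eq 2.3} into this special case, but these are not needed for the statement being proved.
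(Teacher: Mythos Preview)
Your proposal is correct and takes essentially the same approach as the paper, which simply invokes Corollary~\ref{corollary:partial L function holomorphic} without further comment. Your added care in matching the corollary's $\widetilde{\pi}\times\tau$ formulation by passing through $\pi^*$ (and verifying that $\pi^*$ remains cuspidal and that $(\pi^*)^*=\pi$) is a welcome clarification that the paper leaves implicit.
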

We emphasize that here $k=1$ and $r$ is arbitrary, the only assumption is $\mu_{2m}<F^*$.

Of course one can repeat the above construction with the roles of $\pi$ and $\tau$ reversed, i.e., $c=1$, $\pi$ is the fixed representation of
$\GL_1^{(m)}$ and $k$ is arbitrary. The theorem will then provide the $L$-function $L^S(s,\tau)$ which is entire unless $k=1$ (since the poles occur only
if $k|c$), and holomorphic for $k=1$ except at most simple poles at $s=1,0$. This resembles the familiar result in the linear case. However,
this construction would depend on the conjectures of \S~\ref{the representation rho_c(tau)}.

\section{The doubling integrals}\label{The integrals}
Here $F$ is a local field. We recall that all conjectures from \S~\ref{the representation rho_c(tau)} are assumed to hold.
\subsection{The doubling setup}\label{The doubling setup}
Let $n,k$ and $m$ be positive integers, $r=m$ when $m$ is odd and $r=m/2$ otherwise.
Let $G$ be either $\Sp_{2n}$ or $\GL_n$. If $G=\Sp_{2n}$, let $c=2n$, $H=\Sp_{2rkc}$, $\epsilon_0=-1$, and $Q=M_Q\ltimes U$ be the standard parabolic subgroup with $M_Q=\GL_c\times\ldots\times\GL_c\times\Sp_{2c}$.
For $G=\GL_n$ put $c=n$, $H=\GL_{2rkc}$, $\epsilon_0=1$, $Q=M_Q\ltimes U$ with $M_Q=M_{(c^{rk-1},2c,c^{rk-1})}$ and $U=V_{(c^{rk-1},2c,c^{rk-1})}$.

Fix a nontrivial additive character $\psi$ of $F$. Define the following character $\psi_U$ of $U$.
Assume $rk>1$.
If $G=\Sp_{c}$, $\psi_U(\diag(v,I_{2c},v^*))=\psi_{rk-1}(v)$ for $v\in V_{(c^{rk-1})}$,
and for $G=\GL_c$, $\psi_U(\diag(v_1,I_{2c},v_2))=\psi_{rk-1}^{-1}(v_1)\psi_{rk-1}^{-1}(v_2)$, $v_1,v_2\in V_{(c^{rk-1})}$.
Write the middle $4c\times 4c$ block of an element in $U$ in the form
\begin{align}\label{matrix:middle 4c block of u}
\left(\begin{smallmatrix}I_c&u&v\\&I_{2c}&u'\\&&I_c\end{smallmatrix}\right).
\end{align}
For $G=\Sp_{c}$, let $u^{1,1}$ (resp., $u^{2,2}$) be the top left (resp., bottom right) $c/2\times c/2$ block of $u$,
otherwise $G=\GL_c$, $u^{1,1}$ is the top left $c\times c$ block of $u$ and $u^{2,2}$ is the top $c\times c$ block of $u'$.
In both cases $\psi_U$ is defined on \eqref{matrix:middle 4c block of u} by $\psi(\tr(-\epsilon_0u^{1,1}+u^{2,2}))$. For $rk=1$, $U$ and $\psi_U$ are trivial.

The product $G\times G$ is embedded in $M_Q$ in the stabilizer of $\psi_U$, the embedding is given by
\begin{align*}
(g_1,g_2)=\begin{cases}
\diag(g_1,\ldots,g_1,\left(\begin{smallmatrix} g_{1,1}&&g_{1,2}\\ &g_2&\\ g_{1,3}&&g_{1,4}\end{smallmatrix}\right),g_1^*,\ldots,g_1^*),&G=\Sp_c,\\
\diag(g_1,\ldots,g_1,g_1,g_2,g_1,\ldots,g_1),&G=\GL_c.\end{cases}
\end{align*}
Here if $G=\Sp_c$, $g_1=\left(\begin{smallmatrix}g_{1,1}&g_{1,2}\\g_{1,3}&g_{1,4}\end{smallmatrix}\right)$ with $g_{1,i}\in \Mat_n$, and for
$G=\GL_c$ there are $rk-1$ copies of $g_1$ to the right of $g_2$. For clarity,
put $\mathfrak{e}_1(g)=(g,I_c)$ and $\mathfrak{e}_2(g)=(I_c,g)$, $g\in G$.

Consider $G=\Sp_c$. We realize $H^{(m)}$ using $\sigma_{2rkc}$ and $G^{(m)}$ using $\sigma_c$. By \cite[Proposition~7]{me12},
\begin{align}\label{eq:the $2$-cocycle on G times G formula}
\sigma_{2rkc}((g_1,g_2),(g_1',g_2'))&=\sigma^{*,rk}_{c}(g_1,g_1')^{-1}\sigma_{c}(g_2,g_2'),\qquad g_1,g_2,g_1',g_2'\in G.
\end{align}
In particular $\mathfrak{e}_1(G)$ and $\mathfrak{e}_2(G)$ commute in $H^{(m)}$. The mappings
\begin{align}\label{eq:embeddings coverings G and G into H}
\langle g,\epsilon\rangle\mapsto\langle \mathfrak{e}_1(g),\epsilon^{-1}\rangle,\qquad
\langle g,\epsilon\rangle\mapsto\langle \mathfrak{e}_2(g),\epsilon\rangle
\end{align}
define the lift of the embedding $G\times G<H$ to an embedding
\begin{align}\label{eq:lift of embedding}
\{(\epsilon_1,\epsilon_2)\in\mu_m^2:\epsilon_1=\epsilon_2\}\backslash G[\sigma_c^{*,rk}]\times G^{(m)}\rightarrow H^{(m)}.
\end{align}
In addition
\begin{align}\label{eq:g_1 and g_2 product in H}
\langle\mathfrak{e}_1(g_1),\epsilon_1^{-1}\rangle\langle \mathfrak{e}_2(g_2),\epsilon_2\rangle=\langle (g_1,g_2),\epsilon_1^{-1}\epsilon_2\rangle.
\end{align}
To pass between $G[\sigma_c^{*,rk}]$ and $G^{(m)}(=G[\sigma_c])$ we use the canonical isomorphism
\begin{align}\label{eq:iso sigma sigma * rk}
G[\sigma_c^{*,rk}]\rightarrow G^{(m)},\qquad \langle g,\epsilon\rangle\mapsto \langle g,\varsigma_{*,c}^{rk+1}(g)\epsilon\rangle.
\end{align}

Assume $G=\GL_c$. Realize $H^{(m)}$ using $\sigma_{2rkc}^{\diamondsuit}$ and (both copies of) $G^{(m)}$ via $\sigma_c^{\diamondsuit}$.
Then by
\eqref{eq:block compatibility on Levi of P} and with notation similar to \eqref{eq:the $2$-cocycle on G times G formula},
\begin{align}\label{eq:the $2$-cocycle on G times G formula GL}
\sigma^{\diamondsuit}_{2rkc}((g_1,g_2),(g_1',g_2'))&=\sigma^{\diamondsuit}_{c}(g_1,g_1')^{2rkc-1}\sigma_{c}^{\diamondsuit}(g_2,g_2')
=\sigma^{\diamondsuit}_{c}(g_1,g_1')^{-1}\sigma_{c}^{\diamondsuit}(g_2,g_2').
\end{align}
Then \eqref{eq:embeddings coverings G and G into H} implies the analog of \eqref{eq:lift of embedding} with $G^{(m)}$ instead of
$G[\sigma_c^{*,rk}]$, and \eqref{eq:g_1 and g_2 product in H} also holds.

\subsection{Sections and intertwining operators}\label{Sections and intertwining operators}
We describe the spaces and intertwining operators necessary for the definitions of the local factors.
See \cite[\S~3.3, \S~4]{CFK} for more details.

Let $\tau_0$ be a genuine irreducible generic representation of $\GL_k^{(m)}$,
$\rho_c(\tau_0)$ is an $(rk,c)$ representation. If $H=\Sp_{2rkc}$, let $P$ be the standard Siegel parabolic subgroup of $H$, i.e., $M_P=\GL_{rkc}$.
Denote $\tau=\tau_0$, $\rho_c(\tau)$ is a representation of $\widetilde{M}_P$.
For $H=\GL_{2rkc}$, let $P=P_{(rkc,rkc)}$, $\tau=\tau_0\otimes\tau_0^*$, $\rho_c(\tau)=\rho_c(\tau_0)\otimes\rho_c(\tau_0^*)$
and $W_{\psi}(\rho_c(\tau))=W_{\psi}(\rho_c(\tau))\otimes W_{\psi}(\rho_c(\tau^*))$, and
extend $\det$ to a function of $M_P$ by $\det(\diag(m_1,m_2))=\det{m_1}\det {m_2^{-1}}$. We use similar notation in the following setup: For a  composition $\beta=(\beta_1,\beta_2)$ of $l$, let $H=\GL_{rlc}$, $P=P_{\beta rc}$ and
$\tau=\tau_1\otimes\tau_2$, where each $\tau_i$ is a representation of $\GL_{\beta_i}^{(m)}$ with properties similar to $\tau_0$, e.g., $\rho_c(\tau_i)$ is an $(r\beta_i,c)$ representation.

For $s\in\C$ we have the space
$V(s,\rho_c(\tau))$ of $\Ind_{\widetilde{P}}^{H^{(m)}}(|\det|^{r^{-1}(s-1/2)}W_{\psi}(\rho_c(\tau)))$. A function $f$ on $\C\times H^{(m)}$ is called an entire section of $V(\rho_c(\tau))$ if $f(s,\cdot)\in V(s,\rho_c(\tau))$ for all $s$, and $f(\cdot,h)$ is an entire function of $s$ for each $h$. We call $f$ a meromorphic section if $\varphi(s)f(s,h)$ is an entire section for an entire and not identically zero function $\varphi:\C\rightarrow\C$. Over non-archimedean fields an entire (resp., meromorphic) function will always be polynomial (resp., rational) in $q^{\mp s}$. If $F$ is unramified, a function $f(s,\cdot)\in V(s,\rho_c(\tau))$ is unramified if it is right-invariant under
the subgroup $\{\langle y,\eta_{2rkc}(y)\rangle:y\in K_H\}$, and normalized if $f(s,\langle I_{2rkc},1\rangle)=1$ (if $H\ne\Sp_{2rkc}$, $\eta_{2rkc}$ is replaced by $\eta_{2rkc}^{\diamondsuit}$). The normalized unramified section $f$ is the unique entire section such that $f(s,\cdot)$ is normalized unramified for all $s$. If $h_0\in H^{(m)}$, $h_0\cdot f$ is the section defined by $h_0\cdot f(s,h)=f(s,hh_0)$.

For $H=\Sp_{2rkc}$, let $\mathrm{d}_{rk,c}=\diag(-I_c,I_c,\ldots,(-1)^{rk}I_c)\in T_{\GL_{rkc}}$ regarded as an element of $M_P$,
and denote $w_P=\delta_0^{-1}\mathrm{d}_{rk,c}$, $P'=P$ and $\tau'=\tau^{*}$. Also let $\psi_{U_P}(\left(\begin{smallmatrix}I_{rkc}&u\\&I_{rkc}\end{smallmatrix}\right))=\psi(\tr(x))$, where $x$ is the bottom left $c\times c$ block of $u$. Define
$Y_{rk,c}=V_{(c^{rk})}\ltimes {U_{P}}$ and $\psi_{rk,c}=\psi_{rk-1}\otimes\psi_{U_P}$.

If $H=\GL_{rlc}$ and $\tau=\tau_1\otimes\tau_2$, we have $\beta'=(\beta_2,\beta_1)$, $w_P=w_{\beta'rc}$, $\tau'=\tau_2\otimes\tau_1$ and $V(1-s,W_{\psi}(\rho_c(\tau')))$ is induced from $P'=P_{\beta'rc}$ (instead of $P$). In particular for $\tau=\tau_0\otimes\tau_0^*$, $\tau'=\tau_0^*\otimes\tau_0$. In addition $Y_{rl,c}=V_{(c^{rl})}$ and $\psi_{rl,c}=\psi_{rl-1}$ (when $P=P_{(rkc,rkc)}$, $l=2k$ and $\beta'=\beta=(k,k)$).

Consider the following intertwining operators:
\begin{align*}
&M(s,\rho_c(\tau),w_P):V(s,W_{\psi}(\rho_c(\tau)))\rightarrow V(1-s,W_{\psi}(\rho_c(\tau'))),\\
&M(1-s,\rho_c(\tau'),w_{P'}):V(1-s,W_{\psi}(\rho_c(\tau')))\rightarrow V(s,W_{\psi}(\rho_c(\tau))).
\end{align*}
For a meromorphic section $f$ of $V(\rho_c(\tau))$, $M(s,\rho_c(\tau),w_P)f$ is defined
for $\Real(s)\gg0$ by the absolutely convergent integral
\begin{align}\label{int:intertwining operator}
M(s,\rho_c(\tau),w_P)f(s,h)=\int\limits_{U_{P'}}f(s,\langle w_P^{-1}u,1\rangle h)\,du,
\end{align}
then by meromorphic continuation to $\C$.
We normalize these operators as follows. Define
\begin{align}\label{eq:lambda functional}
\lambda(s,c,\tau,\psi)f=\int_{U_{P'}}f(s,\langle \delta_0u,1\rangle)\psi_{rl,c}^{-1}(u)\,du.
\end{align}
Here for $H=\Sp_{2rkc}$, $l=k$.
The r.h.s.~ is absolutely convergent for $\Real(s)\gg0$, and can be made nonzero for a given $s$. Over non-archimedean fields
it can be made constant.
\begin{theorem}\label{theorem:uniqueness for normalizing intertwining}
For all $s$, $\dim\Hom_{Y_{rl,c}}(V(s,\rho_c(\tau)),\psi_{rl,c})\leq1$.
\end{theorem}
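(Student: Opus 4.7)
The strategy is to reduce the claim to the uniqueness of the $(rk,c)$ model on $\rho_c(\tau)$, via a Bruhat-type decomposition of $V(s,\rho_c(\tau))$ as a $Y_{rl,c}$-module. This mirrors the linear-case argument in \cite{CFK} and the $k=1$ covering argument in \cite{me12}. A key simplification is that $H^{(m)}$ splits canonically over unipotent subgroups (see \eqref{eq:sigma on h and v}--\eqref{eq:sigma conjugate v- to v by h}), so the covering introduces essentially no new difficulty: the cocycle factors all trivialize on the unipotent double cosets, and only the genuine character theory of $\widetilde{M}_P$ enters the computation.

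Concretely, I would first filter $V(s,\rho_c(\tau))$ as a $Y_{rl,c}$-module using the double cosets $P(F)\backslash H(F)/Y_{rl,c}(F)$. Over non-archimedean fields this produces a filtration on Jacquet-type subquotients (cf.\ \cite[5.11]{BZ1}); over archimedean fields one uses the degenerate Whittaker framework of \cite{AGS2015a,AGS2015b,GGS}. By Frobenius reciprocity, the contribution of a representative $w$ is a Hom-space on $\rho_c(\tau)$ for the character obtained by restricting $\psi_{rl,c}$ to ${}^{w^{-1}}\!M_P\cap Y_{rl,c}$. Second, this restricted character determines a unipotent orbit of $\GL_{rkc}$, i.e.\ a partition $\beta'$ of $rkc$ together with a character of $V(\beta')$. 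Whenever $\beta'\succsim((rk)^c)$, or $\beta'=((rk)^c)$ but the character is not generic, the cell's contribution vanishes by the defining vanishing/uniqueness properties of $(rk,c)$ representations listed in \S\ref{Definition and generalities}. Finally, for the unique surviving double coset---essentially the one centered at $\delta_0$ (equivalently $w_P$)---the intersection on the Levi is precisely $V_{(c^{rk})}$ carrying the generic character $\psi$, and the Hom-space is identified with the $1$-dimensional $\Hom_{V_{(c^{rk})}}(\rho_c(\tau),\psi)$ provided by Theorem~\ref{theorem rho c tau unique}.

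The principal obstacle is the combinatorial classification of the double cosets $P\backslash H/Y_{rl,c}$ and the verification that every non-open cell produces a partition $\beta'$ strictly dominating or incomparable with $(rk)^c$. This is most delicate when $H=\Sp_{2rkc}$, where $Y_{rk,c}=V_{(c^{rk})}\ltimes U_P$ mixes root directions from the Siegel parabolic with those from the standard parabolic, so the matching of $U_P$-coordinates against $V_{(c^{rk})}$-coordinates under conjugation by $w$ must be tracked carefully. The natural tool is a systematic exchange-of-roots manipulation in the spirit of \cite{GRS5}, together with the uniqueness machinery of Gourevitch--Kaplan \cite{DimaKaplan}, which was already the backbone of the analogous result in \cite{me12} for $k=1$. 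Archimedean complications are secondary and are handled via pre-derivatives as in \cite{GGS}.
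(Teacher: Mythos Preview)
Your approach is essentially the paper's: the paper cites \cite[Appendix~B]{CFK} for the symplectic case, notes that the proof there is an orbit-by-orbit analysis of distributions on $P\backslash H$ (i.e., exactly the Bruhat filtration you outline), and observes that since $\widetilde{P}\backslash H^{(m)}\cong P\backslash H$ and $H^{(m)}$ splits over $Y_{rk,c}$, the argument carries over verbatim with $k$ replaced by $rk$.

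Two minor corrections. First, for $H=\GL_{rlc}$ no double-coset analysis is needed: the paper simply invokes Proposition~\ref{prop:rodier non}, which shows the full induced representation $V(s,\rho_c(\tau))$ is itself $(rl,c)$, whence the Hom space is at most (in fact exactly) one-dimensional by definition. Your filtration works here too but is more than necessary. Second, your invocation of \cite{DimaKaplan} is misplaced: that reference establishes the uniqueness of the space \eqref{Homspace} (the $(G\times G)$-equivariant trilinear form), which is a different and strictly harder statement than the present theorem. The combinatorial input you need for the symplectic case is contained entirely in \cite[Appendix~B]{CFK}, and no exchange-of-roots manipulation in the sense of \cite{GRS5} is required at this stage.
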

In the case $m=1$ for $H=\Sp_{2rkc}$ this was proved in \cite[Appendix~B]{CFK}. The proof
was based on the analysis of the distributions on the orbits of the right action of $Y_{rk,c}$ on the homogeneous space $P\backslash H$. Since $\widetilde{P}\backslash H^{(m)}\cong P\backslash H$ and $H^{(m)}$ is split over $Y_{rk,c}$, the proof carries over immediately to arbitrary $m$. In fact one simply replaces all occurrences of $k$ in \textit{loc. cit.} with $rk$.

Note that for $H=\GL_{rlc}$ the dimension is always $1$, by Proposition~\ref{prop:rodier non}.

We also mention that the space in the theorem can still be zero for some $s$ when $H=\Sp_{2rkc}$, otherwise the corollary in \cite[\S~1]{Banks}
would imply $\lambda(s,c,\tau,\psi)$ is polynomial in $q^{\mp s}$, contradicting \cite[Proposition~4]{LR} for $m=1$.

\begin{corollary}\label{corollary:analytic continuation of lambda functional}
The functional $\lambda(s,c,\tau,\psi)$ admits meromorphic continuation, which is continuous in $f$ over $\C$.
\end{corollary}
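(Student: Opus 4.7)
The plan is to deduce this from the uniqueness statement of Theorem~\ref{theorem:uniqueness for normalizing intertwining} via Bernstein's continuation principle, in the exact same way that Corollary~\ref{corollary:Z and Z^* are meromorphic} was obtained from Theorem~\ref{theorem:uniqueness for bilinear RS}.

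First I would observe that the defining integral \eqref{eq:lambda functional} converges absolutely for $\Real(s)\gg0$ uniformly on compacta, and that for each such $s$ the functional $f\mapsto\lambda(s,c,\tau,\psi)f$ belongs to $\Hom_{Y_{rl,c}}(V(s,\rho_c(\tau)),\psi_{rl,c})$. This equivariance is a direct calculation: one conjugates $V_{(c^{rk})}<M_P$ across $\delta_0$ into $U_P$ where it combines with $U_{P'}$ to give the $(rk,c)$ character $\psi_{rk-1}$ on the $V_{(c^{rk})}$ component and $\psi_{U_P}$ on the $U_P$ component, with only trivial cocycle contributions from \eqref{eq:sigma on h and v}, \eqref{eq:sigma conjugate v by h} and Lemma~\ref{lemma:conjugation commutes}. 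As remarked after \eqref{eq:lambda functional}, one can also choose data making the integral constant in~$s$, so it is not identically zero.

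Next, in the non-archimedean case I would appeal to Bernstein's continuation principle in the form of \cite{Banks}: since $V(s,\rho_c(\tau))$ fits into a standard ``fiber bundle'' of representations over the base $\C$ (realized by the compact model on $\widetilde{K}_H$), and Theorem~\ref{theorem:uniqueness for normalizing intertwining} guarantees that the Hom space has dimension $\leq1$ outside a discrete set of $s$, the local functionals $\lambda(s,c,\tau,\psi)$ defined on a right half-plane extend to a family of functionals that are rational in $q^{-s}$ and whose values on any fixed smooth vector are rational functions of $q^{-s}$. In particular, for any section $f$ which is a polynomial in $q^{\mp s}$ (an entire section in our sense), $s\mapsto\lambda(s,c,\tau,\psi)f$ becomes a rational function in $q^{-s}$, and for meromorphic sections one multiplies by the relevant scalar. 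Continuity in $f$ is automatic from the fact that, away from the poles, $\lambda(s,\cdot,\cdot,\psi)$ factors through the finite-dimensional space of vectors fixed by any given compact open subgroup.

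Over $F=\C$ (the only archimedean case permitted by the assumption $\mu_{2m}\subset F^*$), the covering splits and the statement reduces to the corresponding archimedean result for the linear doubling integrals, proved in \cite{CFK} by the archimedean variant of Bernstein's principle, using the Casselman--Wallach globalization and the continuity of $f\mapsto\lambda(s,c,\tau,\psi)f$ on the smooth Fr\'echet structure. The main technical point (which is really the content of the cited theorem, not of this corollary) is the uniqueness in Theorem~\ref{theorem:uniqueness for normalizing intertwining}; granted that, the continuation and continuity are routine applications of the Bernstein/Banks machinery, so no serious obstacle remains here.
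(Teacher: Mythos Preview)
Your proposal is correct and follows essentially the same approach as the paper: the paper's proof is a one-line reference to Theorem~\ref{theorem:uniqueness for normalizing intertwining} together with \cite{Banks} in the non-archimedean case, and to \cite[\S~6.13]{CFK} over $\C$. You have simply spelled out the details (the equivariance check, the non-triviality, and the role of the compact model) that the paper leaves implicit.
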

\begin{proof}
Use Theorem~\ref{theorem:uniqueness for normalizing intertwining} and \cite{Banks} for non-archimedean fields, and
\cite[\S~6.13]{CFK} for $\C$.
\end{proof}
Thus there is a meromorphic function $C(s,c,\tau,\psi)$ such that for all meromorphic sections $f$,
\begin{align}\label{eq:func equation for normalization}
&\lambda(s,c,\tau,\psi)f=
C(s,c,\tau,\psi)\lambda(1-s,c,\tau',\psi)M(s,\rho_c(\tau),w_P)f.
\end{align}
The choice of measure on $U_{P'}$ is fixed as in \cite[\S~4]{CFK} using $d_{\psi}x$ (see \S~\ref{ints and RS integrals}). Define the normalized intertwining operator by
\begin{align}\label{eq:def normalized intertwining}
M^*(s,c,\tau,\psi)=C(s,c,\tau,\psi)M(s,\rho_c(\tau),w_P).
\end{align}
It then follows that
\begin{align}\label{eq:composition of intertwining operators}
M^*(1-s,c,\tau',\psi)M^*(s,c,\tau,\psi)=1.
\end{align}

Assume $\tau$ is genuine irreducible and unramified, and for $H=\GL_{rlc}$ further assume $l=2k$ and $\tau=\tau_0\otimes\tau_0^*$ (this is the only case needed below). Using the definitions from \S~\ref{unr L functions}, let
\begin{align*}
&a(s,c,\tau)\\&=\begin{cases}[L(s-rc/2,\tau)]\prod\limits_{j=1}^{\lfloor rc/2\rfloor}L(2s-rc+2j-1,\tau,\vee^2)\prod_{j=1}^{\lceil rc/2\rceil}L(2s-rc+2j-2,\tau,\wedge^2)&H=\Sp_{2rkc},\\
\prod_{1\leq j\leq rc}L(2s+j-rc-1,\tau_0\times\tau_0)&H=\GL_{2rkc},
\end{cases}\\
&b(s,c,\tau)=\begin{cases}[L(s+rc/2,\tau)]\prod\limits_{j=1}^{rc/2}L(2s+2j-2,\tau,\vee^2)L(2s+2j-1,\tau,\wedge^2)&H=\Sp_{2rkc},\\
\prod_{1\leq j\leq rc}L(2s+j-1,\tau_0\times\tau_0)&H=\GL_{2rkc}.\\
\end{cases}
\end{align*}
Here factors in square brackets are only included when $m$ is odd. Also for
$x\in\R$, $\lfloor x \rfloor$ (resp., $\lceil x \rceil$) is the largest (resp., smallest) integer smaller (resp., greater) than or equal to $x$. For $m=1$ the definitions agree with \cite[\S~4]{CFK} (also for $m=2$, except the factors in square brackets).

If $\psi$ is unramified and $f_{\tau}$ (resp., $f_{\tau'}$) is the normalized unramified section of $V(\rho_c(\tau))$ (resp., $V(\rho_c(\tau'))$), the computations in \cite[Lemmas~27, 33]{CFGK2} and \cite[Lemmas~79, 80]{me12}
(the inducing character for $\Sp_{2rkc}$ is $\vartheta\otimes_{1\leq i\leq k,1\leq j\leq rc}\chi_i|~|^{r^{-1}(s-1/2)-c/2+j/r-1/(2r)}$, cf. \cite[p.~89]{me12}) show
\begin{align}\label{intertwining operator on unramified}
M(s,\rho_c(\tau),w_P)f_{\tau}={a(s,c,\tau)}{b(s,c,\tau)^{-1}}f_{\tau'}.
\end{align}
This identity is in fact the Gindikin--Karpelevich formula \cite[Corollary~7.4]{Gao2018}, which is the extension of \cite[Theorem~3.1]{CS1} to coverings.

\subsection{The integrals and $\gamma$-factor}\label{The local integrals}
For $G=\Sp_{c}$ let $\iota=\left(\begin{smallmatrix}&I_n\\I_{n}\end{smallmatrix}\right)$, ${}^{\iota}g=\iota^{-1}g\iota$ is an involution of $G$ which according to \cite[\S~1.6]{me12} lifts uniquely to an involution of $G^{(m)}$ (locally and globally), also denoted ${}^{\iota}$. For $G=\GL_c$ set $\iota=I_c$. Let
\begin{align*}
\delta=\delta_0\delta_1,\qquad
\delta_0=\left(\begin{smallmatrix}&I_{rkc}\\\epsilon_0I_{rkc}\end{smallmatrix}\right),\qquad
\delta_1=\diag(I_{(rk-1)c},\left(\begin{smallmatrix}I_c&I_c\\&I_c\end{smallmatrix}\right),I_{(rk-1)c}),\qquad
U_0=U\cap U_P.
\end{align*}

Let $\pi$ and $\tau$ be genuine irreducible admissible representations, $\pi$ of $G^{(m)}$ and
$\tau$ is either $\tau_0$ or $\tau_0\otimes\tau_0^*$ as in \S~\ref{Sections and intertwining operators} (according to $G$).
Let $\omega$ be a matrix coefficient of $\pi^{\vee}$ and $f$ be an entire section of $V(\rho_c(\tau))$. The local integral is defined by
\begin{align}\label{eq:local integral}
&Z(s,\omega,f)=
\int\limits_{G}\int\limits_{U_0}
\omega(\langle g,1\rangle)f(s,\langle\delta u_0,1\rangle
\,{}^{\iota}\langle\mathfrak{e}_2(g),1\rangle)\,\psi_{U}(u_0)\,du_0\,dg.
\end{align}
It is absolutely convergent in $\Real(s)\gg0$ independently of the data $(\omega,f)$.
Over non-archimedean fields \eqref{eq:local integral} can be made constant. Over $\C$, the results in the linear case imply that
the integrals admit meromorphic continuation in $s$ which is continuous in the input data, and for a given $s$, one can choose data for which the integral is holomorphic and nonzero in a neighborhood of $s$, and $f$ is entire and $K_H$-finite (\cite[\S~6.13, Corollary~44]{CFK} with $rk$ instead of $k$).

In its domain of definition, by \cite[Propositions~68, 75]{me12} the integral belongs to
\begin{align}\label{Homspace}
\Hom_{(G,G)^{(m)}}(J_{U,\psi_U^{-1}}(V(s,\rho_c(\tau))),\pi^{\vee}\otimes\pi^{\iota}).
\end{align}
By \cite[Theorem~3.1]{DimaKaplan} this space is at most one-dimensional, outside a discrete subset of $s$. In particular, over non-archimedean fields this and Bernstein's continuation principle (in \cite{Banks}) imply $Z(s,\omega,f)$ admits meromorphic continuation
to a rational function in $q^{-s}$, for a meromorphic section $f$. Note that we only consider meromorphic sections.

We define the $\gamma$-factor as in \cite[\S~5]{CFK} (see also \cite{LR,me6}).
For $G=\Sp_c$, let $N=c+1$ if $m$ is odd and otherwise $N=c$ (we will not use $N$ for
$\GL_c$). Put $\mathfrak{i}_G=\langle -I_c,1\rangle$. Since $\mu_{2m}\subset F^*$,
$\mathfrak{i}_G\in C_{G^{(m)}}$. Denote
\begin{align*}
&Z^*(s,\omega,f)=Z(1-s,\omega,M^*(s,c,\tau,\psi)f),\\
&\vartheta(s,c,\tau,\psi)=\begin{cases}
[\gamma(s,\tau,\psi)]
\tau(\mathfrak{i}_{\GL_k})^{rn}\tau(\langle r^rI_k,1\rangle)^{-c}\eta_{\tau}(\langle2,1\rangle)^{-c}|2r|^{-kc(s-1/2)}&G=\Sp_{c},\\
\tau_0(\langle r^rI_k,1\rangle)^{-2c}|r|^{-2kc(s-1/2)}\tau_0(\mathfrak{i}_{\GL_k})^{rc}&G=\GL_c.
\end{cases}
\end{align*}
Here $\gamma(s,\tau,\psi)$ is the $\gamma$-factor defined in \S~\ref{RS complete L factors}
($\tau$ here is $\pi$ there) and appears only if $m$ is odd.

Since outside a discrete subset of $s$ the space \eqref{Homspace} is at most one-dimensional, and by the basic properties of the integrals described above, there is a well defined function $\gamma(s,\pi\times\tau,\psi)$ which is
meromorphic and not identically zero, such that for all data $(\omega,f)$,
\begin{align}\label{eq:Gamma def}
\gamma(s,\pi\times\tau,\psi)Z(s,\omega,f)=\pi(\mathfrak{i}_G)^{rk}
\vartheta(s,c,\tau,\psi)Z^*(s,\omega,f).
\end{align}
For $c=0$, $\Sp_0=\{1\}$ and we define
$\gamma(s,\pi\times\tau,\psi)=\gamma(s,\tau,\psi)$ when $m$ is odd and $\gamma(s,\pi\times\tau,\psi)=1$ otherwise.
As in \S~\ref{ints and RS integrals}, Schur's Lemma (applied to $\pi$) and the uniqueness of the $(rk,c)$ model for $\rho_c(\tau)$
imply the $\gamma$-factor is independent of concrete realizations of $\pi,\tau$ and the $(rk,c)$ model.

The following is our main theorem. We formulate it for $G=\Sp_c$, except
\eqref{eq:multiplicativity I} and \eqref{eq:multiplicativity II} which are formulated for $\GL_c$ as well and \eqref{eq:GL factors}
(see the ensuing remarks).
\begin{theorem}\label{theorem:ten commendments}
The $\gamma$-factor satisfies the following properties.
\begin{itemize}[leftmargin=*]
\item Unramified twisting:
\begin{align}\label{eq:Unramified twisting}
\gamma(s,\pi\times|\det|^{s_0}\tau,\psi)=\gamma(s+rs_0, \pi\times\tau,\psi).
\end{align}
\item Multiplicativity: Let $\pi$ be a quotient of $\Ind_{\widetilde{R}}^{G^{(m)}}(\sigma_{\beta'}\otimes\pi')$
where $R$ is a standard parabolic subgroup of $G$ with $M_R=M_{\beta'}\times G'$, $\beta'$ is a $d'$ parts composition of $l\leq n$,
and $\sigma_{\beta'}\otimes\pi'$ is a genuine irreducible admissible representation of $\widetilde{M}_R$ ($\pi'$ is omitted when $l=n$).
Let $\tau$ be a quotient of $\Ind_{\widetilde{P}_{\beta}}^{\GL_k^{(m)}}(\otimes_{i=1}^d\tau_i)$ with notation as in \eqref{rep:rho c tau in general} or Conjecture~\ref{conjecture:rho c tau inductive for tempered tau}. Then
    \begin{align}\label{eq:multiplicativity I}
    &\gamma(s,\pi\times\tau,\psi)=\gamma(s,\pi'\times\tau,\psi)\prod_{i=1}^{d'}\gamma(s,\sigma_i\times(\tau\otimes\tau^*),\psi),\qquad\\
    &\gamma(s,\pi\times\tau,\psi)=\prod_{i=1}^d\gamma(s,\pi\times\tau_i,\psi).\label{eq:multiplicativity II}
    \end{align}
Here if $G=\Sp_c$, $l=n$ and $m$ is odd, $\gamma(s,\pi'\times\tau,\psi)=\gamma(s,\tau,\psi)$ (by the definitions for $\Sp_{2(n-l)}$).
For $G=\GL_c$: $\gamma(s,\pi'\times\tau,\psi)$ is omitted, $\tau\otimes\tau^*$ is replaced by $\tau$
and \eqref{eq:multiplicativity II} holds as is.
\item
Unramified factors: For unramified data and when $\pi,\tau$ are parameterized with one parameter,
    \begin{align}\label{eq:unramified factors}
    \gamma(s,\pi\times\tau,\psi)=
    \frac{L(1-s,\widetilde{\pi}\times\widetilde{\tau})}{L(s,\pi\times\tau)}.
    \end{align}

\item
Dependence on $\psi$: For $b\in F^*$,
    \begin{align}\label{eq:dependence on psi}
    \gamma(s,\pi\times\tau,\psi_b)=\eta_{\tau}(\langle b,1\rangle)^{c}[\tau(\langle b^r,1\rangle)\eta_{\varepsilon\otimes1}(\langle b,1\rangle)^k]|b|^{kN(s-1/2)}\gamma(s,\pi\times\tau,\psi).
    \end{align}
Here the factors in square brackets appear only when $m$ is odd.
\item
Duality:
\begin{align}\label{eq:self-duality}
\gamma(s,\widetilde{\pi}\times\tau,\psi)=\gamma(s,\pi\times\tau,\psi).
\end{align}

\item
Functional equation:
    \begin{align}\label{eq:functional equation}
    \gamma(s,\pi\times\tau,\psi)\gamma(1-s,\widetilde{\pi}\times\widetilde{\tau},\psi^{-1})=1.
    \end{align}

\item $\GL_c^{(m)}\times\GL_k^{(m)}$ factors:
    \begin{align}\label{eq:GL factors}
    \gamma(s,\pi\times(\tau_0\otimes\widetilde{\tau_0}),\psi)=\gamma(s,\pi\times\tau_0,\psi)\gamma(s,\widetilde{\pi}\times\tau_0,\psi).
    \end{align}
The $\gamma$-factors on the r.h.s.~ are the ones defined in \S~\ref{RS integrals}.
\item $F=\C$: if $m$ is odd denote ${}^LG=\SO_N(\C)$ and $\theta(\pi^r)=\pi^r$, for even $m$ put
${}^LG=\Sp_N(\C)$ and let $\theta(\pi^r)$ denote the representation attached to $\pi^r$ by the theta correspondence (\cite{Howe1989,AdamsBarbasch1995}). Let
$\varphi:\C^*\rightarrow{}^LG\times{}^L\GL_k$ be the homomorphism attached to
$\theta(\pi^r)\otimes\tau^r$ and $\epsilon(s,\mathrm{st}\circ\varphi,\psi)$ and $L(s,\mathrm{st}\circ\varphi)$ be Artin's local factors attached to $\mathrm{st}\circ\varphi$ by Langlands' correspondence (\cite{Bo,La3}), where $\mathrm{st}$ is the standard representation.
Then
\begin{align}\label{eq:Archimedean property}
\gamma(s,\pi\times\tau,\psi)=\epsilon(s,\mathrm{st}\circ\varphi,\psi)\frac{L(1-s,\mathrm{st}^{\vee}\circ\varphi)}{L(s,\mathrm{st}\circ\varphi)}.
\end{align}
\item
Crude functional equation: Let $F$ be a number field with a ring of adeles $\A$ and $\psi$ be a nontrivial character of $F\backslash\A$. Assume $\pi$ and $\tau$ are genuine cuspidal representations of $G^{(m)}(\A)$ and $\GL_{k}^{(m)}(\A)$.
With the global parametrization and the set $S$ as in \S~\ref{unr L functions},
    \begin{align}\label{eq:global property}
    L^S(s,\pi\times\tau)=\prod_{\nu\in S}\gamma(s,\pi_{\nu}\times\tau_{\nu},\psi_{\nu})L^S(1-s,
    \widetilde{\pi}\times\widetilde{\tau}).
    \end{align}
\end{itemize}
Properties \eqref{eq:multiplicativity I}, \eqref{eq:multiplicativity II}, \eqref{eq:dependence on psi}, \eqref{eq:GL factors} for $c=k=1$, and \eqref{eq:global property} uniquely determine $\gamma(s,\pi\times\tau,\psi)$ for any local field $F$ where $|m|=1$ (for both $G=\Sp_c$ and $\GL_c$).
\end{theorem}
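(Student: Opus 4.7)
\medskip

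\noindent\textbf{Proof proposal.} The plan is to follow the path laid down in the linear generalized doubling setup \cite{CFK} (which itself refined \cite{LR}), inserting the covering-specific ingredients: the cocycle identities of \S\ref{Local coverings}--\S\ref{Global coverings}, the uniqueness result \eqref{Homspace} from \cite{DimaKaplan}, the $(rk,c)$-model theory of \S\ref{Representations of type (k,c)}, and the Rankin--Selberg package for $\GL^{(m)}_c\times\GL^{(m)}_k$ of \S\ref{RS integrals}. The ``easy'' properties come first: unramified twisting \eqref{eq:Unramified twisting} is a substitution $s\mapsto s+rs_0$ in \eqref{eq:local integral} combined with $\rho_c(|\det|^{s_0}\tau)=|\det|^{s_0}\rho_c(\tau)$; the dependence on $\psi$ \eqref{eq:dependence on psi} is obtained by conjugating the torus element $t_b=\diag(I_c,b^{-1}I_c,\ldots)\in M_P$ through the integral, using Lemma~\ref{lemma:conjugation commutes}, Lemma~\ref{lemma:eta tau c 1} and the standard character $\eta_{\varepsilon\otimes1}$ coming from the Weil factor when $m$ is odd; duality \eqref{eq:self-duality} is the observation that the doubling integral for $(\widetilde{\pi},\tau)$ equals the one for $(\pi,\tau)$ after applying ${}^{\iota}$ to $\mathfrak{e}_2(g)$ and renaming $g\mapsto g^{-1}$; and the functional equation \eqref{eq:functional equation} is a formal consequence of \eqref{eq:composition of intertwining operators} together with the $\vartheta$-factor bookkeeping.

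Next, the unramified computation \eqref{eq:unramified factors} combines the evaluation of $Z(s,\omega,f_{\tau})$ on spherical data (done in \cite[\S~2.7]{me12} for $c=k=1$ and extended in \cite{me12,G7}) with the Gindikin--Karpelevich formula \eqref{intertwining operator on unramified}; the ratio $a/b$ is precisely what converts the product of local Rankin--Selberg factors into the standard $L$-factor. The $\GL_c^{(m)}\times\GL_k^{(m)}$ identity \eqref{eq:GL factors} is proved by unfolding the Siegel-type doubling integral on $\GL_{2rkc}^{(m)}$: on the open orbit one recognizes the $Z(s,\omega,W)$ of \eqref{eq:Z integral GL 1 GL rk} for $(\pi,\tau_0)$, on the complementary term (after applying $M^*$) one reads off $Z^*(s,\omega,W)$ for $(\widetilde{\pi},\tau_0)$, and the product factorization follows from \eqref{eq:gamma RS} applied twice. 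The archimedean identity \eqref{eq:Archimedean property} uses $\mu_{2m}\subset F^*\Rightarrow F=\C$ so $H^{(m)}$ is split, reducing to the linear case \cite[Theorem~5 and Appendix]{CFK}; the only new content is to relate the archimedean local factors of $\pi^r\otimes\tau^r$ through the Satake parametrization and the theta-correspondence identification $\theta(\pi^r)$ for even $m$. The crude global functional equation \eqref{eq:global property} is obtained by writing down the global doubling integral on $H^{(m)}(\A)$ (using the splittings of $G(F)\times G(F)$ and $U(\A)$), showing it unfolds to $L^S(s,\pi\times\tau)$ by the local unramified computation, and applying the global functional equation of the Eisenstein series.

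The hardest steps are the two multiplicativity statements \eqref{eq:multiplicativity I}--\eqref{eq:multiplicativity II}. For multiplicativity in $\pi$, I will induct on the number of blocks of $\beta'$ and analyze the pullback of $f(s,\delta u_0\,{}^{\iota}\mathfrak{e}_2(g))$ along the parabolic $R$: after a change of variables the $g$-integral breaks into a doubling piece on $G'$ and a sequence of Rankin--Selberg-type pieces for $\sigma_i\times(\tau\otimes\tau^*)$, and the cocycle bookkeeping of \S\ref{The doubling setup}, combined with \eqref{eq:the $2$-cocycle on G times G formula} and Lemma~\ref{lemma:conjugation commutes}, ensures the identities hold on the nose. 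For multiplicativity in $\tau$, the strategy is to realize $\rho_c(\tau)$ as the unique $(rk,c)$-quotient of an induced representation from $\bigotimes_i\rho_c(\tau_i)$ via \eqref{rep:rho c tau in general} and Conjecture~\ref{conjecture:rho c tau inductive for tempered tau}, lift this to a decomposition of $V(s,\rho_c(\tau))$ compatible with the doubling embedding, and match the two sides via the ``exchange of roots'' and the Jacquet integral \eqref{eq:mnk functional using w_{n,m,k}}; one must also use the normalization $C(s,c,\tau,\psi)$ to split accordingly, which follows from Theorem~\ref{theorem:uniqueness for normalizing intertwining}. The main obstacle will be the latter: keeping track of how $\rho_c$ interacts with parabolic induction in the covering setting requires the block-compatibility \eqref{eq:block compatibility on Levi of P} at every step, and the archimedean version needs the additional continuity arguments of \cite[\S~6]{CFK}. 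Finally, uniqueness under $|m|=1$ is the standard Shahidi-style reduction: multiplicativity \eqref{eq:multiplicativity I}--\eqref{eq:multiplicativity II} allows one to assume $\pi$ and $\tau$ are supercuspidal, then \eqref{eq:GL factors} and the case $c=k=1$ together with \eqref{eq:dependence on psi} and \eqref{eq:global property} (via a stability-type globalization argument as in \cite{Sh3}) pin down the factor, using that over such $F$ the tame covering is essentially trivial so the linear uniqueness of \cite{LR} transfers.
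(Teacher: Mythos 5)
Your overall architecture coincides with the paper's: the easy properties by direct manipulation of \eqref{eq:local integral}, multiplicativity via integral realizations of the $(rk,c)$ functional together with the factorization \eqref{eq:mult II of ops} of the normalized intertwining operators, \eqref{eq:GL factors} by identifying the doubling integral with the Rankin--Selberg integrals of \S~\ref{RS integrals} through a proportionality argument in the one-dimensional space \eqref{Homspace}, the archimedean case by reduction to principal series, the crude functional equation from the Eisenstein series, and uniqueness by a two-step globalization. So for most items you are on the paper's track.

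There is, however, one concrete problem: your route to \eqref{eq:unramified factors} is circular as stated. To evaluate $\gamma$ on spherical data directly from \eqref{eq:Gamma def} you need not only the unramified value \eqref{eq:unramified integral} of $Z$ and the Gindikin--Karpelevich formula \eqref{intertwining operator on unramified}, but also the value of the normalizing factor $C(s,c,\tau,\psi)$ of \eqref{eq:func equation for normalization} on unramified data --- and the paper computes that constant only \emph{after} establishing \eqref{eq:unramified factors} (this is exactly Corollary~\ref{corollary:C factors}). The paper instead deduces \eqref{eq:unramified factors} from \eqref{eq:multiplicativity I}--\eqref{eq:multiplicativity II}, reducing to $c=k=1$ where \eqref{eq:GL factors} and \eqref{eq:RS unramified factors} apply; your route would require an independent unramified evaluation of $\lambda(s,c,\tau,\psi)$ on the normalized spherical section and on its image under the intertwining operator, which you do not supply. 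Two smaller corrections: duality \eqref{eq:self-duality} is trivial for $G=\Sp_c$ because $\widetilde{\pi}=\pi$ by definition (\S~\ref{dual reps}), and for $\GL_c$ it is immediate from the symmetry of the right-hand side of \eqref{eq:GL factors}; the change of variables you describe is neither needed nor meaningful. And in the uniqueness argument, the parenthetical claim that over $F$ with $|m|=1$ the covering is ``essentially trivial so the linear uniqueness of \cite{LR} transfers'' is false (the tame covering is nontrivial) and plays no role: the globalization plus \eqref{eq:global property} determine the finite product of unknown local factors, and enlarging the ramified set by the single place $\nu$ isolates $\gamma$ at $\nu$.
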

\begin{remark}
For $G=\GL_c$, \eqref{eq:GL factors} and Theorem~\ref{theorem:RS ten commendments}
also determine the $\gamma$-factors, except here we are able to prove \eqref{eq:multiplicativity II}.
\end{remark}
\begin{remark}
The remaining properties for $\GL_c$ can easily be determined from
\eqref{eq:GL factors} and Theorem~\ref{theorem:RS ten commendments}. In some detail:
in \eqref{eq:Unramified twisting} $|\det|^{s_0}\tau=|\det|^{s_0}\tau_0\otimes|\det|^{-s_0}\tau_0^*$ and
an unramified twisting property of $\pi$ can be read from \eqref{eq:RS Unramified twisting};
for \eqref{eq:unramified factors} the denominator is replaced by $L(s,\pi\times\tau_0)L(s,\widetilde{\pi}\times\tau_0)$;
in \eqref{eq:dependence on psi} replace
$(\eta_{\tau},N)$ by $(\eta_{\tau_0},2c)$; \eqref{eq:self-duality} and \eqref{eq:functional equation} remain unchanged;
and in \eqref{eq:global property} $\tau=\tau_0\otimes\tau_0^*$ where $\tau_0$ is a genuine
cuspidal representation of $\GL_k^{(m)}(\A)$.
\end{remark}

\section{Proof of Theorem~\ref{theorem:ten commendments}}\label{proof of theorem:ten commendments}
Arguments where coverings impose no additional difficulty are described briefly, for details see \cite[\S~6]{CFK} and the references therein. Note that the proofs are given for both $\Sp_c$ and $\GL_c$.

\subsection{Uniqueness}\label{almost Uniqueness}
By \eqref{eq:multiplicativity I}, \eqref{eq:multiplicativity II}, and \eqref{eq:GL factors} for $c=k=1$, it remains to consider a non-archimedean
field $F_{\nu}$ (with $\mu_{2m}<F_{\nu}^*$) such that $|m|_{\nu}=1$, and genuine supercuspidal representations $\pi_{\nu}$ and $\tau_{\nu}$
of $G^{(m)}$ and $\GL_k^{(m)}$.
Let $F'$ be a number field with $F'_{\nu}=F_{\nu}$ and $\mu_{2m}<(F')^*$ (adding roots of unity to $F'$ will not change $F'_{\nu}$),
and $S'$ be the finite set of places $\nu'$ of $F'$ where $|m|_{\nu'}<1$. First take genuine supercuspidal
representations $(\pi_{\nu'},\tau_{\nu'})_{\nu'\in S'}$, and apply the Poincar\'{e} series argument of \cite[Appendice 1]{GH2} (see also \cite[p.~1004]{GanIchino2018}) to obtain genuine cuspidal representations $(\pi,\tau)$ of $G^{(m)}(\A)$ and $\GL_k^{(m)}(\A)$, such that for each $\nu'\notin S'$, $\pi_{\nu'}$ and $\tau_{\nu'}$ are constituents of principal series representations. Also globalize $\psi_{\nu}$ to $\psi$. Then \eqref{eq:multiplicativity I}, \eqref{eq:multiplicativity II}, \eqref{eq:dependence on psi} and for $c=k=1$ also
\eqref{eq:GL factors} uniquely determine $\gamma(s,\pi_{\nu'}\times\tau_{\nu'},\psi_{\nu'})$ for all $\nu'\notin S'$. By
\eqref{eq:global property} with $S'\subset S$ we deduce
$\prod_{\nu'\in S'}\gamma(s,\pi_{\nu'}\times\tau_{\nu'},\psi_{\nu'})$ is also uniquely determined.
Repeating this globalization argument with $S'\cup \nu$ implies the uniqueness of
$\gamma(s,\pi_{\nu}\times\tau_{\nu},\psi_{\nu})$.

\subsection{Unramified twisting}\label{Unramified twisting}This holds because
$V(s,\rho_c(|\det|^{s_0}\tau))=V(s+rs_0,\rho_c(\tau))$ and
$\vartheta(s,c,|\det|^{s_0}\tau,\psi)=\vartheta(s+rs_0,c,\tau,\psi)$, where we also
use \eqref{eq:RS Unramified twisting}.

\subsection{Dependence on $\psi$}\label{Dependence on psi}
Consider $G=\Sp_{c}$.
Changing the character $\psi$ entails changing the $(k,c)$ model of $\rho_c(\tau)$ and the normalization of the intertwining operator.
Let $\lambda$ be an $(rk,c)$ functional realizing $W_{\psi}(\rho_c(\tau))$. Denote
\begin{align*}
&t_b=\diag(b^{rk-1}I_c,\ldots,b I_c,I_{2c},b^{-1} I_c,\ldots b^{1-rk}I_c)\in T_H.
\end{align*}

First we compute $Z(s,\omega,\langle t_b,1\rangle\cdot f)$.
Since the middle block of $t_b$ is $I_{2c}$ and $\mathfrak{e}_2(G)$ is contained in the middle $c\times c$ block, \eqref{eq:BLS block compatible} implies $t_b$ commutes with $\mathfrak{e}_2(G)$ in $H^{(m)}$. Also
$t_b$ normalizes $U_0$ and commutes with $\delta_1$, these hold in $H^{(m)}$ by \eqref{eq:sigma conjugate v by h}.
The choice of $t_b$ implies ${}^{t_b^{-1}}\psi_{U}=(\psi_b)_{U}$ on $U_0$ (${}^{x^{-1}}\psi_U(y)=\psi_U({}^{x}y)$). Set $y_b={}^{\delta_0}t_b$.
By Proposition~\ref{proposition:action of W on torus is trivial on Sp},
$\langle y_b,1\rangle={}^{\delta_0}\langle t_b,1\rangle$. The map $\xi\mapsto\lambda(\langle y_b,1\rangle\cdot\xi)$ is an $(rk,c)$ functional realizing $W_{\psi_b}(\rho_c(\tau))$. Thus if $f$ is a meromorphic section on $V(\rho_c(\tau))$ where $\rho_c(\tau)$ is realized in $W_{\psi}(\rho_c(\tau))$, $Z(s,\omega,\langle t_b,1\rangle\cdot f)$ is the similar integral with $\psi$ replaced by $\psi_b$ (so, $\rho_c(\tau)$ is now realized in $W_{\psi_b}(\rho_c(\tau))$), multiplied by a measure constant $c_b$.

We similarly compute $Z^*(s,\omega,\langle t_b,1\rangle\cdot f)$. Since ${}^{w_P^{-1}}{}^{\delta_0}t_b=(b^{rk-1}I_{rkc})y_b$ and $\langle b^{rk}I_{rkc},1\rangle\in\widetilde{C}_{r,rkc}$, by Proposition~\ref{proposition:action of W on torus is trivial on Sp}, \eqref{eq:BLS $2$-cocycle on torus} and because $(b,b)_m=1$,
\begin{align}\label{eq:conjugation w_P in dep on psi}
{}^{w_P^{-1}}{}^{\delta_0}\langle t_b,1\rangle=\langle (b^{rk-1}I_{rkc})y_b,1\rangle=\langle b^{rk}I_{rkc},1\rangle
\langle b^{-1}I_{rkc},1\rangle\langle y_b,1\rangle.
\end{align}
Thus we can take out the character $\rho_c(\tau)(\langle b^{r}I_{rkc},1\rangle)^k$. The functional realizing
$W_{\psi_b}(\rho_c(\tau))$ is given by $\lambda_b(\xi)=\lambda(\langle b^{-1}I_{rkc},1\rangle\langle y_b,1\rangle\xi)$. By
\eqref{eq:def of eta}, $\lambda(\langle b^{-1}I_{rkc},1\rangle\langle y_b,1\rangle\xi)=\eta_{\tau,c}(\langle b,1\rangle)^{-1}\lambda(\langle y_b,1\rangle\xi)$ (see \S~\ref{Definition and generalities}). Then
\begin{align}\label{dep on psi Z with M}
&Z(1-s,\omega,M(s,W_{\psi}(\rho_c(\tau)),w_P)t_b\cdot f)=
c_b|b|^{-d/2}\eta_{\tau,c}(\langle b,1\rangle)^{-1}\rho_c(\tau)(\langle b^{r}I_{rkc},1\rangle)^k\\\nonumber&\qquad \times|b|^{(rk-1)rkc(r^{-1}(s-1/2)+d/2)}\delta_{P}({}^{\delta_0}t_b)Z(1-s,\omega,M(s,W_{\psi_b}(\rho_c(\tau)),w_P)f).
\end{align}
Here we used the notation $M(s,W_{\psi}(\rho_c(\tau)),w_P)$ instead of $M(s,\rho_c(\tau),w_P)$ to note that the character of the $(rk,c)$ model of $\rho_c(\tau)$ changes; $d$ is an integer and $|b|^{-d/2}$ is a measure constant.

It remains to relate between $C(s,c,\tau,\psi_b)$ and $C(s,c,\tau,\psi)$. Let
\begin{align*}
h_b=\diag(b^{rk}I_{c/2},b^{rk-1}I_c\ldots,bI_c,I_c,b^{-1}I_c,\ldots,b^{-rk+1}I_c,b^{-rk}I_{c/2})\in T_H.
\end{align*}
Put $z_b={}^{\delta_0}h_b$, then
$\langle z_b,1\rangle={}^{\delta_0}\langle h_b,1\rangle$ by Proposition~\ref{proposition:action of W on torus is trivial on Sp}, and $\xi\mapsto\lambda(\langle z_b,1\rangle\xi)$ realizes $W_{\psi_b}(\rho_c(\tau))$.
Let $f$ be a section of $V(W_{\psi}(\rho_c(\tau)))$ as above. Using \eqref{eq:sigma conjugate v by h},
\begin{align*}
\lambda(s,c,\tau,\psi)h_b\cdot f&=\delta_{P}(h_b)\int_{U_P}f(\langle z_b,1\rangle\langle \delta_0u,1\rangle )\psi_b(u)\,du=
|b|^{-d/2}\delta_{P}(h_b)\lambda(s,c,\tau,\psi_b)f.
\end{align*}
Additionally ${}^{w_{P}^{-1}}z_b=(b^{rk}I_{rkc})z_b$ and
${}^{w_{P}^{-1}}\langle z_b,1\rangle=\langle b^{rk}I_{rkc},1\rangle\langle z_b,1\rangle$. Then
\begin{align*}
&\lambda(1-s,c,\tau^*,\psi)M(s,W_{\psi}(\rho_c(\tau)),w_P)h_b\cdot f\\&=
|b|^{-d}\rho_c(\tau)(\langle b^{r}I_{rkc},1\rangle)^k|b|^{(rk)^2c(r^{-1}(s-1/2)+d/2)}\delta_{P}({}^{w_P^{-1}}h_b)\delta_{P}(h_b)
\\&\quad\times\lambda(1-s,c,\tau^*,\psi_b)M(s,W_{\psi_b}(\rho_c(\tau)),w_P)f.
\end{align*}
Therefore by \eqref{eq:func equation for normalization},
\begin{align}\label{eq:C with twist}
C(s,c,\tau,\psi)=\rho_c(\tau)(\langle b^{r}I_{rkc},1\rangle)^{-k}|b|^{d/2-rk^2c(s-1/2)}C(s,c,\tau,\psi_b).
\end{align}
Altogether \eqref{dep on psi Z with M}, \eqref{eq:C with twist}, Lemma~\ref{lemma:eta tau c 1} and the definitions imply
\begin{align*}
\gamma(s,\pi\times\tau,\psi_b)=\eta_{\tau}(\langle b,1\rangle)^{c}|b|^{kc(s-1/2)}\gamma(s,\pi\times\tau,\psi)
\frac{\vartheta(s,c,\tau,\psi_b)}{\vartheta(s,c,\tau,\psi)}.
\end{align*}
Denote by $\eta_{\tau_b,c}$ the character defined by \eqref{eq:def of eta} with $\lambda_b(\xi)$, 
$\eta_{\tau_b,c}(\langle2,1\rangle)$ appears in $\vartheta(s,c,\tau,\psi_b)$ (by Lemma~\ref{lemma:eta tau c 1}). 
Since the commutator of $(b^{-1}I_{rkc})y_b$ and $xI_{rkc}$ in $\GL_{rkc}^{(m)}$ is $(b,x)_{m}^{(rk+1)rkc}=1$ for any $x\in F^*$ 
(use \eqref{eq:Nice GL $2$-cocycle on torus}), \eqref{eq:def of eta} implies $\eta_{\tau_b,c}=\eta_{\tau,c}$. 
This proves the result if $m$ is even (then $N=c$).
In the odd case we also use \eqref{eq:RS dependence on psi} which for the ``standard" factors (see \S~\ref{RS complete L factors}) reads $\gamma(s,\tau,\psi_b)=\tau(\langle b^r,1\rangle)\eta_{\varepsilon\otimes1}(\langle b,1\rangle)^k$.

For $G=\GL_c$, the element $t_b$ remains the same; ${}^{w_P^{-1}}{}^{\delta_0}t_b=\diag(b^{rk-1}I_{rkc},b^{1-rk}I_{rkc})y_b$
and by Proposition~\ref{proposition:action of W on torus is trivial on Sp} and \eqref{eq:Nice GL $2$-cocycle on torus},
\begin{align*}
{}^{w_P^{-1}}{}^{\delta_0}\langle t_b,1\rangle=\langle \diag(b^{rk}I_{rkc},b^{-rk}I_{rkc}),1\rangle
\langle \diag(b^{-1}I_{rkc},bI_{rkc}),1\rangle\langle y_b,1\rangle.
\end{align*}
Since $\eta_{\tau_0,c}=\eta_{\tau_0^*,c}^{-1}$ (\cite[Proposition~77]{me12}), $\lambda$ translates on the left under $\langle \diag(b^{-1}I_{rkc},bI_{rkc}),1\rangle$ by 
$\eta_{\tau_0,c}(\langle b,1\rangle^{-1})\eta_{\tau_0^*,c}(\langle b,1\rangle)=\eta_{\tau_0}(\langle b,1\rangle)^{-2c}$.
In addition
\begin{align*}
&h_b=\diag(b^{rk-1}I_c,\ldots,bI_c,I_c,b^{-1}I_c,\ldots,b^{-rk}I_c),\qquad z_b={}^{\delta_0}h_b,
\end{align*}
${}^{w_{P}^{-1}}\langle z_b,1\rangle=\langle \diag(b^{rk}I_{rkc},b^{-rk}I_{rkc})z_b,1\rangle$, and $\rho_c(\tau)=\rho_c(\tau_0)\otimes\rho_c(\tau_0^*)$. Altogether we obtain
\begin{align}\label{GL dependence on psi}
\gamma(s,\pi\times(\tau_0\otimes\tau_0^*),\psi_b)=
\eta_{\tau_0}(\langle b,1\rangle)^{2c}|b|^{2kc(s-1/2)}\gamma(s,\pi\times(\tau_0\otimes\tau_0^*),\psi).
\end{align}

\subsection{Multiplicativity II: \eqref{eq:multiplicativity II}}\label{Multiplicativity II}
Let $G=\Sp_{c}$. By \eqref{rep:rho c tau in general} and Conjecture~\ref{conjecture:rho c tau inductive for tempered tau}, $\rho_c(\tau)$ is a quotient of $\Ind_{\widetilde{P}_{\beta rc}}^{\GL_{rkc}^{(m)}}(\otimes_{i=1}^d\rho_c(\tau_i))$.
For simplicity we will only consider $d=2$. Let $H'$, $P'$, $U_0'$, $\delta'=\delta'_0\delta'_1$ be the groups and elements defined in  \S~\ref{The integrals} for the $G^{(m)}\times\GL_{\beta_2}^{(m)}$ integral for $\pi\times\tau_2$.
Let $L=M_L\ltimes U_L$ be the standard parabolic subgroup of $H$ with $M_{L}=\GL_{\beta_1rc}\times H'$.
Plugging \eqref{eq:(k,c)  functional using an integral} into \eqref{eq:local integral},
\begin{align}\label{eq:mult II identity 1}
Z(s,\omega,f)=\int\limits_G\omega(g)\int\limits_{U_0}\int\limits_{V_{\beta'rc}}f(s,
\langle w_{\beta rc}v,1\rangle \langle\delta u_0,1\rangle
{}^{\iota}\langle\mathfrak{e}_2(g),1\rangle)\psi^{-1}(v)\psi_U(u_0)\,dv\,du_0\,dg.
\end{align}
In fact there is an additional twist by a complex parameter, but this is similar to the proof in the linear case (\cite[\S~6.3]{CFK}) and omitted. Denote the r.h.s.~ of \eqref{eq:mult II identity 1} by $\mathcal{I}(f)$.

By \eqref{eq:sigma on h and v}, \eqref{eq:sigma conjugate v by h} and because $\sigma_{2rkc}$ is trivial on $\mathfrak{W}^+_{2rkc}$,
\begin{align*}
\langle w_{\beta rc}v,1\rangle \langle\delta u_0,1\rangle
=\langle w_{\beta rc},1\rangle
\langle v,1\rangle
\langle \delta,1\rangle
 \langle u_0,1\rangle
 =\langle w_{\beta rc}\delta_0,1\rangle
\langle \delta_1,1\rangle
\langle {}^{\delta^{-1}}v,1\rangle
 \langle u_0,1\rangle.
 \end{align*}
We have the following properties.
\begin{enumerate}[leftmargin=*]
\item\label{II item 1}$U_0=(U_0\cap U_L)\rtimes U_0'$, ${}^{\delta^{-1}}V_{\beta'rc}$ normalizes $U_0$ and $U_{L}={}^{\delta^{-1}}V_{\beta'rc}\ltimes(U_0\cap U_L)$. Then if $u_0=u_0''u_0'$ with $u_0''\in U_0\cap U_L$ and $u_0'\in U_0'$, \eqref{eq:sigma on h and v} implies
\begin{align*}
\langle {}^{\delta^{-1}}v,1\rangle\langle u_0,1\rangle
=\langle {}^{\delta^{-1}}v,1\rangle\langle u_0'',1\rangle\langle u_0',1\rangle
=\langle u,1\rangle\langle u_0',1\rangle,\qquad u=({}^{\delta^{-1}}v)u_0'' \in U_L.
 \end{align*}
\item\label{II item 2} $w_{\beta rc}\delta_0=\delta_0'w_{L}$, where ${}^{w_{L}}U_L=U_L^-$ ($w_L=\left(\begin{smallmatrix}&&I_{\beta_1rc}\\&I_{2\beta_2rc}\\ \epsilon_0I_{\beta_1rc}\end{smallmatrix}\right)$),
    then $\langle w_{\beta rc}\delta_0,1\rangle=\langle \delta_0',1\rangle\langle w_L,1\rangle$.
\item\label{II item 3} $\delta_1=\delta'_1$.
\item\label{II item 4} $w_{L}$ commutes with $\delta'_{1}$ and $U_0'$, this extends to $H^{(m)}$ by \eqref{eq:sigma conjugate v by h}.
\item\label{II item 5} ${}^{\iota}\langle\mathfrak{e}_2(g),1\rangle$ normalizes the image of $U_L$ in $H^{(m)}$ (by \eqref{eq:sigma conjugate v by h}).
\item\label{II item 6} ${}^{\iota}\langle\mathfrak{e}_2(g),1\rangle$ is the corresponding element
in the $G^{(m)}\times\GL_{\beta_2}^{(m)}$ integral, and
$w_L$ commutes with ${}^{\iota}\langle\mathfrak{e}_2(g),1\rangle$ in $H^{(m)}$ (equivalently,
$w_L$ and $\langle\mathfrak{e}_2(G),1\rangle$ commute in $H^{(m)}$).
The first statement is immediate from the definition, the second follows from
Lemma~\ref{lemma:conjugation commutes} because ${}^{w_L}\mathfrak{e}_2(g)=\mathfrak{e}_2(g)$.
\end{enumerate}
Using these properties,
\begin{align}\label{eq:mult II identity 2}
\mathcal{I}(f)
=\int\limits_{U_L}Z'(s,\omega,\langle w_Lu,1\rangle\cdot f)\psi^{-1}(u)\,du.
\end{align}
Here $Z'$ is the $G^{(m)}\times\GL_{\beta_2}^{(m)}$ integral for $\pi\times\tau_2$; $\psi$ is the character of $U_L$ extended trivially
from the character of ${}^{\delta^{-1}}V_{\beta'rc}$, and $\langle w_Lu,1\rangle\cdot f$ is regarded as a meromorphic section of $V(\rho_c(\tau_2))$. Thus by \eqref{eq:Gamma def}
(formally, for the justification see Corollary~\ref{corollary:mult II obtaining inner integral} below or \cite[Lemma~3.4]{Soudry2}),
\begin{align*}
&\gamma(s,\pi\times\tau_2,\psi)\mathcal{I}(f)
=\pi(\mathfrak{i}_G)^{r\beta_2}\vartheta(s,c,\tau_2,\psi)\int\limits_{U_L}{Z'}^*(s,\omega,(w_Lu)\cdot f)\psi^{-1}(u)\,du.
\end{align*}
Reversing steps \eqref{eq:mult II identity 1}--\eqref{eq:mult II identity 2},
\begin{align}\label{eq:mult II identity 3}
&\gamma(s,\pi\times\tau_2,\psi)\mathcal{I}(f)
=\pi(\mathfrak{i}_G)^{r\beta_2}\vartheta(s,c,\tau_2,\psi)
\mathcal{I}(M^*(s,c,\tau_2,\psi)f).
\end{align}
Note that the restriction of $M^*(s,c,\tau_2,\psi)f$ to $\widetilde{M}_P$ belongs to the space of
\begin{align*}
\Ind_{\widetilde{P}_{\beta rc}}^{\GL_{rkc}^{(m)}}(W_{\psi}(\rho_c(\tau_1))\otimes W_{\psi}(\rho_c(\tau_2^*))).
\end{align*}

Next, since the $dv$-integration of \eqref{eq:(k,c)  functional using an integral} comprises the l.h.s.~ of \eqref{eq:func equation for normalization},
\begin{align}\label{eq:mult II gl part}
\mathcal{I}(M^*(s,c,\tau_2,\psi)f)=\mathcal{I}(M^*(s,c,\tau_1\otimes\tau_2^*,\psi)M^*(s,c,\tau_2,\psi)f).
\end{align}
Here on the r.h.s.~ $\beta$ is replaced with $(\beta_2,\beta_1)$ in \eqref{eq:mult II identity 1} (e.g., $\beta'$ becomes $(\beta_1,\beta_2)$).

To proceed we utilize the multiplicativity of the intertwining operators, namely
\begin{align}\label{eq:mult II of ops}
M^*(s,c,\tau,\psi)=M^*(s,c,\tau_1,\psi)
M^*(s,c,\tau_1\otimes\tau_2^*,\psi)
M^*(s,c,\tau_2,\psi).
\end{align}
To see this, first apply \eqref{eq:lambda functional} to $f$ to obtain
\begin{align*}
\int\limits_{U_P}\int\limits_{V_{\beta'rc}}f(s,\langle w_{\beta rc}v,1\rangle \langle \mathrm{d}_{rk,c},1\rangle\langle\delta_0 u,1\rangle)\psi^{-1}(v)\psi^{-1}(u)\,dv\,du.
\end{align*}
(See \S~\ref{Sections and intertwining operators} for the notation.) Applying
\eqref{II item 1}, \eqref{II item 2} and \eqref{II item 4} to this integral and observing that
${}^{w_{\beta rc}}\langle\mathrm{d}_{rk,c},1\rangle=\langle\diag(d',\mathrm{d}_{r\beta_2,c}),1\rangle$ by Proposition~\ref{proposition:action of W on torus is trivial on Sp}, where $d'$ is a suitable diagonal matrix, we can use
\eqref{eq:func equation for normalization} for $H'$, thereby replacing $f$ by $M^*(s,c,\tau_2,\psi)f$, then apply
\eqref{eq:func equation for normalization} for $\GL_{rkc}^{(m)}$, then again
use \eqref{II    item 1}, \eqref{II item 2}, \eqref{II item 4} and \eqref{eq:func equation for normalization}. This proves \eqref{eq:mult II of ops}.

Applying \eqref{eq:mult II identity 1}--\eqref{eq:mult II identity 2} to the r.h.s.~ of
\eqref{eq:mult II gl part} and using \eqref{eq:mult II of ops} we deduce
\begin{align}\label{eq:mult II identity 3}
&\prod_{i=1}^2\gamma(s,\pi\times\tau_i,\psi)\mathcal{I}(f)
=\pi(\mathfrak{i}_G)^{r\beta_i}\vartheta(s,c,\tau_i,\psi)
\mathcal{I}(M^*(s,c,\tau,\psi)f).
\end{align}
Since $\eta_{\tau_1}\cdot\eta_{\tau_2}=\eta_{\tau}$ by a direct computation using \eqref{eq:(k,c)  functional using an integral}, and
using \eqref{eq:RS mult I} when $m$ is odd,
$\prod_{i=1}^2\vartheta(s,c,\tau_i,\psi)=\vartheta(s,c,\tau,\psi)$. We conclude \eqref{eq:multiplicativity II}.

For $G=\GL_c$, $\tau=\tau_0\otimes\tau_0^{*}$, $\tau_0$ is a quotient of $\Ind_{\widetilde{P}_{\beta}}^{\GL_k^{(m)}}(\varrho_1\otimes\varrho_2)$ and $\tau_i=\varrho_i\otimes\varrho_i^*$.
The formula \eqref{eq:multiplicativity II} for $\GL_c^{(m)}$ reads
$\gamma(s,\pi\times\tau,\psi)=\prod_{i=1}^2\gamma(s,\pi\times\tau_i,\psi)$.
The argument is similar to the above, note that Proposition~\ref{proposition:action of W on torus is trivial on Sp}
is still valid, and the intertwining operator applied in \eqref{eq:mult II gl part} is replaced by
$M^*(s,c,\varrho_1\otimes\varrho_2^*,\psi)M^*(s,c,\varrho_2\otimes\varrho_1^*,\psi)$.

The proof implies the following corollary. Assume
$\rho_c(\tau)$ is an $(rk,c)$ representation and a quotient of $\Ind_{\widetilde{P}_{\beta r c}}^{\GL_{rkc}^{(m)}}(\otimes_{i=1}^d\rho_c(\tau_i))$, with any $d\geq2$ and where $\rho_c(\tau_i)$ are $(r\beta_i,c)$ representations. Let
$V'(s,\rho_c(\tau_d))$ be the space corresponding to the
representation induced from $P'$ to $H'$, where $H'$ and $P'$ are defined as above, for the $G^{(m)}\times\GL_{\beta_d}^{(m)}$ integral involving $\pi\times\tau_d$ (the corollary applies to $G=\GL_c$ as well).
\begin{corollary}\label{corollary:mult II obtaining inner integral}
Let $\omega$ be a matrix coefficient of $\pi^{\vee}$.
For every entire section $f'\in V'(\rho_c(\tau_d))$
there is an entire section $f\in V(\rho_c(\tau))$ such that
$Z(s,\omega,f)=Z(s,\omega,f')$.
\end{corollary}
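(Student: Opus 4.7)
The plan is to invert the unfolding identity \eqref{eq:mult II identity 2} established in the proof of multiplicativity II, namely
\begin{align*}
Z(s,\omega,f)=\int\limits_{U_L}Z'(s,\omega,\langle w_L u,1\rangle\cdot f)\,\psi^{-1}(u)\,du,
\end{align*}
by constructing $f$ whose support localizes the inner integrand. Given $f'$, I would fix a Schwartz function $\phi\in C_c^\infty(U_L)$ normalized so that $\int_{U_L}\phi(u)\psi^{-1}(u)\,du=1$.

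By transitivity of induction combined with the Jacquet-integral realization \eqref{eq:(k,c)  functional using an integral} of the $(rk,c)$-model, the space $V(s,\rho_c(\tau))$ sits inside the space induced in stages from the standard parabolic $L=M_L\ltimes U_L$, whose $H'$-factor of inducing data is $V'(s,\rho_c(\tau_d))$ and whose $\GL$-factors carry $\rho_c(\tau_1),\ldots,\rho_c(\tau_{d-1})$ (suitably twisted by modular characters). Using this realization, I would define $f$ to be supported on the Bruhat-type cell $\widetilde{P}\cdot\langle w_L^{-1},1\rangle\cdot U_L\cdot N_0$, where $N_0$ is a small enough open compact subgroup of $H'$ under which $f'$ is right-invariant, with slice value
\begin{align*}
f(s,\langle w_L^{-1}u,1\rangle\langle n,1\rangle)=\phi(u)\,f'(s,\langle n,1\rangle),\qquad u\in U_L,\ n\in N_0,
\end{align*}
extended $\widetilde{P}$-equivariantly. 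Shrinking $N_0$ keeps the cell open and the prescription consistent, and entireness in $s$ follows from $\phi$ being fixed and $f'$ being entire; the covering cocycles on the cell are tracked through \eqref{eq:sigma on h and v}--\eqref{eq:sigma conjugate v- to v by h} and \eqref{eq:block compatibility on Levi of P}. Then $Z'(s,\omega,\langle w_L u,1\rangle\cdot f)=\phi(u)\,Z'(s,\omega,f')$ by linearity of $Z'$ in its section argument, so the unfolding identity produces
\begin{align*}
Z(s,\omega,f)=Z'(s,\omega,f')\cdot\int_{U_L}\phi(u)\psi^{-1}(u)\,du=Z'(s,\omega,f'),
\end{align*}
which is $Z(s,\omega,f')$ in the notation of the corollary.

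The main obstacle will be the cocycle and equivariance bookkeeping: verifying that the slice prescription for $f$ is genuinely $\widetilde{P}$-equivariant with the correct modulus $\delta_P^{1/2}|\det|^{r^{-1}(s-1/2)}$, and that applying the $(rk,c)$-functional on $\rho_c(\tau)$ at the slice value exactly recovers the $V'$-data corresponding to $f'$, without extra scalars coming from $\varsigma_{\triangle,c}$ or the twist by $\gamma_{\psi'}$ that governs the passage between $\GL_k^{(m)}$- and $\GL_{rkc}^{(m)}$-normalizations in \S~\ref{Definition and generalities}. For $F=\C$ the compactly supported $\phi$ must be replaced by an approximate identity on $U_L$ in the sense of \cite[\S~6.13]{CFK}, and the equality of integrals becomes a limit of entire sections; this is the standard archimedean adaptation of the Schwartz-support technique used in \S~\ref{RS The functional equation}.
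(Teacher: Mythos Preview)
Your approach is essentially the same as the paper's: both invert identity~\eqref{eq:mult II identity 2} by constructing $f$ with localized support so that the $U_L$-integral collapses. The paper phrases this as choosing $f$ so that $\langle w_L,1\rangle\cdot f$ is supported in the preimage of $L\mathcal{N}$ and right-invariant under a small neighborhood $\mathcal{N}$ of the identity in $H$; since ${}^{w_L}U_L=U_L^-$, the integrand then vanishes for $u$ outside a small set and is constant on it. Your version with a general $\phi\in C_c^\infty(U_L)$ is a mild variant of the same idea (the paper's $\mathcal{N}$-invariance corresponds to taking $\phi$ a normalized characteristic function).

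A few points in your write-up need tidying. The identity involves right-translation by $w_L u$, not $w_L^{-1}u$. More importantly, your slice prescription at $\langle w_L^{-1}u,1\rangle\langle n,1\rangle$ with $n\in N_0\subset H'$ does not, when extended $\widetilde{P}$-equivariantly as you say, determine $f(h'w_Lu)$ for general $h'\in H'$ (which is what $Z'$ requires). What makes the construction go through is $\widetilde{L}$-equivariance, coming from the induction-in-stages realization you invoke in the preceding sentence; that is also why the paper takes $\mathcal{N}\subset H$ rather than $N_0\subset H'$, so that the support condition on $L\backslash H$ interacts directly with $U_L^-$. The one covering-specific point the paper makes explicit is that such an $f$ exists because $H^{(m)}$ is split over a sufficiently small $\mathcal{N}$.
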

\begin{proof}
As in \cite[Corollary~33]{CFK} this follows from \eqref{eq:mult II identity 2} using the fact that ${}^{w_L}U_L=U_L^-$ to control the 
unipotent integral, by choosing $f$ such that $\langle w_L,1\rangle\cdot f$ is supported in the preimage of 
$L\mathcal{N}$ in $H^{(m)}$ and right-invariant under $\mathcal{N}$, 
where $\mathcal{N}$ is a small neighborhood of the identity in $H$. Note that such $f$ exists, because for a sufficiently small $\mathcal{N}$, $H^{(m)}$ is split over $\mathcal{N}$.
\end{proof}

\subsection{Multiplicativity I: Identity~\eqref{eq:multiplicativity I}}\label{subsection:Multiplicativity I}

First assume $G=\Sp_{c}$.
The case $l=c$ was proved in \cite[Lemma~79]{me12} for unramified data, although most of the proof is independent of this fact.
However, the general case ($l\leq c$) is more complicated. The linear version of the proof
is found in \cite[\S~6.4.1]{CFK} and we focus on the differences relevant to the covering.

It is enough to assume $R$ is maximal, $\sigma$ (resp., $\pi'$) is a genuine irreducible admissible representation of $\GL_l^{(m)}$ (resp., ${G'}^{(m)}$) and $l\leq n$. Set $\upsilon=\sigma\otimes\pi'$. We may take
$\pi=\Ind_{\widetilde{R}}^{G^{(m)}}(\upsilon)$, then $\pi^{\vee}=\Ind_{\widetilde{R}}^{G^{(m)}}(\upsilon^{\vee})$.
If $\varphi$ (resp., $\varphi^{\vee}$) belongs to the space of $\pi$ (resp., $\pi^{\vee}$), for $g_0,g\in G$,
\begin{align*}
\varphi(\langle g_0,1\rangle\langle g,1\rangle)=\sigma_c(g_0,g)\varphi(\langle g_0g,1\rangle),\qquad
\varphi^{\vee}(\langle g_0,1\rangle\langle g,1\rangle)=\sigma_c^{-1}(g_0,g)\varphi^{\vee}(\langle g_0g,1\rangle).
\end{align*}
($\pi^{\vee}$ is anti-genuine.) Let $\{,\}$ be the canonical $\widetilde{M}_R$-invariant pairing on $\upsilon\otimes\upsilon^{\vee}$. We realize the $G^{(m)}$-invariant pairing on $\pi\times\pi^{\vee}$ using a semi-invariant measure $dg_0$ on $R\backslash G$ (see \cite[1.21]{BZ1}):
\begin{align*}
\{\varphi,\varphi^{\vee}\}=\int\limits_{R\backslash G}\{\varphi(\langle g_0,1\rangle),\varphi^{\vee}(\langle g_0,1\rangle)\}\,dg_0.
\end{align*}
We can assume
\begin{align*}
\omega(\langle g,1\rangle)=\int\limits_{R\backslash G}\{\varphi(\langle g_0,1\rangle),\varphi^{\vee}(\langle g_0,1\rangle \langle g,1\rangle)\}\,dg_0.
\end{align*}

First we replace the $dg$-integral of $Z(s,\omega,f)$ with an iterated integral over $(G^{\triangle}\backslash G\times G)\times (R\backslash G)$, where $G^{\triangle}$ is the diagonal embedding of $G$ in $G\times G$. For $g\in G$,
\begin{align}\label{eq:cocycles for varphi varphi vee}
&\varphi(\langle g_0,1\rangle\langle gg_1,1\rangle)\varphi^{\vee}(\langle g_0,1\rangle\langle gg_2,1\rangle)
=\sigma_c^{-1}(g,g_1)\sigma_c(g,g_2)\varphi(\langle g_0g,1\rangle\langle g_1,1\rangle)\varphi^{\vee}(\langle g_0g,1\rangle\langle g_2,1\rangle).
\end{align}
Also by \eqref{eq:the $2$-cocycle on G times G formula},
\begin{align*}
&f(s,\langle\delta u_0,1\rangle{}^{\iota}\langle(gg_1,gg_2),1\rangle)
=\sigma^{*,rk}_{c}(g,g_1)\sigma_c^{-1}(g,g_2)f(s,\langle\delta u_0,1\rangle{}^{\iota}\langle(g,g),1\rangle)
{}^{\iota}\langle(g_1,g_2),1\rangle).
\end{align*}
Recall $\sigma^{*,rk}_{c}$ and $\sigma_c$ are cohomologous and by \eqref{eq:sigma l * rk}, for $g,g_1\in G$,
\begin{align}\label{eq:sigma * rk c}
\sigma^{*,rk}_c(g,g_1)=\left(\frac{\varsigma_{*,c}(g)\varsigma_{*,c}(g_1)}{\varsigma_{*,c}(gg_1)}\right)^{rk+1}\sigma_c(g,g_1).
\end{align}
Using \eqref{eq:sigma * rk c} to express $\sigma^{*,rk}_c(g,g_1)\sigma_c^{-1}(g,g_1)$ we obtain
\begin{align*}
&\{\varphi(\langle g_0,1\rangle\langle gg_1,1\rangle ),\varphi^{\vee}(\langle g_0,1\rangle\langle gg_2,1\rangle )\}
f(s,\langle\delta u_0,1\rangle{}^{\iota}\langle(gg_1,gg_2),\varsigma_{*,c}^{rk+1}(gg_1)\rangle)
\\&=
\{\varphi(\langle g_0g,1\rangle\langle g_1,1\rangle ),\varphi^{\vee}(\langle g_0g,1\rangle\langle g_2,1\rangle )\}
f(s,\langle\delta u_0,1\rangle{}^{\iota}\langle(g,g),\varsigma_{*,c}^{rk+1}(g)\rangle
{}^{\iota}\langle(g_1,g_2),\varsigma_{*,c}^{rk+1}(g_1)\rangle).
\end{align*}
According to \cite[Corollary~69]{me12}, for any section $f$,
\begin{align*}
&\int\limits_{U_0}
f(s,\langle\delta u_0 ,1\rangle\,
{}^{\iota}\langle(g,g),\varsigma_{*,c}^{rk+1}(g)\rangle)\,\psi_{U}(u_0)\,du_0
=\int\limits_{U_0}
f(s,\langle\delta u_0 ,1\rangle)\,\psi_{U}(u_0)\,du_0.
\end{align*}
Therefore
\begin{align}\label{eq:factoring}
&\int\limits_{U_0}\{\varphi(\langle g_0,1\rangle\langle gg_1,1\rangle ),\varphi^{\vee}(\langle g_0,1\rangle\langle gg_2,1\rangle )\}
f(s,\langle\delta u_0,1\rangle{}^{\iota}\langle(gg_1,gg_2),\varsigma_{*,c}^{rk+1}(gg_1)\rangle)\,\psi_U(u_0)\,du_0
\\&=\nonumber
\int\limits_{U_0}\{\varphi(\langle g_0g,1\rangle\langle g_1,1\rangle ),\varphi^{\vee}(\langle g_0g,1\rangle\langle g_2,1\rangle )\}
f(s,\langle\delta u_0,1\rangle
{}^{\iota}\langle(g_1,g_2),\varsigma_{*,c}^{rk+1}(g_1)\rangle)\,\psi_U(u_0)\,du_0.
\end{align}
Thus we can write $Z(s,\omega,f)$ in the form
\begin{align}\label{eq:mult I start 0}
&\int\limits_{G^{\triangle}\backslash G\times G}
\int\limits_{R\backslash G}\int\limits_{U_0}\{\varphi(\langle g_0,1\rangle\langle g_1,1\rangle),\varphi^{\vee}(\langle g_0,1\rangle\langle g_2,1\rangle)\} \\&f(s,
\langle \delta u_0,1\rangle{}^{\iota}\langle (g_1,g_2),\varsigma_{*,c}^{rk+1}(g_1)\rangle)\,\psi_U(u_0)\,du_0\,dg_0\,d(g_1,g_2).\nonumber
\end{align}
Applying \eqref{eq:factoring} again in the opposite direction with $g_0=I_c$ and $g=g_0$, the integral becomes
\begin{align*}
&\int\limits_{G^{\triangle}\backslash G\times G}
\int\limits_{R\backslash G}\int\limits_{U_0}\{\varphi(\langle g_0g_1,1\rangle),\varphi^{\vee}(\langle g_0g_2,1\rangle)\} \\&f(s,
\langle \delta u_0,1\rangle{}^{\iota}\langle (g_0g_1,g_0g_2),\varsigma_{*,c}^{rk+1}(g_0g_1)\rangle)\,\psi_U(u_0)\,du_0\,dg_0\,d(g_1,g_2).
\end{align*}
The $dg_0$-integral collapses into the $d(g_1,g_2)$-integral, and further factoring through $R$ we have
\begin{align}\label{eq:mult I start}
&\int\limits_{R\times R\backslash G\times G}\,
\int\limits_{M_R}\int\limits_{U_R}\int\limits_{U_0}
\delta_R^{-1/2}(m)
\{\varphi(\langle g_1,1\rangle),\upsilon^{\vee}(m)\varphi^{\vee}(\langle g_2,1\rangle)\}\\&
 \quad f(s,\langle \delta u_0,1\rangle{}^{\iota}\langle\mathfrak{e}_2(zm),1\rangle{}^{\iota}\langle (g_1,g_2),\varsigma_{*,c}^{rk+1}(g_1)\rangle)\,\psi_U(u_0)\,du_0\,dz\,dm\,d(g_1,g_2).\notag
\end{align}
(Cf. \cite[(6.12)]{CFK}.) Here we also used \eqref{eq:sigma on h and v} to separate between $z$ and $mg_2$, and
\eqref{eq:cocycles for varphi varphi vee} and \eqref{eq:the $2$-cocycle on G times G formula} to separate between $m$ and $g_2$ in $f$ and $\varphi^{\vee}$.

By \eqref{eq:multiplicativity II} (or trivially for $k=1$) we may assume $\tau$ is essentially tempered.
Applying Corollary~\ref{corollary:tempered} twice, first to $\rho_c(\tau)$ with $l$, and second to $\rho_{c-l}(\tau)$ with
$c'=c-2l$, and using transitivity of induction, we can assume
$f$ is a section of the space of
\begin{align}\label{rep:triple induced multiplicativity I}
\Ind_{\widetilde{P}}^{H^{(m)}}(|\det|^{r^{-1}(s-1/2)}\Ind_{\widetilde{P}_{(rkl,rkc',rkl)}}^{\GL_{rkc}^{(m)}}((W_{\psi}(\rho_l(\tau))\otimes W_{\psi}(\rho_{c'}(\tau))
\otimes W_{\psi}(\rho_{l}(\tau)))\delta_{P_{(rkl,rkc',rkl)}}^{-1/(2rk)}))
\end{align}
(Cf. \cite[(6.13)]{CFK}). Now for any $h\in H^{(m)}$, the $du_0$-integration of \eqref{eq:mult I start} takes the form
\begin{align}\label{eq:mult 1 identity for normalization 1}
\int\limits_{U_0}\int\limits_{V_1}\int\limits_{V_2}f(s,\langle \diag(I_{rkl},\kappa_{c',l})v_2,1\rangle
\langle\kappa_{l,c-l}v_1,1\rangle \langle\delta u_0,1\rangle h)\,\psi_U(u_0)\,dv_2\,dv_1\,du_0.
\end{align}
(Cf. \cite[(6.14)]{CFK}.)
Here $\kappa_{l,c-l}$ and $V_1$ (resp., $\kappa_{c',l}$ and $V_2$) correspond to the application of \eqref{eq:mnk functional using w_{n,m,k}} to $W_{\psi}(\rho_c(\tau))$ (resp., $W_{\psi}(\rho_{c-l}(\tau))$). Note that we can use \eqref{eq:sigma on h and v} and the fact that $\sigma_{2rkc}$ is trivial on $\mathfrak{W}^+_{2rkc}$ for manipulations involving $\kappa_{l,c-l},\kappa_{c',l},v_1$ and $v_2$.
As in the linear case we can shift $v_1$ and $v_2$ to the right of $u_0$ in \eqref{eq:mult I start}, using
\eqref{eq:sigma on h and v}--\eqref{eq:sigma conjugate v by h}. Set
$V={}^{(\kappa_{l,c-l}\delta_0)^{-1}}V_2\ltimes {}^{\delta_0^{-1}}V_1<V_{(c^{rk})}$ and $\kappa=\diag(I_{rkl},\kappa_{c',l})\kappa_{l,c-l}$.
Then \eqref{eq:mult I start} equals
\begin{align}\label{eq:mult I start factoring through R}
&\int\limits_{R\times R\backslash G\times G}\,
\int\limits_{M_R}\int\limits_{U_R}\int\limits_{V}\int\limits_{U_0}
\delta_R^{-1/2}(m)
\{\varphi(\langle g_1,1\rangle),\upsilon^{\vee}(m)\varphi^{\vee}(\langle g_2,1\rangle)\}\\&
 \quad f(s,\langle\kappa\delta u_0v,1\rangle{}^{\iota}\langle\mathfrak{e}_2(zm),1\rangle{}^{\iota}\langle(g_1,g_2),\varsigma_{*,c}^{rk+1}(g_1)\rangle)\,\psi_U(u_0)\,du_0\,dv\,dz\,dm\,d(g_1,g_2).\notag
\end{align}
(Cf. \cite[(6.16)]{CFK}.) Here all conjugations involved take elements of $N_H$ into elements of $N_H$.

Next we shift $\langle\mathfrak{e}_2(z),1\rangle$ to the left, and use the coordinates of $z$ and $u_0$ to form a unipotent subgroup $U^{\circ}<U_P$. Part of the coordinates of $U^{\circ}$, including those from $z$, will be used for the application of an intertwining operator
$m(s,\tau,w)$ below. Conjugating $\delta_1u_0v$ by ${}^{\iota}\mathfrak{e}_2(z)$ we write
\begin{align*}
\kappa\delta u_0v(1,{}^{\iota}z)={}^{\kappa\delta_0}(1,{}^{\iota}z)\,\kappa\,\delta_0\, x_z\,\delta_1\,u_z\,r_z\,a_{u_0,z}\,b_z\,v,
\end{align*}
where $x_z,u_z,r_z,a_{u_0,z},b_z\in N_H$ (the precise description of these elements was given in \cite{CFK}).
This identity holds in $H^{(m)}$ as well, by \eqref{eq:sigma conjugate v by h} (note that $\delta_1,u_0,v\in N_H$) and \eqref{eq:sigma on h and v}. Also while ${}^{\iota}\mathfrak{e}_2(z)\in N_H^-$, ${}^{\kappa\delta_0}({}^{\iota}\mathfrak{e}_2(z))\in N_H$, and moreover ${}^{\kappa\delta_0}(1,{}^{\iota}z)\in V_{(rkl,rk(c-l))}\ltimes U_P$ so that $h\mapsto f(s,h)$
is left-invariant under ${}^{\kappa\delta_0}({}^{\iota}\langle \mathfrak{e}_2(z),1\rangle)$. Now we can apply the steps
\cite[p.~48, (2)--(4)]{CFK} which include a change of variables and using the equivariance properties of $f$ under $V_{(c^{rk})}\ltimes U_P$,
and arrive at the analog of \cite[(6.19)]{CFK}:
\begin{align}\label{int:before fixing g_1 and g_2}
&\int\limits_{R\times R\backslash G\times G}\,
\int\limits_{M_R}\int\limits_{V}\int\limits_{U^{\circ}}
\delta_R^{-1/2}(m)
\{\varphi(\langle g_1,1\rangle),\upsilon^{\vee}(m)\varphi^{\vee}(\langle g_2,1\rangle)\}\\&
 \quad f(s,\langle \kappa\delta uv,1\rangle{}^{\iota}\langle\mathfrak{e}_2(m),1\rangle{}^{\iota}\langle(g_1,g_2),\varsigma_{*,c}^{rk+1}(g_1)\rangle)\,\psi_U(u)\,du\,dv\,dm\,d(g_1,g_2).\notag
\end{align}
Here $U^{\circ}>U_0$ is the subgroup of elements $u_zr_z$ (see $U^{\bullet}$ below); $\psi_U$ is extended trivially to $U^{\circ}$.

Let $H^{\sigma}=\GL_{2rkl}$, $U_0^{\sigma}$, $P^{\sigma}$ and $\delta^{\sigma}=\delta_0^{\sigma}\delta_1^{\sigma}$ be the groups and elements defined in \S~\ref{The integrals} for the $\GL_l^{(m)}\times\GL_k^{(m)}$ integral, and
$H',P',U_0'$ and $\delta'=\delta_0'\delta_1'$ be the notation for the ${G'}^{(m)}\times\GL_k^{(m)}$ integral.
Fix the standard parabolic subgroup $L<H$ with $M_L=H^{\sigma}\times H'$, and identify $H^{\sigma}$ and $H'$ with the factors of $M_L$.

Denote $\kappa^{\bullet}={}^{\delta_0^{-1}}\kappa=\diag(\kappa_{l,c'},I_{rkl})\kappa_{c-l,l}$ and
$U^{\bullet}={}^{\kappa^{\bullet}}U^{\circ}<U_P$. Denote the top right $rkc\times rkc$ block of elements of $U^{\bullet}$ by $(u^{i,j})_{1\leq i,j\leq3}$. Then
$\left(\begin{smallmatrix}I_{rkl}&u^{1,1}\\&I_{rkl}\end{smallmatrix}\right)$ (resp., $\left(\begin{smallmatrix}I_{rkc'}&u^{2,2}\\&I_{rkc'}\end{smallmatrix}\right)$) is a general element of $U_0^{\sigma}$ (resp., $U_0'$), $u^{2,1}\in\Mat_{rkc'\times rkl}$ (resp., $u^{3,1}\in\Mat_{rkl}$) and its bottom left $c'\times l$ (resp., $l\times l$) block is $0$. The remaining blocks take arbitrary coordinates but such that $U^{\bullet}<H$.
In addition $\psi_U$ restricts to $\psi_{U_0^{\sigma}}$ (resp., $\psi_{U_0'}$) on $u^{1,1}$ (resp., $u^{2,2}$) and is trivial on the other coordinates.

Write $\delta_0=w^{-1}\delta_0'\delta_0^{\sigma}w$ ($\delta_0'\in H'<M_L$), where
\begin{align}\label{eq:mult 1 w^-1}
w^{-1}=\diag(I_{rkl},\left(\begin{smallmatrix}&I_{rkc'}\\&&&I_{rkl}\\ \epsilon_0I_{rkl}\\&&I_{rkc'}\end{smallmatrix}\right),I_{rkl}).
\end{align}
Then ${}^{w}({}^{(\delta_0^{-1}\kappa\delta_0)}\delta_1)=\delta_1^{\sigma}\delta_1'$. Let $[u^{i,j}]$ be the subgroup of $U^{\bullet}$ generated by elements whose coordinates $u^{t,t'}$ are zeroed out for $(t,t')\ne(i,j)$,
\begin{align*}
U_0^{\sigma}={}^{w}[u^{1,1},u^{3,3}],\quad U_0'={}^{\delta^{\sigma}w}[u^{2,2}], \quad
Z={}^{\delta'\delta^{\sigma}w}[u^{1,2},u^{1,3},u^{2,3}],\quad
O=[u^{2,1},u^{3,1},u^{3,2}].
\end{align*}
We will write the integration $du$ as an iterated integral according to these subgroups.

Let $L_0$ be the standard parabolic subgroup of $H$ whose Levi part is $M_{(rkl,rkc',rkl)}$.
Denote
\begin{align*}
m(s,\tau,w)f(s,h)=\int\limits_{Z}f(s,\langle w^{-1}z,1\rangle h)dz\qquad
(Z=\left\{\diag(I_{rkl},\left(\begin{smallmatrix}I_{rkl}&z_1&&z_2\\&I_{rkc'}\\&&I_{rkc'}&z_1^*\\&&&I_{rkl}\end{smallmatrix}\right),I_{rkl})\in H\right\}).
\end{align*}
This is a standard intertwining operator from the space of the representation of $H^{(m)}$ induced from
$\widetilde{L}_0$ and the representation of $\widetilde{P}_{(rkl,rkc',rkl)}$ appearing in \eqref{rep:triple induced multiplicativity I}, to
\begin{align}\label{space:rep induced mult 0}
\Ind_{\widetilde{L}}^{H^{(m)}}\left(
\delta_{L}^{-1/2}
\left(|\det|^dV(s,W_{\psi}(\rho_l(\tau))\otimes W_{\psi}(\rho_l(\tau^*)))\,\otimes\,V(s,W_{\psi}(\rho_{c'}(\tau)))\right)\right).
\end{align}
Here $d=(rk-1/2)(c-l)+1/2$. This follows from the decomposition
\begin{align*}
w^{-1}z=
\left(\begin{smallmatrix}I_{rkc'}\\&&I_{rkl}\\&\epsilon_0I_{rkl}\\&&&I_{rkc'}\end{smallmatrix}\right)
\left(\begin{smallmatrix}I_{rkc'}\\&I_{rkl}&z_2\\&&I_{rkl}\\&&&I_{rkc'}\end{smallmatrix}\right)
\left(\begin{smallmatrix}&I_{rkc'}\\I_{rkl}\\&&&I_{rkl}\\&&I_{rkc'}\end{smallmatrix}\right)
\left(\begin{smallmatrix}I_{rkl}&z_1\\&I_{rkc'}\\&&I_{rkc'}&z_1^*\\&&&I_{rkl}\end{smallmatrix}\right).
\end{align*}

Returning to \eqref{int:before fixing g_1 and g_2}, we obtain
\begin{align}\label{int:after fixing g_1 and g_2}
&\int\limits_{R\times R\backslash G\times G}\,
\int\limits_{M_R}
\int\limits_{V}\int\limits_{O}
\int\limits_{U_0^{\sigma}}\int\limits_{U_0'}
\delta_R^{-1/2}(m)
\{\varphi(\langle g_1,1\rangle),\upsilon^{\vee}(m)\varphi^{\vee}(\langle g_2,1\rangle)\}\\&\nonumber
m(s,\tau,w)f(s,\langle \delta'u',1\rangle\langle\delta^{\sigma}u^{\sigma},1\rangle\langle  w_1o,1\rangle
\,\langle\kappa^{\bullet} v ,1\rangle{}^{\iota}\langle\mathfrak{e}_2(m),1\rangle{}^{\iota}\langle(g_1,g_2),\varsigma_{*,c}^{rk+1}(g_1)\rangle)
\,\psi_{U'}(u')\psi_{U^{\sigma}}(u^{\sigma})
\\&du'\,du^{\sigma}\,do\,dv\,dm\,d(g_1,g_2).\notag
\end{align}
(Cf. \cite[(6.22)]{CFK}.)
Let $m=\diag(a,g',a^*)\in M_R$ ($a\in\GL_l$, $g'\in G'$). Then by \eqref{eq:BLS block compatible} in $G^{(m)}$,
\begin{align*}
\langle m,1\rangle=\langle \diag(a,I_{c'},a^*),1\rangle\langle \diag(I_l,g',I_{l}),1\rangle
\end{align*}
and a similar identity holds in $H^{(m)}$ for the image of $M_R$ under $\mathfrak{e}_2$. Hence we can consider $a$ and $g'$ separately.

Observe that $\kappa^{\bullet}\in\{\diag(x,I_{2l},x^*):x\in \GL_{rkc-l}\}<H$.
Hence ${}^{\iota}\mathfrak{e}_2(\diag(a,I_{c'},a^*))$ commutes with $\kappa^{\bullet}$ in $H$, and also in
$H^{(m)}$ by \eqref{eq:BLS block compatible}, noting that as an element of $\GL_{rkc-l}$, $\det\kappa^{\bullet}=\mp1$ so that
$(\det a,\det \kappa^{\bullet})_m=1$. Additionally ${}^{\iota}\mathfrak{e}_2(\diag(a,I_{c'},a^*))$ commutes with $v$, normalizes $O$ (with a change of measure $|\det{a}|^{(1-rk)(c-l)}$) and
\begin{align*}
{}^{w}({}^{\iota}\mathfrak{e}_2(\diag(a,I_{c'},a^*)))=\diag(I_{rkl},a,I_{2(rkc-rkl-l)},a^*,I_{rkl})=\mathfrak{e}_2^{\sigma}(a),
\end{align*}
where $\mathfrak{e}_2^{\sigma}$ is the embedding of the right copy of $\GL_l$ in $H^{\sigma}$ for the
$\GL_l^{(m)}\times\GL_k^{(m)}$ integral. Then by Lemma~\ref{lemma:conjugation commutes},
\begin{align}\label{eq:a becomes a sigma}
{}^{w}({}^{\iota}\langle\mathfrak{e}_2(\diag(a,I_{c'},a^*)),1\rangle)=\langle\mathfrak{e}_2^{\sigma}(a),1\rangle.
\end{align}
(See also \cite[(4.20)]{me12}, proved for $l=n$ but the argument applies to $l\leq n$.)

Regarding $\mathfrak{e}_2(\diag(I_l,g',I_{l}))$, as observed in \cite{CFK} it does not normalize
$V$ nor $O$, but this was handled by combining a subgroup of $O$ with a subgroup of
$V$. The manipulations from \textit{loc. cit.} extend to $H^{(m)}$ using \eqref{eq:sigma on h and v} and
\eqref{eq:sigma conjugate v by h}. Since ${}^{w\kappa^{\bullet}}({}^{\iota}\mathfrak{e}_2(\diag(I_l,g',I_{l})))={}^{\iota'}\mathfrak{e}_2'(g')$,
${}^{\iota'}({}^{w\kappa^{\bullet}}({}^{\iota}\mathfrak{e}_2(\diag(I_l,g',I_{l}))))=\mathfrak{e}_2'(g')$ and
Lemma~\ref{lemma:conjugation commutes} implies
\begin{align*}
{}^{\iota'}({}^{w\kappa^{\bullet}}({}^{\iota}\langle\mathfrak{e}_2(\diag(I_l,g',I_{l})),1\rangle))=\langle\mathfrak{e}_2'(g'),1\rangle,
\end{align*}
whence the analog of \eqref{eq:a becomes a sigma} also holds:
\begin{align*}
{}^{w\kappa^{\bullet}}({}^{\iota}\langle\mathfrak{e}_2(\diag(I_l,g',I_{l})),1\rangle)={}^{\iota'}\langle\mathfrak{e}_2'(g'),1\rangle.
\end{align*}

Thus \eqref{int:after fixing g_1 and g_2} equals
\begin{align}\label{int:before functional eq mult I}
&\mathcal{I}(m(s,\tau,w)f)\\\nonumber&=\int\limits_{R\times R\backslash G\times G}\,
\int\limits_{V}\int\limits_{O}
\int\limits_{\GL_l}
\int\limits_{U_0^{\sigma}}
\int\limits_{G'}
\int\limits_{U_0'}
\delta_R^{-1/2}(a)|\det{a}|^{(1-rk)(c-l)}\\\nonumber
&\{\varphi(\langle g_1,1\rangle),\sigma^{\vee}(\langle a,1\rangle)\otimes{\pi'}^{\vee}(\langle g',1\rangle)\varphi^{\vee}(\langle g_2,1\rangle)\}\\&\nonumber
m(s,\tau,w)f(s,(\langle\delta'u',1\rangle{}^{\iota'}\langle\mathfrak{e}_2'(g'),1\rangle)\,(\langle\delta^{\sigma}u^{\sigma},1\rangle\langle
\mathfrak{e}_2^{\sigma}(a),1\rangle)\,\langle w_1o,1\rangle\langle\kappa^{\bullet} v,1\rangle{}^{\iota}\langle(g_1,g_2),\varsigma_{*,c}^{rk+1}(g_1)\rangle)
\\&\psi_{U'}(u')\psi_{U^{\sigma}}(u^{\sigma})\,du'\,dg'\,du^{\sigma}\,da\,do\,dv\,d(g_1,g_2)\notag\nonumber.
\end{align}
(Cf. \cite[(6.24)]{CFK}.)
Note that $\delta_R^{-1/2}(a)|\det{a}|^{(1-rk)(c-l)}=|\det{a}|^{-d}$.
The $du^{\sigma}da$-integral is the $\GL_l^{(m)}\times\GL_k^{(m)}$ integral for
$\sigma\times(\tau\otimes\tau^*)$ and the $du'dg'$-integral is the ${G'}^{(m)}\times\GL_k^{(m)}$ integral for $\pi'\times\tau$.
Therefore when we multiply $Z(s,\omega,f)$ by the appropriate $\gamma$-factors we deduce
\begin{align}\label{intert 1}
&\gamma(s,\sigma\times(\tau\otimes\tau^*),\psi)\gamma(s,\pi'\times\tau,\psi)Z(s,\omega,f)\\&=\nonumber
\sigma(\mathfrak{i}_{\GL_l})^{rk}
\tau(\langle r^rI_k,1\rangle)^{-2l}|r|^{-2kl(s-1/2)}
\tau(\mathfrak{i}_{\GL_k})^{rl}\pi'(\mathfrak{i}_{G'})^{rk}\vartheta(s,c',\tau,\psi)\\&\quad\times\mathcal{I}(M^*(s,l,\tau\otimes\tau^*,\psi)M^*(s,c',\tau,\psi)m(s,\tau,w)f).\nonumber
\end{align}
Here if $l=n$ and $m$ is even, we set $\vartheta(s,c',\tau,\psi)=1$, and if $l=n$ and $m$ is odd, 
$\vartheta(s,c',\tau,\psi)=\gamma(s,\tau,\psi)(=\gamma(s,\pi'\times\tau,\psi))$. 
This step is justified as in the linear case (see \cite[(6.25)]{CFK}). Applying the same manipulations
\eqref{eq:mult I start 0}--\eqref{int:before functional eq mult I} to
$Z^*(s,\omega,f)$ yields
\begin{align*}
Z^*(s,\omega,f)=\mathcal{I}(m(1-s,\tau^*,w)M^*(s,c,\tau,\psi)f).
\end{align*}
For any $b\in F^*$, set $C(b)=\eta_{\tau}(\langle b,1\rangle)^{2l}|b|^{2kl(s-1/2)}$. Next we claim
\begin{align}\label{proportionality between inter}
M^*(s,l,\tau\otimes\tau^*,\psi)M^*(s,c',\tau,\psi)m(s,\tau,w)=C(1/2)m(1-s,\tau^*,w)M^*(s,c,\tau,\psi).
\end{align}
This identity follows as in \cite[(6.26)]{CFK}. Specifically, both sides are proportional (the uniqueness result in \cite[pp.~51--52]{CFK}
extends to our context by replacing $k$ with $rk$), and the proportionality factor can be computed
by expressing $\lambda(s,c,\tau,\psi)$ as an iterated integral and applying \eqref{eq:func equation for normalization}. In  more detail,  repeating the part of the above argument concerning unipotent integrations and Weyl elements (e.g., ignoring the integral over $G\times G$), we see that $\lambda(s,c,\tau,\psi)$ can be evaluated as the composition of $\lambda_{2}(s,l,\tau\otimes\tau^*,\psi)$ and $\lambda(s,c',\tau,\psi)$, where $\lambda_{2}(\cdots)$ is given by \eqref{eq:lambda functional} except the character $\psi_{rk,c}^{-1}$ appearing in \eqref{eq:lambda functional} is replaced with $(\psi_2)_{rk,c}^{-1}$ (but $\rho_l(\tau)$ is still realized in $W_{\psi}(\rho_l(\tau))$). See \cite[(6.30)--(6.32)]{CFK}. The factor $C(2)$ ($C(2)^{-1}=C(1/2)$) is obtained when in \eqref{eq:func equation for normalization}, $f$ is replaced with its right translate by $\langle\diag(2I_{rkl},I_{rkl}),1\rangle$ and we use Proposition~\ref{proposition:action of W on torus is trivial on Sp}, and note that $\eta_{\tau,l}=\eta_{\tau}^l$ and $2rkl(r^{-1}(s-1/2))=2kl(s-1/2)$ (see \cite[(6.31)]{CFK}).

Since $\mathfrak{i}_G=\mathfrak{i}_{\GL_l}\mathfrak{i}_{G'}$ in $M_L$ (by \eqref{eq:BLS block compatible}), $\pi(\mathfrak{i}_G)=\sigma(\mathfrak{i}_{\GL_l})\pi'(\mathfrak{i}_{G'})$. Also
\begin{align*}
C(1/2)\tau(\langle r^rI_k,1\rangle)^{-2l}|r|^{-2kl(s-1/2)}\tau(\mathfrak{i}_{\GL_k})^{rl}\vartheta(s,c',\tau,\psi)=\vartheta(s,c,\tau,\psi).
\end{align*}
(With the aforementioned convention for $\vartheta(s,c',\tau,\psi)$ when $l=n$.) 
We conclude
\begin{align*}
\gamma(s,\sigma\times(\tau\otimes\tau^*),\psi)\gamma(s,\pi'\times\tau,\psi)Z(s,\omega,f)=\pi(\mathfrak{i}_G)^{rk}\vartheta(s,c,\tau,\psi)Z^*(s,\omega,f).
\end{align*}

Assume $G=\GL_c$. The argument was carried out in \cite[Lemma~80]{me12} in the unramified case, but except for the choice
of realization for the matrix coefficient $\omega$, the proof was applicable in the ramified or archimedean cases as well. Moreover, as opposed to the symplectic case \cite[Lemma~80]{me12} already treated all $l\leq c$, so we can be more brief. See also \cite[\S~6.4.5]{CFK}.

Recall $\tau=\tau_0\otimes\tau_0^*$ and we assume $\tau_0$ (and thereby $\tau_0^*$) is essentially tempered. Now $R=P_{(l,c-l)}$, $\upsilon=\sigma_1\otimes\sigma_2$ is a representation of $\GL_l^{(m)}\times\GL_{c-l}^{(m)}$ and $\pi=\Ind_{\widetilde{R}}^{G^{(m)}}(\upsilon)$.
Formula \eqref{eq:multiplicativity I} reads
$\gamma(s,\pi\times\tau,\psi)=\prod_{i=1}^2\gamma(s,\sigma_i\times\tau,\psi)$.
Since $\pi^{\vee}=\Ind_{\widetilde{R}}^{G^{(m)}}(\upsilon^{\vee})$, we again realize $\{\varphi,\varphi^{\vee}\}$
using a semi-invariant measure on $R\backslash G$,
\begin{align*}
\omega(\langle g,1\rangle)=\int\limits_{R\backslash G}\{\varphi(\langle g_0,1\rangle),\varphi^{\vee}(\langle g_0,1\rangle \langle g,1\rangle)\}\,dg_0.
\end{align*}
For $g\in G$, we have \eqref{eq:cocycles for varphi varphi vee} with $\sigma_c^{\diamondsuit}$ instead of
$\sigma_c$, and by \eqref{eq:the $2$-cocycle on G times G formula GL},
\begin{align*}
&f(s,\langle\delta u_0,1\rangle\langle(gg_1,gg_2),1\rangle)
=\sigma^{\diamondsuit}_{c}(g,g_1)\sigma^{\diamondsuit}_c(g,g_2)^{-1}f(s,\langle\delta u_0,1\rangle\langle(g,g),1\rangle)
\langle(g_1,g_2),1\rangle).
\end{align*}
Hence
\begin{align*}
&\{\varphi(\langle g_0,1\rangle\langle gg_1,1\rangle ),\varphi^{\vee}(\langle g_0,1\rangle\langle gg_2,1\rangle )\}
f(s,\langle\delta u_0,1\rangle\langle(gg_1,gg_2),1\rangle)
\\&=
\{\varphi(\langle g_0g,1\rangle\langle g_1,1\rangle ),\varphi^{\vee}(\langle g_0g,1\rangle\langle g_2,1\rangle )\}
f(s,\langle\delta u_0,1\rangle\langle(g,g),1\rangle
\langle(g_1,g_2),1\rangle).
\end{align*}
According to \cite[Corollary~76]{me12},
\begin{align*}
&\int\limits_{U_0}
f(s,\langle\delta u_0 ,1\rangle\,
\langle(g,g),1\rangle)\,\psi_{U}(u_0)\,du_0
=\int\limits_{U_0}
f(s,\langle\delta u_0 ,1\rangle)\,\psi_{U}(u_0)\,du_0.
\end{align*}
Thus we obtain a formula similar to \eqref{eq:factoring} (without $\iota$ and $\varsigma_{*,c}^{rk+1}$), which we use exactly as above
to write $dg$ over $G^{\triangle}\backslash G\times G$, then collapse $dg_0$ into $d(g_1,g_2)$ and obtain the analog of
\eqref{eq:mult I start}.
Now apply the arguments for \cite[(4.33)--(4.44)]{me12} (we can also use Lemma~\ref{lemma:conjugation commutes}) and obtain
\begin{align}\label{int:GL after 7 props 6 1}
&\mathcal{I}(m(s,\tau,w)f)\\&\nonumber=
\int\limits_{R\times R\backslash G\times G}\,
\int\limits_{V}\int\limits_{V}\int\limits_{U^3}
\int\limits_{\GL_b}
\int\limits_{U^4}
\int\limits_{\GL_a}
\int\limits_{U^1}
\omega'(\langle x,1\rangle)\omega''(\langle y,1\rangle
|\det{x}|^{b/2-rkb}|\det{y}|^{rka-a/2}\\&\quad\notag m(s,\tau,w)f(s,
\langle \delta'u^1,1\rangle\langle \mathfrak{e}_2'(x),1\rangle
\langle \delta''u^4,1\rangle\langle \mathfrak{e}_2''(y),1\rangle
\langle w^{-1}u^3,1\rangle
\langle k^{\bullet}\diag(v',v),1\rangle\langle (g_1,g_2)\rangle)\nonumber\\&\quad\psi_{U}(u^1)\psi_{U}(u^4)
\,du^1\,dx\,du^4\,dy\,du^3\,dv\,dv'\,d(g_1,g_2).\notag
\end{align}
(Cf. \eqref{int:before functional eq mult I} and \cite[(4.44)]{me12}.) Here
$\omega'$, $\delta'$ and $\mathfrak{e}_2'$ (resp., $\omega''$, $\delta''$ and $\mathfrak{e}_2''$) correspond to the
$\GL_l^{(m)}\times\GL_{k}^{(m)}$ (resp., $\GL_{c-l}^{(m)}\times\GL_{k}^{(m)}$) integral for
$\sigma_1\times(\tau_0\otimes\tau_0^*)$ (resp., $\sigma_2\times(\tau_0\otimes\tau_0^*)$), and $U^3$ plays the role of $O$
in \eqref{int:before functional eq mult I} ($U^3$ was the notation of \cite[(4.44)]{me12}).

The analog of \eqref{intert 1} takes the form
\begin{align*}
&\gamma(s,\sigma_1\times\tau,\psi)\gamma(s,\sigma_2\times\tau,\psi)
Z(s,\omega,f)\\&=\nonumber
\pi(\mathfrak{i}_G)^{rk}\vartheta(s,c,\tau,\psi)\mathcal{I}(M^*(s,l,\tau,\psi)
M^*(s,c-l,\tau,\psi)m(s,\tau,w)f).
\end{align*}
The proof is then complete once we prove the analog of \eqref{proportionality between inter}:
\begin{align*}
M^*(s,l,\tau,\psi)
M^*(s,c-l,\tau,\psi)m(s,\tau,w)=m(1-s,\tau^*,w)M^*(s,c,\tau,\psi),
\end{align*}
which is similar and simpler, and note that there is no additional twist of $\psi$ by $2$ here.

\subsubsection{The $\GL_c^{(m)}\times\GL_k^{(m)}$-factors}\label{GL(n) factors example n=1}
In the linear case the analog of \eqref{eq:GL factors} was derived directly in \cite[\S~6.10.2]{CFK} using the $\gamma$-factor of \cite{JPSS}, which is relevant in the context of the doubling integrals only for $c=1$. The general case was then obtained using other properties including multiplicativity and globalization (\cite[\S~6.10.1]{CFK}). In the context of covering groups we have to argue locally, thus we have to treat all $c$ simultaneously, which is possible because here \eqref{eq:GL factors} involves the $\gamma$-factor defined in \S~\ref{RS integrals}.
Nonetheless, the argument closely follows the methods of \cite[\S~3.7]{CFGK2}, \cite[\S~6.10.2]{CFK} and
\cite[\S~4.3]{me12}, the latter of which treated the $\GL_1^{(m)}\times\GL_k^{(m)}$ case in the unramified setting.

For $rk=1$ \eqref{eq:GL factors} follows from \cite{Gan} (see also \cite[(4.62), (4.63)]{me12}), hence in this section $rk>1$.
We reformulate several results from \cite[\S~4.3]{me12} with arbitrary $c$, which are applicable also when data are not unramified.
The section $f$ is on the representation
\begin{align}\label{eq:ind sections}
\mathrm{I}(s,W_{\psi}(\rho_c(\tau)))=\Ind_{\widetilde{P}}^{H^{(m)}}(|\det|^{r^{-1}(s-1/2)}W_{\psi}(\rho_c(\tau_0))\otimes |\det|^{-r^{-1}((s-1/2)}W_{\psi}(\rho_c(\tau_0^*))).
\end{align}
Write $v\in V_{(rkc,rkc)}$ in the form
\begin{align*}
[\begin{smallmatrix}y&z\\u&x\end{smallmatrix}]=\begin{pmatrix}I_{(rk-1)c}&&y&z\\&I_c&u&x\\&&I_c\\&&&I_{(rk-1)c}\end{pmatrix}.
\end{align*}
With this notation $U_0=\{[\begin{smallmatrix}y&z\\0&x\end{smallmatrix}]\}$ and $\psi_U([\begin{smallmatrix}y&z\\0&x\end{smallmatrix}])=\psi(\tr x_1)$, where $x=(x_1,\ldots,x_{rk-1})$ with $x_i\in\Mat_c$.
By Proposition~\ref{prop:rodier non}, $\mathrm{I}(s,W_{\psi}(\rho_c(\tau)))$ admits a unique $(2rk,c)$ model. We fix a character of $V_{(c^{2rk})}<N_H$ by
\begin{align}\label{eq:character for Ind GL 1}
\psi(\diag(d,d')[\begin{smallmatrix}y&z\\u&x\end{smallmatrix}])=\psi(\tr (\sum_{i=1}^{rk-1} d_{i,i+1}-u+\sum_{i=1}^{rk-1} d'_{i,i+1})),\qquad d,d'\in V_{(c^{rk})}.
\end{align}
(Cf. \cite[(4.51)]{me12}.)
The corresponding $(2rk,c)$ model of \eqref{eq:ind sections} is spanned by the functions
\begin{align}\label{int:standard Whittaker on I W W}
W_f(s,h)=\int\limits_{V_{(rkc,rkc)}} f(s,\langle\delta_0[\begin{smallmatrix}y&z\\u&x\end{smallmatrix}],1\rangle h)\psi(\tr u)\,dx\,dy\,dz\,du.
\end{align}
The $(2rk,c)$ model of $\mathrm{I}(s,W_{\psi}(\rho_c(\tau)))^*$ with respect to the inverse of
\eqref{eq:character for Ind GL 1} is spanned by
\begin{align}\label{int:standard Whittaker on I W W *}
W_f^*(s,h)=\int\limits_{V_{(rkc,rkc)}} f(s,\langle\delta_0[\begin{smallmatrix}y&z\\u&x\end{smallmatrix}],1\rangle\,{}^*h)\psi(\tr u)\,dx\,dy\,dz\,du.
\end{align}
Denote
\begin{align*}
[t,v]=\diag(I_{rkc},\left(\begin{smallmatrix}I_c&&\\&I_{(rk-2)c}&\\-t&v&I_c\end{smallmatrix}\right)),\qquad
w'=\left(\begin{smallmatrix} I_{rkc} \\ & & I_{(rk-1)c} \\ & I_c \end{smallmatrix}\right).
\end{align*}
Let $v^-\mapsto\langle v^-,\varsigma^-(v^-)\rangle$ be the splitting of $N_{H}^-$ in $H^{(m)}$. Also let $\zeta\in\C$. The study of the $\GL_c^{(m)}\times\GL_k^{(m)}$ case is based on the following integral, defined
for a matrix coefficient $\omega$ of $\pi^{\vee}$ and a holomorphic section $f$ of \eqref{eq:ind sections}:
\begin{align}\label{int:after functional equation to compare}
\Psi(\zeta,s,\omega,f)=&\int\limits_{\GL_c}\int\limits_{\Mat_{((rk-2)c)\times c}}\int\limits_{\Mat_c} W_f(s,\langle\diag(I_{(2rk-1)c},a),1\rangle\langle [t,v],\varsigma^-([t,v])\rangle
\langle w',1\rangle)\\\nonumber&\omega(\langle a,1\rangle)|\det a|^{\zeta+(rk-1)c}\,dt\,dv\,da.
\end{align}
Assume $F$ is non-archimedean.
This integral is formally well defined and there are constants $B,D\in\R$ such that
\eqref{int:after functional equation to compare} is absolutely convergent for $\Real(\zeta)\leq B\Real(s)+D$ for all data $(\omega,f)$. In its domain of convergence, the integral belongs to \eqref{Homspace} with $\pi$ replaced by $|~|^{-\zeta}\pi$. Consequently, it admits meromorphic continuation to a function in $\C(q^{-\zeta},q^{-s})$, and moreover
outside finitely many values of $q^{-s}$, the continuation with $\zeta=0$ belongs to \eqref{Homspace}. To prove all of these observations
we repeat the arguments in the proof of \cite[Proposition~83]{me12} but for $c>1$, with the following additional remarks.

For the convergence note that after dealing with the unipotent integrals, in order to establish the convergence of the integral over $\GL_c$ we use the invariance properties of $W$, with respect to a small compact open neighborhood of the identity, to pass to an integral over $T_{\GL_c}$. Then the diagonal coordinates will each be bounded from below.

For the equivariance properties with respect to $e_1(g_1)$ note that $e_1(g_1)$ commutes with $w'$ also in $H^{(m)}$ by
Lemma~\ref{lemma:conjugation commutes}; $e_1(g_1)$ normalizes the subgroup of elements $[t,v]$ without affecting the measure;
although $(g_1,g_1)$ does not belong to $C_H$ when $c>1$, it does commute
with $\delta_0$ in $H^{(m)}$ by Lemma~\ref{lemma:conjugation commutes}, and normalizes $V_{(rkc,rkc)}$ without changing the measure.
It then follows that $W_f(s,\langle(g_1,g_1),1\rangle h)=W_f(s,h)$ by \cite[Proposition~77]{me12}.

Regarding the meromorphic continuation, for the application of \cite{Banks} we also need to show the integral can be made nonzero, and to this end in the proof of \cite[Proposition~83]{me12} we chose a function in \eqref{eq:ind sections}, whose restriction to 
the preimage of the so-called mirabolic subgroup of $\GL_{2rk}$ was supported in a small compact open neighborhood of the identity. This was obtained by the analog of \cite[5.15]{BZ1} for coverings.
For a general $(rk,c)$ model the same result applies with the preimage of the subgroup of elements $\{\left(\begin{smallmatrix}g'&v\\&I_c\end{smallmatrix}\right):g'\in\GL_{(2rk-1)c}\}$, see \cite[Lemma~3.11, Corollary~3.12]{LapidMao2018} (the Kirillov--Shalika model).

Over $F=\C$, to obtain the convergence, meromorphic continuation and continuity of the continuation
of \eqref{int:after functional equation to compare} note that after handling the unipotent integration,
the remaining integral is similar to the integral of \cite[Appendix~C]{CFK} with the $(2rk,c)$ representation \eqref{eq:ind sections} instead of the $(k,c)$ representation there.

Consequently by \cite[Theorem~3.1]{DimaKaplan}, \eqref{int:after functional equation to compare} is proportional to $Z(s,\omega,f)$.

\begin{proposition}\label{proposition:substitution}
We have $\gamma(s,\widetilde{\pi}\times\tau_0,\psi)\pi(\mathfrak{i}_{\GL_c})^{rk-1}\vartheta(s,\widetilde{\pi},\tau)^{-1}Z(s,\omega,f)=
\Psi(0,s,\omega,f)$, as meromorphic continuations.
\end{proposition}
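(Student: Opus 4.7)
The plan is to prove the identity by observing that both sides belong to the one-dimensional Hom space \eqref{Homspace} and then computing the proportionality constant explicitly via a reduction to the Rankin--Selberg functional equation \eqref{eq:gamma RS}.

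First I would check that $\Psi(0,s,\omega,f)$, as a meromorphic function of $s$, lies in the space \eqref{Homspace} (this was already sketched in the discussion preceding the proposition, using Bernstein's continuation in the non-archimedean case together with \cite[Theorem~3.1]{DimaKaplan}). Since $Z(s,\omega,f)$ also lies in this space and the space is at most one-dimensional outside a discrete set of $s$, the two integrals must be proportional by a meromorphic function of $s$ alone; the claim is that this factor is the explicit scalar on the left-hand side.

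To evaluate the constant, I would substitute the defining integral \eqref{int:standard Whittaker on I W W} of $W_f$ into $\Psi(0,s,\omega,f)$ and rearrange the combined integration using \eqref{eq:sigma on h and v}--\eqref{eq:sigma conjugate v- to v by h}, \eqref{eq:Prop 1 for GL}, Proposition~\ref{proposition:action of W on torus is trivial on Sp}, and Lemma~\ref{lemma:conjugation commutes}. The point is to split the $V_{(rkc,rkc)}$ integration into coordinates belonging to the $\rho_c(\tau_0)$ factor and those belonging to the $\rho_c(\tau_0^*)$ factor, absorb the variables $t,v$ and the Weyl element $w'$ into the first group, and conjugate $\diag(I_{(2rk-1)c},a)$ into the position occupied by $e_1(a)$ in the Rankin--Selberg setup. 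A direct cocycle computation then identifies the inner integral over $\GL_c$, $\Mat_{((rk-2)c)\times c}$, $\Mat_c$ and the $\tau_0$-coordinates of $V_{(rkc,rkc)}$ with the Rankin--Selberg second integral \eqref{eq:second Z integral GL 1 GL rk} for the pair $\widetilde{\pi}\times\tau_0$, applied to a Whittaker function extracted from the $\rho_c(\tau_0)$-component of $f$.

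At this point I would apply \eqref{eq:gamma RS} to convert $Z^*$ into $Z$, which contributes $\gamma(s,\widetilde{\pi}\times\tau_0,\psi)^{-1}\widetilde{\pi}(\mathfrak{i}_{\GL_c})^{rk-1}\vartheta(s,\widetilde{\pi},\tau_0)$ (noting that $\widetilde{\pi}(\mathfrak{i}_{\GL_c})=\pi(\mathfrak{i}_{\GL_c})$, since $(-I_c)^*=-I_c$). The outer integral, still carrying the $\rho_c(\tau_0^*)$-data of $f$ together with the residual coordinates of $V_{(rkc,rkc)}$ and the $\GL_c$-translation by $a$, then reassembles---by reversing the substitution and using the equivariance of the $(2rk,c)$ functional under $e_2(\GL_c)$---into the doubling integral $Z(s,\omega,f)$. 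The leftover constants involving the central character of $\tau$ and powers of $|r|$ combine to the ratio $\vartheta(s,\widetilde{\pi},\tau_0)\vartheta(s,\widetilde{\pi},\tau)^{-1}$ that completes the identity.

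The main obstacle will be the bookkeeping of the covering $2$-cocycle throughout the unfolding: the splittings $\varsigma^-([t,v])$ and $\varsigma^-(\cdot)$ on $N_{H}^-$, together with the elements $w'$, $\delta_0$, and the torus element $\diag(I_{(2rk-1)c},a)$, must be reorganized so as to coincide with the conventions of \S\ref{RS integrals}, where $\mathfrak{e}_1(\GL_c)$ occupies the first block rather than the last. In addition, one must verify that restricting the $(2rk,c)$ functional on $\mathrm{I}(s,W_\psi(\rho_c(\tau)))$ to the relevant subgroup of $V_{(rkc,rkc)}$ realizes, after the $\tau_0^*$-integration, the $(rk,c)$ model $W_\psi(\rho_c(\tau_0))$; this is a Jacquet-integral computation in the spirit of \eqref{eq:(k,c)  functional using an integral} and Corollary~\ref{corollary:tempered}, and is the source of the asymmetry between $\gamma(s,\widetilde{\pi}\times\tau_0,\psi)$ and the full $\vartheta(s,\widetilde{\pi},\tau)$ that appears in the statement.
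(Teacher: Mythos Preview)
Your overall strategy—invoking the uniqueness of \eqref{Homspace} and then identifying the proportionality constant by reducing to the Rankin--Selberg functional equation \eqref{eq:gamma RS}—matches the paper's approach (due to Soudry \cite{Soudry,Soudry3}). However, there is a genuine gap in your ``reassembly'' step, and the paper handles it via a device you do not mention.

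The doubling integral $Z(s,\omega,f)$ involves the element $\delta=\delta_0\delta_1$, where $\delta_1=\diag(I_{(rk-1)c},\left(\begin{smallmatrix}I_c&I_c\\&I_c\end{smallmatrix}\right),I_{(rk-1)c})$. When you unfold $\Psi(0,s,\omega,f)$ by inserting \eqref{int:standard Whittaker on I W W}, apply \eqref{eq:gamma RS}, and attempt to reverse the substitution, what you recover is \emph{not} $Z(s,\omega,f)$ but rather an auxiliary integral
\[
Z'(s,\omega,f)=
\int\limits_{\GL_c}\omega(\langle a,1\rangle)
\int f(s,\langle\delta_0[\begin{smallmatrix}y&z\\u&x\end{smallmatrix}]\jmath(t),1\rangle\langle\mathfrak{e}_2(a),1\rangle)\psi(\tr x_1)\,dx\,dy\,dz\,du\,dt\,da,
\]
with $\jmath(t)=\diag(I_{rkc},\left(\begin{smallmatrix}I_c&-t\\&I_c\end{smallmatrix}\right),I_{(rk-2)c})$. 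The element $\delta_1$ is absent here; instead there are extra integrations over $u$ and $t$. The paper shows separately that $Z'(s,\omega,f)=Z(s,\omega,f)$, the key point being the Fourier inversion $\int_{\Mat_c}\psi(\tr((u-I_c)t))\,dt=0$ unless $u=I_c$: the $t$-integration forces $u=I_c$, and this residual translation is precisely $\delta_1$. Your proposal does not account for this, and ``equivariance of the $(2rk,c)$ functional under $e_2(\GL_c)$'' does not produce $\delta_1$.

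Concretely, the paper proceeds as follows: (i) establish $Z'=Z$ via the Fourier trick above; (ii) observe that for a suitable substitution (a family of sections $f$ parametrized by $W\in W_\psi(\rho_c(\tau_0))$, cf.\ \cite[(4.67)--(4.71)]{me12}), $Z'(s,\omega,f)$ becomes exactly the Rankin--Selberg integral $Z(s,\omega,W)$ of \eqref{eq:Z integral GL 1 GL rk}, while the same substitution turns $\Psi(0,s,\omega,f)$ into $Z^*(s,\omega,W)$ of \eqref{eq:second Z integral GL 1 GL rk}; (iii) apply \eqref{eq:gamma RS}. Since $\Psi(0,s,\omega,\cdot)$ and $Z(s,\omega,\cdot)$ are already known to be proportional, this determines the constant.

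A minor point: your remark about the ratio $\vartheta(s,\widetilde{\pi},\tau_0)\vartheta(s,\widetilde{\pi},\tau)^{-1}$ reflects a notational confusion. In the proposition, $\vartheta(s,\widetilde{\pi},\tau)$ is the Rankin--Selberg normalizing factor from \S\ref{ints and RS integrals}, with $\tau$ the $\GL_k^{(m)}$-representation (i.e., $\tau_0$ in the doubling notation); there is no additional ratio to track.
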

\begin{proof}
The proof technique is due to Soudry \cite{Soudry,Soudry3}.
The linear analog of this result for $c=1$ was proved in \cite[Claim~37]{CFK}, and the covering version for the unramified case
in \cite[Lemma~85]{me12} (for the linear unramified version see \cite[Claim~36]{CFGK2}).
We reformulate the argument for all $c$.

Define the following integral:
\begin{align*}
Z'(s,\omega,f)=
\int\limits_{\GL_c}\omega(\langle a,1\rangle)
\int f(s,\langle\delta_0[\begin{smallmatrix}y&z\\u&x\end{smallmatrix}]\jmath(t),1\rangle\langle\mathfrak{e}_2(a),1\rangle)\psi(\tr x_1)\,dx\,dy\,dz\,du\,dt\,da.
\end{align*}
Here $\jmath(t)=\diag(I_{rkc},\left(\begin{smallmatrix}I_c&-t\\&I_c\end{smallmatrix}\right),I_{(rk-2)c})\in P\cap N_{H}$ with $t\in\Mat_c$.
This integral is defined in the sense of \cite[(4.65)]{me12}, can be regarded as an element of \eqref{Homspace} and $Z'(s,\omega,f)=Z(s,\omega,f)$ (as meromorphic continuations). See \cite[Lemma~85]{me12}, the main observation
used in \textit{loc. cit.} was that $\int_{\Mat_c}\psi(\tr((u-I_c)t))dt=0$ unless $u=I_c$, and this is valid for all $c$.
The only difference to note is that, while the splitting of $\mathcal{N}_l=\{[\begin{smallmatrix}0&0\\b&0\end{smallmatrix}]:b\in\Mat_c(\varpi^l\mathcal{O})\}$ for $l>0$ is not unique
when $|m|<1$ (in \cite[Lemma~85]{me12}, $|m|=1$), still any $f$ is right-invariant under right-translations by
$\{\langle y,1\rangle:y\in\mathcal{N}_l\}$ when $l\gg0$ (see \S~\ref{RS The functional equation} and \cite[p.~321]{BJ}).
We proceed to prove the analog of \cite[(4.66)]{me12}:
\begin{align*}
\gamma(s,\widetilde{\pi}\times\tau_0,\psi)\pi(\mathfrak{i}_{\GL_c})^{rk-1}\vartheta(s,\widetilde{\pi},\tau)^{-1}Z'(s,\omega,f)=\Psi(0,s,\omega,f).
\end{align*}
(Note that $\pi(\mathfrak{i}_{\GL_c})^{rk-1}=\vartheta(s,\widetilde{\pi},\tau)=1$ in the unramified case and
$\vartheta(s,\widetilde{\pi},\tau)=1$ when $r=1$.)
Indeed the substitution from \textit{loc. cit.} (see \cite[(4.67), (4.68)]{me12})
but with an arbitrary $W\in W_{\psi}(\rho_c(\tau_0))$, shows that
$Z'(s,\omega,f)$ equals the integral $Z(s,\omega,W)$ of \eqref{eq:Z integral GL 1 GL rk}, and for the same substitution
$\Psi(0,s,\omega,f)=Z^*(s,\omega,W)$ of \eqref{eq:second Z integral GL 1 GL rk} (see \cite[(4.69), (4.71)]{me12}). Together with
\eqref{eq:gamma RS} this completes the proof
(as opposed to \textit{loc. cit.} we do not need to compute $Z(s,\omega,W)$ and $Z^*(s,W,\omega)$, since we have \eqref{eq:gamma RS}).
\end{proof}
\begin{corollary}\label{eq:coro c=1 arbitrary k GL}
$\gamma(s,\pi\times(\tau_0\otimes\tau_0^*),\psi)=\gamma(s,\pi\times\tau_0,\psi)\gamma(s,\widetilde{\pi}\times\tau_0,\psi)$, i.e., \eqref{eq:GL factors} holds.
\end{corollary}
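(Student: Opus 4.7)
The plan is to derive \eqref{eq:GL factors} by combining Proposition~\ref{proposition:substitution} with the defining functional equation \eqref{eq:Gamma def} of the doubling $\gamma$-factor and the Rankin--Selberg functional equation \eqref{eq:RS functional equation}. First, I would use Proposition~\ref{proposition:substitution} to rewrite
\begin{align*}
Z(s,\omega,f)=\gamma(s,\widetilde{\pi}\times\tau_0,\psi)^{-1}\pi(\mathfrak{i}_{\GL_c})^{-(rk-1)}\vartheta(s,\widetilde{\pi},\tau_0)\,\Psi(0,s,\omega,f).
\end{align*}
The proof of that proposition applies essentially verbatim to sections of the swapped induced representation $V(1-s,W_{\psi}(\rho_c(\tau_0^*\otimes\tau_0)))$ with the roles of $\tau_0$ and $\tau_0^*$ interchanged; let $\Psi'$ denote the analog of $\Psi$ there. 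Applying this analog with $g=M^*(s,c,\tau,\psi)f$ yields a parallel expression for $Z(1-s,\omega,g)$ in which $\gamma(s,\widetilde{\pi}\times\tau_0,\psi)$ is replaced by $\gamma(1-s,\widetilde{\pi}\times\tau_0^*,\psi)$, the $\vartheta$-factor at $s$ is replaced by its counterpart at $1-s$ with $\tau_0^*$, and $\Psi$ is replaced by $\Psi'$.

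Next I would establish the key identity
\begin{align*}
\Psi'(0,1-s,\omega,M^*(s,c,\tau,\psi)f)=\Psi(0,s,\omega,f),
\end{align*}
up to an explicit Weil-index factor. This follows from the uniqueness of the $(2rk,c)$ model of the induced representation (Proposition~\ref{prop:rodier non}) together with the normalization \eqref{eq:def normalized intertwining} of $M^*$ via the $\lambda$-functional \eqref{eq:lambda functional}: the restriction of the $(2rk,c)$ character \eqref{eq:character for Ind GL 1} to $U_P$ matches the character appearing in $\lambda(s,c,\tau,\psi)$, so the composition identity \eqref{eq:composition of intertwining operators} descends directly to the $(2rk,c)$ Whittaker functional on the induced space. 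A direct unfolding computation in the spirit of \cite[Lemma~85]{me12}, \cite[Claim~36]{CFGK2} and the substitution argument used in the proof of Proposition~\ref{proposition:substitution} then gives the identity.

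Substituting the three expressions into \eqref{eq:Gamma def},
\begin{align*}
\gamma(s,\pi\times\tau,\psi)Z(s,\omega,f)=\pi(\mathfrak{i}_G)^{rk}\vartheta(s,c,\tau,\psi)Z(1-s,\omega,M^*(s,c,\tau,\psi)f),
\end{align*}
the common $\Psi$-factor cancels, leaving $\gamma(s,\pi\times\tau,\psi)$ as an explicit quotient involving $\gamma(s,\widetilde{\pi}\times\tau_0,\psi)$ and $\gamma(1-s,\widetilde{\pi}\times\tau_0^*,\psi)$, multiplied by $\pi(\mathfrak{i}_G)^{rk}\vartheta(s,c,\tau,\psi)$ and a ratio of RS $\vartheta$-factors. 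I then invoke \eqref{eq:RS functional equation} together with $\widetilde{\tau_0^*}=\tau_0$ from \S~\ref{dual reps} to rewrite $\gamma(1-s,\widetilde{\pi}\times\tau_0^*,\psi)^{-1}=\gamma(s,\pi\times\tau_0,\psi^{-1})$, and convert $\psi^{-1}$ back to $\psi$ via the analog of \eqref{eq:RS dependence on psi} for $\pi$ (obtained from \eqref{eq:RS dependence on psi} by replacing $\widetilde{\pi}$ with $\pi$, valid since $\widetilde{\widetilde{\pi}}=\pi$). Collecting the resulting central-character and Weil-index contributions and matching them against $\pi(\mathfrak{i}_G)^{rk}\vartheta(s,c,\tau,\psi)$ and the ratio of $\vartheta_{\mathrm{RS}}$-factors produces the required factorization $\gamma(s,\pi\times(\tau_0\otimes\tau_0^*),\psi)=\gamma(s,\pi\times\tau_0,\psi)\gamma(s,\widetilde{\pi}\times\tau_0,\psi)$.

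The main obstacle is the key identity $\Psi'(0,1-s,\omega,M^*f)=\Psi(0,s,\omega,f)$: although the uniqueness of the $(2rk,c)$ model forces the two sides to be proportional, pinning down that the proportionality constant is trivial requires careful bookkeeping of the cocycles from \eqref{eq:the $2$-cocycle on G times G formula GL} and of the Weil indices introduced by the $\mu_{2m}\subset F^*$ assumption. The surrounding algebra with Tate-type factors and central characters is routine but detail-heavy, paralleling the linear case in \cite[\S~6.10.2]{CFK}.
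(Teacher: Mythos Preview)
Your overall strategy matches the paper's: apply Proposition~\ref{proposition:substitution} to both $Z(s,\omega,f)$ and $Z^*(s,\omega,f)=Z(1-s,\omega,M^*f)$, substitute into \eqref{eq:Gamma def}, then clean up with \eqref{eq:RS functional equation} and \eqref{eq:RS dependence on psi}. The final bookkeeping with $\pi(\mathfrak{i}_{\GL_c})$, $\tau_0(\mathfrak{i}_{\GL_k})$ and $\eta_{\tau_0}(\langle -1,1\rangle)$ is exactly as in the paper.

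Where you overcomplicate things is the key identity $\Psi'(0,1-s,\omega,M^*f)=\Psi(0,s,\omega,f)$. You treat this as something to be extracted from uniqueness of the $(2rk,c)$ model plus a separate unfolding, and you worry about a leftover Weil-index proportionality constant. In fact the identity is \emph{immediate and exact} once you observe, as the paper does, that the functional $f\mapsto W_f$ defined by \eqref{int:standard Whittaker on I W W} \emph{is} the functional $\lambda_{-1}(s,c,\tau_0\otimes\tau_0^*,\psi)$ of \eqref{eq:lambda functional} (with $\psi_{rk,c}$ in place of $\psi_{rk,c}^{-1}$). The normalization \eqref{eq:def normalized intertwining} of $M^*$ is precisely the statement $\lambda(s,\ldots)f=\lambda(1-s,\ldots)M^*(s,\ldots)f$, so $W_f=W_{M^*f}$ on the nose. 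The only thing to justify is that one may pass $M^*$ inside the outer $da\,dv\,dt$ integral; the paper does this by showing the integrand is a Schwartz function of $[t,v]$. You invoke instead the composition identity \eqref{eq:composition of intertwining operators}, which is a consequence of the normalization but not the relevant statement here, and there is no Weil-index factor at this step --- all such factors appear only in the final simplification of the RS $\gamma$-factors.
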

\begin{proof}
First observe that $f\mapsto W_f$ is the functional $\lambda(s,c,\tau_0\otimes\tau_0^*,\psi)$ of \eqref{eq:lambda functional} except that $\psi_{rk,c}^{-1}$ appearing in \eqref{eq:lambda functional} is replaced with $\psi_{rk,c}$. Denote this functional by
$\lambda_{-1}=\lambda_{-1}(s,c,\tau_0\otimes\tau_0^*,\psi)$.
Thus when we formally apply \eqref{eq:func equation for normalization} to $\Psi(\zeta,s,\omega,f)$ we obtain
\begin{align*}
\Psi(\zeta,s,\omega,f)=&\int\limits_{\GL_c}\int\limits_{\Mat_{((rk-2)c)\times c}}\int\limits_{\Mat_c}
\lambda_{-1}((\langle\diag(I_{(2rk-1)c},a),1\rangle\langle [t,v],\varsigma^-([t,v])\rangle
\langle w',1\rangle)\\&\nonumber\cdot M^*(s,c,\tau_0\otimes\tau_0^*,\psi)f)\omega(\langle a,1\rangle)|a|^{\zeta+(rk-1)c}\,dt\,dv\,d^*a.
\end{align*}
This application is justified by observing that the integrand of $\Psi(\zeta,s,\omega,f)$ is a Schwartz function of $[t,v]$.
Indeed let $Y_i=\{[0_{((i-1)c)\times c},y,0_{((rk-1-i)c)\times c}]:y\in\Mat_c\}$,
\begin{align*}
&X=\left\{\diag(I_{(rk-1)c},\left(\begin{smallmatrix}I_{(rk-1)c}&&x\\&I_c&\\&&I_c\end{smallmatrix}\right)):x\in\Mat_{((rk-1)c)\times c}\right\},
\end{align*}
write ${}^tx=(x_1,\ldots,x_{rk-1})$ where $x_j\in\Mat_c$
and let $X_i<X$ be the subgroup of elements whose coordinates other than $x_i$ are zero.
Any given $f$ is right-invariant under ${}^{{w'}^{-1}}\langle x,1\rangle$ whenever the coordinates of $x$ are sufficiently small.
Then for any $a\in\GL_c$, $y\in Y_i$ and $x\in X_i$, $1\leq i\leq rk-1$,
\begin{align*}
&\lambda_{-1}((\langle\diag(I_{(2rk-1)c},a),1\rangle\langle [y],\varsigma^-([y])\rangle
\langle w',1\rangle)\cdot f)
\\&=\lambda_{-1}((\langle\diag(I_{(2rk-1)c},a),1\rangle\langle [y],\varsigma^-([y])\rangle
\langle x,1\rangle\langle w',1\rangle)\cdot f)
\\&=\psi^{-1}(\tr xy)\lambda_{-1}((\langle\diag(I_{(2rk-1)c},a),1\rangle\langle [y],\varsigma^-([y])\rangle
\langle w',1\rangle)\cdot f).
\end{align*}
This implies the integrand is a Schwartz function of $[t,v]$ (see \cite[(4.58)--(4.61)]{me12} and also, e.g., \cite[Lemma~10]{CFK}).

Taking $\zeta=0$ we obtain, as meromorphic continuations,
\begin{align*}
\Psi(0,s,\omega,f)=&\int\limits_{\GL_c}\int\limits_{F^{rk-2}}\int\limits_{F}
\lambda_{-1}((\langle\diag(I_{(2rk-1)c},a),1\rangle\langle [t,v],\varsigma^-([t,v])\rangle
\langle w',1\rangle)\\&\nonumber\cdot M^*(s,c,\tau_0\otimes\tau_0^*,\psi)f)\omega(\langle a,1\rangle)|a|^{(rk-1)c}\,dt\,dv\,da.
\end{align*}
By Proposition~\ref{proposition:substitution},
\begin{align*}
\gamma(s,\widetilde{\pi}\times\tau_0,\psi)\pi(\mathfrak{i}_{\GL_c})^{rk-1}
\vartheta(s,\widetilde{\pi},\tau_0)^{-1}Z(s,\omega,f)
&=\Psi(0,s,\omega,f)\\&=\Psi(0,s,\omega,M^*(s,c,\tau_0\otimes\tau_0^*,\psi)f),
\end{align*}
and further applying the proposition to the r.h.s.~ we deduce
\begin{align*}
&\gamma(1-s,\widetilde{\pi}\times\widetilde{\tau}_0,\psi)^{-1}\gamma(s,\widetilde{\pi}\times\tau_0,\psi)
\vartheta(1-s,\widetilde{\pi},\widetilde{\tau}_0)\vartheta(s,\widetilde{\pi},\tau_0)^{-1}
Z(s,\omega,f)=Z^*(s,\omega,f).
\end{align*}
In addition according to \eqref{eq:RS dependence on psi} and \eqref{eq:RS functional equation} ($\psi_{-1}=\psi^{-1}$),
\begin{align*}
&\gamma(s,\pi\times\tau_0,\psi)\gamma(1-s,
\widetilde{\pi}\times\widetilde{\tau}_0,\psi)=
\pi(\langle (-1)^rI_c,1\rangle)^{k}\eta_{\tau_0}(\langle-1,1\rangle)^{c}=\pi(\mathfrak{i}_{\GL_c})^{rk}\tau_0(\mathfrak{i}_{\GL_k})^{rc}.
\end{align*}
For the second equality note that $-1\in F^{*r}$ (if $2|r$, $m=2r$) 
and $\eta_{\tau_0}(\langle a^r,1\rangle)=\rho_1(\tau_0)(\langle a^rI_{rk},1\rangle)=\tau_0(\langle a^{r}I_k,1\rangle)^{r}$ (for all $a\in F^*$).
The result follows.
\end{proof}

\subsection{Unramified factors}\label{unr}
Using \eqref{eq:multiplicativity I} and \eqref{eq:multiplicativity II}
we reduce to the case of $c=k=1$, i.e., $\GL_1^{(m)}\times \GL_1^{(m)}$ $\gamma$-factors, which follows from
\eqref{eq:GL factors} and \eqref{eq:RS unramified factors}. This proves \eqref{eq:unramified factors}.

\begin{corollary}\label{corollary:C factors}
When data are unramified,
\begin{align}\label{eq:C factor all unramified}
C(s,c,\tau,\psi)=
\frac{b(1-s,c,\widetilde{\tau})}{a(s,c,\tau)}\left[\frac{L(s,\tau)}{L(1-s,\widetilde{\tau})}\right].
\end{align}
Here the factor in square brackets appears only when $G=\Sp_{c}$ and $m$ is odd.
\end{corollary}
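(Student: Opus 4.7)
The plan is to evaluate the defining relation \eqref{eq:func equation for normalization} on the normalized unramified section $f_\tau$, using the unramified Gindikin--Karpelevich formula and the unramified evaluation of the doubling integral, and then to solve for $C(s,c,\tau,\psi)$ using \eqref{eq:unramified factors}.

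First I would substitute $f = f_\tau$ into \eqref{eq:func equation for normalization}. By the Gindikin--Karpelevich identity \eqref{intertwining operator on unramified}, $M(s,\rho_c(\tau),w_P)f_\tau = a(s,c,\tau)b(s,c,\tau)^{-1}f_{\tau'}$, so
\begin{align*}
M^*(s,c,\tau,\psi)f_\tau = C(s,c,\tau,\psi)\,\frac{a(s,c,\tau)}{b(s,c,\tau)}\,f_{\tau'}.
\end{align*}
Note that $\tau' = \tau^*$ for $H=\Sp_{2rkc}$ and $\tau' = \tau_0^*\otimes\tau_0$ for $H=\GL_{2rkc}$; in either case $\tau'$ carries the same unramified $L$-data as $\widetilde{\tau}$ (see \S~\ref{dual reps}).

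Next I would insert this into the definition of the $\gamma$-factor \eqref{eq:Gamma def}, applied to unramified data $(\omega,f_\tau)$ with $\omega$ the normalized spherical matrix coefficient of $\pi^\vee$. In the unramified setting $\pi(\mathfrak{i}_G)^{rk}=1$ because $-I_c\in K_G$ and the splitting is trivial there; and the only surviving factor of $\vartheta(s,c,\tau,\psi)$ is the bracketed $\gamma(s,\tau,\psi)$, which occurs only for $G=\Sp_c$ and $m$ odd (the factors $\tau(\langle r^rI_k,1\rangle)$, $\tau(\mathfrak{i}_{\GL_k})$, $\eta_\tau(\langle 2,1\rangle)$, and $|2r|$ all trivialize on unramified data). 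Thus \eqref{eq:Gamma def} reduces to
\begin{align*}
\gamma(s,\pi\times\tau,\psi)\,Z(s,\omega,f_\tau) \;=\; [\gamma(s,\tau,\psi)]\,C(s,c,\tau,\psi)\,\frac{a(s,c,\tau)}{b(s,c,\tau)}\,Z(1-s,\omega,f_{\tau'}).
\end{align*}

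The unramified doubling integral, computed in \cite[\S 4]{me12} for $G=\Sp_c$ and by the analogous computation (using the Rankin--Selberg factorization of \S~\ref{RS integrals} as a building block) for $G=\GL_c$, evaluates to
\begin{align*}
Z(s,\omega,f_\tau)=\frac{L(s,\pi\times\tau)}{b(s,c,\tau)},
\qquad Z(1-s,\omega,f_{\tau'})=\frac{L(1-s,\widetilde\pi\times\widetilde\tau)}{b(1-s,c,\widetilde\tau)}.
\end{align*}
Substituting these and the unramified $\gamma$-factor identity $\gamma(s,\pi\times\tau,\psi)=L(1-s,\widetilde\pi\times\widetilde\tau)/L(s,\pi\times\tau)$ from \eqref{eq:unramified factors}, the $L(\pi\times\tau)$ and $L(\widetilde\pi\times\widetilde\tau)$ contributions cancel and leave
\begin{align*}
C(s,c,\tau,\psi)=\frac{b(1-s,c,\widetilde\tau)}{a(s,c,\tau)}\cdot\frac{1}{[\gamma(s,\tau,\psi)]}.
\end{align*}
Finally, in the unramified case $\epsilon(s,\tau,\psi)=1$, so $1/[\gamma(s,\tau,\psi)]=[L(s,\tau)/L(1-s,\widetilde\tau)]$ when the bracket is present, and is $1$ otherwise, which is exactly the right-hand side of \eqref{eq:C factor all unramified}.

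The main obstacle is the unramified evaluation $Z(s,\omega,f_\tau)=L(s,\pi\times\tau)/b(s,c,\tau)$, which rests on the explicit action of the $(rk,c)$-functional on the spherical vector of $\rho_c(\tau)$ and a Casselman--Shalika-type computation for the covering $H^{(m)}$; this was carried out in \cite{me12} for the symplectic case and must be checked to adapt verbatim to $G=\GL_c$ via the factorization of the integral into Rankin--Selberg pieces as in \cite{G7} and \S~\ref{RS integrals}. Once this identity is in hand, the rest of the argument is a short algebraic manipulation.
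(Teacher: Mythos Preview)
Your proposal is correct and follows essentially the same route as the paper: combine the unramified evaluation $Z(s,\omega,f_\tau)=L(s,\pi\times\tau)/b(s,c,\tau)$ (which the paper cites as \cite[Theorems~66, 87]{me12}, after reducing to tempered $\tau$ via \eqref{rep:rho c tau in general} and \eqref{eq:mult II of ops}) with the Gindikin--Karpelevich formula \eqref{intertwining operator on unramified}, the defining relation \eqref{eq:Gamma def}, and the unramified $\gamma$-factor identities \eqref{eq:unramified factors} and \eqref{eq:RS unramified factors}. The only point you leave implicit that the paper makes explicit is the preliminary reduction to tempered $\tau$, needed so that Lemma~\ref{lemma:unr unitary Satake} guarantees the hypothesis under which the unramified integral formula is proved in \cite{me12}.
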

\begin{proof}
By \eqref{rep:rho c tau in general} and \eqref{eq:mult II of ops} we can already assume $\tau$ is tempered.
The condition \cite[(2.45)]{me12} holds by Lemma~\ref{lemma:unr unitary Satake}, then by \cite[Theorems~66, 87]{me12} when data are unramified
\begin{align}\label{eq:unramified integral}
Z(s,\omega,f)=\frac{L(s,\pi\times\tau)}{b(s,c,\tau)}.
\end{align}
Here if $G=\GL_c$, $\tau=\tau_0\otimes\tau_0^*$ and $L(s,\pi\times\tau)=L(s,\pi\times\tau_0)L(s,\widetilde{\pi}\times\tau_0)$.
Now \eqref{eq:C factor all unramified} follows by combining \eqref{eq:unramified integral} with \eqref{intertwining operator on unramified}, \eqref{eq:Gamma def} and \eqref{eq:unramified factors}, and when $G=\Sp_c$ and $m$ is odd we also use \eqref{eq:RS unramified factors} for the factor $\gamma(s,\tau,\psi)$ appearing in $\vartheta(s,c,\tau,\psi)$ (see \S~\ref{RS complete L factors}).
\end{proof}
\begin{remark}\label{remark:dont need tempered globally}
For our purposes here it suffices to know \eqref{eq:unramified integral} at almost all places in the setting of
\eqref{eq:global property}, and this holds without the need to assume $\tau$ is tempered.
\end{remark}

\subsection{The case $F=\C$}\label{Archimedean property}
Both $\pi$ and $\tau$ are quotients of principal series representations, say, with inducing data $\{\pi_j\}_j$ and $\{\tau_i\}_i$. Applying \eqref{eq:multiplicativity I}, \eqref{eq:multiplicativity II}, \eqref{eq:GL factors} and \eqref{eq:RS Archimedean property} we conclude $\gamma(s,\pi\times\tau,\psi)$ is the product of factors
$\gamma^{\mathrm{Tate}}(s,\tau_i^r\pi_j^r,\psi)\gamma^{\mathrm{Tate}}(s,\tau_i^r\pi_j^{-r},\psi)$, and an additional product of factors $\gamma^{\mathrm{Tate}}(s,\tau_i^r,\psi)$ when $G=\Sp_c$ and $m$ is odd.
By \cite[Theorem~27 (5.2), (5.3), (5.8)]{CFK}, $\gamma(s,\pi^r\times\tau^r,\psi)$ defined in \cite{CFK} equals 
$\gamma(s,\pi\times\tau,\psi)$ when $G\ne\Sp_c$ or $m$ is odd, and $\gamma(s,\tau,\psi)\gamma(s,\pi\times\tau,\psi)$ otherwise. 
Then when $G\ne\Sp_c$ or $m$ is odd,
\eqref{eq:Archimedean property} follows from \cite[Theorem~27 (5.9)]{CFK}, proved using the results of Shahidi \cite{Shahidi1985} (see \cite[\S~6.14]{CFK}).

Consider $G=\Sp_c$ and even $m$ (then $N=c$). We can already assume $\tau$ is tempered (see \cite{Shahidi1985}), i.e., $k=1$.
According to Gan and Savin \cite[Corollary~11.3(ii)]{WGS}, for $r=1$,
\begin{align*}
\gamma_{G^{(2)}}(s,\pi\times\tau,\psi)=\gamma_{\SO_{c+1}}(s,\theta(\pi)\times\tau,\psi),
\end{align*}
where
$\gamma_{G^{(2)}}$ is the $G^{(2)}\times\GL_1^{(2)}$ factor defined in \cite{Gan} and $\gamma_{\SO_{c+1}}$
is the $\SO_{c+1}\times\GL_1$ factor defined in \cite{LR,RS}. Over $\C$ the $\gamma$-factor is uniquely determined by \eqref{eq:multiplicativity I}, \eqref{eq:multiplicativity II} and \eqref{eq:GL factors}, hence $\gamma(s,\pi\times\tau,\psi)=\gamma_{G^{(2)}}(s,\pi^r\times\tau^r,\psi)$ for all $r$, and the $\gamma$-factor $\gamma(s,\theta(\pi^r)\times\tau^r,\psi)$ defined in \cite{CFK} (note that $\SO_{c+1}$ is split) is identical with $\gamma_{\SO_{c+1}}(s,\theta(\pi^r)\times\tau^r,\psi)$. Therefore
$\gamma(s,\pi\times\tau,\psi)=\gamma(s,\theta(\pi^r)\times\tau^r,\psi)$
and the result again follows from \cite[Theorem~27 (5.9)]{CFK}.

\subsection{The global integral}\label{The global integral}
The global integral for $G=\Sp_c$ was defined in \cite[\S~3.1]{me12} and for $\GL_c$ in \cite[Appendix~A]{me12}. We briefly recall the construction for $\Sp_c$. We use the notation from
\S~\ref{The integrals} and in particular \S~\ref{The doubling setup} in the global setting.
See \S~\ref{Global coverings} for the choice of a $2$-cocycle $\rho_{2rkc}$ on $H^{(m)}(\A)$, this defines the
splitting of $H(F)$ in $H^{(m)}(\A)$, and see \cite[\S~1.5]{me12} for the definitions of the $2$-cocycles on
each copy of $G^{(m)}(\A)$ and the identifications of $G(F)$ in each.

Let $\tau$ be a genuine cuspidal representation of $\GL_{k}^{(m)}(\A)$.
Let $f$ be a standard $\widetilde{K}_H$-finite section on the space of
$\Ind_{\widetilde{P}({\A})}^{H^{(m)}({\A})}(|\det|^{r^{-1}(s-1/2)}\mathcal{E}_{\tau})$, where $\mathcal{E}_{\tau}$ was defined in
\S~\ref{the representation rho_c(tau)}. We have the Eisenstein series $E(h;s,f)$, defined as a sum over
$P(F)\backslash H(F)$ (see \cite[(3.1)]{me12}).

Let $\pi$ be a genuine cuspidal representation of $G^{(m)}(\A)$ (the right copy of $G^{(m)}(\A)$, see \cite[\S~3.1]{me12}) and consider two cusp forms $\varphi_1$ and $\varphi_2$ in the space of $\pi$. The global integral is given by
\begin{align}\label{global1}
Z(s,\varphi_1,\varphi_2,f)=&\int\limits_{G(F)\times G(F)\backslash G({\A})\times G({\A})}\,
\int\limits_{U(F)\backslash U({\A})}\varphi_1^{(\eta^{\times})^{-1}}(\langle g_1,1\rangle)\,\overline{{}^{\iota}\varphi_2(\langle g_2,1\rangle)}
\\&\times E(\langle u,\eta_{2rkc}^{-1}(u)\rangle\nonumber
\langle \mathfrak{e}_1(g_1),1\rangle\langle \mathfrak{e}_2(g_2),1\rangle;s,f)\,\psi_U(u)\,du\,dg_1\,dg_2.
\end{align}
Here $\eta^{\times}$ is a global $1$-cochain defined in \cite[\S~1.5]{me12} to compensate for differences between the cocycles on the copies of $G^{(m)}(\A)$, so that $\varphi_1^{(\eta^{\times})^{-1}}$ is an automorphic form on the left copy (see \cite[Corollary~13]{me12}).
By \cite[Theorem~63]{me12} \eqref{global1} is well defined, absolutely convergent away from the poles of the series and admits meromorphic continuation to $\C$. For decomposable data, by \cite[Theorem~64 and (3.16)]{me12} (and Conjecture~\ref{conjecture:intertwining op Speh}),
in $\Real(s)\gg0$ \eqref{global1} is Eulerian. The similar results for $\GL_c$ were proved in
\cite[Theorems~A.2--A.3]{me12}. By \eqref{eq:unramified integral} (proved in \textit{loc. cit.} also when $\tau$ is non-tempered
under assumptions implied by Conjecture~\ref{Shimura conjecture}, see
Remark~\ref{remark:dont need tempered globally}),
\begin{align}\label{eq:global integral computation}
Z(s,\varphi_1,\varphi_2,f)=\frac{L^S(s,\pi\times\tau)}{b^S(s,c,\tau)}Z_S(s,\omega,f).
\end{align}
The superscript (resp., subscript) $S$ denotes the infinite (resp., finite) product of local factors over the places outside (resp., inside) $S$, with
$S$ as in \S~\ref{unr L functions},
e.g., $Z_S(s,\omega,f)=\prod_{\nu\in S}Z(s,\omega_{\nu},f_{\nu})$.

\subsection{Crude functional equation}\label{Crude functional equation}
We present the argument briefly, for more details see \cite[\S~6.9]{CFK}.
We use the definitions and notation of \S~\ref{The global integral}.
According to the functional equation of the Eisenstein series and \eqref{intertwining operator on unramified}, for
a standard $\widetilde{K}_H$-finite decomposable section $f$,
\begin{align}\label{eq:func eq of series}
E(\cdot;s,f)=E(\cdot;1-s,M(s,\mathcal{E}_{\tau},w_P)f)=\frac{a^S(s,c,\tau)}{b^S(s,c,\tau)}E(\cdot;1-s,f').
\end{align}
Here $M(s,\mathcal{E}_{\tau},w_P)$ is an intertwining operator --- the global analog of \eqref{int:intertwining operator}. By Conjecture~\ref{conjecture:intertwining op Speh}, $\mathcal{E}_{\tau}=\otimes_{\nu}'\rho_c(\tau_{\nu})$, and by
Lemma~\ref{lemma:rho c dual}, $f'_{\nu}(s,h)=M_{\nu}(s,\rho_c(\tau_{\nu}),w_P)f_{\nu}(s,h)$ is a section of $V(\rho_c(\tau_{\nu}^*))$.
Using \eqref{eq:func eq of series} and \eqref{eq:global integral computation} for $b^S(s,c,\tau)f$,
\begin{align}\label{eq:global func eq 0}
L^S(s,\pi\times\tau)Z_S(s,\omega,f)&=
\frac{a^S(s,c,\tau)}{b^S(1-s,c,\widetilde{\tau})}L^S(1-s,\widetilde{\pi}\times\widetilde{\tau})Z_S(1-s,\omega,f').
\end{align}

Let $\vartheta^{\circ}(s,c,\tau_{\nu},\psi_{\nu})$ be equal to $\vartheta(s,c,\tau_{\nu},\psi_{\nu})$ when $m$ is even or $G=\GL_c$, otherwise it is $\vartheta(s,c,\tau_{\nu},\psi_{\nu})\gamma(s,\tau_{\nu},\psi_{\nu})^{-1}$. In the latter case
because \eqref{eq:RS global property} implies $\gamma(s,\tau,\psi)=1$,
\begin{align}\label{eq:gamma vartheta circ reverse}
\gamma^S(s,\tau,\psi)^{-1}\vartheta^{\circ}_S(s,c,\tau,\psi)=\gamma^S(s,\tau,\psi)^{-1}\gamma(s,\tau,\psi)\vartheta^{\circ}_S(s,c,\tau,\psi)=
\vartheta_S(s,c,\tau,\psi).
\end{align}
Since $\prod_{\nu}\eta_{\tau_{\nu}}(\langle2,1\rangle)=\prod_{\nu\notin S}\eta_{\tau_{\nu}}(\langle2,1\rangle)=1$
(see the discussion following Theorem~\ref{theorem:RS ten commendments}),
we have $\vartheta^{\circ}(s,c,\tau,\psi)=(\vartheta^{\circ})^S(s,c,\tau,\psi)=1$. Also
$\pi(\mathfrak{i}_G)=\pi^S(\mathfrak{i}_G)=1$. Then by \eqref{eq:C factor all unramified}, \eqref{eq:RS unramified factors} and \eqref{eq:gamma vartheta circ reverse},
\begin{align}\label{eq:C^S}
C^S(s,c,\tau,\psi)=\frac{b^S(1-s,c,\widetilde{\tau})}{a^S(s,c,\tau)}[\gamma^S(s,\tau,\psi)^{-1}]=
\prod_{\nu\in S}\pi_{\nu}(\mathfrak{i}_G)^{rk}\vartheta_{\nu}(s,c,\tau_{\nu},\psi_{\nu})\frac{b^S(1-s,c,\widetilde{\tau})}{a^S(s,c,\tau)}.
\end{align}
The global analog of Theorem~\ref{theorem:uniqueness for normalizing intertwining} implies $C(s,c,\tau,\psi)=1$, and now
\eqref{eq:global property} follows from \eqref{eq:global func eq 0},
\eqref{eq:C^S} and \eqref{eq:Gamma def}.

\subsection{Duality}\label{Self-duality}
Identity~\eqref{eq:self-duality} is trivial for $\Sp_c$, and for $\GL_c$ follows immediately from \eqref{eq:GL factors}.
\subsection{Functional equation}
Assume $G=\Sp_c$. If $m$ is odd, by \eqref{eq:RS functional equation} and \eqref{eq:RS dependence on psi} and because
$\eta_{\varepsilon\otimes1}(\langle -1,1\rangle)=(\varepsilon\otimes1)(\langle -1,1\rangle)^r=1$,
we have $\gamma(s,\tau,\psi)\gamma(1-s,\widetilde{\tau},\psi)=\tau(\langle (-1)^r,1\rangle)$.
In addition $\eta_{\tau}\eta_{\tau^*}=1$ by \cite[Proposition~77]{me12}, thus
$\vartheta(s,c,\tau,\psi)\vartheta(1-s,c,\widetilde{\tau},\psi)=\tau(\langle (-1)^r,1\rangle)^N$.
Together with \eqref{eq:composition of intertwining operators} we obtain
$\gamma(s,\pi\times\tau,\psi)\gamma(1-s,\pi\times\widetilde{\tau},\psi)=\tau(\langle (-1)^r,1\rangle)^N$.
Then \eqref{eq:functional equation} follows from
\eqref{eq:self-duality} and \eqref{eq:dependence on psi}.
For $G=\GL_c$ apply \eqref{eq:GL factors} and \eqref{eq:RS functional equation}.

\section{$L$- and $\epsilon$-factors}\label{L and epsilon factors}
We define the local factors using Theorem~\ref{theorem:ten commendments}, as in \S~\ref{RS complete L factors}, following \cite{Sh3} (see also \cite{LR,CFK}).
Let $\pi$ and $\tau$ be as in \S~\ref{The local integrals}.
Assume $F$ is non-archimedean. For $G=\GL_c$ and when $\pi$ and $\tau$ are essentially tempered, define the
$L$- and $\epsilon$-factors as the products of $L$- and $\epsilon$-factors for $\pi\times\tau_0$ and $\widetilde{\pi}\times\tau_0$
defined in \S~\ref{RS complete L factors} (see \eqref{eq:GL factors}). For $G=\Sp_c$ and tempered $\pi$ and $\tau$, define
$L(s,\pi\times\tau)=1/P(q^{-s})$ for $P(X)\in\C[X]$ with $P(0)=1$ such that the zeroes of $P(q^{-s})$ coincide with those of $\gamma(s,\pi\times\tau,\psi)$.
By \eqref{eq:dependence on psi}, $L(s,\pi\times\tau)$ is independent of $\psi$. We then apply \eqref{eq:functional equation} to define
$\epsilon(s,\pi\times\tau,\psi)\in \C[q^{-s},q^s]^*$. The essentially tempered case (for $\tau$) is defined
via \eqref{eq:Unramified twisting}.

In general we utilize the Langlands classification:
Let $(\otimes_{i=1}^{d'}\sigma_i)\otimes\pi'$ be Langlands' data for $\pi$, where $\pi'$ is omitted when $G=\GL_c$ or $l=n$ using the notation of
\eqref{eq:multiplicativity I};
and let $\otimes_{j=1}^d\tau_j$ be Langlands' data for $\tau_0$ ($\tau=\tau_0\otimes\tau_0^*$ for $\GL_c$, $\tau=\tau_0$ for $\Sp_c$). Then
\begin{align}\label{eq:RS L function essentially tempered}
&L(s,\pi\times\tau)=\prod_{i,j}L(s,\sigma_i\times\tau_j)L(s,\widetilde{\sigma}_i\times\tau_j)\prod_jL(s,\pi'\times\tau_j).
\end{align}
Here the product $\prod_jL(s,\pi'\times\tau_j)$ is included only when $G=\Sp_c$ and either $\pi'$ appears in the inducing data of $\pi$,
or $\pi'$ does not appear but $m$ is odd in which case $L(s,\pi'\times\tau_j)$ is taken to be $L(s,\tau_j)$. The factor $\epsilon(s,\pi\times\tau,\psi)$ is defined using the same product of $\epsilon$-factors.

When data are unramified, by Lemma~\ref{lemma:unr unitary Satake}, \eqref{eq:unramified factors}
and the definitions we deduce that $L(s,\pi\times\tau)$ equals the
unramified $L$-function of \S~\ref{unr L functions} and  $\epsilon(s,\pi\times\tau,\psi)=1$.

For $F=\C$ define $L(s,\pi\times\tau)=L(s,\mathrm{st}\circ\varphi)$ and
$\epsilon(s,\pi\times\tau,\psi)=\epsilon(s,\mathrm{st}\circ\varphi,\psi)$ with the notation of Theorem~\ref{theorem:ten commendments}.

According to \eqref{eq:multiplicativity I}, \eqref{eq:multiplicativity II}, \eqref{eq:Unramified twisting} and \eqref{eq:Archimedean property}, over any local field,
\begin{align}\label{eq:final local functional equation}
\gamma(s,\pi\times\tau,\psi)=\epsilon(s,\pi\times\tau,\psi)\frac{L(1-s,\widetilde{\pi}\times\widetilde{\tau})}{L(s,\pi\times\tau)}.
\end{align}

Finally we define the complete $L$-function for $G=\Sp_c$. Let $\pi$ and $\tau$ be genuine cuspidal representations of $G^{(m)}(\A)$ and $\GL_k^{(m)}(\A)$. Define $L(s,\pi\times\tau)=\prod_{\nu}L(s,\pi_{\nu}\times\tau_{\nu})$.
\begin{theorem}\label{theorem:complete Sp L function}
The $L$-function $L(s,\pi\times\tau)$ is absolutely convergent for $\Real(s)\gg0$, admits meromorphic continuation to $\C$ and satisfies a functional equation $L(s,\pi\times\tau)=\epsilon(s,\pi\times\tau)L(1-s,\widetilde{\pi}\times\widetilde{\tau})$.
\end{theorem}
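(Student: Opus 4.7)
The plan is to mirror the proof of Theorem~\ref{theorem:complete RS GL L function}, using the global doubling integral of \S~\ref{The global integral} as the analytic input in place of the Rankin--Selberg integral used there. The three assertions will be extracted respectively from the Eulerian identity \eqref{eq:global integral computation}, the meromorphic continuation of the Eisenstein series $E(\cdot;s,f)$, and the crude functional equation \eqref{eq:global property} combined with the local functional equation \eqref{eq:final local functional equation}.

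For absolute convergence in $\Real(s)\gg0$, I would read \eqref{eq:global integral computation} backwards: by \cite[Theorem~63]{me12} the global integral $Z(s,\varphi_1,\varphi_2,f)$ is absolutely convergent away from the poles of the series, and $Z_S(s,\omega,f)$ can be made a nonzero constant by an appropriate choice of decomposable data. The denominator $b^S(s,c,\tau)$ is a finite product of partial $L$-functions of $\tau$ twisted by symmetric and exterior square characters (and the standard $L$-factor of $\tau$ when $m$ is odd), each of which is absolutely convergent and nonzero on a suitably deep right half-plane; so the identity forces convergence of $L^S(s,\pi\times\tau)$ there. The finitely many local factors at places of $S$ are manifestly of the appropriate analytic type--rational in $q_\nu^{-s}$ at non-archimedean $\nu$ and products of archimedean Gamma factors at $\nu\mid\infty$--so adjoining them yields convergence of the Euler product $L(s,\pi\times\tau)$.

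The meromorphic continuation proceeds by the same mechanism: the Eisenstein series extends meromorphically to $\C$, and $Z_S(s,\omega,f)$ can be arranged to be holomorphic and nonzero in a neighborhood of any prescribed $s_0$, so \eqref{eq:global integral computation} delivers the meromorphic continuation of $L^S(s,\pi\times\tau)$ and hence of $L(s,\pi\times\tau)$. For the functional equation I multiply both sides of \eqref{eq:global property} by $\prod_{\nu\in S}L(s,\pi_\nu\times\tau_\nu)$, and apply \eqref{eq:final local functional equation} at each $\nu\in S$ to rewrite every local $\gamma$-factor as an $\epsilon$-factor times a ratio of $L$-factors; the $L$-ratios combine with $L^S$ on either side to yield the complete $L$-functions, leaving $L(s,\pi\times\tau)=\epsilon(s,\pi\times\tau)L(1-s,\widetilde{\pi}\times\widetilde{\tau})$ with $\epsilon(s,\pi\times\tau):=\prod_\nu\epsilon(s,\pi_\nu\times\tau_\nu,\psi_\nu)$, a product that is essentially finite since the local $\epsilon$-factor is trivial at unramified places.

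The main bookkeeping obstacle will be to verify that the resulting global $\epsilon$-factor is independent of the auxiliary global character $\psi$. A change $\psi\mapsto\psi_b$ with $b\in F^*$ multiplies each local factor according to \eqref{eq:dependence on psi}, and the desired invariance amounts to showing that the product over all places of the quantities $\eta_{\tau_\nu}(\langle b,1\rangle)^c$, the absolute value factor $|b|_\nu^{kN(s-1/2)}$, and, when $m$ is odd, $\tau_\nu(\langle b^r,1\rangle)\eta_{\varepsilon\otimes1}(\langle b,1\rangle)^k$, is trivial. This will follow along the same automorphy lines already used in the discussion after Theorem~\ref{theorem:RS ten commendments}: the product formula $\prod_\nu|b|_\nu=1$, the Fourier coefficient realization \eqref{eq:factorizable rk c functional} (which gives $\prod_\nu\eta_{\tau_\nu}(\langle b,1\rangle)=1$), and, for the standard factor in the odd-$m$ case, an application of \eqref{eq:RS global property} to $\tau$ and of the product formula to the genuine central characters.
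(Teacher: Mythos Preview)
Your proposal is correct and follows the same route as the paper: convergence and meromorphic continuation come from the global doubling integral and its Eulerian expansion \eqref{eq:global integral computation}, and the functional equation from \eqref{eq:global property} combined with \eqref{eq:final local functional equation} and the $\psi$-dependence \eqref{eq:dependence on psi}. One small gap to patch in the meromorphic continuation step: from \eqref{eq:global integral computation} you need not only the continuation of the Eisenstein series and of $Z_S(s,\omega,f)$ but also of $b^S(s,c,\tau)$; the paper invokes \cite{Gao2018} (the Langlands--Shahidi theory for Brylinski--Deligne extensions) at exactly this point, and you should do the same, since the meromorphic continuation of these partial symmetric/exterior-square $L$-functions of $\tau$ on the covering is not available from elementary considerations.
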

\begin{proof}
The proof is similar to the proof of Theorem~\ref{theorem:complete RS GL L function}.
Briefly, the convergence and meromorphic continuation follow from the properties of the global integral \eqref{global1}, \eqref{eq:global integral computation} and the local integrals (see \S~\ref{The local integrals}), note that $b^S(s,c,\tau)$ (in \eqref{eq:global integral computation}) admits meromorphic continuation by \cite{Gao2018}. The functional equation follows
from \eqref{eq:global property}, \eqref{eq:final local functional equation} and \eqref{eq:dependence on psi}.
\end{proof}

\section{Shimura lift from $\Sp_{c}^{(2)}(\A)$ to $\GL_{N}(\A)$}\label{functorial lift}
Let $G=\Sp_c$, $N=c$ ($c=2n$).
Let $F$ be a totally imaginary number field with a ring of adeles $\A$, and $S_{\infty}$ be the set of complex places.
Let $\pi$ be a genuine cuspidal representation of $G^{(2)}(\A)$.
\begin{theorem}\label{theo:globl functorial lift}
The representation $\pi$ of $G^{(2)}(\A)$ has a Shimura lift $\Pi$ to $\GL_{N}(\A)$.
\end{theorem}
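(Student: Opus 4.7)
\emph{Plan of proof.} The approach is to apply the Converse Theorem of Cogdell and Piatetski-Shapiro \cite{CPS3} to construct an automorphic representation $\Pi$ of $\GL_{N}(\A)$ and then verify that it is a Shimura lift of $\pi$. Since $m=2$ gives $r=1$, the results of \S~\ref{The integrals}--\S~\ref{L and epsilon factors} are unconditional, and via the bijection of \S~\ref{covering GL r=1} each cuspidal representation $\tau_{0}$ of $\GL_{k}(\A)$ lifts uniquely to a genuine cuspidal representation $\tau = \gamma_{\psi'} \otimes \tau_{0}$ of $\GL_{k}^{(2)}(\A)$. For every $k$ with $1 \leq k \leq N-1$ and every such $\tau_{0}$, set
\begin{align*}
L(s, \pi, \tau_{0}) := L(s, \pi \times \tau), \qquad \epsilon(s, \pi, \tau_{0}, \psi) := \epsilon(s, \pi \times \tau),
\end{align*}
using Theorem~\ref{theorem:complete Sp L function}.

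The first step is to verify the hypotheses of the Converse Theorem: each $L(s, \pi, \tau_{0})$ must be entire, bounded in vertical strips of finite width, and satisfy the standard functional equation $L(s,\pi,\tau_{0}) = \epsilon(s,\pi,\tau_{0},\psi) L(1-s,\widetilde{\pi},\widetilde{\tau}_{0})$. The functional equation is supplied directly by Theorem~\ref{theorem:complete Sp L function}. Entirety will follow by combining Corollary~\ref{corollary:partial L function holomorphic} (which pinpoints the potential poles of the partial Rankin--Selberg $L$-function for $\tau \times \widetilde{\tau}$) with the definitions of the ramified local $L$-factors via the Langlands classification, ensuring that the completed $L$-function is holomorphic for every $\tau_{0}$ of the relevant rank. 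Boundedness in vertical strips will be derived from the Eulerian integral expression \eqref{eq:global integral computation} together with the moderate growth of the Eisenstein series defining it, following the standard argument of Gelbart--Shahidi. To handle ramified local factors one uses the sharpened form of the Converse Theorem that allows a fixed finite set $S$ of exceptional places, twisting by sufficiently ramified Hecke characters to neutralize the problematic local factors inside $S$.

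The second step is to construct the candidate lift $\Pi = \otimes'_{\nu} \Pi_{\nu}$. At each unramified finite place $\nu$, take $\Pi_{\nu}$ to be the unramified representation of $\GL_{N}(F_{\nu})$ whose Satake parameter is obtained from $t_{\pi_{\nu},1}$ via the embedding $\Sp_{N}(\C) \hookrightarrow \GL_{N}(\C)$; at each complex place, take $\Pi_{\nu}$ to be the lift to $\GL_{N}(\C)$ of $\theta_{\psi_{\nu}}(\pi_{\nu})$, consistent with \eqref{eq:Archimedean property}; at remaining places in $S$ choose $\Pi_{\nu}$ to be any irreducible admissible representation. By the definitions of the local $L$- and $\epsilon$-factors in \S~\ref{L and epsilon factors}, these local choices give $L^{S}(s, \Pi \times \tau_{0}) = L^{S}(s, \pi, \tau_{0})$ and matching $\epsilon$-factors outside $S$, for every admissible twist $\tau_{0}$. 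The Converse Theorem then yields an automorphic representation $\Pi'$ of $\GL_{N}(\A)$ that agrees with $\Pi$ at every place outside $S$. Since $\Pi_{\nu}$ is, by construction, the prescribed Shimura lift of $\pi_{\nu}$ at every complex place and at almost every unramified finite place, $\Pi'$ is a Shimura lift of $\pi$.

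The principal technical obstacle is verifying the analytic ``niceness'' of $L(s,\pi,\tau_{0})$ over the full range of twists required by \cite{CPS3}. Entirety is the most delicate: one must rule out poles coming from the Eisenstein series built from $\mathcal{E}_{\tau}$ as well as from the intertwining operator in \eqref{intertwining operator on unramified}; the unramified normalizing factor $b^{S}(s,c,\tau)$ combined with the poles identified in Corollary~\ref{corollary:partial L function holomorphic} must be matched against the local $L$-factors at $S$ defined via the Langlands classification. Boundedness in vertical strips, although standard in the linear doubling setting of \cite{CFK}, requires care in the covering setup because the residual Eisenstein series $\mathcal{E}_{\tau}$ contributes extra growth; one controls this by combining a truncation argument with the meromorphic continuation of $M(s,\mathcal{E}_{\tau},w_{P})$. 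These are the main hurdles; the overall architecture is parallel to the linear case treated in \cite{CFK}, which we may use as a blueprint.
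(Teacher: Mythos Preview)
Your overall architecture matches the paper's: build a candidate $\Pi$ place-by-place, then verify the hypotheses of the (twisted) Converse Theorem via the complete $L$-function of Theorem~\ref{theorem:complete Sp L function}. However, two of your key steps have genuine gaps.

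\textbf{Matching local factors at bad places.} You say that at $\nu\in S$ one picks $\Pi_{\nu}$ arbitrarily and then ``twisting by sufficiently ramified Hecke characters neutralizes the problematic local factors.'' But the twisted Converse Theorem still requires $L(s,\Pi\times\tau')=L(s,\pi\times\tau)$ and the analogous $\epsilon$-identity for every $\tau'$ in the twisting set, including the contributions at $\nu\in S_{\pi}$. You never explain why these local factors agree. The paper's Proposition~\ref{proposition:L funcs pi and Pi} is exactly this step: at $\nu\in S_{\pi}$ it invokes the theta correspondence result of Gan--Savin \cite{WGS} to identify $\gamma(s,\pi_{\nu}\times\tau_{\nu},\psi_{\nu})$ with an $\SO_{c+1}\times\GL_1$ $\gamma$-factor, and then applies the \emph{stability} results of \cite{RS,CKPS} so that for $\eta_{\nu}$ sufficiently ramified this coincides with $\gamma(s,\Pi_{\nu}\times\eta_{\nu}\chi,\psi_{\nu})$ regardless of the arbitrary choice of $\Pi_{\nu}$. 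Without this stability input your argument does not close.

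\textbf{Entirety.} Your citation of Corollary~\ref{corollary:partial L function holomorphic} is misplaced: that corollary concerns the $\GL_c^{(m)}\times\GL_k^{(m)}$ Rankin--Selberg partial $L$-function, not the $\Sp_c^{(2)}\times\GL_k^{(2)}$ doubling $L$-function you need. The paper's route is quite different. It shows (i) $b^{S}(s,c,\tau)$ is entire because the twist by the highly ramified $\eta$ forces $\tau'$ to be non-self-dual (via \cite{KimShahidi2002}); (ii) the global integral $Z(s,\varphi_1,\varphi_2,f)$ is holomorphic in $\Real(s)\geq 1/2$ by analyzing possible poles of the Eisenstein series through the Langlands spectral decomposition \cite[\S~III.3]{MW2}, ruling out a square-integrable leading term by a central-character argument that again uses the ramification of $\eta$; and (iii) each pole of the unramified or archimedean local $L$-factor is realized by some local integral with an entire section (the analogue of \cite[Lemma~58]{CFK}). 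None of these ingredients is captured by your sketch.

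Two minor points: for $m=2$ one has $m\equiv 2\,(4)$, so the unramified parametrization uses $\vartheta=\gamma_{\psi}$, not $\vartheta=1$; and for boundedness in vertical strips the paper relies on the appendix (Theorem~\ref{theorem:Muller}), which extends M\"uller's finite-order bounds to covering groups --- this is more than a routine truncation argument.
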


To prove the theorem, we construct the candidate lift $\Pi$ as in \cite{CKPS,CFK}, then verify the
conditions of the Converse Theorem of \cite[\S~2]{CKPS2} (the twisted version of \cite{CPS3}).

Fix a nonempty finite set of finite places $S_{\pi}$ such that
for all finite $\nu\notin S_{\pi}$, $\pi_{\nu}$ is unramified.
We use $\vartheta=\gamma_{\psi}$ to parameterize $\pi$ (see \S~\ref{unr L functions}).
For a finite $\nu\notin S_{\pi}$, $\pi_{\nu}$ is a constituent of a genuine unramified representation $\Ind_{\widetilde{B}_{G}(F_{\nu})}^{G^{(2)}(F_{\nu})}(\otimes_{i=1}^n\varepsilon\otimes\vartheta\mu_i)$ and
$\Pi_{\nu}$ is the irreducible unramified constituent of $\Ind_{B_{\GL_N}(F_{\nu})}^{\GL_N(F_{\nu})}(\otimes_{i=1}^n\mu_i^2\otimes\mu_i^{-2})$.
For $\nu\in S_{\infty}$, let $\Pi_{\nu}$ be the local functorial lift (to $\GL_N(F_{\nu})$) of
the representation $\theta(\pi_{\nu})$ attached to $\pi_{\nu}$ by the theta correspondence
(for $\theta(\pi_{\nu})$ see \cite{Howe1989,AdamsBarbasch1995}; for the lift see \cite{Bo,La3} and \cite[\S~5.1]{CKPS}).
For $\nu\in S_{\pi}$, let $\Pi_{\nu}$ be an arbitrary irreducible representation of $\GL_N(F_{\nu})$ with a trivial central character.
Put $\Pi=\otimes_{\nu}'\Pi_{\nu}$. The central character of $\Pi$ is trivial.

The product $L(s,\Pi)$ is absolutely convergent for $\Real(s)\gg0$, because if $\tau_0$ is the ``standard" representation of $\GL_1^{(2)}(\A)$ defined in \S~\ref{RS complete L factors} (i.e., $\tau_{0,\nu}$ corresponds to
$\varepsilon\otimes\vartheta_{\nu}$), $L^{S}(s,\Pi)=L^{S}(s,\pi\times\tau_0)$ for $S=S_{\pi}\cup S_{\infty}$.

Let $\eta$ be an automorphic character of $\A^*$, which is sufficiently highly ramified over all $\nu\in S_{\pi}$, and let $\mathscr{A}_0(S_{\pi},\eta)$ be the set of cuspidal representations $\tau'$ of $\GL_k(\A)$ such that for $\nu\in S_{\pi}$,
$\tau'_{\nu}$ is the twist of an unramified representation by $\eta_{\nu}$, and $1\leq k<N$. For $\tau'\in\mathscr{A}_0(S_{\pi},\eta)$, let $\tau=\vartheta\otimes\tau'$ be the genuine cuspidal representation of $\GL_k^{(2)}(\A)$ corresponding to $\tau'$ (see \S~\ref{covering GL r=1}). For each $\nu\in S_{\pi}$, if $\tau'_{\nu}=\eta_{\nu}\Ind_{B_{\GL_k}(F_{\nu})}^{\GL_k(F_{\nu})}(\otimes_{i=1}^k\chi_{\nu,i})$ where $\chi_{\nu,i}$ are unramified, $\vartheta_{\nu}\otimes\tau'_{\nu}$ is either unramified if $|2|=1$, or ramified when $|2|<1$ but the ramification level of $\vartheta_{\nu}\eta_{\nu}\chi_{\nu,i}$ still depends only on $\eta_{\nu}$, because $\vartheta_{\nu}$ is fixed independently of $\tau'$.
\begin{proposition}\label{proposition:L funcs pi and Pi}
$L(s,\pi\times\tau)=L(s,\Pi\times\tau)$, $\epsilon(s,\pi\times\tau)=\epsilon(s,\Pi\times\tau)$ and
$L(1-s,\widetilde{\pi}\times\widetilde{\tau})=L(1-s,\Pi^{\vee}\times\tau^{\vee})$. Moreover,
the $L$-factors for $\nu\in S_{\pi}$ are trivial.
\end{proposition}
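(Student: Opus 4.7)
The plan is to verify the three claimed identities place by place, dividing the places of $F$ into three classes: finite $\nu \notin S:=S_{\pi}\cup S_{\infty}$, archimedean $\nu$, and $\nu\in S_{\pi}$. At a place $\nu\notin S$, both $\pi_{\nu}$ and $\tau_{\nu}$ are unramified (the latter because $\eta$ is unramified outside $S_{\pi}$ and $\tau'\in\mathscr{A}_0(S_{\pi},\eta)$). The local $L$-factor $L(s,\pi_{\nu}\times\tau_{\nu})$ is defined in \S\ref{unr L functions} through $t_{\pi_{\nu},\vartheta}\otimes t_{\tau_{\nu},\vartheta}$, and the construction of $\Pi_{\nu}$ at almost all finite places is precisely arranged so that the image of $t_{\pi_{\nu},\vartheta}$ under the embedding $\Sp_{N}(\C)<\GL_{N}(\C)$ is the Satake parameter of $\Pi_{\nu}$. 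Hence $L(s,\pi_{\nu}\times\tau_{\nu})=L(s,\Pi_{\nu}\times\tau_{\nu})$; the $\epsilon$-factors are trivially $1$; and the contragredient identity follows from the explicit description of $t_{\widetilde{\pi}_{\nu},\vartheta}$ and $t_{\widetilde{\tau}_{\nu},\vartheta}$ in \S\ref{dual reps}.

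At an archimedean $\nu$ we have $F_{\nu}=\C$ and $r=1$ (since $m=2$). Property \eqref{eq:Archimedean property} identifies $\gamma(s,\pi_{\nu}\times\tau_{\nu},\psi_{\nu})$ with the Artin factor $\epsilon(s,\mathrm{st}\circ\varphi,\psi_{\nu})L(1-s,\mathrm{st}^{\vee}\circ\varphi)/L(s,\mathrm{st}\circ\varphi)$, where $\varphi:\C^*\to \Sp_{N}(\C)\times\GL_{k}(\C)$ is attached via Langlands' parameterization to $\theta(\pi_{\nu})\otimes\tau_{\nu}$. By the archimedean definition of local factors in \S\ref{L and epsilon factors}, $L(s,\pi_{\nu}\times\tau_{\nu})=L(s,\mathrm{st}\circ\varphi)$ and $\epsilon(s,\pi_{\nu}\times\tau_{\nu},\psi_{\nu})=\epsilon(s,\mathrm{st}\circ\varphi,\psi_{\nu})$. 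Since $\Pi_{\nu}$ is defined as the local functorial lift of $\theta(\pi_{\nu})$ to $\GL_{N}(\C)$, the standard Rankin--Selberg factors for $\Pi_{\nu}\times\tau_{\nu}$ coincide with these Artin factors. The contragredient identity follows in the same way, using $\widetilde{\pi}_{\nu}=\pi_{\nu}$ for $\Sp^{(2)}$ and $\widetilde{\tau}_{\nu}=\tau_{\nu}^*$.

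The main obstacle is the case $\nu\in S_{\pi}$, where $\Pi_{\nu}$ is an arbitrary irreducible representation of $\GL_{N}(F_{\nu})$ with trivial central character, while $\tau_{\nu}=\vartheta_{\nu}\otimes\tau'_{\nu}$ with $\tau'_{\nu}$ a twist of an unramified principal series by the sufficiently highly ramified character $\eta_{\nu}$. The plan is to exploit a stability property at both $\gamma$-factors. On the $\GL_{N}\times\GL_{k}$ side, classical stability of Rankin--Selberg $\gamma$-factors under highly ramified twists (\cite{JPSS,JS3}) implies $L(s,\Pi_{\nu}\times\tau_{\nu})=1$ and that $\epsilon(s,\Pi_{\nu}\times\tau_{\nu},\psi_{\nu})$ depends only on $\eta_{\nu}$, $\psi_{\nu}$ and the (trivial) central character of $\Pi_{\nu}$. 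For the doubling side the analogous stability of $\gamma(s,\pi_{\nu}\times\tau_{\nu},\psi_{\nu})$ is needed; the plan is to derive it from multiplicativity \eqref{eq:multiplicativity I}--\eqref{eq:GL factors} together with Theorem~\ref{theorem:RS ten commendments}, which reduces the question to stability of the $\GL_{1}^{(2)}\times\GL_{1}^{(2)}$ $\gamma$-factor; this is available since \S\ref{covering GL r=1} converts the latter into a linear $\GL_{1}\times\GL_{1}$ $\gamma$-factor, to which the classical argument applies. Sufficient high ramification of $\eta_{\nu}$ forces the tempered constituents of the Langlands data to contribute trivial $L$-factors on both sides, so $L(s,\pi_{\nu}\times\tau_{\nu})=L(s,\Pi_{\nu}\times\tau_{\nu})=1$ and the equality of $\epsilon$-factors then follows from the local functional equation \eqref{eq:final local functional equation}; the contragredient identity is obtained by replacing $\tau$ with $\widetilde{\tau}$ throughout. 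The hardest point is controlling the precise dependence of the stable doubling $\epsilon$-factor on the central character data so that it agrees with the stable Rankin--Selberg $\epsilon$-factor for $\Pi_{\nu}\times\tau_{\nu}$; this is what determines the required triviality of the central characters of the $\Pi_{\nu}$ at $\nu\in S_{\pi}$.
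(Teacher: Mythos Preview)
Your treatment of the unramified and archimedean places is essentially fine, but there is a genuine gap at $\nu\in S_{\pi}$. Your plan is to obtain stability of $\gamma(s,\pi_{\nu}\times\tau_{\nu},\psi_{\nu})$ by using multiplicativity \eqref{eq:multiplicativity I}--\eqref{eq:GL factors} to reduce to $\GL_{1}^{(2)}\times\GL_{1}^{(2)}$. Multiplicativity in $\tau$ does reduce to $k=1$, but multiplicativity in $\pi$ \eqref{eq:multiplicativity I} only applies when $\pi_{\nu}$ is a quotient of a representation induced from a \emph{proper} parabolic. If $\pi_{\nu}$ is supercuspidal on $\Sp_{c}^{(2)}(F_{\nu})$ there is no such reduction, and you are left with a genuine $\Sp_{c}^{(2)}\times\GL_{1}^{(2)}$ $\gamma$-factor for which no stability has been established in this paper. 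So your reduction to $\GL_{1}^{(2)}\times\GL_{1}^{(2)}$ fails precisely in the supercuspidal case, and with it the conclusion that $L(s,\pi_{\nu}\times\tau_{\nu})=1$ and the matching of $\epsilon$-factors.

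The paper avoids this by transferring the problem to a linear group via the local theta correspondence. For $k=1$ it invokes \cite[Theorem~1.3]{WGS} to identify $\gamma(s,\pi_{\nu}\times\tau_{\nu},\psi_{\nu})$ with the $\SO_{c+1}\times\GL_{1}$ doubling $\gamma$-factor $\gamma_{\SO_{c+1}}(s,\theta_{\psi}(\pi_{\nu})\times\eta_{\nu}\chi,\psi_{\nu})$, for which the stability results of \cite{RS,CKPS} already give equality with $\gamma(s,\Pi_{\nu}\times\eta_{\nu}\chi,\psi_{\nu})$; the passage to general $k$ then follows \cite[Lemma~51]{CFK}. This route gives an exact equality of $\gamma$-factors (not merely ``stability up to unspecified central-character data''), which is what makes the $\epsilon$-factor matching go through. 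The remark following the proof notes that one could instead extend the \cite{RS} stability argument directly to $\Sp_{c}^{(2)}\times\GL_{1}^{(2)}$, but that is a separate analytic input, not a consequence of multiplicativity.

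A smaller point: your claim that $\tau_{\nu}$ is unramified at every finite $\nu\notin S_{\pi}$ is not justified. The definition of $\mathscr{A}_{0}(S_{\pi},\eta)$ constrains $\tau'_{\nu}$ only for $\nu\in S_{\pi}$; at other finite places $\tau'_{\nu}$ can be ramified. The paper handles all $\nu\notin S_{\pi}$ (including those where $\tau_{\nu}$ is ramified) by appealing to the proof of \cite[Lemma~48]{CFK}, using \eqref{eq:Unramified twisting}--\eqref{eq:multiplicativity II}, \eqref{eq:final local functional equation}, \eqref{eq:RS L function essentially tempered}, Lemma~\ref{lemma:unr unitary Satake}, and the fact that for $\GL_{c}^{(2)}\times\GL_{k}^{(2)}$ the doubling $\gamma$-factor coincides with the linear one of \cite{CFK}.
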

\begin{proof}
The equality between the local $L$- and $\epsilon$-factors for $\nu\notin S_{\pi}$ follows
from the proof of \cite[Lemma~48]{CFK}. The proof extends to $G^{(2)}$ using
\eqref{eq:Unramified twisting}--\eqref{eq:multiplicativity II}, \eqref{eq:final local functional equation},
the definitions in \S~\ref{L and epsilon factors} and in particular \eqref{eq:RS L function essentially tempered},
Lemma~\ref{lemma:unr unitary Satake}, and because the doubling $\gamma$-factor for $\GL_c^{(2)}(F_{\nu})\times\GL_k^{(2)}(F_{\nu})$
coincides with the linear one from \cite{CFK} (thereby implying \cite[(7.2)]{CFK})
since $\GL_c^{(2)}(F_{\nu})$ is split over $\GL_c(F_{\nu})$.
The similar equality for $\nu\in S_{\infty}$ holds by the definitions and
\eqref{eq:Archimedean property}.

Let $\nu\in S_{\pi}$. We omit $\nu$ and use local notation, i.e., $\pi=\pi_{\nu}$,
and recall $\vartheta=\gamma_{\psi}$. Let $\theta_{\psi}(\pi)$ be the irreducible representation
of $\SO_{c+1}$ given by the theta correspondence, where $\SO_{c+1}$ is either split or non-split
(\cite[Theorem~1.1]{WGS}, the assumption that the residue characteristic is odd was removed by \cite{GanTakeda2016}). Also let $\pi_0$ be an arbitrary irreducible representation of $\SO_{c+1}$.
Assume $k=1$. Then $\tau=\vartheta\otimes\eta\chi$ where $\chi$ is an unramified
quasi-character of $F^*$. According to \cite[Theorem~1.3]{WGS},
$\gamma(s,\pi\times\tau,\psi)$ coincides with the $\gamma$-factor $\gamma_{\SO_{c+1}}(s,\theta_{\psi}(\pi)\times\eta\chi,\psi)$ for
$\SO_{c+1}\times\GL_1$ defined in \cite{LR,RS}. For the latter, the stability results of \cite{RS,CKPS}
imply $\gamma(s,\theta_{\psi}(\pi)\times\eta\chi,\psi)=\gamma(s,\Pi\times\eta\chi,\psi)$.
Now we proceed exactly as in \cite[Lemma~51]{CFK} to obtain the result on the $L$- and $\epsilon$-factors for all $k\geq1$.
\end{proof}
\begin{remark}
The theta correspondence does not seem to play a crucial role here: One can extend the arguments of
\cite{RS} to obtain stability for the $\Sp_{c}^{(2)}\times\GL_1^{(2)}$ $\gamma$-factor. In the generic setting
Zhang \cite{Zhang2017} proved stability when $q$ is odd for the Rankin--Selberg $\gamma$-factor from
\cite{me6}.
\end{remark}
Thus $L(s,\Pi\times\tau)=\epsilon(s,\Pi\times\tau)L(1-s,\Pi^{\vee}\times\tau^{\vee})$ follows from Proposition~\ref{proposition:L funcs pi and Pi} and Theorem~\ref{theorem:complete Sp L function}.
\begin{proposition}
$L(s,\Pi\times\tau)$ and $L(1-s,\Pi^{\vee}\times\tau^{\vee})$ are entire.
\end{proposition}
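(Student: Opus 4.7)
The plan is to exploit the doubling integral representation to control the poles of $L(s,\Pi\times\tau)$. By Proposition~\ref{proposition:L funcs pi and Pi}, it suffices to prove that $L(s,\pi\times\tau)$ and $L(1-s,\widetilde{\pi}\times\widetilde{\tau})$ are entire, and by the functional equation of Theorem~\ref{theorem:complete Sp L function} the two statements are equivalent; so I would focus on the former.

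The starting point is the key identity \eqref{eq:global integral computation}, which reads, for $S=S_\pi\cup S_\infty$,
\[
L^S(s,\pi\times\tau) \;=\; b^S(s,c,\tau)\,\frac{Z(s,\varphi_1,\varphi_2,f)}{Z_S(s,\omega,f)}.
\]
First I would note that the local integrals $Z_S(s,\omega,f)$ can be made nonzero outside a discrete set of $s$ by choosing data appropriately (using the local non-vanishing properties recorded in \S~\ref{The local integrals}), and that $b^S(s,c,\tau)$ is a product of partial Rankin--Selberg and symmetric/exterior square $L$-functions of $\tau$ (equivalently of the linear cuspidal $\tau'$ via \S~\ref{covering GL r=1}), whose analytic behaviour is standard (Jacquet--Shalika, Shahidi) and does not produce poles in the ranges where we need to rule them out. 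Combined with the triviality of the local $L$-factors at $\nu\in S_\pi$ furnished by Proposition~\ref{proposition:L funcs pi and Pi}, and the matching of archimedean factors at $S_\infty$ via the archimedean property \eqref{eq:Archimedean property}, the proof then reduces to showing that the global integral $Z(s,\varphi_1,\varphi_2,f)$ is entire for a suitable choice of section $f$.

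The heart of the argument is therefore to establish that the Eisenstein series $E(h;s,f)$ itself is entire once the auxiliary character $\eta$ is taken sufficiently highly ramified at $S_\pi$. The functional equation of the series \eqref{eq:func eq of series} shows that its potential poles arise from the normalising ratio $a^S(s,c,\tau)/b^S(s,c,\tau)$ together with local intertwining operators $M_\nu(s,\rho_c(\tau_\nu),w_P)$ at the places $\nu\in S_\pi$. At those places $\tau'_\nu$ is the twist of an unramified representation by the highly ramified $\eta_\nu$, which allows one to choose $f_\nu$ so that $M_\nu(s,\rho_c(\tau_\nu),w_P)f_\nu$ is entire; this is the standard stabilisation-by-ramification mechanism used in \cite{CKPS,CFK}. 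Together with the control of $a^S/b^S$ coming from the global Rankin--Selberg theory of $\tau$, this gives holomorphy of $E(h;s,f)$ at every $s\in\C$, hence holomorphy of $Z(s,\varphi_1,\varphi_2,f)$, and thus of $L^S(s,\pi\times\tau)$; multiplying by the trivial $S_\pi$-factors and the entire archimedean $L$-factors coming from \eqref{eq:Archimedean property} yields the claim.

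The hard part will be the holomorphy of the Eisenstein series in the unitary strip: one must verify that the poles predicted by the constant term along $P$ really are killed by the ramification at $S_\pi$. The reduction of this constant term to the linear situation on $\GL_{rkc}^{(2)}$, which by \S~\ref{covering GL r=1} is essentially linear once $\mu_{2m}\subset F^*$, should allow one to transcribe the linear argument from \cite{CKPS,CFK}; but the bookkeeping of the sections, splittings and $1$-cochains from \S~\ref{Global coverings} must be performed carefully so that the local choices of $f_\nu$ at $\nu\in S_\pi$ truly absorb the remaining local poles of the intertwining operator.
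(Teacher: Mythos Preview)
Your proposal has the right overall shape but contains a genuine gap and one outright false claim.

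\textbf{The false claim.} You write that the proof is completed by ``multiplying by \ldots\ the entire archimedean $L$-factors''. The archimedean $L$-factors are products of $\Gamma$-functions and are \emph{not} entire; they have infinitely many poles. Equation~\eqref{eq:Archimedean property} only identifies $\gamma(s,\pi_\nu\times\tau_\nu,\psi_\nu)$ with an Artin $\gamma$-factor; it says nothing about holomorphy of $L(s,\pi_\nu\times\tau_\nu)$. So even if you establish that the global integral and $b^S(s,c,\tau)$ are entire, the identity $b^S(s,c,\tau)Z(s,\varphi_1,\varphi_2,f)=L^S(s,\pi\times\tau)Z_S(s,\omega,f)$ does \emph{not} let you conclude that $L(s,\pi\times\tau)=L_S(s,\pi\times\tau)L^S(s,\pi\times\tau)$ is entire, because the local factors $L(s,\pi_\nu\times\tau_\nu)$ at $\nu\in S\setminus S_\pi$ (archimedean, and any finite $\nu\in S$ with $\pi_\nu$ unramified) can contribute poles.

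\textbf{The missing step.} The paper closes this gap with an additional local argument: for every $\nu\in S\setminus S_\pi$ and every potential pole $s_0$ with $\Real(s_0)\geq 1/2$ of $L(s,\pi_\nu\times\tau_\nu)$, one can choose an entire (and $\widetilde K_{H_\nu}$-finite at $\infty$) section $f_\nu$ so that $Z(s,\omega_\nu,f_\nu)$ has a pole at $s_0$ of at least the same order. Plugging such data into the Euler product on the right of \eqref{eq:global integral computation} and using that the left side is holomorphic forces cancellation, hence $L(s,\pi\times\tau)$ is holomorphic at $s_0$. This step is carried out by reducing (via the multiplicativity manipulations of \S~\ref{subsection:Multiplicativity I}) to Rankin--Selberg $\GL_l^{(2)}\times\GL_k^{(2)}$ integrals, where the covering is trivial and the linear results of \cite[Theorem~C.12, Lemma~58]{CFK} apply. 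Your outline never produces these local poles and so cannot conclude.

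\textbf{On the Eisenstein series.} Your mechanism for holomorphy of $E(h;s,f)$---choosing $f_\nu$ at $S_\pi$ so that $M_\nu(s,\rho_c(\tau_\nu),w_P)f_\nu$ is entire---is not what the paper does and is not obviously sufficient: poles of the series in the strip $1/2<\Real(s)<1$ need not be visible in a single local intertwining factor. The paper instead proves holomorphy for $\Real(s)\geq 1/2$ by analysing the constant term (reducing to the linear case since $r=1$), showing that any residual term would be square-integrable, and then invoking the Langlands--M{\oe}glin--Waldspurger criterion \cite[\S~III.3.1]{MW2}: the $N(H)$-th power of the central character of the cuspidal datum, restricted to a suitable subgroup of local units at a place of $S_\pi$, equals $(\vartheta_{\nu_0}\eta_{\nu_0})^{N(H)k}$, which is not positive real once $\eta_{\nu_0}$ is sufficiently ramified. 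This is where the high ramification of $\eta$ actually enters. Finally, the entireness of $b^S(s,c,\tau)$ is not ``standard'' in the vague sense you suggest; it uses that $\tau'$ is not self-dual together with \cite[Proposition~2.1]{KimShahidi2002}.
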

\begin{proof}
As in \cite[Theorem~68]{CFK} consider \eqref{eq:global integral computation} and multiply both sides by $b^S(s,c,\tau)$ ($S$, not $S_{\pi}$).
The function $b^S(s,c,\tau)$ is entire, because $b^S(s,c,\tau')$ is entire by \cite[Proposition~2.1]{KimShahidi2002} ($\tau'$ is not self-dual).

Next we see that $Z(s,\varphi_1,\varphi_2,f)$ is holomorphic in $\Real(s)\geq1/2$. This follows from \cite[Theorem~66]{CFK}, where we proved
the Eisenstein series is holomorphic given the condition on $\tau'$. Since $r=1$, the section $f$ belongs to the space of
$\Ind_{\widetilde{P}({\A})}^{H^{(2)}({\A})}(|\det|^{s-1/2}\mathcal{E}_{\tau})$ where $M_P=\GL_{kc}$ as in the linear case. Hence the constant term computation from \cite[Proposition~2.3]{JiangLiuZhang2013} is still valid, and the (global and local) intertwining operators
(see \cite[(10.2)]{CFK}) only involve the representation $\tau$ of $\GL_k^{(2)}$, so that their analytic results can be deduced
from the linear case. The remaining poles to consider (when we argue as in the proof of \cite[Theorem~66]{CFK}) are those in the strip
$1/2<\Real(s)<1$. Let $\rho(h)$ be the leading coefficient of the Laurent expansion of $E(h;s,f)$ about $s$ in this strip.
The computation of the exponents of the series (\cite[Lemma~65]{CFK}) still applies, because $r=1$ and using
\cite[(10.1)--(10.3), (10.6)]{CFK}, so that $\rho$ is square-integrable. Now we apply \cite[\S~III.3.1]{MW2} which is valid for a large class of covering groups including $\Sp_c^{(m)}$ for all $m$, as in \cite[Theorem~66]{CFK}.

Specifically, let $N(H)\geq1$ be the integer defined in \cite[\S~III.3.2]{MW2}, and $A_{\nu_0}$ be the subgroup of $a\in \A^*$ such that
$a_{\nu}=1$ for all $\nu\ne\nu_0$ and $a_{\nu_0}\in\mathcal{O}_{\nu_0}^{*2}$ for some $\nu_0\in S_{\pi}$. 
Assume the isomorphism $H(\A)[\sigma^{\mathrm{Rao}}]\rightarrow H(\A)[\rho_{2kc}]$ is given by $\langle h,\epsilon\rangle\mapsto\langle h,\phi(h)\epsilon\rangle$ where
$\phi\in\mathrm{C}^1(H(\A),\mu_2)$ (see \S~\ref{Global coverings} and also \eqref{eq:RS rho top rk}). Put $d(a)=\diag(I_{(c-1)k},aI_k)$. Then 
$A'=\{\langle d(a),\phi(d(a))\rangle:a\in A_{\nu_0}\}$ is a subgroup of $C_{\widetilde{M}_{(k^c)}(\A)}\cap H^{(2)}(\A)$. We can now assume 
$\tau\otimes\ldots\otimes\tau$ is a representation of $M_{(k^c)}(\A)[\sigma^{\mathrm{Rao}}]$. 
The restriction of the $N(H)$-th power of the central character of $\tau\otimes\ldots\otimes\tau$ to $A'$ is given by 
$\tau_{\nu_0}^{N(H)}(\langle a_{\nu_0}I_k,1\rangle)=(\vartheta_{\nu_0}\eta_{\nu_0})^{N(H)}(a_{\nu_0}^k)$, which is not positive real 
if $\eta_{\nu_0}$ is sufficiently highly ramified independently of $\tau$ ($\eta$ may depend on $\phi$). 
Therefore $\rho$ can not be square-integrable.

It remains to show that if $\pi_{\nu}$ is unramified or $\nu\in S_{\infty}$, each pole of $L(s,\pi_{\nu}\times\tau_{\nu})$
at $\Real(s)\geq1/2$ occurs with the same multiplicity in
$Z(s,\omega_{\nu},f_{\nu})$ for an entire section $f_{\nu}$, which is also $\widetilde{K}_{H_{\nu}}$-finite if $\nu\in S_{\infty}$.
We obtain this by applying \cite[Lemma~58]{CFK}, whose proof is applicable here. In more detail, for $\nu\notin S_{\infty}$,
\cite[Corollary~49]{CFK} is valid using Lemma~\ref{lemma:unr unitary Satake}, then the poles we seek are poles of
Rankin--Selberg $L$-functions for (generic) representations of $\GL_l^{(2)}(F_{\nu})\times \GL_k^{(2)}(F_{\nu})$. Because these coverings are
trivial, we can obtain the poles using \cite[Theorem~C.12]{CFK}. Note that the manipulations of the integrals in the proof of \cite[Lemma~58]{CFK} are similar to those
applied for the proof of \eqref{eq:multiplicativity I} and mostly involve unipotent subgroups. (Alternatively one can use
\cite[Lemma~53]{CFK} and \cite[Claims~54--55]{CFK} which are applicable here because
$\GL_k^{(2)}(F_{\nu})$ is split over $\GL_k(F_{\nu})$; the arguments mimic the unramified computation).
The archimedean results are valid because $F_{\nu}=\C$ and $\rho_c(\tau_{\nu})$ is $(rk,c)=(k,c)$ for $r=1$.
\end{proof}

\begin{proposition}\label{proposition:BVS}
$L(s,\Pi\times\tau)$ and $L(1-s,\Pi^{\vee}\times\tau^{\vee})$ are bounded in vertical strips of finite width.
\end{proposition}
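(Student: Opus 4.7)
By Proposition~\ref{proposition:L funcs pi and Pi} we have $L(s,\Pi\times\tau)=L(s,\pi\times\tau)$ and $L(1-s,\Pi^{\vee}\times\tau^{\vee})=L(1-s,\widetilde{\pi}\times\widetilde{\tau})$, and by the functional equation of Theorem~\ref{theorem:complete Sp L function} (combined with \eqref{eq:dependence on psi} and \eqref{eq:final local functional equation}) it is enough to bound $L(s,\pi\times\tau)$ in a vertical strip of finite width. The plan is to read this bound off the global integral identity \eqref{eq:global integral computation}, which we rewrite as
\begin{align*}
L^S(s,\pi\times\tau)=\frac{b^S(s,c,\tau)\,Z(s,\varphi_1,\varphi_2,f)}{Z_S(s,\omega,f)},
\end{align*}
and then multiply both sides by the local $L$-factors over $S=S_{\pi}\cup S_{\infty}$.

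First, I would show that the global integral $Z(s,\varphi_1,\varphi_2,f)$ is bounded in vertical strips of finite width away from the (finitely many) poles of the Eisenstein series $E(h;s,f)$. The required BVS property for Eisenstein series is the classical theorem of Gelbart--Shahidi for linear groups; since $r=1$ here, $\mathcal{E}_{\tau}$ is constructed from $\vartheta\otimes\rho_c(\tau')$ with $\tau'$ cuspidal on $\GL_k(\A)$, and $H^{(2)}(\A)$ is split over the subgroups relevant for Arthur's truncation, so the Müller--Gelbart--Shahidi argument (truncation $+$ meromorphic continuation of intertwining operators already used to establish \eqref{eq:func eq of series}) carries over verbatim. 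Pairing against the cusp forms $\varphi_1,\varphi_2$ and integrating over the compact unipotent quotient $U(F)\backslash U(\A)$ preserves this bound, since cusp forms are of rapid decay uniformly on Siegel sets.

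Next, $b^S(s,c,\tau)$ is a product of partial Rankin--Selberg and symmetric/exterior-square $L$-functions for the linear-like $\GL_k^{(2)}$-representation $\tau=\vartheta\otimes\tau'$; via the correspondence of \S~\ref{covering GL r=1} these are partial linear $\GL_k\times\GL_k$ and related $L$-functions, which are BVS by Gelbart--Shahidi. For the local factors over $S$: at $\nu\in S_{\pi}$ the $L$-factors are trivial by the last assertion of Proposition~\ref{proposition:L funcs pi and Pi}; at $\nu\in S_{\infty}$ we have $F_{\nu}=\C$ and $L_{\nu}(s,\pi_{\nu}\times\tau_{\nu})$ is a product of shifted $\Gamma$-factors, which satisfies BVS by Stirling's formula. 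It remains to show that, for suitable data, $Z_S(s,\omega,f)^{-1}$ is BVS in a strip. At the non-archimedean places in $S_{\pi}$ the integral $Z(s,\omega_{\nu},f_{\nu})$ is a rational function of $q_{\nu}^{-s}$ and, as noted in \S~\ref{The local integrals}, data may be chosen so that it is a nonzero constant. At $\nu\in S_{\infty}$ the continuous dependence of the meromorphic continuation on the input (see \S~\ref{The local integrals} and \cite[\S~6.13]{CFK}) permits choosing $\widetilde{K}_H$-finite data so that, after dividing by $L_{\nu}(s,\pi_{\nu}\times\tau_{\nu})$, the ratio is holomorphic and of moderate (in fact finite-order) growth in the strip; this is the archimedean counterpart of the rationality statement and is the form in which Gelbart--Shahidi use such integrals.

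The main obstacle is the first step, namely BVS for the Eisenstein series $E(h;s,f)$ on the covering group $H^{(2)}(\A)$ away from its poles. For linear groups this is standard; for covering groups one has to verify that Arthur's truncation operator is well defined on $H^{(2)}(\A)$ (which is straightforward since $H^{(2)}(\A)$ is split over rational points and over the unipotent radicals of rational parabolics) and that the known estimates for intertwining operators apply. Since only the representation $\mathcal{E}_{\tau}$ of the Siegel Levi enters and the intertwining operators involved are controlled by ratios of $a$ and $b$ factors as in \eqref{intertwining operator on unramified}, whose BVS properties are inherited from the linear $\GL_k$ setting via \S~\ref{covering GL r=1}, the Gelbart--Shahidi argument goes through without essential change, completing the proof.
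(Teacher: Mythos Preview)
Your proposal identifies the same technical core as the paper's proof---extending M\"uller's growth estimates for Eisenstein series to the covering group $H^{(2)}(\A)$, using that Arthur's truncation operator and the intertwining-operator bounds carry over (truncation is handled in \cite[\S I.2.13]{MW2} and \cite{Li2013}; the intertwining operators reduce to the linear $\GL_k$ case since $r=1$). This is exactly what the paper proves carefully in Appendix~\ref{Muller} (Theorem~\ref{theorem:Muller}).

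However, your logical packaging has a gap. You claim BVS for the Eisenstein series $E(h;s,f)$ directly, attributing this to Gelbart--Shahidi. What the M\"uller/truncation argument actually yields (see the statement inside the proof of Theorem~\ref{theorem:Muller}) is that $|q(s)E(h;s,f)|\leq c_1 e^{c_0|s|^l}$ for some entire $q$ of finite order---i.e., the series is of \emph{finite order}, not bounded in vertical strips. Gelbart--Shahidi and Gelbart--Lapid do not prove BVS for Eisenstein series; they prove BVS for $L$-functions by combining finite order with Phragm\'en--Lindel\"of. Consequently, bounding each factor in your product formula separately does not close: you get finite order for the global integral and for $b^S(s,c,\tau)$, not BVS, and a product of finite-order functions divided by $Z_S$ is again only finite-order.

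The paper's route is to feed the finite-order statement for $L^S(s,\pi\times\tau)$ (Theorem~\ref{theorem:Muller}) and the archimedean control of $L_{S_\infty}(s,\pi\times\tau)$ (from \cite[Lemma~50]{CFK}, valid here since $F_\nu=\C$) into the black-box \cite[Theorem~70, Corollary~71]{CFK}, which implements the Phragm\'en--Lindel\"of step of \cite[Proposition~1]{GL} using entireness (the previous proposition) and the functional equation. Your argument can be repaired by making exactly this change: replace the direct BVS claim for $E(h;s,f)$ with the finite-order bound, and then invoke Phragm\'en--Lindel\"of.
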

\begin{proof}
This follows as in \cite[Corollary~71]{CFK} from \cite[Theorem~70]{CFK}, which is based
on the arguments of \cite[Proposition~1]{GL}. The properties of $L_{S_{\infty}}(\pi\times\tau)$ needed for the proof still follow
from \cite[Lemma~50]{CFK}. It remains to know $L^S(s,\pi\times\tau)$ is a meromorphic function of finite order, which is
Theorem~\ref{theorem:Muller} in Appendix~\ref{Muller}.
\end{proof}

The existence of a Shimura lift $\Pi'$ now follows from the Converse Theorem \cite[\S~2]{CKPS2}, namely, there exists an irreducible automorphic representation $\Pi'$ of $\GL_N(\A)$ such that $\Pi'_{\nu}\cong\Pi_{\nu}$ for all $\nu\notin S_{\pi}$.
The proof of Theorem~\ref{theo:globl functorial lift} is complete.

When we combine this theorem with the characterization of the image of global functoriality of Ginzburg,
Rallis and Soudry \cite[\S~11]{RGS} using their descent method (for generic representations), and with the strong multiplicity one result for isobaric representations (\cite{JS2,JS1})
we conclude the analog of \cite[Corollary~7.1]{CKPS}, that no data is lost at the places of $S_{\pi}$:
\begin{corollary}\label{corollary:globl functorial lift}
If $\pi$ is globally $\psi$-generic such that its $\psi$-theta lift to $\SO_{N-1}(\A)$ vanishes, $\Pi$ is the unique lift of $\pi$.
\end{corollary}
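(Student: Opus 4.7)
The Converse Theorem used in the proof of Theorem~\ref{theo:globl functorial lift} produces an automorphic representation $\Pi'$ of $\GL_N(\A)$ with $\Pi'_{\nu}\cong\Pi_{\nu}$ for every $\nu\notin S_{\pi}$, but a priori it says nothing about the components at $\nu\in S_{\pi}$, where the $\Pi_{\nu}$ were chosen arbitrarily. To prove uniqueness I will construct a second, canonical isobaric avatar of the lift and then invoke strong multiplicity one for isobaric representations (\cite{JS2,JS1}) to identify it with $\Pi'$ at every place, thereby pinning down the $S_\pi$-components as well.

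The first step is to pass from $\pi$ to a generic cuspidal representation of an odd orthogonal group. By the Rallis tower property, the vanishing of the $\psi$-theta lift of $\pi$ to $\SO_{N-1}(\A)=\SO_{2n-1}(\A)$ forces the first occurrence of $\pi$ in the odd orthogonal $\psi$-theta tower to lie at $\SO_{N+1}(\A)=\SO_{2n+1}(\A)$; combined with the hypothesis that $\pi$ be globally $\psi$-generic, this makes $\sigma=\theta_{\psi}(\pi)$ a globally $\psi$-generic cuspidal representation of $\SO_{2n+1}(\A)$.

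The second step is to apply the functorial lift of Cogdell--Kim--Piatetski-Shapiro--Shahidi \cite{CKPS} to $\sigma$, whose image is characterized by the descent method of Ginzburg--Rallis--Soudry \cite[\S~11]{RGS}. This produces an isobaric automorphic representation $\Pi_0$ of $\GL_N(\A)$ which is the local functorial lift of $\sigma_{\nu}$ at every place. At each finite unramified $\nu$, the Satake parameter of $\Pi_{0,\nu}$ is the image of the Satake parameter of $\sigma_{\nu}$ under the embedding $\SO_{2n+1}(\C)\hookrightarrow\GL_N(\C)$, and by the compatibility of the unramified theta correspondence with the Satake isomorphism normalized via $\vartheta=\gamma_{\psi}$ (as recalled in \S~\ref{unr L functions}, and as also follows from \cite{WGS}), this coincides with the Satake parameter defining $\Pi_{\nu}$. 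Hence $\Pi_{0,\nu}\cong\Pi_{\nu}\cong\Pi'_{\nu}$ for all $\nu\notin S_{\pi}$. Strong multiplicity one for isobaric representations of $\GL_N(\A)$ then forces $\Pi'\cong\Pi_0$ globally, and the resulting local equivalence $\Pi'_{\nu}\cong\Pi_{0,\nu}$ at every place shows that the $S_\pi$-components of the lift are in fact forced, giving uniqueness of $\Pi$.

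The main obstacle is the unramified Satake-level compatibility statement at finite places, linking the unramified theta correspondence for the dual pair $(\Sp_{2n}^{(2)},\SO_{2n+1})$ with the normalization of the Shimura lift defined in \S~\ref{unr L functions}; this is what allows the identification $\Pi_{0,\nu}\cong\Pi_{\nu}$ at every $\nu\notin S_{\pi}$. The remaining ingredients---the Rallis tower argument for the nonvanishing and cuspidality of $\sigma=\theta_\psi(\pi)$, the CKPS functorial lift of $\sigma$ to $\GL_N(\A)$, and strong multiplicity one---are then assembled essentially formally to complete the proof.
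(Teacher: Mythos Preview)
Your proposal is correct and follows essentially the same strategy the paper indicates: the paper's one-sentence proof cites the descent characterization of the image of functoriality \cite[\S~11]{RGS} together with strong multiplicity one for isobaric representations \cite{JS2,JS1}, pointing to the analog \cite[Corollary~7.1]{CKPS}. Your route---passing through $\sigma=\theta_\psi(\pi)$ on $\SO_{2n+1}(\A)$, lifting via \cite{CKPS} to an isobaric $\Pi_0$, and then comparing with $\Pi'$ via strong multiplicity one---is a legitimate and natural unpacking of that sketch; the unramified Satake compatibility you isolate is indeed the substantive local input, and it is available from \cite{WGS}.

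One small point worth tightening: strong multiplicity one as stated in \cite{JS2,JS1} compares two \emph{isobaric} representations, whereas the Converse Theorem only guarantees that $\Pi'$ is automorphic, i.e., a subquotient of some $\Ind(\sigma_1\otimes\cdots\otimes\sigma_r)$ with cuspidal $\sigma_i$. The standard fix (implicit in \cite[Corollary~7.1]{CKPS}) is to note that the isobaric sum $\sigma_1\boxplus\cdots\boxplus\sigma_r$ agrees with $\Pi'$, hence with $\Pi_0$, at almost all places, so by strong multiplicity one it equals $\Pi_0$; this identifies the cuspidal support and thus the local components at every place. Your conclusion then stands.
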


\appendix
\section{The partial $L$-function $L^S(s,\pi\times\tau)$ is meromorphic of finite order}\label{Muller}

The purpose of this section is to prove, in the setup of general covering groups, that partial $L$-functions occurring in the constant term of Eisenstein series with cuspidal data are meromorphic functions of finite order. This is one of the main ingredients towards proving that entire complete $L$-functions are bounded in
vertical strips of finite width, a property which is one of the conditions of the Converse Theorem (e.g., \cite{CPS3}).

In the linear case, for globally generic representations boundedness in vertical strips was proved by Gelbart and Shahidi \cite{GS2}, using deep results of complex analysis. Gelbart and Lapid \cite{GL} generalized and sharpened some of the results of \cite{GS2}, while providing a proof
which underlines the role of M{\"u}ller's bounds \cite{Mu89,Mu00} on the growth of Eisenstein series (M{\"u}ller's bounds were also crucial for \cite{GS2}). To us, the advantage of the approach of \cite{GL} is that their proof, being independent of the Whittaker model, lends itself to ``transparent extensions" to covering groups (once the local theory is fully developed).

Let $G$ be a connected reductive group defined over a totally imaginary field $F$.
Assume $\widetilde{G}(\A)$ is a topological central extension of $G(\A)$ by $\mu_m$, satisfying the properties from \cite[\S~I.1]{MW2} (this includes the coverings of \cite{Moore,Stein,Mats,BD}), realized using a $2$-cocycle $\rho\in\mathrm{Z}^2(G(\A),\mu_m)$.
Fix a splitting $y\mapsto\langle y,\eta(y)\rangle$ of $G(F)$ in $\widetilde{G}(\A)$.
Recall $\widetilde{G}(\A)\cong \{(\epsilon_{\nu})_{\nu}\in\mu_m:\prod_{\nu}\epsilon_{\nu}=1\}\backslash \prod_{\nu}'\widetilde{G}_{\nu}(F_{\nu})$,
where $\prod_{\nu}'\widetilde{G}_{\nu}(F_{\nu})$ is a restricted direct product with respect to, say, $(K_{\nu},\eta_{\nu})_{\nu\notin S}$, $K_{\nu}=G(\mathcal{O}_{\nu})$,
$y_{\nu}\mapsto\langle y_{\nu},\eta_{\nu}(y_{\nu})\rangle$ is a splitting of $K_{\nu}$ in $\widetilde{G}(F_{\nu})$, and
$S$ is a sufficiently large finite set. Let $G(\A_{\mathrm{fin}})=\prod_{\nu\notin S_{\infty}}'G(F_{\nu})$ and
$G_{\infty}=\prod_{\nu\in S_{\infty}}G(F_{\nu})$. Our assumption on $F$ implies
$\widetilde{G}_{\infty}$ is split over $G_{\infty}$. Thus we can assume
$\rho$ and the $1$-cochain $\eta$ are trivial on $S_{\infty}$, i.e.,
$\rho=\rho^{S_{\infty}}(=\prod_{\nu\notin S_{\infty}}\rho_{\nu}$) and similarly
$\eta=\eta^{S_{\infty}}$. It also follows that for
$x_{\infty}\in G_{\infty}$ and $y\in G(\A_{\mathrm{fin}})$,
\begin{align*}
\langle x_{\infty},1\rangle
\langle y,1\rangle
=\langle x_{\infty}y,1\rangle=\langle y,1\rangle\langle x_{\infty},1\rangle.
\end{align*}
In particular $x_{\infty}$ and $y$ commute in $\widetilde{G}(\A)$.

M{\"u}ller \cite{Mu89} carried out his analysis on the space $L^2(\Gamma\backslash G_{\infty})$ where $\Gamma$ is an arithmetic group, then used the correspondence between automorphic forms on $G(\A)$ and $G_{\infty}$ given in \cite[\S~4.3(2)]{BJ1977}, to derive an adelic version of the Trace Class Conjecture proved in \cite[Corollary~0.2]{Mu89} (see \cite[pp.~527--528]{Mu89}). We start by extending (the formal part of) this
correspondence to $\widetilde{G}(\A)$.

Let $K_0<G(\A_{\mathrm{fin}})$ be a compact open subgroup such that $\widetilde{G}(\A)$ is split over $K_0$, and fix a splitting $y\mapsto\langle y,\varsigma(y)\rangle$ of $K_0$ in $\widetilde{G}(\A)$. Denote the projection of $G(F)$ into $G(\A_{\mathrm{fin}})$ by $G(F)_{\mathrm{fin}}$, and for $g\in G(\A)$ put $\Gamma_g=(G_{\infty}\times{}^gK_0)\cap G(F)$.

Since $\eta=\eta^{S_{\infty}}$, $y\mapsto\langle y,\eta(y)\rangle$ is a splitting of $G(F)_{\mathrm{fin}}$ in $\widetilde{G}(\A)$.
For $y\in K_0$, write ${}^g\langle y,\varsigma(y)\rangle=\langle {}^gy,\varsigma^g(y)\rangle$. If ${}^gy\in G(F)_{\mathrm{fin}}$,
${}^gy\mapsto \langle {}^gy,\varsigma^g(y)\rangle$ and ${}^gy\mapsto\langle {}^gy,\eta({}^gy)\rangle$ are both splittings of
${}^gK_0\cap G(F)_{\mathrm{fin}}$, thus $\mu^g(y)=\varsigma^g(y)\eta^{-1}({}^gy)$ is a character of ${}^gK_0\cap G(F)_{\mathrm{fin}}$. In addition (because $\eta=\eta^{S_{\infty}}$) the
image of the projection of $\{\langle x,\eta(x)\rangle:x\in\Gamma_g\}$ into $\widetilde{G}_{\infty}$ is contained in $\{\langle x_{\infty},1\rangle : x_{\infty}\in G_{\infty}\}$.

Consider a genuine function $f$ on $G(F)\backslash \widetilde{G}(\A)/K_0$, where $G(F)$ and $K_0$ are identified with subgroups of
$\widetilde{G}(\A)$ using $\eta$ and $\varsigma$ (resp.). Then
\begin{align*}
f(\langle g,\epsilon\rangle)=
f(\langle g,\epsilon\rangle\langle y,\varsigma(y)\rangle)=
\mu^g(y)f(\langle g,\epsilon\rangle),\qquad\forall\,y\in {}^gK_0\cap G(F)_{\mathrm{fin}}.
\end{align*}
Hence $f$ vanishes on the preimage of $gG_{\infty}$ in $\widetilde{G}(\A)$, unless
$\mu^g$ is trivial.

Write $G(\A)=G(F)\mathcal{G}G_{\infty}K_0$ where $\mathcal{G}\subset G(\A)$ is a finite set of elements whose components at $S_{\infty}$ are all trivial, and let $\mathcal{G}^{1}=\{g\in\mathcal{G}:\mu^g=1\}$. For $f$ as above and $g\in\mathcal{G}$, let $f_g$ be the (genuine) function on $\widetilde{G}_{\infty}$ defined by
$f_g(\langle x_{\infty},\epsilon\rangle)=f(\langle g,1\rangle\langle x_{\infty},\epsilon \rangle)$. If $z_{\infty}\in G_{\infty}$ and $y\in K_0$ satisfy $z_{\infty}{}^gy\in \Gamma_g$,
\begin{align*}
f_g(\langle z_{\infty},1\rangle \langle x_{\infty},1\rangle)=
f(\langle z_{\infty}{}^gy,\varsigma^g(y)\rangle\langle g,1\rangle\langle x_{\infty},1\rangle)=
\mu^g(y)f_g(\langle x_{\infty},1\rangle)=f_g(\langle x_{\infty},1\rangle),
\end{align*}
where the last equality holds either because $f_g$ vanishes on both sides, or $\mu^g(y)=1$.

In the opposite direction assume $(f_g)_{g\in\mathcal{G}^{1}}$ is a genuine function on $\coprod_{g\in\mathcal{G}^{1}}(\Gamma_g\backslash \widetilde{G}_{\infty})$. Define a genuine function $f$ on
$G(F)\backslash \widetilde{G}(\A)/K_0$ to be $0$ outside the preimage of $G(F)\mathcal{G}^{1}G_{\infty}K_0$, and
$f(h)=f_g(\langle x_{\infty},\epsilon\rangle)$ where
\begin{align*}
h=\langle z,\eta(z)\rangle\langle  g,\epsilon\rangle\langle  x_{\infty},1\rangle \langle y,\varsigma(y)\rangle
,\qquad z\in G(F),\quad g\in\mathcal{G}^{1},\quad x_{\infty}\in G_{\infty},\quad y\in K_0.
\end{align*}
The equivariance properties are satisfied, once we prove $f$ is well defined. To this end assume 
\begin{align*}
h=\langle z',\eta(z')\rangle\langle  g',\epsilon'\rangle\langle  x_{\infty}',1\rangle \langle y',\varsigma(y')\rangle
\end{align*}
with similar notation. Then $g=g'$ and $(x_{\infty}'x_{\infty}^{-1}){}^g(y'y^{-1})=z'^{-1}z\in G(F)$, whence 
$f_g(\langle x_{\infty},1\rangle)=f_g(\langle x_{\infty}',1\rangle)$. It remains to show $\epsilon=\epsilon'$. 
Comparing both representations of $h$ in $\widetilde{G}(\A)$ we have 
\begin{align*}
&\langle (x_{\infty}'x_{\infty}^{-1}){}^g(y'y^{-1}),\varsigma^g(y'y^{-1})\varsigma^{-1}(y'y^{-1})\rho(y',y^{-1})\rho(y,y^{-1})^{-1}\varsigma(y')\varsigma^{-1}(y)\epsilon'\epsilon^{-1}\rangle
\\&=\langle z'^{-1}z,\rho(z'^{-1},z)\rho(z',z'^{-1})^{-1}\eta^{-1}(z')\eta(z)\rangle.
\end{align*}
Hence
\begin{align*}
&\varsigma^g(y'y^{-1})\varsigma^{-1}(y'y^{-1})\rho(y',y^{-1})\rho(y,y^{-1})^{-1}\varsigma(y')\varsigma^{-1}(y)\epsilon'\epsilon^{-1}
=\rho(z'^{-1},z)\rho(z',z'^{-1})^{-1}\eta^{-1}(z')\eta(z).
\end{align*}
Because $g\in\mathcal{G}^{1}$, $\mu^g=1$ whence $\varsigma^g(y'y^{-1})=\eta({}^g(y'y^{-1}))$, and because 
$\varsigma(y_1)\varsigma(y_2)\rho(y_1,y_2)=\varsigma(y_1y_2)$ for all $y_1,y_2\in K_0$ and
$\varsigma$ is trivial on the identity,
\begin{align*}
&\eta({}^g(y'y^{-1}))
\epsilon'\epsilon^{-1}
=\rho(z'^{-1},z)\rho(z',z'^{-1})^{-1}\eta^{-1}(z')\eta(z).
\end{align*}
In addition since ${}^g(y'y^{-1})$ coincides with the projection of $z'^{-1}z$ into $G(\A_{\mathrm{fin}})$ and
$\eta=\eta^{S_{\infty}}$, $\eta({}^g(y'y^{-1}))=\eta(z'^{-1}z)=\rho(z'^{-1},z)\eta(z'^{-1})\eta(z)$, then using
$\rho(z',z'^{-1})^{-1}\eta^{-1}(z')=\eta(z'^{-1})$ we conclude $\epsilon=\epsilon'$.

Also observe that the space of genuine functions on
$\Gamma_g\backslash \widetilde{G}_{\infty}$ is isomorphic to the space of functions on
$\Gamma_g\backslash G_{\infty}$.
To summarize:
\begin{appcorollary}\label{corollary:Muller bijection}
The map $f\mapsto(f_g)_{g\in\mathcal{G}^{1}}$ is a bijection between genuine functions on
$G(F)\backslash \widetilde{G}(\A)/K_0$ and
$\coprod_{g\in\mathcal{G}^{1}}(\Gamma_g\backslash G_{\infty})$.
\end{appcorollary}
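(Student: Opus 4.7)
The plan is to verify that the two constructions described just before the statement are mutually inverse, after which the identification of the target with $\coprod_{g\in\mathcal{G}^{1}}(\Gamma_g\backslash G_\infty)$ reduces to the canonical triviality of $\widetilde{G}_\infty$ over $G_\infty$ (available because $F$ is totally imaginary).

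First I would formalize the forward direction. Since $G(\A)=G(F)\mathcal{G}G_\infty K_0$, any genuine function $f$ on $G(F)\backslash\widetilde{G}(\A)/K_0$ is determined by its values on elements of the form $\langle g,1\rangle\langle x_\infty,\epsilon\rangle$ with $g\in\mathcal{G}$. The identity $f(\langle g,\epsilon\rangle\langle y,\varsigma(y)\rangle)=\mu^g(y)f(\langle g,\epsilon\rangle)$ for $y\in{}^gK_0\cap G(F)_{\mathrm{fin}}$, already recorded above, forces $f$ to vanish on the preimage of $gG_\infty$ whenever $\mu^g$ is nontrivial, so only indices $g\in\mathcal{G}^{1}$ contribute. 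The functions $f_g$ on $\widetilde{G}_\infty$ are then well defined by the formula $f_g(\langle x_\infty,\epsilon\rangle)=f(\langle g,1\rangle\langle x_\infty,\epsilon\rangle)$, and the $\Gamma_g$-equivariance needed to descend to $\Gamma_g\backslash\widetilde{G}_\infty$ is exactly the short computation carried out above.

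Next I would handle the inverse direction: given a tuple $(f_g)_{g\in\mathcal{G}^{1}}$, extend by zero off $G(F)\mathcal{G}^{1}G_\infty K_0$ and define $f$ on this set by the displayed formula. The main obstacle, and the only nontrivial step, is showing $f$ is well defined. If an element of $\widetilde{G}(\A)$ admits two decompositions of the prescribed form, matching $G(\A)$-components forces $g=g'$ and $(x_\infty'x_\infty^{-1}){}^g(y'y^{-1})=(z')^{-1}z\in\Gamma_g$, so the $\Gamma_g$-invariance of $f_g$ handles the continuous variable. What remains is to verify $\epsilon=\epsilon'$, and this is the delicate cocycle calculation performed above: the cancellation collapses to $\varsigma^g(y'y^{-1})=\eta({}^g(y'y^{-1}))$, which is precisely the statement that $\mu^g\equiv 1$ on ${}^gK_0\cap G(F)_{\mathrm{fin}}$, and this holds because $g\in\mathcal{G}^{1}$.

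Finally, to replace $\widetilde{G}_\infty$ by $G_\infty$ in the target, I would invoke the splitting of $\widetilde{G}_\infty$ over $G_\infty$ (canonical since $F$ is totally imaginary and, by our choice, $\eta=\eta^{S_\infty}$, $\rho=\rho^{S_\infty}$): a genuine function on $\widetilde{G}_\infty$ is the same datum as a function on $G_\infty$, and the $\Gamma_g$-action in both pictures is matched through this splitting since $\Gamma_g$ embeds into $\widetilde{G}(\A)$ with trivial archimedean central component. The two maps are then mutually inverse by construction; the hard part of the entire argument is really the cocycle bookkeeping in the well-definedness check, and the rest is formal.
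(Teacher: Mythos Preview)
Your proposal is correct and follows exactly the paper's approach: the corollary is stated with the preface ``To summarize,'' and the paper's proof is precisely the material preceding it, which you have accurately recapitulated---the forward map, the inverse construction extended by zero, the well-definedness check pivoting on $\mu^g\equiv 1$ for $g\in\mathcal{G}^1$ to force $\epsilon=\epsilon'$, and the identification of genuine functions on $\Gamma_g\backslash\widetilde{G}_\infty$ with functions on $\Gamma_g\backslash G_\infty$ via the archimedean splitting.
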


\begin{apptheorem}\label{theorem:Muller}
Assume $G$ is split over $F$, $M$ is the Levi part of a maximal parabolic subgroup $P=M\ltimes U$ of $G$ and $\pi$ is a genuine cuspidal representation of $\widetilde{M}(\A)$. The partial $L$-function $L^S(s,\pi)$ defined in \cite{Gao2018} is a meromorphic function of finite order.
In particular with the notation of \eqref{eq:global property}, $L^S(s,\pi\times\tau)$ is a meromorphic function of finite order.
\end{apptheorem}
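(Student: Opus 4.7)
The plan is to adapt the approach of Gelbart--Lapid \cite{GL}, who derived the finite-order property of partial $L$-functions from Müller's bounds \cite{Mu89,Mu00} on the growth of Eisenstein series on arithmetic quotients of real reductive groups.

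First, I would invoke the Langlands--Shahidi machinery for coverings established by Gao \cite{Gao2018}: since $P$ is a maximal parabolic with Levi $M$ and $\pi$ is genuine cuspidal on $\widetilde{M}(\A)$, the partial $L$-function $L^S(s,\pi)$ appears (up to a product of shifted $L$-factors in lower rank, handled by induction on the semisimple rank) as an explicit Gindikin--Karpelevich ratio in the constant term of an Eisenstein series $E(s,\phi)$ on $\widetilde{G}(\A)$ induced from the shifted datum $|\det|^s\pi$ on $\widetilde{P}(\A)$. Choosing $\phi$ to be right-invariant under a suitably small compact open $K_0\subset G(\A_{\mathrm{fin}})$ over which $\widetilde{G}(\A)$ splits, one is reduced to proving that $E(s,\phi)$, and hence its constant term along $P$, is of finite order in vertical strips away from its poles.

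Second, I would use the reduction to the archimedean case afforded by the totally imaginary hypothesis. Because $\widetilde{G}_\infty$ splits canonically over $G_\infty$, Corollary~\ref{corollary:Muller bijection} identifies $K_0$-invariant genuine automorphic forms on $\widetilde{G}(\A)$ with automorphic forms on the finite disjoint union $\coprod_{g\in \mathcal{G}^1}(\Gamma_g\backslash G_\infty)$. Under this bijection, $E(s,\phi)$ decomposes as a finite sum of classical Eisenstein series $E_g(s,\phi_g)$ on $\Gamma_g\backslash G_\infty$ attached to the parabolic $\Gamma_g\cap P_\infty$ with cuspidal inducing datum $\phi_g$. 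Shrinking $K_0$ if necessary, we may assume each $\Gamma_g$ is a neat congruence subgroup, so each $E_g$ falls within the scope of Müller's bounds. Müller's theorem \cite[Theorem~0.2]{Mu00}, extending \cite{Mu89}, asserts that such Eisenstein series are, away from their poles, of finite order in vertical strips; the same therefore holds for $E(s,\phi)$ and for its $P$-constant term.

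Third, combining this finite-order bound with the meromorphic continuation of $L^S(s,\pi)$ established by Gao \cite{Gao2018}, an inductive argument on the rank (as in \cite[\S 2]{GL}) peels off the auxiliary $L$-factors in the constant term formula and shows that $L^S(s,\pi)$ itself is meromorphic of finite order. The final assertion about $L^S(s,\pi\times\tau)$ is the special case where $G$ is taken to be $\Sp_{2(n+k)}$ (or the appropriate general linear group) with $M\cong\GL_k\times \Sp_{2n}$ (respectively $\GL_c\times\GL_k$), and the cuspidal datum on $\widetilde{M}(\A)$ is $\tau\otimes\pi$ (respectively $\tau_0\otimes\pi$), matching the constant-term computation underlying \eqref{eq:global property}.

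The main obstacle is the careful bookkeeping required in the reduction step: one must ensure that the decomposition of $E(s,\phi)$ into classical Eisenstein series $E_g$ preserves the specific Gindikin--Karpelevich factorization in which $L^S(s,\pi)$ appears, so that the finite-order bound survives after peeling off the known factors; this is an adelic-to-archimedean translation of the data at the ramified and unramified finite places, which amounts to checking that the $(K_0,\Gamma_g)$-correspondence intertwines the intertwining operators from the Langlands--Shahidi constant term formula with their archimedean counterparts. Once this is confirmed, the argument of \cite{GL} applies essentially verbatim, since the splitting of $\widetilde{G}_\infty$ over $G_\infty$ removes the covering from all the truly analytic inputs.
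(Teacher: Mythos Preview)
Your overall strategy matches the paper's: extract $L^S(s,\pi)$ from the constant term of a covering Eisenstein series via Gao's Gindikin--Karpelevich formula and the inductive peeling of \cite{GL}. The difference is in how the finite-order bound on $E(s,\phi)$ is established. The paper does \emph{not} push the whole Eisenstein series through Corollary~\ref{corollary:Muller bijection}. Instead it re-runs M\"uller's argument adelically on $\widetilde{G}(\A)$: Corollary~\ref{corollary:Muller bijection} is invoked only to transfer the rank-$1$ intertwining-operator bound \cite[Theorem~2.1]{Mu00} (which M\"uller himself proved by going classical via \cite{BJ1977}), while the remainder of \cite[\S3]{Mu00}---Arthur's truncation, the Langlands inner-product formula, square-integrability, and the finite-dimensionality of the relevant spaces of automorphic forms---is carried out directly on the covering using the covering-group versions supplied by \cite[\S I.2.13, \S IV]{MW2} and Li \cite{Li2013}.

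Your proposed shortcut has a gap and a misidentified obstacle. The gap: Corollary~\ref{corollary:Muller bijection} is only a bijection of function spaces, and you assert without argument that the resulting $E_g$ are themselves classical Eisenstein series on $\Gamma_g\backslash G_\infty$ with cuspidal inducing data of the type M\"uller's bounds cover. That requires converting the adelic sum over $P(F)\backslash G(F)$ into classical sums over $(\Gamma_g\cap P_\infty)\backslash\Gamma_g$ and checking that the genuine cuspidal datum $\pi$ on $\widetilde{M}(\A)$ descends to the correct cuspidal data on the classical Levi; none of this is automatic. In addition, \cite[Theorem~0.2]{Mu00} is an adelic statement, so citing it for the classical $E_g$ is circular unless you go back through the linear adelic--classical dictionary. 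The paper sidesteps all of this by staying adelic and paying the price of invoking \cite{Li2013} and \cite{MW2}. As for your ``main obstacle'': it is not one. Once a finite-order bound on the adelic $E(s,\phi)$ is in hand---by whatever route---its constant term is computed adelically via Gao's formula, and the Gindikin--Karpelevich factorization holds there; no compatibility of the classical decomposition with that factorization is ever needed.
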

\begin{proof}
Let $E(g;\varphi,s)$ be the Eisenstein series attached to the representation induced from $\widetilde{P}(\A)$ and $\pi$ to $\widetilde{G}(\A)$.  We use notation as in \cite[\S~2.2]{GL} except that $M(\A)$ is replaced with $\widetilde{M}(\A)$,
$M(F)$ is regarded as a subgroup of $\widetilde{M}(\A)$ by first embedding it in $G(F)$ then using the splitting fixed above, and the maximal compact subgroup is $\widetilde{K}$, where $K$ is a maximal compact subgroup of $G(\A)$ as in \cite[\S~I.1.1]{MW2} (see
\cite[\S~I.1.4]{MW2}). The section $\varphi$ is standard, i.e., its restriction to $\widetilde{K}$ is independent of $s$, and also $\widetilde{K}$-finite.

We argue as in the proof of \cite[Theorem~2]{GL}: the inductive step \cite[Proposition~4.1]{Sh3} was extended to the covering groups of \cite{BD} in \cite[\S~8.3]{Gao2018}, and it remains to prove:
\begin{itemize}[leftmargin=*]
\item There exists a nontrivial entire function $q(s)$ of finite order and constants $c_0,l>0$, such that for any compact set $\mathcal{C}\subset G(\A)$, there is $c_1>0$ satisfying
    $|q(s)E(g;s,\varphi)|\leq c_1e^{c_0|s|^l}$ for all $g\in \mathcal{C}$ and $s\in\C$.
\end{itemize}
In the linear setting this result was contained in \cite[Theorem~0.2]{Mu00}.
The proof was based on growth bounds for (normalized) intertwining operators, and on the
inner product formula of Langlands for truncated Eisenstein series (\cite{Langlands1966,A2}).
We describe the extension of these to covering groups. Note that the general properties of
Eisenstein series and global intertwining operators were developed simultaneously for
reductive groups and their coverings in \cite{MW2} (see \cite[\S~II.1.5--7, \S~IV]{MW2}).

For the intertwining operators, the main result \cite[Theorem~2.4]{Mu00} was that cuspidal intertwining operators between associated parabolic subgroups can be normalized by entire functions of finite order, so that the normalized operators are entire operator-valued functions of finite order. The rank-$1$ case \cite[Theorem~2.1]{Mu00} was obtained by reinterpreting the similar result of \cite{Mu89}
for automorphic forms on $G_{\infty}$, in the adelic setting, using the correspondence of \cite[\S~4.3(2)]{BJ1977}
(see \cite[pp.~1129--1131]{Mu00}). This correspondence extends to $\widetilde{G}(\A)$ when we apply the formal
Corollary~\ref{corollary:Muller bijection} to automorphic forms. While for us the rank-$1$ case is sufficient
(because $P$ is maximal), we note that the result for arbitrary
parabolic subgroups now follows as well, since the proof only involves the multiplicativity of
intertwining operators, \cite[Lemma~1.1]{Arthur1981} and \cite[(1.4), (1.5)]{Arthur1982AJM}, all of which still apply.

For the remaining part of the proof (\cite[\S~3]{Mu00}) we utilize the following observations.
\begin{enumerate}[leftmargin=*]
\item The truncation operator of Arthur \cite{A2} is also defined in the context of covering groups, and the truncated Eisenstein
series is square-integrable (\cite[\S~I.2.13, \S~IV.2.4]{MW2}).
\item Li \cite[\S~3]{Li2013} extended the inner product formula of Langlands for truncated Eisenstein series to coverings groups
(and actually treated the more general asymptotic formula of \cite{Arthur1982}).
\item The singularities of Eisenstein series and intertwining operators lie on root hyperplanes (\cite[\S~IV.1.11]{MW2}).
\item The general decompositions of spaces of automorphic forms (\cite[\S~1.6]{Mu00}) remain valid,
in particular the spaces $A^2_{\mathrm{cusp}}(P,\chi,\sigma)\cap A^2(P)(\pi)$ in the notation of \textit{loc. cit.} are finite dimensional (see \cite[\S~3]{Li2013}),
this is needed for the proof of \cite[Corollary~3.4]{Mu00}. \qedhere
\end{enumerate}
\end{proof}

\begin{remark}
For this work there is no need to consider an arbitrary number field. Moreover, in light of \cite{GanIchino2018} applications of this section will probably be to coverings of rank $m>2$, so that taking
a totally imaginary field is not a compromise.
\end{remark}

\def\cprime{$'$} \def\cprime{$'$} \def\cprime{$'$}


\begin{thebibliography}{CKPSS04}

\bibitem[AB95]{AdamsBarbasch1995}
J.~Adams and D.~Barbasch.
\newblock Reductive dual pair correspondence for complex groups.
\newblock {\em J. Funct. Anal.}, 132(1):1--42, 1995.

\bibitem[AV16]{AdamsVogan2016}
J.~Adams and D.~A.~Jr. Vogan.
\newblock Contragredient representations and characterizing the local
  {L}anglands correspondence.
\newblock {\em Amer. J. Math.}, 138(3):657--682, 2016.

\bibitem[AGS15a]{AGS2015a}
A.~Aizenbud, D.~Gourevitch, and S.~Sahi.
\newblock Derivatives for smooth representations of {$GL(n,\Bbb{R})$} and
  {$GL(n,\Bbb{C})$}.
\newblock {\em Israel J. Math.}, 206(1):1--38, 2015.

\bibitem[AGS15b]{AGS2015b}
A.~Aizenbud, D.~Gourevitch, and S.~Sahi.
\newblock Twisted homology for the mirabolic nilradical.
\newblock {\em Israel J. Math.}, 206(1):39--88, 2015.

\bibitem[Art80]{A2}
J.~Arthur.
\newblock A trace formula for reductive groups. {II}. {A}pplications of a
  truncation operator.
\newblock {\em Compositio Math.}, 40(1):87--121, 1980.

\bibitem[Art81]{Arthur1981}
J.~Arthur.
\newblock The trace formula in invariant form.
\newblock {\em Ann. of Math. (2)}, 114(1):1--74, 1981.

\bibitem[Art82a]{Arthur1982AJM}
J.~Arthur.
\newblock On a family of distributions obtained from {E}isenstein series. {II}.
  {E}xplicit formulas.
\newblock {\em Amer. J. Math.}, 104(6):1289--1336, 1982.

\bibitem[Art82b]{Arthur1982}
J.~Arthur.
\newblock On the inner product of truncated {E}isenstein series.
\newblock {\em Duke Math. J.}, 49(1):35--70, 1982.

\bibitem[Art13]{Arthur2013}
J.~Arthur.
\newblock {\em The endoscopic classification of representations}, volume~61 of
  {\em American Mathematical Society Colloquium Publications}.
\newblock American Mathematical Society, Providence, RI, 2013.
\newblock Orthogonal and symplectic groups.

\bibitem[BJ13]{BJ}
D.~Ban and C.~Jantzen.
\newblock The {L}anglands quotient theorem for finite central extensions of
  {$p$}-adic groups.
\newblock {\em Glas. Mat. Ser. III}, 48(2):313--334, 2013.

\bibitem[BJ16]{BanJantzen2016}
D.~Ban and C.~Jantzen.
\newblock The {L}anglands quotient theorem for finite central extensions of
  {$p$}-adic groups {II}: intertwining operators and duality.
\newblock {\em Glas. Mat. Ser. III}, 51(71)(1):153--163, 2016.

\bibitem[Ban98a]{Banks}
W.~D. Banks.
\newblock A corollary to {B}ernstein's theorem and {W}hittaker functionals on
  the metaplectic group.
\newblock {\em Math. Res. Lett.}, 5(6):781--790, 1998.

\bibitem[Ban98b]{Banks1998}
W.~D. Banks.
\newblock Heredity of {W}hittaker models on the metaplectic group.
\newblock {\em Pacific J. Math.}, 185(1):89--96, 1998.

\bibitem[BLS99]{BLS}
W.~D. Banks, J.~Levy, and M.~R. Sepanski.
\newblock Block-compatible metaplectic cocycles.
\newblock {\em J. Reine Angew. Math.}, 1999(507):131--163, 1999.

\bibitem[BZ76]{BZ1}
I.~N. Bernstein and A.~V. Zelevinsky.
\newblock Representations of the group ${GL(n,F)}$ where ${F}$ is a local
  non-{A}rchimedean field.
\newblock {\em Russian Math. Surveys}, 31(3):1--68, 1976.

\bibitem[BZ77]{BZ2}
I.~N. Bernstein and A.~V. Zelevinsky.
\newblock Induced representations of reductive ${p}$-adic groups {I}.
\newblock {\em Ann. Scient. \'{E}c. Norm. Sup.}, 10(4):441--472, 1977.

\bibitem[Bor79]{Bo}
A.~Borel.
\newblock Automorphic ${L}$-functions.
\newblock In {\em Automorphic Forms, Representations, and ${L}$-functions},
  volume 33 Part II, pages 27--61, 1979.

\bibitem[BJ79]{BJ1977}
A.~Borel and H.~Jacquet.
\newblock Automorphic forms and automorphic representations.
\newblock In {\em Automorphic forms, representations and $L$-functions (Proc.
  Sympos. Pure Math., Oregon State Univ., Corvallis, Ore., 1977), Part 1},
  Proc. Sympos. Pure Math., XXXIII, pages 189--207. Amer. Math. Soc.,
  Providence, R.I., 1979.
\newblock With a supplement ``On the notion of an automorphic representation''\
  by R. P. Langlands.

\bibitem[BBF11]{BBF2011}
B.~Brubaker, D.~Bump, and S.~Friedberg.
\newblock Weyl group multiple {D}irichlet series, {E}isenstein series and
  crystal bases.
\newblock {\em Ann. of Math. (2)}, 173(2):1081--1120, 2011.

\bibitem[BD01]{BD}
J.-L. Brylinski and P.~Deligne.
\newblock Central extensions of reductive groups by {$\bold K_2$}.
\newblock {\em Publ. Math. Inst. Hautes \'Etudes Sci.}, 94(1):5--85, 2001.

\bibitem[BF99]{BF2}
D.~Bump and S.~Friedberg.
\newblock Metaplectic generating functions and {S}himura integrals.
\newblock In {\em Automorphic forms, automorphic representations, and
  arithmetic ({F}ort {W}orth, {TX}, 1996)}, volume~66 of {\em Proc. Sympos.
  Pure Math.}, pages 1--17. Amer. Math. Soc., Providence, RI, 1999.

\bibitem[Cai]{Cai2}
Y.~Cai.
\newblock Fourier coefficients for degenerate {E}isenstein series and the
  descending decomposition.
\newblock Preprint 2016, to appear in {\em Manuscripta Math.}, available at
  {http://arxiv.org/abs/1606.08872}.

\bibitem[CFGK19]{CFGK2}
Y.~Cai, S.~Friedberg, D.~Ginzburg, and E.~Kaplan.
\newblock Doubling constructions and tensor product ${L}$-functions: the linear
  case.
\newblock {\em Invent. Math.}, 217(3):985--1068, 2019.

\bibitem[CFK]{CFK}
Y.~Cai, S.~Friedberg, and E.~Kaplan.
\newblock Doubling constructions: local and global theory, with an application
  to global functoriality for non-generic cuspidal representations.
\newblock Preprint 2018, available at {http://arxiv.org/abs/1802.02637}.

\bibitem[Cas80]{CS1}
W.~Casselman.
\newblock The unramified principal series of ${p}$-adic groups {I}: the
  spherical function.
\newblock {\em Compositio Math.}, 40(3):387--406, 1980.

\bibitem[CKPSS01]{CKPS2}
J.~W. Cogdell, H.~H. Kim, I.~I. Piatetski-Shapiro, and F.~Shahidi.
\newblock On lifting from classical groups to {${\rm GL}_N$}.
\newblock {\em Publ. Math. Inst. Hautes \'Etudes Sci.}, 93(1):5--30, 2001.

\bibitem[CKPSS04]{CKPS}
J.~W. Cogdell, H.H. Kim, I.~I. Piatetski-Shapiro, and F.~Shahidi.
\newblock Functoriality for the classical groups.
\newblock {\em Publ. Math. Inst. Hautes \'Etudes Sci.}, 99(1):163--233, 2004.

\bibitem[CPS94]{CPS3}
J.~W. Cogdell and I.~I. Piatetski-Shapiro.
\newblock Converse theorems for {${\rm GL}_n$}.
\newblock {\em Publ. Math. Inst. Hautes \'Etudes Sci.}, 79(1):157--214, 1994.

\bibitem[CPS04]{CPS}
J.~W. Cogdell and I.~I. Piatetski-Shapiro.
\newblock Remarks on {R}ankin-{S}elberg convolutions.
\newblock In {\em Contributions to automorphic forms, geometry, and number
  theory}, pages 255--278. Johns Hopkins Univ. Press, Baltimore, MD, 2004.

\bibitem[EHLS]{EischenHarrisLiSkinner}
E.~Eischen, M.~Harris, J.-S. Li, and C.~M. Skinner.
\newblock {$p$}-adic {$L$}-functions for unitary groups.
\newblock Preprint 2016, available at {http://arxiv.org/abs/1602.01776}.

\bibitem[Fli80]{Flicker2}
Y.~Z. Flicker.
\newblock Automorphic forms on covering groups of {${\rm GL}(2)$}.
\newblock {\em Invent. Math.}, 57(2):119--182, 1980.

\bibitem[FK86]{FK}
Y.~Z. Flicker and D.~A. Kazhdan.
\newblock Metaplectic correspondence.
\newblock {\em Publ. Math. Inst. Hautes \'Etudes Sci.}, 64(1):53--110, 1986.

\bibitem[FK19]{KM}
J.~Frahm and E.~Kaplan.
\newblock A {G}odement--{J}acquet type integral and the metaplectic {S}halika
  model.
\newblock {\em Amer. J. Math.}, 141(1):219--282, 2019.

\bibitem[Gan12]{Gan}
W.~T. Gan.
\newblock Doubling zeta integrals and local factors for metaplectic groups.
\newblock {\em Nagoya Math. J.}, 208:67--95, 2012.

\bibitem[Gan14]{Gan2014}
W.~T. Gan.
\newblock Theta correspondence: recent progress and applications.
\newblock In {\em Proceedings of the {I}nternational {C}ongress of
  {M}athematicians---{S}eoul 2014. {V}ol. {II}}, pages 343--366. Kyung Moon Sa,
  Seoul, 2014.

\bibitem[GG18]{GG}
W.~T. Gan and F.~Gao.
\newblock The {L}anglands-{W}eissman program for {B}rylinski-{D}eligne
  extensions.
\newblock {\em Ast\'{e}risque}, 398:187--275, 2018.
\newblock L-groups and the Langlands program for covering groups.

\bibitem[GGW18]{GanFanWeissman2018}
W.~T. Gan, F.~Gao, and M.~H. Weissman.
\newblock L-group and the {L}anglands program for covering groups: a historical
  introduction.
\newblock {\em Ast\'{e}risque}, 398:1--31, 2018.
\newblock L-groups and the Langlands program for covering groups.

\bibitem[GI18]{GanIchino2018}
W.~T. Gan and A.~Ichino.
\newblock The {S}himura-{W}aldspurger correspondence for {${\rm Mp}_{2n}$}.
\newblock {\em Ann. of Math. (2)}, 188(3):965--1016, 2018.

\bibitem[GS12]{WGS}
W.~T. Gan and G.~Savin.
\newblock Representations of metaplectic groups {I}: epsilon dichotomy and
  local {L}anglands correspondence.
\newblock {\em Compositio Math.}, 148:1655--1694, 2012.

\bibitem[GT16]{GanTakeda2016}
W.~T. Gan and S.~Takeda.
\newblock A proof of the {H}owe duality conjecture.
\newblock {\em J. Amer. Math. Soc.}, 29(2):473--493, 2016.

\bibitem[Gao]{Gao2018b}
F.~Gao.
\newblock Hecke {L}-functions and {F}ourier coefficients of covering
  {E}isenstein series.
\newblock Preprint 2018, to appear in Doc. Math.

\bibitem[Gao17]{Gao5}
F.~Gao.
\newblock Distinguished theta representations for certain covering groups.
\newblock {\em Pacific J. Math.}, 290(2):333--379, 2017.

\bibitem[Gao18a]{Gao4}
F.~Gao.
\newblock Generalized {B}ump-{H}offstein conjecture for coverings of the
  general linear groups.
\newblock {\em J. Algebra}, 499:183--228, 2018.

\bibitem[Gao18b]{Gao2018}
F.~Gao.
\newblock The {L}anglands-{S}hahidi {$L$}-functions for {B}rylinski-{D}eligne
  extensions.
\newblock {\em Amer. J. Math.}, 140(1):83--137, 2018.

\bibitem[GSS]{GaoShahidiSzpruch2019}
F.~Gao, F.~Shahidi, and D.~Szpruch.
\newblock Local coefficients and gamma factors for principal series of covering
  groups.
\newblock Preprint 2019, to appear in Mem. Amer. Math. Soc.

\bibitem[GSS18]{GaoShahidiSzpruch2018}
F.~Gao, F.~Shahidi, and D.~Szpruch.
\newblock On the local coefficients matrix for coverings of {${\rm SL(2,F)}$}.
\newblock In A.~Akbary and S.~Gun, editors, {\em Geometry, Algebra, Number
  Theory, and Their Information Technology Applications}, pages 297--244.
  Springer International Publishing, 2018.

\bibitem[GS01]{GS2}
S.~Gelbart and F.~Shahidi.
\newblock Boundedness of automorphic {$L$}-functions in vertical strips.
\newblock {\em J. Amer. Math. Soc.}, 14(1):79--107 (electronic), 2001.

\bibitem[GL06]{GL}
S.~S. Gelbart and E.~M. Lapid.
\newblock Lower bounds for {$L$}-functions at the edge of the critical strip.
\newblock {\em Amer. J. Math.}, 128(3):619--638, 2006.

\bibitem[Gin]{G7}
D.~Ginzburg.
\newblock Tensor product ${L}$-functions on metaplectic covering groups of
  ${GL}_r$.
\newblock Preprint 2019, available at {https://arxiv.org/abs/1908.07720}.

\bibitem[Gin06]{G2}
D.~Ginzburg.
\newblock Certain conjectures relating unipotent orbits to automorphic
  representations.
\newblock {\em Israel J. Math.}, 151(1):323--355, 2006.

\bibitem[Gin18]{G8}
D.~Ginzburg.
\newblock Generating functions on covering groups.
\newblock {\em Compos. Math.}, 154(4):671--684, 2018.

\bibitem[GRS99]{GRS5}
D.~Ginzburg, S.~Rallis, and D.~Soudry.
\newblock On a correspondence between cuspidal representations of {${\rm
  GL}_{2n}$} and {$\widetilde{\rm Sp}_{2n}$}.
\newblock {\em J. Amer. Math. Soc.}, 12(3):849--907, 1999.

\bibitem[GRS11]{RGS}
D.~Ginzburg, S.~Rallis, and D.~Soudry.
\newblock {\em The descent map from automorphic representations of {${\rm
  GL}(n)$} to classical groups}.
\newblock World Scientific Publishing, Singapore, 2011.

\bibitem[GGS17]{GGS}
R.~Gomez, D.~Gourevitch, and S.~Sahi.
\newblock Generalized and degenerate {W}hittaker models.
\newblock {\em Compos. Math.}, 153(2):223--256, 2017.

\bibitem[GK]{DimaKaplan}
D.~Gourevitch and E.~Kaplan.
\newblock Multiplicity one theorem for the generalized doubling method.
\newblock Preprint 2019, to appear in the J. Eur. Math. Soc.

\bibitem[HKS96]{HKS}
M.~Harris, S.~S. Kudla, and W.~J. Sweet.
\newblock Theta dichotomy for unitary groups.
\newblock {\em J. Amer. Math. Soc.}, 9(4):941--1004, 1996.

\bibitem[HLS05]{HarrisLiSkinner2005}
M.~Harris, J.-S. Li, and C.~M. Skinner.
\newblock The {R}allis inner product formula and {$p$}-adic {$L$}-functions.
\newblock In {\em Automorphic representations, {$L$}-functions and
  applications: progress and prospects}, volume~11 of {\em Ohio State Univ.
  Math. Res. Inst. Publ.}, pages 225--255. de Gruyter, Berlin, 2005.

\bibitem[HLS06]{HarrisLiSkinner2006}
M.~Harris, J.-S. Li, and C.~M. Skinner.
\newblock {$p$}-adic {$L$}-functions for unitary {S}himura varieties. {I}.
  {C}onstruction of the {E}isenstein measure.
\newblock {\em Doc. Math.}, Extra Vol.:393--464, 2006.

\bibitem[Hen84]{GH2}
G.~Henniart.
\newblock La conjecture de {L}anglands locale pour {${\rm GL}(3)$}.
\newblock {\em M\'em. Soc. Math. France (N.S.)}, 11-12:186, 1984.

\bibitem[How89]{Howe1989}
R.~Howe.
\newblock Transcending classical invariant theory.
\newblock {\em J. Amer. Math. Soc.}, 2(3):535--552, 1989.

\bibitem[Jac84]{Jac4}
H.~Jacquet.
\newblock On the residual spectrum of {${\rm GL}(n)$}.
\newblock In {\em Lie group representations, {II} ({C}ollege {P}ark, {M}d.,
  1982/1983)}, volume 1041 of {\em Lecture Notes in Math.}, pages 185--208.
  Springer, Berlin, 1984.

\bibitem[JPSS83]{JPSS}
H.~Jacquet, I.~I. Piatetski-Shapiro, and J.~A. Shalika.
\newblock {R}ankin--{S}elberg convolutions.
\newblock {\em Amer. J. Math.}, 105(2):367--464, 1983.

\bibitem[JS81a]{JS2}
H.~Jacquet and J.~A. Shalika.
\newblock On {E}uler products and the classification of automorphic forms {II}.
\newblock {\em Amer. J. Math.}, 103(4):777--815, 1981.

\bibitem[JS81b]{JS1}
H.~Jacquet and J.~A. Shalika.
\newblock On {E}uler products and the classification of automorphic
  representations {I}.
\newblock {\em Amer. J. Math.}, 103(3):499--558, 1981.

\bibitem[JS83]{JS}
H.~Jacquet and J.~A. Shalika.
\newblock The {W}hittaker models of induced representations.
\newblock {\em Pacific. J. Math.}, 109(1):107--120, 1983.

\bibitem[JS90]{JS3}
H.~Jacquet and J.~A. Shalika.
\newblock {R}ankin--{S}elberg convolutions: {A}rchimedean theory.
\newblock In S.~Gelbert, R.~Howe, and P.~Sarnak, editors, {\em Festschrift in
  Honor of I. I. Piatetski–-Shapiro on the occasion of his sixtieth birthday,
  Part I}, pages 125--207, Ramat Aviv, 1989, 1990. Israel Math. Conf. Proc., 2,
  Weizmann Science Press of Israel, Jerusalem.

\bibitem[JLZ13]{JiangLiuZhang2013}
D.~Jiang, B.~Liu, and L.~Zhang.
\newblock Poles of certain residual {E}isenstein series of classical groups.
\newblock {\em Pacific J. Math.}, 264(1):83--123, 2013.

\bibitem[JS07]{DihuaSoudry2007}
D.~Jiang and D.~Soudry.
\newblock On the genericity of cuspidal automorphic forms of {${\rm
  SO}(2n+1)$}. {II}.
\newblock {\em Compos. Math.}, 143(3):721--748, 2007.

\bibitem[Kab99]{Kable3}
A.~C. Kable.
\newblock The main involutions of the metaplectic group.
\newblock {\em Proc. Amer. Math. Soc.}, 127(4):955--962, 1999.

\bibitem[Kab01]{Kable}
A.~C. Kable.
\newblock The tensor product of exceptional representations on the general
  linear group.
\newblock {\em Ann. Sci. \'Ecole Norm. Sup. (4)}, 34(5):741--769, 2001.

\bibitem[Kap]{me12}
E.~Kaplan.
\newblock Doubling constructions and tensor product ${L}$-functions: coverings
  of the symplectic group.
\newblock Preprint 2019, available at {https://arxiv.org/abs/1902.00880}.

\bibitem[Kap15]{me6}
E.~Kaplan.
\newblock Complementary results on the {R}ankin-{S}elberg gamma factors of
  classical groups.
\newblock {\em J. Number Theory}, 146:390–447, 2015.

\bibitem[Kap17]{me11}
E.~Kaplan.
\newblock The characterization of theta-distinguished representations of {${\rm
  GL}(n)$}.
\newblock {\em Israel J. Math.}, 222:551--598, 2017.

\bibitem[KP84]{KP}
D.~A. Kazhdan and S.~J. Patterson.
\newblock Metaplectic forms.
\newblock {\em Publ. Math. Inst. Hautes \'Etudes Sci.}, 59(1):35--142, 1984.

\bibitem[KS02]{KimShahidi2002}
H.~H. Kim and F.~Shahidi.
\newblock Functorial products for {${\rm GL}_2\times{\rm GL}_3$} and the
  symmetric cube for {${\rm GL}_2$}.
\newblock {\em Ann. of Math. (2)}, 155(3):837--893, 2002.
\newblock With an appendix by Colin J. Bushnell and Guy Henniart.

\bibitem[Kna94]{Knapp1994}
A.~W. Knapp.
\newblock Local {L}anglands correspondence: the {A}rchimedean case.
\newblock In {\em Motives ({S}eattle, {WA}, 1991)}, volume~55 of {\em Proc.
  Sympos. Pure Math.}, pages 393--410. Amer. Math. Soc., Providence, RI, 1994.

\bibitem[Kub67]{Kubota}
T.~Kubota.
\newblock Topological covering of {${\rm SL}(2)$} over a local field.
\newblock {\em J. Math. Soc. Japan}, 19(1):114--121, 1967.

\bibitem[KR94]{KudlaRallis1994}
S.~S. Kudla and S.~Rallis.
\newblock A regularized {S}iegel-{W}eil formula: the first term identity.
\newblock {\em Ann. of Math. (2)}, 140(1):1--80, 1994.

\bibitem[Lan66]{Langlands1966}
R.~P. Langlands.
\newblock Eisenstein series.
\newblock In {\em Algebraic {G}roups and {D}iscontinuous {S}ubgroups ({P}roc.
  {S}ympos. {P}ure {M}ath., {B}oulder, {C}olo., 1965)}, pages 235--252. Amer.
  Math. Soc., Providence, R.I., 1966.

\bibitem[Lan89]{La3}
R.~P. Langlands.
\newblock On the classification of irreducible representations of real
  algebraic groups.
\newblock In {\em Representation theory and harmonic analysis on semisimple
  {L}ie groups}, volume~31 of {\em Math. Surveys Monogr.}, pages 101--170.
  Amer. Math. Soc., Providence, RI, 1989.

\bibitem[LM20]{LapidMao2018}
E.~Lapid and Z.~Mao.
\newblock Local {R}ankin--{S}elberg integrals for {S}peh representations.
\newblock {\em Compos. Math.}, 156(5):908--945, 2020.

\bibitem[LR05]{LR}
E.~M. Lapid and S.~Rallis.
\newblock On the local factors of representations of classical groups.
\newblock In J.~W. Cogdell, D.~Jiang, S.~S. Kudla, D.~Soudry, and R.~Stanton,
  editors, {\em Automorphic representations, ${L}$-functions and applications:
  progress and prospects}, pages 309--359. Ohio State Univ. Math. Res. Inst.
  Publ., 11, de Gruyter, Berlin, 2005.

\bibitem[Li97]{JSLi1997}
J.-S. Li.
\newblock Automorphic forms with degenerate {F}ourier coefficients.
\newblock {\em Amer. J. Math.}, 119(3):523--578, 1997.

\bibitem[Li]{WWLi2017}
W.-W. Li.
\newblock Stable conjugacy and epipelagic {L}-packets for {B}rylinski-{D}eligne
  covers of {Sp(2n)}.
\newblock Preprint 2017, available at {http://arxiv.org/abs/1703.04365}.

\bibitem[Li13]{Li2013}
W.-W. Li.
\newblock La formule des traces pour les rev\^{e}tements de groupes
  r\'{e}ductifs connexes {III}: {L}e d\'{e}veloppement spectral fin.
\newblock {\em Math. Ann.}, 356(3):1029--1064, 2013.

\bibitem[Li14]{Li}
W.-W. Li.
\newblock La formule des traces pour les rev\^etements de groupes r\'eductifs
  connexes. {I}. {L}e d\'eveloppement g\'eom\'etrique fin.
\newblock {\em J. Reine Angew. Math.}, 2014(686):37--109, 2014.

\bibitem[Mat69]{Mats}
H.~Matsumoto.
\newblock Sur les sous-groupes arithm\'etiques des groupes semi-simples
  d\'eploy\'es.
\newblock {\em Ann. Sci. \'Ecole Norm. Sup. (4)}, 2(1):1--62, 1969.

\bibitem[McN11]{McNamara2}
P.~J. McNamara.
\newblock Metaplectic {W}hittaker functions and crystal bases.
\newblock {\em Duke Math. J.}, 156(1):1--31, 2011.

\bibitem[McN12]{McNamara}
P.~J. McNamara.
\newblock Principal series representations of metaplectic groups over local
  fields.
\newblock In {\em Multiple {D}irichlet series, {L}-functions and automorphic
  forms}, volume 300 of {\em Progr. Math.}, pages 299--327.
  Birkh\"auser/Springer, New York, 2012.

\bibitem[Mez01]{Mezo2001}
P.~Mezo.
\newblock Comparisons of general linear groups and their metaplectic coverings.
  {II}.
\newblock {\em Represent. Theory}, 5:524--580 (electronic), 2001.

\bibitem[MVW87]{MVW}
C.~M{\oe}glin, M.-F. Vign\'{e}ras, and J.-L. Waldspurger.
\newblock {\em Correspondances de {H}owe sur un corps {$p$}-adique}, volume
  1291 of {\em Lecture Notes in Mathematics}.
\newblock Springer-Verlag, Berlin, 1987.

\bibitem[MW87]{MW3}
C.~M{\oe}glin and J.-L. Waldspurger.
\newblock Mod\`eles de {W}hittaker d\'eg\'en\'er\'es pour des groupes
  {$p$}-adiques.
\newblock {\em Math. Z.}, 196(3):427--452, 1987.

\bibitem[MW89]{MW4}
C.~M{\oe}glin and J.-L. Waldspurger.
\newblock Le spectre r\'esiduel de {${\rm GL}(n)$}.
\newblock {\em Ann. Sci. \'Ecole Norm. Sup. (4)}, 22(4):605--674, 1989.

\bibitem[MW95]{MW2}
C.~M{\oe}glin and J.-L. Waldspurger.
\newblock {\em Spectral decomposition and {E}isenstein series}, volume 113 of
  {\em Cambridge Tracts in Mathematics}.
\newblock Cambridge University Press, Cambridge, 1995.
\newblock Une paraphrase de l'{\'E}criture [A paraphrase of Scripture].

\bibitem[Moo68]{Moore}
C.~C. Moore.
\newblock Group extensions of {$p$}-adic and adelic linear groups.
\newblock {\em Publ. Math. Inst. Hautes \'Etudes Sci.}, 35(1):157--222, 1968.

\bibitem[M{\"u}l89]{Mu89}
W.~M{\"u}ller.
\newblock The trace class conjecture in the theory of automorphic forms.
\newblock {\em Ann. of Math. (2)}, 130(3):473--529, 1989.

\bibitem[M{\"u}l00]{Mu00}
W.~M{\"u}ller.
\newblock On the singularities of residual intertwining operators.
\newblock {\em Geom. Funct. Anal.}, 10(5):1118--1170, 2000.

\bibitem[PSR87]{PSR}
I.~Piatetski-Shapiro and S.~Rallis.
\newblock {\em ${L}$-functions for the classical groups}, volume 1254 of {\em
  Lecture Notes in Math.}
\newblock Springer-Verlag, New York, 1987.

\bibitem[RS05]{RS}
S.~Rallis and D.~Soudry.
\newblock Stability of the local gamma factor arising from the doubling method.
\newblock {\em Math. Ann.}, 333(2):291--313, 2005.

\bibitem[Ran39]{Rankin1939}
R.~A. Rankin.
\newblock Contributions to the theory of {R}amanujan's function {$\tau(n)$} and
  similar arithmetical functions. {I}. {T}he zeros of the function
  {$\sum^\infty_{n=1}\tau(n)/n^s$} on the line {$\mathfrak{R}s=13/2$}. {II}.
  {T}he order of the {F}ourier coefficients of integral modular forms.
\newblock {\em Proc. Cambridge Philos. Soc.}, 35:351--372, 1939.

\bibitem[Rao93]{Rao}
R.~Ranga Rao.
\newblock On some explicit formulas in the theory of {W}eil representations.
\newblock {\em Pacific J. Math.}, 157(2):335–--371, 1993.

\bibitem[Sat63]{Satake63}
I.~Satake.
\newblock Theory of spherical functions on reductive algebraic groups over
  {${p}$}-adic fields.
\newblock {\em Publ. Math. Inst. Hautes \'Etudes Sci.}, 18(1):5--69, 1963.

\bibitem[Sav]{Savin7}
G.~Savin.
\newblock A nice central extension of {${\rm GL}_r$}.
\newblock Preprint 2017, personal communication.

\bibitem[Sav88]{Savin6}
G.~Savin.
\newblock Local {S}himura correspondence.
\newblock {\em Math. Ann.}, 280(2):185--190, 1988.

\bibitem[Sav04]{Savin5}
G.~Savin.
\newblock On unramified representations of covering groups.
\newblock {\em J. Reine Angew. Math.}, 2004(566):111--134, 2004.

\bibitem[Sha85]{Shahidi1985}
F.~Shahidi.
\newblock Local coefficients as {A}rtin factors for real groups.
\newblock {\em Duke Math. J.}, 52(4):973--1007, 1985.

\bibitem[Sha90]{Sh3}
F.~Shahidi.
\newblock A proof of {L}anglands' conjecture on {P}lancherel measures;
  complementary series for {$p$}-adic groups.
\newblock {\em Ann. of Math. (2)}, 132(2):273--330, 1990.

\bibitem[Sou93]{Soudry}
D.~Soudry.
\newblock Rankin--{S}elberg convolutions for {${\rm SO}_{2l+1}\times{\rm
  GL}_n$}: local theory.
\newblock {\em Mem. Amer. Math. Soc.}, 105(500):vi+100, 1993.

\bibitem[Sou95]{Soudry3}
D.~Soudry.
\newblock On the {A}rchimedean theory of {R}ankin-{S}elberg convolutions for
  {${\rm SO}_{2l+1}\times{\rm GL}_n$}.
\newblock {\em Ann. Sci. \'Ecole Norm. Sup. (4)}, 28(2):161--224, 1995.

\bibitem[Sou00]{Soudry2}
D.~Soudry.
\newblock Full multiplicativity of gamma factors for {${\rm
  SO}_{2l+1}\times{\rm GL}_n$}.
\newblock {\em Israel J. Math.}, 120(1):511--561, 2000.

\bibitem[Ste68]{Stein}
R.~Steinberg.
\newblock {\em Lectures on {C}hevalley groups}.
\newblock Yale University, New Haven, Conn., 1968.
\newblock Notes prepared by John Faulkner and Robert Wilson.

\bibitem[Sun11]{Binyong2011}
B.~Sun.
\newblock Dual pairs and contragredients of irreducible representations.
\newblock {\em Pacific J. Math.}, 249(2):485--494, 2011.

\bibitem[Szp]{Szpruch2018}
D.~Szpruch.
\newblock On shahidi local coefficients matrix.
\newblock Preprint 2018, to appear in Manuscripta Math.

\bibitem[Szp10]{Dani}
D.~Szpruch.
\newblock The {L}anglands--{S}hahidi {M}ethod for the metaplectic group and
  applications.
\newblock Thesis, Tel Aviv University, Israel, 2010.

\bibitem[Szp13]{Dani2}
D.~Szpruch.
\newblock Some irreducibility theorems of parabolic induction on the
  metaplectic group via the {L}anglands-{S}hahidi method.
\newblock {\em Israel J. Math.}, 195(2):897--971, 2013.

\bibitem[Tad87]{Tadic1986}
M.~Tadi{\'c}.
\newblock Unitary representations of {${\rm GL}(n)$}, derivatives in the
  non-{A}rchimedean case.
\newblock In {\em V. {M}athematikertreffen {Z}agreb-{G}raz
  ({M}ariatrost/{G}raz, 1986)}, volume 274 of {\em Ber. Math.-Statist. Sekt.
  Forschungsgesellsch. Joanneum}, pages Ber.\ No.\ 281, 19. Forschungszentrum
  Graz, Graz, 1987.

\bibitem[Tak14]{Tk}
S.~Takeda.
\newblock The twisted symmetric square {L}-function of {GL(r)}.
\newblock {\em Duke Math. J.}, 163(1):175--266, 2014.

\bibitem[Tat67]{Tate}
J.~T. Tate.
\newblock Fourier analysis in number fields and {H}ecke's zeta functions.
\newblock In J.~W.~S. Cassels and A.~Frohlich, editors, {\em Algebraic number
  theory}, pages 305--347, Washington, D. C., 1967. Academic Press.

\bibitem[Vog86]{Vog86}
D.~A.~Jr. Vogan.
\newblock The unitary dual of {${\rm GL}(n)$} over an {A}rchimedean field.
\newblock {\em Invent. Math.}, 83(3):449--505, 1986.

\bibitem[Wei64]{We}
A.~Weil.
\newblock Sur certains groupes d'op\'erateurs unitaires.
\newblock {\em Acta Math.}, 111(1):143--211, 1964.

\bibitem[Wei67]{Weil1967}
A.~Weil.
\newblock \"{U}ber die {B}estimmung {D}irichletscher {R}eihen durch
  {F}unktionalgleichungen.
\newblock {\em Math. Ann.}, 168:149--156, 1967.

\bibitem[Wei14]{Weissman2}
M.~H. Weissman.
\newblock Split metaplectic groups and their {L}-groups.
\newblock {\em J. Reine Angew. Math.}, 2014(696):89--141, 2014.

\bibitem[Wei18]{Weissman2018a}
M.~H. Weissman.
\newblock L-groups and parameters for covering groups.
\newblock {\em Ast\'{e}risque}, 398:33--186, 2018.
\newblock L-groups and the Langlands program for covering groups.

\bibitem[Yam14]{Yamana}
S.~Yamana.
\newblock L-functions and theta correspondence for classical groups.
\newblock {\em Invent. Math.}, 196(3):651--732, 2014.

\bibitem[Yam17]{Yamana2}
S.~Yamana.
\newblock Local symmetric square ${L}$-factors of representations of general
  linear groups.
\newblock {\em Pacific J. Math.}, 286(1):215--256, 2017.

\bibitem[Zha17]{Zhang2017}
Q.~Zhang.
\newblock Stability of {R}ankin-{S}elberg gamma factors for {${\rm Sp}(2n)$},
  {$\widetilde{\rm Sp}(2n)$} and {${\rm U}(n,n)$}.
\newblock {\em Int. J. Number Theory}, 13(9):2393--2432, 2017.

\end{thebibliography}
\end{document}